\title[On heights of motives with semistable reduction]
{On heights of motives with semistable reduction}
\author{Teruhisa Koshikawa}
\address{Department of Mathematics, University of Chicago}
\email{teruhisa@uchicago.edu}
\newcommand\cD{\mathcal D}
\newcommand\cL{\mathcal L}
\newcommand\cM{\mathcal M}
\newcommand\cO{\mathcal O}
\newcommand\cV{\mathcal V}
\newcommand\fM{\mathfrak M}
\newcommand\fN{\mathfrak N}
\newcommand\fS{\mathfrak S}
\newcommand\bA{\mathbb A}
\newcommand\bC{\mathbb C}
\newcommand\bF{\mathbb F}
\newcommand\bN{\mathbb N}
\newcommand\bQ{\mathbb Q}
\newcommand\bR{\mathbb R}
\newcommand\bZ{\mathbb Z}
\DeclareMathOperator{\ab}{ab}
\DeclareMathOperator{\BK}{BK}
\DeclareMathOperator{\crys}{crys}
\DeclareMathOperator{\Deg}{Deg}
\DeclareMathOperator{\dR}{dR}
\DeclareMathOperator{\et}{\acute{e}t}
\DeclareMathOperator{\Frac}{Frac}
\DeclareMathOperator{\Fil}{Fil}
\DeclareMathOperator{\free}{fr}
\DeclareMathOperator{\Frob}{Frob}
\DeclareMathOperator{\Gal}{Gal}
\DeclareMathOperator{\gr}{gr}
\DeclareMathOperator{\Hom}{Hom}
\DeclareMathOperator{\Ind}{Ind}
\DeclareMathOperator{\rk}{rk}
\DeclareMathOperator{\SL}{SL}
\DeclareMathOperator{\st}{st}
\DeclareMathOperator{\torsion}{tor}
\DeclareMathOperator{\Ver}{Ver}
\theoremstyle{plain}
\newtheorem{thm}{Theorem}[section]
\newtheorem{lem}[thm]{Lemma}
\newtheorem{prop}[thm]{Proposition}
\newtheorem{cor}[thm]{Corollary}
\theoremstyle{definition}
\newtheorem{rem}[thm]{Remark}
\newtheorem{defn}[thm]{Definition}
\newtheorem{exam}[thm]{Example}
\newtheorem{conj}[thm]{Conjecture}
\begin{document}

\begin{abstract}
We study heights of motives with integral coefficients over number fields introduced by Kato. 
It is a generalization of the Faltings height of an abelian variety and we establish generalizations of some properties of the Faltings height in our context as conjectured by Kato. 
This sheds some light on integral structures of motives. 
\end{abstract}

\maketitle

\setcounter{tocdepth}{1}
\tableofcontents

\section{Introduction}

In his celebrated paper ~\cite{Faltings83}, Faltings established a number of significant results at the same time. 
In particular, he proved the Tate conjecture and the Shafarevich conjecture for abelian variteties, and the semisimplicity of Tate modules. 

An important notion introduced in \textit{ibid.} is so-called the Faltings height of an abelian variety. 
He assigned a real number $h(A_F)$ to an abelian varietiy $A_F$ over a number field $F$. 
There are two key properties of the Faltings height:
\begin{itemize}
\item Fix a real number $c$. There are only finitely many isomorphism classes of abelian varieties over $F$ with heights less than $c$. (Finiteness)
\item Fix an abelian variety $A_F$ over $F$ with semistable reduction everywhere. The set of heights of abelian varieties over $F$ which are isogeneous to $A_F$ is finite. (Boundedness)
\end{itemize}

Faltings essentially proved these two properties in \textit{ibid.}, and deduced such significant results from them. 
(Technically speaking, his actual arguments are different.)  
The first property above can be proved by studying the moduli space of abelian varieties, and it is a consequence of the fact that heights of abelian varieties in the universal family behaves as the height function attached to a line bundle with a metric of logarithmic singularities.

Our aim in this paper is to generalize these results to any pure motives with integral coefficients as much as possible. 
It is Kazuya Kato ~\cite{Kato1} who first gave a candidate for the height of a pure $\bZ$-motive. 
We essentially follow his definition but we modify it slightly. 
When we define a height, it is important to choose a good integral structure of the de Rham realization, and our choice is (possibly) different from his one. 

A little more precisely, given a pure $\bZ$-motive with semistable reduction everywhere, integral $p$-adic Hodge theory, mainly developed by Breil, Kisin and Liu, provides such an integral structure.  
We use it to define our height. 
(Since we need to work with semistable representations, we can only define the stable height in general.)

As our definition uses integral $p$-adic Hodge theory, there are many techniques we can use to study heights, and we can indeed prove some generalizations of the results of Faltings we mentioned before. 
(We cannot prove any of big conjectures though.)

We can prove the boundedness completely. 

\begin{thm}
Fix a pure $\bZ$-motive $M$ over a number field $F$ with semistable reduction everywhere. 
The set of heights of pure $\bZ$-motives which are isogeneous to $M$ is finite. 
\end{thm}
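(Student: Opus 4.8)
We sketch the plan, which follows Faltings's proof of boundedness for abelian varieties, with his use of Raynaud's theory of finite flat group schemes replaced by the integral $p$-adic Hodge theory of Breuil--Kisin--Liu that underlies our definition. Let $M'$ be a pure $\bZ$-motive over $F$ isogenous to $M$, and fix an isogeny $\phi\colon M\to M'$. Being semistable at a place is a condition on the $\ell$-adic and $p$-adic realizations that is insensitive to replacing a lattice by a commensurable one, so $M'$ again has semistable reduction everywhere and $h(M')$ is defined. By construction $h(\,\cdot\,)$ is $[F:\bQ]^{-1}$ times the Arakelov degree of a Hermitian line bundle $\cL(\,\cdot\,)$ on $\Spec\cO_F$ whose generic fibre is the canonical line $L$ cut out of tensor powers of $M_{\dR}$ by the Hodge filtration — hence depends only on the rational realization and is literally shared by $M$ and $M'$ —, whose metrics at the archimedean places are the Hodge-theoretic ones built from the period isomorphism and the Betti lattice $H_{\mathrm B}(M,\bZ)$, and whose $\cO_{F_v}$-structure at $v\mid p$ is the one induced, via the Breuil--Kisin--Liu module, by the semistable lattice $T_p(M)|_{\Gal(\overline{F}_v/F_v)}$. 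Fixing a generator $\ell$ of $L$ gives
\[
h(M')-h(M)=\frac{1}{[F:\bQ]}\sum_{v}\log\frac{\|\ell\|_{v,M}}{\|\ell\|_{v,M'}},
\]
a sum over all places of $F$ with finitely many nonzero terms. The plan is to show that only finitely many of these terms can ever be nonzero, uniformly in $M'$, and that each lies in an explicit finite set.

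The first point is a cancellation at the ``good'' primes. Call a rational prime $p$ good if it is unramified in $F$ and $p-1$ exceeds the difference between the largest and smallest Hodge--Tate weights of $M$; all but finitely many $p$ are good. For good $p$ the relevant part of integral $p$-adic Hodge theory is Fontaine--Laffaille theory, which provides an exact, fully faithful, filtration-compatible equivalence between lattices in the representations $T_p(\,\cdot\,)|_{\Gal(\overline{F}_v/F_v)}$ for $v\mid p$ and filtered $\cO_{F_v}$-lattices in $M_{\dR}\otimes_F F_v$. Consequently, for good $p$, the change of $\sum_{v\mid p}\log\|\ell\|_{v,\,\cdot\,}$ from $M$ to $M'$ equals the purely formal change one computes directly from the lattices $T_p(M)$ and $T_p(M')$, ignoring integrality subtleties at $p$ — legitimate precisely because Fontaine--Laffaille is exact. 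Since $T_p(\,\cdot\,)=H_{\mathrm B}(\,\cdot\,,\bZ)\otimes\bZ_p$, this formal change, summed over all good $p$ together with the archimedean places, cancels by the product formula, exactly as the ``determinant'' part of the height does (and as happens for the height of a rank-one motive, which is an isogeny invariant). Thus $h(M')-h(M)$ is a sum over the finitely many bad primes only.

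It remains to bound the contribution of a bad prime $p$. Fix $v\mid p$. After splitting off the formal part as above, the surviving quantity $\log(\|\ell\|_{v,M}/\|\ell\|_{v,M'})$ is the logarithm of an index of $\cO_{F_v}$-lattices in a one-dimensional $F_v$-space, hence lies in $f_v(\log p)\cdot\bZ$; and the point where integral $p$-adic Hodge theory enters essentially is that its absolute value is bounded by $c_v\cdot f_v\log p$ for a constant $c_v$ depending only on $e(F_v/\bQ_p)$, the Hodge--Tate weights of $M$, and $\rk M$ — because the Breuil--Kisin--Liu module pins down the Hodge filtration on the associated de Rham lattice only up to $p^{c_v}$-torsion. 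Since there are finitely many bad $v$, each contributing an element of the finite set $f_v(\log p)\cdot\{n\in\bZ:|n|\le c_v\}$, the difference $h(M')-h(M)$ takes only finitely many values as $M'$ ranges over the isogeny class of $M$, so $\{h(M'):M'\sim M\}$ is finite. The main obstacles are exactly the two quantitative inputs from integral $p$-adic Hodge theory: the filtration-compatible equivalence in the Fontaine--Laffaille range, so that good primes contribute nothing once the product-formula bookkeeping is in place; and the explicit, lattice-uniform bound on the filtration defect at the remaining primes. The archimedean estimates and the product-formula bookkeeping, while routine in the style of Faltings, must be arranged so that exactly this defect — and not the unbounded formal term — survives in $h(M')-h(M)$.
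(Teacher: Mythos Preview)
Your proposed cancellation at good primes is incorrect, and the argument omits the decisive arithmetic input: the Weil conjecture. By Proposition~\ref{formula}, for a $p$-isogeny with cokernel $(\bZ/p^m\bZ)^h$ one has
\[
h(M)-h(M')=\Bigl(\frac{d_{\BK,p}}{[F:\bQ]}-\frac{hw}{2}\Bigr)m\log p,
\]
where $d_{\BK,p}=\sum_{v\mid p}[F_v:\bQ_p]\,d_{\BK,v}$ records the Hodge filtration on the torsion Kisin modules. Fontaine--Laffaille exactness lets you \emph{compute} each $d_{\BK,v}$ from the local Galois module $T_p/T'_p$, but no product formula forces $d_{\BK,p}=[F:\bQ]\cdot\tfrac{hw}{2}$: already a degree-$p$ isogeny of elliptic curves at an ordinary prime well inside the Fontaine--Laffaille range changes the height by $\pm\tfrac{1}{2}\log p$ according as the kernel is \'etale or multiplicative. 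So the ``formal change cancels by the product formula'' step fails, and with it the claim that only bad primes survive. Your bad-prime bound has the same defect: the local term grows linearly in $m$, so no constant $c_v$ independent of the isogeny can absorb it.

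The paper's mechanism is quite different. One first shows (Theorem~\ref{determinant}, Proposition~\ref{Fontaine-Laffaille:determinant}, Lemma~\ref{Ver}) that for a ``good'' isogeny $\det(T_p/T'_p)\cong(\bZ/p^m\bZ)(-d_{\BK,p})$ as a $G_\bQ$-representation via transfer; since this sits inside a tensor power of $M_p$, the Frobenius eigenvalue $\ell^{d_{\BK,p}}$ satisfies $P_{\ell,h}(\ell^{d_{\BK,p}})\equiv 0\pmod{p^m}$ for a fixed integer polynomial whose complex roots, by the Weil conjecture, all have absolute value $\ell^{[F:\bQ]hw/2}$. For $m$ large this forces $d_{\BK,p}=[F:\bQ]\cdot\tfrac{hw}{2}$ and hence $h(M)=h(M')$ (Theorem~\ref{preservation}). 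A factorization result (Theorem~\ref{factorization}), built on rigidity for torsion Kisin modules (Proposition~\ref{rigidity}, Corollary~\ref{free}), then decomposes an arbitrary $p$-isogeny into such good isogenies of high level plus boundedly many isogenies of bounded degree, yielding the finite set of possible values of $h(M')-h(M)$. Purity, not a product-formula bookkeeping, is what makes the unbounded terms cancel.
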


The key point of the proof is finding enough isogenies which preserve heights. 
This is done by generalizing Raynaud's ~\cite{Raynaud} refinement of Faltings' arguement. 
Raynaud used the theory of finite flat group schemes.  
In our case, the theory of torsion Kisin modules is a replacement. 
Note that we can actually give an effective bound on the size of the set above. 

Another result we obtain is, for instance, 

\begin{thm}
Let $C$ be a quasi-projective smooth curve over a number field $F$, and fix a real number $c$ and an integer $i$. 

Given a projective smooth morphism $f\colon X\to C$ and a rational point $t$ of $C$, we write $M_t$ for the $\bZ$-motive attached to the $i$-th cohomology of $X_t$ modulo torsion. 
We assume $f$ satisfies some technical assumptions on degenerations, cf.~$(\ast)$ in Section \ref{Heights in families}. 
In particular, $f$ has log smooth degenerations. 

If the variation of Hodge structure attached to the family $M_t$ is nonisotrivial on any geometric connected component, then there are only finitely many rational points at which the (stable) heights of $M_t $ are less than $c$. 
\end{thm}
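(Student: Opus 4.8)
The plan is to reduce the assertion to a Northcott property on the curve $\bar C$ for the height function attached to a line bundle of positive degree carrying a metric with logarithmic singularities, in the spirit of Faltings' treatment of $\mathcal{A}_g$ but with the moduli interpretation replaced by the theory of degenerating variations of Hodge structure. Since $C$ has finitely many connected components, and base changing each one to make it geometrically connected preserves the original $F$-rational points and the nonisotriviality hypothesis in the relevant sense, we may assume $C$ is geometrically connected with smooth projective model $\bar C$ over $F$ and boundary $D=\bar C\setminus C$, and that the variation attached to $f$ is nonisotrivial. By the technical assumptions $(\ast)$, in particular that $f$ admits a log smooth degeneration over $\bar C$, the relative de Rham cohomology $R^if_*\Omega^\bullet_{X/C}$ with its Hodge filtration extends to a filtered vector bundle $(\bar{\mathcal{H}},\Fil^\bullet\bar{\mathcal{H}})$ on $\bar C$ with locally free graded pieces, namely Deligne's canonical extension; the polarization on the variation endows the associated Hodge line bundle $\omega:=\bigotimes_j(\det\gr^j_{\Fil}\bar{\mathcal{H}})^{\otimes a_j}$ on $\bar C$, with weights $a_j$ dictated by the definition of the height, with a metric at each archimedean place of $F$ which, by the norm estimates of Cattani--Kaplan--Schmid for degenerating variations, has at worst logarithmic singularities along $D$.

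Next I would spread $\bar C$, the divisor $D$ and the log smooth degeneration of $f$ out over an open subscheme of $\Spec\cO_F$ to obtain a regular integral model $\bar{\mathcal{C}}$ together with a canonical extension lattice inside $\bar{\mathcal{H}}$, and hence an adelically metrized line bundle $\bar\omega$ refining $\omega$. For $t\in C(F)$ the de Rham realization of $M_t$ is the fiber $\bar{\mathcal{H}}_t$ with its Hodge filtration, and I claim $h(M_t)=h_{\bar\omega}(t)+O(1)$ as $t$ ranges over $C(F)$. At the archimedean places this is immediate from the choice of Hodge metric. At a finite place $v$ at which the section $\bar t$ of $\bar{\mathcal{C}}$ extending $t$ does not meet $D$, the motive $M_t$ has good reduction at $v$ and the integral structure used in the definition of the (stable) height, the one produced by integral $p$-adic Hodge theory, coincides with the fiber at $t$ of the canonical extension lattice; at the finitely many $v$ at which $\bar t$ meets $D$, the motive has semistable (log smooth) reduction and the two lattices differ by torsion bounded in terms of the monodromy along $D$, which is uniform in $t$ because the degeneration occurs along the fixed divisor $D$, and whose total contribution is absorbed into the logarithmic singularity of the archimedean metric exactly as in Faltings' computation; finally the finitely many places of bad reduction of $\bar{\mathcal{C}}\to\Spec\cO_F$ itself, and the passage from the height of $M_t$ to its stable height, contribute only an amount bounded independently of $t$. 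This yields the asserted comparison.

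It then remains to check $\deg\omega>0$ and to run Northcott. Positivity of $\deg\omega$ is the strict form of the Arakelov-type inequality for polarized variations of Hodge structure over a curve, i.e. nonnegativity of the degree of the canonical extension of the Hodge bundle, which holds precisely because the variation is nonisotrivial: the Hodge metric has semipositive curvature by Griffiths, the boundary contributions to the degree are controlled by the nilpotent orbit theorem, and nontriviality of the Kodaira--Spencer map, equivalent to nonisotriviality, forces strict positivity. Given $\deg\omega>0$, the set $\{t\in C(F):h_{\bar\omega}(t)\le c\}$ is finite, since a metric with logarithmic singularities changes the height attached to a line bundle of positive degree only by a term that is $o(h_{\bar\omega}(t))+O(1)$, so that Northcott's finiteness survives; this is the standard argument, comparing the singular metric with a smooth one on $\bar C$ and estimating the difference in terms of the arithmetic distance of $t$ to $D$. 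Combining with $h(M_t)=h_{\bar\omega}(t)+O(1)$ proves the theorem.

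I expect the main obstacle to be the comparison in the second step: identifying, up to an error bounded uniformly over all $t\in C(F)$, the integral de Rham structure underlying the height in this paper, built place by place from semistable representations and torsion Kisin modules, with the one cut out by Deligne's canonical extension over the integral model $\bar{\mathcal{C}}$. This rests on relating integral $p$-adic Hodge theory to the log crystalline (equivalently, integral log de Rham) cohomology of an integral log smooth model of $f$ along $D$, and on the fact that for a log smooth degeneration the local monodromy is unipotent, so that the discrepancy between the two lattices at a place of semistable reduction is uniformly bounded; the remaining ingredients, namely the Cattani--Kaplan--Schmid estimates, the Arakelov inequality, and Northcott for logarithmically singular metrics, are available in the literature.
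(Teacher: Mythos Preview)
Your proposal is correct and follows the same architecture as the paper: compare $h(M_t)$ to the height attached to the Griffiths line bundle $\cL = \bigotimes_r (\det \gr^r M_{\dR})^{\otimes r}$ with its Hodge metric, establish that this metric has logarithmic singularities along the boundary, deduce positive degree on each component from nonisotriviality via Griffiths--Peters, and conclude by the Northcott-type lemma of Faltings for logarithmically singular metrics.

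Two small corrections to your comparison step, which you rightly flag as the crux. First, your claim that at a place of good reduction the Kisin-module lattice coincides with the log-de-Rham lattice holds only for $p \geq d+2$ (the Fontaine--Laffaille range); at the finitely many small primes even good-reduction fibers can contribute, and the bound comes not from monodromy but from the fact that both integral comparison maps---the geometric one of Faltings for log crystalline cohomology and the internal one of Liu for Kisin modules---admit inverses up to $t^d$, respectively $t^i$, which yields a constant depending only on $d$ and the Hodge numbers. Second, and for the same reason, the uniformity at places where $t$ meets $D$ is not a consequence of $D$ being fixed as you suggest: the paper's route is, for each $t$, to pass to a finite extension over which the log-smooth fiber becomes honestly semistable after log blow-ups (Yoshioka's theorem plus Krasner), then apply the bound just described, using that the stable height is invariant under base change. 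The comparison you seek is exactly the content of the paper's key proposition in the subsection on geometric interpretation of the height.
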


Our definition of the height uses an \emph{internal integral} $p$-adic Hodge theory, in the sense that it does not involve geometry. 
(We certainly use geometric results on \emph{rational} $p$-adic Hodge theory.)
So, to prove the theorem above, we need to relate it to geometry. 
We call such a theory an \emph{external integral} $p$-adic Hodge theory. 
It is less complete compared to the internal one, but enough developed for our purpose. 

Another ingredient for the theorem above is a result on variations of Hodge structure ~\cite{Peters}. 
It fits nicely with Kato's definition of the height. 

We can prove a variant with $C$ replaced by a higher dimensional projective smooth variety. 
There is a corresponding result ~\cite{Griffiths} on variations of Hodge structure in this case.  

Finally, we outline the structure of the paper, which has two parts. 
In the first part, we begin with reviews on integral $p$-adic Hodge theory (Section $2$ and $3$). We give the definition of the height of a pure motive in Section $4$ and then discuss some basic properties. 
In Section $5$, we review the results on variations of Hodge structure, and apply it to study heights of motives in a family (Section $6$). 

To show the boundedness, which is a goal of the second part, we discuss torsion $p$-adic Hodge theory in Section $7$. 
In Section $8$ we analyze a class of isogenies as a preparation. 
We prove the boundedness in Section $9$. 

An additional Section $10$ provides a conjectural picture. 

\subsection*{Acknowledgments}
I am deeply indebted to my advisor, Kazuya Kato, for sharing his ideas and his constant encouragement. 

I would like to thank Naoki Imai, Kazuya Kato and Takuya Maruyama for helpful comments. 
I am grateful to Keerthi Madapusi Pera for discussion. 

\subsection*{Notation and Conventions}
Throughout this paper, $K$ is a nonarchimedean local field of mixed characteristic and $F$ is a number field. 
For a finite place $v$ of $F$, we will set $K=F_v$ to apply local results.  
We will write $O_F$ and $O_K $ for the rings of integers of $F$ and $K$ respectively. 

Choosing algebraic closures, we let $G_K$ and $G_F$ be the absolute Galois groups of $K$ and $F$ respectively.  
We denote by $I_K$ the inertia subgroup of $G_K$. 
For a place $v$ of $F$, we will write $F_v$ for the completion of $F$ at $v$, and 
$G_v\supset I_v$ for its absolute Galois group and the inertia subgroup. 
We will fix an embedding  $G_v\to G_F$. 

We will choose a uniformizer $\pi$ of $K$ and its $p^n$-power roots $\pi_n$ of $\pi$ as in Subsection $2.1$. 
We define $K_{\infty}=\bigcup_n K(\pi_n)$ inside the algebraic closure of $K$.  
We will denote by $G_\infty$ its absolute Galois group. 

Our conventions on $p$-adic Hodge theory is explained in Section $2$. 
In particular, our normalization of the Hodge-Tate weights is explained in Definition~\ref{Hodge-Tate}. 

We use the language of log schemes in the sense of Fontaine-Illusie and Kato. 
In this paper, a log scheme means fine and saturated. In particular, fiber products will be taken in the category of fine saturated log schemes. 
The canonical log structure on $O_K$ is given by the special fiber. 

We let $\hat{\bZ}$ be the profinite completion of $\bZ$. 
We will write $\bA^f_{\bQ}$ for the ring $\bQ\otimes_\bZ \hat{\bZ}$ of finite adeles of $\bQ$. 

Let $p$ be a prime number. 
If a ring $A$ is given the $p$-adic topology, we will write $\widehat{A}$ for the $p$-adic completion of $A$, and $A_m$ for $A/p^mA$. 
We say a free $A_m$ module $M$ is free of level $m$. 
Its rank will be denoted by $\rk_{A_m} M$. 

If $A$ is equipped with an endomorphism $\varphi$, a Frobenius endomorphism of an $A$-module $M$ means a semilinear endomorphism with respect to $\varphi$. 
We will write $A\otimes_{\varphi,A}M$ or $\varphi^*M$ for the module obtained by scalar extension of $M$ under $\varphi$.  
If $A$-modules $M$ and $M'$ have Frobenius endomorphisms, $\Hom_{\varphi}(M,M')$ is the set of homomorphisms compatible with Frobenius endomorphisms. 
More generally, $\Hom$ with a subscript means that it is the set of morphisms which respect the corresponding structure. 

Suppose $A$ is a $p$-adic completed domain and we let $D$ be a finite free $\bQ_p\otimes_{\bZ_p}A$-module. 
An $A$-lattice in $D$ is a finite free $A$-submodule $M$ of $D$ such that $\bQ_p\otimes_{\bZ_p}M$ is canonically isomorphic to $D$.  
If $A=\bZ_p$, we simply call it a lattice. 

If $A$ is a ring and $G$ is a group, an $A$-representation of $G$ is $A$-module with an action of $G$. 

An object obtained by scalar extension or base change will be often denoted by adding a suitable subscript to the original object (or replace the subscript by new one). 

We will generally write $M(d)$ for the $d$-th Tate twist of an object $M$ in any context. 

\section{Internal Integral $p$-adic Hodge Theory}
We review internal aspects of integral $p$-adic Hodge theory, which study lattices inside a semisstable Galois representation. 

\subsection{Rational theory}
Before discussing  the integral theory, we need to recall the rational theory briefly.  

We let $k$ be a finite field of characteristic $p>0$ and we write $K_0=\Frac W$ for the fraction field of the ring $W=W(k)$ of Witt vectors. 
It has a natural Frobenius lift $\varphi$. 
We fix an algebraic closure $\overline{K}$ of $K_0$ with the subring $O_{\overline{K}}$ of integers, and take a finite extension $K$ of $K_0$ inside $\overline{K}$. 
We denote by $G_K=\Gal(\overline{K}/K)$ the absolute Galois group of $K$. 

We first recall some period rings of Fontaine. 
Let $R=\varprojlim O_{\overline{K}}/p$; limit is taken with respect to $p$-powers.  
There is a natural homomorphism from the ring of Witt vectors $W(R)$ to $\widehat{O}_{\overline{K}}$, denoted by $\theta$. 
The ring $A_{\crys}$ is the $p$-adic completion of the PD hull of $\theta$. 
It is equipped with a filtration and a Frobenius endomorphism $\varphi$. 
We write $t\in A_{\crys}$ for a generator of $\bZ_p(1)=(\Fil^1 A_{\crys})^{\varphi=p}$ as usual.
Set $B_{\crys}=A_{\crys}[\frac{1}{t}]$. 
There is a filtration on $K\otimes_{K_0}B_{\crys}$. 
Also, the group $G_K$ acts on $W(R), A_{\crys}$ and $B_{\crys}$ naturally. 

We fix a uniformizer $\pi$ and $p$-power roots $\pi_n$ of $\pi$ in $\overline{K}$ so that $\pi_{n+1}^p=\pi_n$, $\pi_0=\pi$. 
This system gives an element $[\underline{\pi}]\in W(R)\subset A_{\crys}$ via the Teichmuller lift of $(\pi_n~\textnormal{mod}~p)_{n\geq 0}\in R$. 
The ring $B_{\st}$ is a polynomial algebra $B_{\crys}[\log(u)]$ with a variable $\log(u)$, on which the Frobenius endomorphism and the Galois action extend. 
This Galois action depends on $[\underline{\pi}]$ in the following way: $\sigma(\log(u))=\log(u)+\log(\beta(\sigma))$, where $\beta(\sigma)$ is the Teichmuller lift of $p$-power roots of unity defined by $\sigma([\underline{\pi}])=\beta(\sigma)\cdot[\underline{\pi}]$. Note that $\log(\beta(\sigma))$ makes sense in $A_{\crys}$, and in fact it is a generator of $\bZ_p(1)=(\Fil^1 A_{\crys})^{\varphi=p}$. 
Furthermore, it is equipped with a $B_{\crys}$-derivation $N$ such that $N(\log(u))=-1$ and $N\varphi=p\varphi N$. 
There is also a filtration on $K\otimes_{K_0}B_{\st}$ which is compatible with the filtration on $K\otimes_{K_0}B_{\crys}$ under the inclusion $B_{\crys}\subset B_{\st}$. 

\begin{rem}
Our normalization of the monodromy operator $N$ is different from some reference, but this one fits nicely with the geometric theory. 
\end{rem}

We consider a contravariant functor 
$V\mapsto D_{\st}(V)=(B_{\st}\otimes_{\bQ_p}V^{\vee})^{G_K}$ from the category of $\bQ_p$-representations of $G_K$ to the category of filtered $(\varphi, N)$-modules. 
(The structure of a filtered $(\varphi, N)$-module on $D_{\st}(V)$ is induced by the filtration on $B_{\st}\otimes_{K_0}K$, and $(\varphi, N)$ on $B_{\st}$.)

Note that we use the \emph{contravariant} functor, which is more convenient to discuss the integral theory later. 

\begin{defn}\label{Hodge-Tate}
We say that a finite-dimensional $\bQ_p$-representation $V$ of $G_K$ is semistable if
$\dim_{\bQ_p}V=\dim_{K_0}D_{\st}(V)$. 
\end{defn}

If $V$ is semistabe, the filtered $(\varphi, N)$-module $D_{\st}(V)$ satisfies the condition called weak admissibility. 
In fact, it is first proved by ~\cite{Colmez-Fontaine} that $D_{\st}$ induces an anti-equivalence of categories between the category of semistable representations and the category of weakly admissible filtered $(\varphi,N)$-modules. 
There is a canonical identification $V= \Hom_{\Fil,\varphi, N}(D_{\st}(V),B_{\st})$ as representations of $G_K$. 

Also, recall that a semistable representation $V$ is crystalline if and only if $N=0$ on $D_{\st}(V)$.  
If $V$ is crystalline, there is a natural isomorphism of representations of $G_K$
\[
\Hom_{\Fil,\varphi}(D_{\st}(V), B_{\crys})\to\Hom_{\Fil,\varphi,N}(D_{\st}(V), B_{\st})
\]
because $B_{\st}^{N=0}=B_{\crys}$. 

We fix our convention on the Hodge-Tate weights.  
Since $D_{\st}(V)$ is a filtered $(\varphi, N)$-module, $D_K=K\otimes_{K_0}D_{\st}(V)$ has a decreasing filtration.  

\begin{defn}
A semistable representation $V$ has the nonnegative Hodge-Tate weights if $\Fil^0 D_K=D_K$ and $\Fil^{w+1}D_K=0$ for some nonnegative integer $w$, in which case the Hodge-Tate weights are in $\{0,\dots, w\}$. 
\end{defn}

In particular, the cyclotomic character $\bQ_p(1)$ has the Hodge-Tate weight one. 

In the rest of this section, we will consider semistable representations with the nonnegative Hodge-Tate weights unless otherwise mentioned. 

\subsection{Fontaine-Laffaille theory}
There are several integral theories. 
We begin under the Fontaine-Laffaille condition. 
Namely,  we assume $K=K_0$, $V$ is crystalline and $V$ has the Hodge-Tate weight in $\{0,\dots, w\}$ with $w<p-1$. 
We will write $D$ for $D_{\st}(V)$. 

\begin{defn}
A $\varphi$-stable $W$-lattice $M$ in $D$ is called strongly divisible if it satisfies
$\varphi(M\cap \Fil^r D)\subset p^r M$ for any integer $r\geq 0$. 
\end{defn}

\begin{rem}\label{divisibility}
Submodules $M\cap \Fil^r D$ defines a decreasing filtration on $M$, and its graded pieces $\gr^r (M)$ are torsion-free. 
The weak admissibility of $D$ implies that $\sum_{r\geq 0}\frac{\varphi}{p^r}(M\cap\Fil^r D)=M$. 
\end{rem}

We associate a $\bZ_p$-representation $T_{W}(M)$ of $G_K$ with a strongly divisible lattice $M$ by $T_{W}(M)=\Hom_{\Fil,\varphi}(M, A_{\crys})$. 
One can prove that it defines a $G_K$-stable lattice in $V$ via an injection
\[
T_{W}(M)\to \Hom_{\Fil,\varphi, N}(D_{\st}(V), B_{\crys})\overset{\cong}{\to}\Hom_{\Fil,\varphi}(D_{\st}(V), B_{\st})= V. 
\]

Breuil's proof of the following theorem is based on works of Laffaille and Fontaine-Laffaille ~\cite{Fontaine-Laffaille}. 

\begin{thm}[\cite{Breuil1999}*{Proposition 3}]
The contravariant functor $T_{W}$ induces an anti-equivalence of categories between the category of strongly divisible lattices in $D$ and the category of $G_K$-stable lattices in $V$. 
\end{thm}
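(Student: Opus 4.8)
The plan is to follow Breuil's strategy, reducing the statement to the equivalence between strongly divisible lattices and certain filtered modules, and then invoking Fontaine--Laffaille's classification of $p$-torsion-free objects in the Fontaine--Laffaille category. First I would recall that under the hypotheses ($K=K_0$, $V$ crystalline, Hodge--Tate weights in $\{0,\dots,w\}$ with $w<p-1$) the rational theory gives an anti-equivalence $D_{\st}$ between crystalline representations and weakly admissible filtered $\varphi$-modules, with $V=\Hom_{\Fil,\varphi}(D,B_{\crys})$. The key observation is that a strongly divisible lattice $M\subset D$, equipped with the induced filtration $\Fil^r M=M\cap\Fil^r D$, is precisely a $\bZ_p$-lattice object in the Fontaine--Laffaille category $\mathcal{MF}^{\le w}_{W}$: the strong divisibility condition $\varphi(M\cap\Fil^r D)\subset p^r M$ lets one define $\varphi_r=\tfrac{\varphi}{p^r}$ on $\Fil^r M$, and Remark~\ref{divisibility} (together with weak admissibility) gives $\sum_r \varphi_r(\Fil^r M)=M$, which is exactly the requirement that the object be ``$\varphi$-admissible'' in the Fontaine--Laffaille sense.

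Next I would verify that the functor $M\mapsto T_W(M)=\Hom_{\Fil,\varphi}(M,A_{\crys})$ is compatible with the Fontaine--Laffaille functor $T_{\mathrm{FL}}$ applied to $M$ viewed in $\mathcal{MF}^{\le w}_{W}$, and that the natural map $T_W(M)\to V$ displayed in the excerpt is injective with image a $G_K$-stable $\bZ_p$-lattice. For injectivity one uses that $A_{\crys}\hookrightarrow B_{\crys}$ and that a $\Fil,\varphi$-compatible map $M\to A_{\crys}$ extends uniquely to $D\to B_{\crys}$ after inverting $p$ (since $M$ spans $D$). That the image is a full lattice follows by comparing ranks: $T_W(M)$ is finite free of rank $\dim_{\bQ_p}V$ over $\bZ_p$ because $M$ is a Fontaine--Laffaille lattice object and the Fontaine--Laffaille functor preserves ranks.

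Having set this up, the anti-equivalence follows from Fontaine--Laffaille theory ~\cite{Fontaine-Laffaille}: $T_{\mathrm{FL}}$ is an anti-equivalence between $\mathcal{MF}^{\le w}_{W}$ and the category of finite $\bZ_p$-modules with $G_K$-action that arise as such, and restricting to $p$-torsion-free objects on both sides yields the claimed anti-equivalence between strongly divisible lattices in $D$ and $G_K$-stable lattices in $V$. Concretely, to produce a quasi-inverse, given a $G_K$-stable lattice $T\subset V$ one sets $M=\Hom_{\bZ_p[G_K]}(T,A_{\crys})$ with its induced filtration and Frobenius, and checks via Fontaine--Laffaille that this lands among strongly divisible lattices and that $T_W(M)\cong T$, $M\hookrightarrow D$ recovers the original data after inverting $p$. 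Full faithfulness reduces, by taking internal Homs (which stay within the category as long as the total Hodge--Tate spread stays $<p-1$; for the bare statement one only needs Hom into the trivial-ish object, or one argues directly), to the statement that $\Hom_{\Fil,\varphi}(\mathbf 1,M)=M^{\varphi=1,\,\Fil^0}$ matches $\Hom_{G_K}(\bZ_p,T)=T^{G_K}$, which is the rank-one/unit case of the equivalence.

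The main obstacle I expect is the careful bookkeeping needed to match my ``internal'' functor $T_W(M)=\Hom_{\Fil,\varphi}(M,A_{\crys})$ with the classical Fontaine--Laffaille functor and to confirm that strongly divisible lattices are exactly the $p$-torsion-free objects of $\mathcal{MF}^{\le w}_{W}$ --- in particular, checking that the filtration $M\cap\Fil^r D$ on $M$ is ``admissible'' in the sense that its graded pieces are the correct $W$-lattices in $\gr^r D$ (this is where Remark~\ref{divisibility}'s torsion-freeness of $\gr^r(M)$ and the identity $\sum_r\varphi_r(\Fil^r M)=M$ are essential), and that the bound $w<p-1$ is used precisely to guarantee $A_{\crys}$-linear algebra behaves like the Fontaine--Laffaille ring $\mathbf{A}$ does in that range. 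Once this dictionary is in place, invoking \cite{Breuil1999}*{Proposition 3} and \cite{Fontaine-Laffaille} completes the proof.
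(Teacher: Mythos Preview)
The paper does not give its own proof of this theorem: it is stated as a citation of \cite{Breuil1999}*{Proposition 3}, with only the remark that ``Breuil's proof of the following theorem is based on works of Laffaille and Fontaine--Laffaille~\cite{Fontaine-Laffaille}.'' Your proposal is a reasonable outline of precisely that strategy---identifying strongly divisible lattices with $p$-torsion-free Fontaine--Laffaille modules and invoking the Fontaine--Laffaille equivalence---so there is nothing further to compare.
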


\subsection{Breuil modules}
Fontaine-Laffaille theory requires a strong assumption. 
Here, we will only assume $w< p-1$. 
Namely, we allow any ramification in $K$ and nonzero monodromy operators on $(\varphi, N)$-modules. 
Still in this case, there is a nice classification by lattices in Breuil modules ~\cite{Liu:Breuil}. 

At the beginning, we do not assume $w< p-1$. 
Let $S$ be the $p$-adic completion of the PD hull of a homomorphism $W(k)[u]\to O_K; u\mapsto \pi$. 
We equip $S$ with a filtration coming from the PD structure, a Frobenius endomorphism $\varphi;u\mapsto u^p$, and a $W(k)$-derivation $N;u\mapsto -u$.  
There is an embedding $S\to A_{\crys};u\mapsto [\underline{\pi}]$. 

\begin{defn}
A Breuil module over $S_{K_0}$ is a filtered $\varphi$-module $\cD$ over $S_{K_0}$ with a monodromy operator $N_{\cD}$ in the following sense:
\begin{itemize}
\item $\cD$ is a finite free $S_{K_0}$-module equipped with a decreasing filtration over $S_{K_0}$ such that $\Fil^0(\cD)=\cD$ and $\Fil^{i}_{S}\cdot\Fil^j(\cD)\subset \Fil^{i+j}{\cD}$.   
\item $\cD$ is given a Frobenius endomorphism $\varphi_\cD\colon\cD\to\cD$ whose determinant is invertible in $S_{K_0}$. 
\item $N_\cD$ satisfies $N_{\cD}(fm)=N_S(f)m+fN_\cD(m)$, $N_\cD\varphi=p\varphi N_\cD$ and $N_\cD(\Fil^i\cD)\subset \Fil^{i-1}\cD$. (The last condition is the Griffiths transversality.)
\end{itemize}
\end{defn}

For the filtered $(\varphi, N)$-module $D$ corresponding to a semistable representation $V$, Breuil ~\cite{Breuil97} defined the structure of a Breuil module on $\cD:=S\otimes_{W}D$. 
There is a natural homomorphism
\[
V=\Hom_{\Fil,\varphi, N}(D_{\st}(V), B_{\st})\to \Hom_{\Fil, \varphi}(\cD,B_{\crys})
\]
induced by the reduction modulo $\log(u)$. 
Combining Breuil's fundamental results~\cite{Breuil97} and ~\cite{Liu:Breuil}*{Lemma 3.4.3}, one can show that this is an isomorphism. 
However, one should be careful for Galois actions as follows.  
Set $K_{\infty}=\bigcup_n K(\pi_n)$ and $G_{\infty}=\Gal(\overline{K}/K_{\infty})$. 
The embedding $S\to A_{\crys}$ is invariant under the action of $G_\infty$. 
Therefore, $G_{\infty}$ acts on $\Hom_{\Fil, \varphi}(\cD,B_{\crys})$ naturally, and the above isomorphism commutes with $G_{\infty}$-actions. 

We introduce two classes of lattices in $\cD$. 

\begin{defn}
\begin{enumerate}
\item A quasi-strongly divisible lattice of weight $\leq w$ is a finite free $\varphi$-stable $S$-submodule $\cM\subset \cD$ such that 
\[
K_0\otimes_W \cM\overset{\cong}{\to}\cD \quad \textnormal{and} \quad \varphi(\Fil^w\cM)\subset p^w\cM, 
\]
where $\Fil^w\cM=\cM\cap \Fil^w\cD$. 
\item A strongly divisible lattice is an $N$-stable quasi-strongly divisible lattice. 
\end{enumerate}
\end{defn}

\begin{rem}
Similar to Remark ~\ref{divisibility}, the weak admissibility of $D$ implies that $\frac{\varphi}{p^w}(\Fil^w \cM)$ generates $\cM$. 
\end{rem}

\begin{exam}
Suppose $K=K_0$, $V$ is crystalline and $w< p-1$. 
We let $M$ be a strongly divisible lattice in $D$. 
Then, $\cM=S\otimes_W M$ is a strongly divisible lattice in $\cD$. 
\end{exam}

We associate a $\bZ_p$-representation $T_{\crys}(\cM)$ of $G_{\infty}$ with a quasi-strongly divisible lattice $\cM$ by $T_{S}(\cM)=\Hom_{\Fil,\varphi}(\cM, A_{\crys})$. 
One can prove that it defines a $G_{\infty}$-stable lattice in $V$ via an injection
\[
T_{S}(\cM)\to \Hom_{\Fil,\varphi}(\cD, B_{\crys})\overset{\cong}{\gets}\Hom_{\Fil,\varphi, N}(D, B_{\st})= V. 
\]
In fact, this lattice is $G_K$-stable if $\cM$ is strongly divisible and we regard it as a representation of $G_K$. 
We remark that the lattice is independent of $\pi$. 

\begin{rem}
Under the Fontaine-Laffaille condition, a natural homomorphism 
$T_{W}(M)\to T_{S}(S\otimes_W M)$ is an isomorphism of representations of $G_K$. 
\end{rem}

Breuil ~\cite{Breuil} conjectured that strongly divisible lattices in $\cD$ classify $G_K$-stable lattices in $V$ if $w< p-1$. 
This is proved by Liu by using Kisin modules, which we discuss next. 

\begin{thm}[\cite{Liu:Breuil}]
Assume $w< p-1$. 
The contravariant functor $T_S$ induces an anti-equivalence of categories between strongly divisible lattices in $\cD$ and $G_K$-stable lattices in $V$. 
\end{thm}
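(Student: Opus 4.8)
The plan is to reduce the statement to Liu's classification of $G_\infty$-stable lattices by Kisin modules, and then descend from $G_\infty$ to $G_K$. Kisin associates to a semistable representation $V$ with nonnegative Hodge–Tate weights $\leq w$ a module $\mathfrak{M}$ over $\mathfrak{S}=W(k)[[u]]$ equipped with a Frobenius $\varphi_{\mathfrak{M}}$, and lattices in the étale $\varphi$-module associated to $V$ correspond exactly to $G_\infty$-stable lattices in $V$. The first step is therefore to establish, functorially, a bijection between quasi-strongly divisible lattices $\mathcal{M}\subset\mathcal{D}$ and Kisin lattices $\mathfrak{M}$; this is the content of Liu's comparison, and I would phrase it as: the functor $\mathfrak{M}\mapsto S\otimes_{\varphi,\mathfrak{S}}\mathfrak{M}$ (with the induced filtration and $\varphi$) lands in quasi-strongly divisible lattices, and every quasi-strongly divisible lattice arises this way uniquely. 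Concretely one checks that $\mathrm{Fil}^w(S\otimes_{\varphi,\mathfrak{S}}\mathfrak{M})$ is generated by $1\otimes\varphi(\mathfrak{M})$ together with $(\mathrm{Fil}^w S)\otimes\mathfrak{M}$, and that the divisibility $\varphi(\mathrm{Fil}^w\mathcal{M})\subset p^w\mathcal{M}$ translates into $E(u)^w\in\varphi(\mathfrak{S})$-type conditions on the Kisin side.

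The second step is the identification $T_S(\mathcal{M})\cong T_{\mathfrak{S}}(\mathfrak{M})$ as $G_\infty$-representations, compatible with the inclusions into $V$; this is essentially formal once the comparison of the previous step is set up, because the period ring $A_{\mathrm{crys}}$ receives compatible maps from both $S$ and $\mathfrak{S}$, and the Hom-spaces computing the two lattices are canonically identified. Combined with Kisin's result that $T_{\mathfrak{S}}$ is an anti-equivalence onto $G_\infty$-stable lattices in $V$, this already proves the theorem at the level of $G_\infty$: the functor $T_S$ is fully faithful and its essential image is all $G_\infty$-stable lattices.

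The third and genuinely delicate step is to show that $N$-stability of $\mathcal{M}$ is equivalent to the corresponding $G_\infty$-stable lattice being in fact $G_K$-stable. One direction is part of the earlier discussion ($N$-stable $\Rightarrow$ $G_K$-stable). For the converse, suppose $T_S(\mathcal{M})$ is $G_K$-stable; then $T:=T_S(\mathcal{M})$ carries an honest $G_K$-action, and one wants to recover $N$-stability of $\mathcal{M}$ inside $\mathcal{D}$. The mechanism is Liu's: a $G_\infty$-stable lattice $T$ is $G_K$-stable precisely when the associated Kisin module $\mathfrak{M}$ admits a connection $\nabla$ over $\mathcal{O}$ (or equivalently when the monodromy descends), and under the comparison of Step 1 this connection corresponds exactly to the operator $N$ on $S\otimes_{\varphi,\mathfrak{S}}\mathfrak{M}=\mathcal{M}$ preserving $\mathcal{M}$. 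I expect this equivalence — matching the existence of the $\mathcal{O}$-connection on $\mathfrak{M}$ with the $N$-stability of $\mathcal{M}$, including the Griffiths-transversality bookkeeping $N_{\mathcal{D}}(\mathrm{Fil}^i)\subset\mathrm{Fil}^{i-1}$ and the compatibility $N\varphi=p\varphi N$ — to be the main obstacle, since it requires tracking the monodromy through the base change along $\varphi\colon\mathfrak{S}\to S$ and controlling the $p$-adic convergence of $\log$-type expansions; the condition $w<p-1$ is what makes these convergence issues manageable. Once these three steps are in place, full faithfulness and essential-image statements for $T_S$ on strongly divisible lattices follow by restricting the $G_\infty$-level anti-equivalence to the subcategories cut out by $N$-stability on one side and $G_K$-stability on the other.
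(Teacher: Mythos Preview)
The paper does not supply its own proof of this theorem; it is stated as a cited result of Liu \cite{Liu:Breuil}, preceded by the remark that Breuil's conjecture ``is proved by Liu by using Kisin modules, which we discuss next.'' Your outline correctly reconstructs Liu's strategy as the paper itself sketches it in the subsequent subsection: the functor $\fM\mapsto S\otimes_{\varphi,\fS}\fM$ produces quasi-strongly divisible lattices, there is a natural isomorphism $T_{\fS}(\fM)\cong T_S(\cM)$ when $w<p-1$ (the paper cites \cite{Liu:Breuil}*{Lemma 3.3.4}), and Kisin's anti-equivalence for $G_\infty$-stable lattices then reduces everything to matching $N$-stability with $G_K$-stability.

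One small caution on your Step~3: the mechanism you describe---phrasing $G_K$-stability in terms of a connection $\nabla$ over $\cO$ descending to $N$ on $\cM$---is closer in spirit to later developments (e.g.\ \cite{Liu:monodromy}, where the paper notes strong divisibility of $\cM$ is established for $G_K$-stable $T$ and $p>2$). In Liu's original \cite{Liu:Breuil} the monodromy is handled more directly on the Breuil-module side, and the hypothesis $w<p-1$ enters precisely to make the divided-power bookkeeping and the identification $T_{\fS}(\fM)\cong T_S(\cM)$ go through. Your expectation that this is ``the main obstacle'' and that $w<p-1$ governs the relevant convergence is accurate, but you should be aware that the argument in \cite{Liu:Breuil} does not literally pass through a $\nabla$-on-$\cO$ formulation.
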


\subsection{Kisin modules}
If $w\geq p-1$, classification of  $G_K$-stable lattices is much harder. 
Fortunately, if we only consider actions of $G_\infty$, there is a good classification by Kisin modules.
 
We need one more ring defined by $\fS=W(k)\llbracket u\rrbracket$. 
It has a Frobenius endomorphism induced by $u\mapsto u^p$. 
Let $E(u)$ be the minimal polynomial of $\pi$ over $W$, which is Eisenstein of degree $e$. 

\begin{defn}
A Kisin module of height $\leq w$ is a pair of a finite free $\fS$-module $\fM$ and a $\varphi$-linear map $\varphi_{\fM}:\fM\to\fM$ such that the cokernel of the linear extension $\fS\otimes_{\varphi,\fS}\fM\to \fM$ of $\varphi_{\fM}$ is killed by $E(u)^w$.  
\end{defn}

\begin{rem}
This is also called a Breuil-Kisin module. 
We call it a Kisin module to distinguish it from a Breuil module. 
\end{rem}

We associate a $\bZ_p$-representation $T_{\fS}(\fM)$ of $G_{\infty}$ with a Kisin module $\fM$ by $T_{\fS}(\fM)=\Hom_{\varphi}(\fM, W(R))$. 
(This is compatible with the standard definition, see the proof of ~\cite{Liu10}*{Lemma 3.1.1}.) 

Let $\cO\subset S_{K_0}$ be the ring of rigid-analytic functions on the rigid-analytic open unit disc ~\cite{Kisin}*{(1.1.1)}. 
It is stable under the Frobenius endomorphism, and contains $\fS$. 

We let $V$ be a semistable representation of $G_K$, $D=D_{\st}(V)$ and $\cD=S\otimes_W D$. 
Kisin proved there is a $\varphi$-stable submodule $\fM_0\subset\cO\otimes_{K_0}D$ which is a Kisin module such that $\cO\otimes_{\fS}\fM_0\cong \cO\otimes_{K_0}D$ and natural homomorphisms
\begin{align*}
T_{\fS}(\fM_0)  
\to\Hom_{\Fil,\varphi}(\cD,B_{\crys})\overset{\cong}{\gets}\Hom_{\Fil,\varphi,N}(D,B_{\st})=V 
\end{align*}
produce a $G_\infty$-stable lattice in $V$. 
(The first homomorphism is well-defined, see below.)
As a Kisin module, $\fM_0$ is unique up to isogeny. 

By the same procedure, any Kisin module of the maximal rank in $\bQ_p\otimes_{\bZ_p}\fM_0$ gives a $G_\infty$-stable lattice in $V$. 

Now, suppose $V$ has the Hodge-Tate weights in $\{0,\dots, w\}$. 
Kisin further proved the following;

\begin{thm}[\cite{Kisin}]
The above construction gives an equivalence of categories between the category of Kisin modules of the maximal rank inside $\bQ_p\otimes_{\bZ_p}\fM_0$ and the category of $G_{\infty}$-stable lattices in $V$. All such Kisin modules have height $\leq w$. 
\end{thm}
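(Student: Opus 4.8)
The plan is to follow Kisin's argument, the core of which is Fontaine's classification of $\bZ_p$-representations of $G_\infty$ by \'etale $\varphi$-modules over the $p$-adic completion $\mathcal{O}_{\mathcal E}$ of $\fS[1/u]$, a complete discrete valuation ring whose residue field is the field of norms of $K_\infty/K$, identified with $k((u))$. I would begin by recalling two standard facts: (i) attaching to a finite free $\bZ_p$-representation $T$ of $G_\infty$ its \'etale $\varphi$-module $\mathbb{D}(T)$ over $\mathcal{O}_{\mathcal E}$ defines an exact (anti-)equivalence onto the category of finite free \'etale $\varphi$-modules over $\mathcal{O}_{\mathcal E}$; and (ii) for a Kisin module $\fM$ there is a canonical identification $\mathbb{D}(T_{\fS}(\fM))\cong\mathcal{O}_{\mathcal E}\otimes_\fS\fM$, so that the construction $\fM\mapsto(T_{\fS}(\fM)\hookrightarrow V)$ of the statement becomes, after applying $\mathcal{O}_{\mathcal E}\otimes_\fS-$, simply $\fM\mapsto\mathcal{O}_{\mathcal E}\otimes_\fS\fM$. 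Since $\fM$ has finite height and $E(u)$ is a unit in $\mathcal{O}_{\mathcal E}$, the module $\mathcal{O}_{\mathcal E}\otimes_\fS\fM$ is automatically \'etale, and it lies in the fixed \'etale $\varphi$-module $\mathcal{E}\otimes_{\fS[1/p]}\fM_0[1/p]$ attached to $V|_{G_\infty}$, where $\mathcal{E}=\mathcal{O}_{\mathcal E}[1/p]$ and $\fM_0[1/p]=\bQ_p\otimes_{\bZ_p}\fM_0$. It therefore suffices to prove that $\fM\mapsto\mathcal{O}_{\mathcal E}\otimes_\fS\fM$ is a functorial bijection from the full-rank finite-height $\varphi$-stable $\fS$-submodules of $\fM_0[1/p]$ onto the $\mathcal{O}_{\mathcal E}$-lattices in $\mathcal{E}\otimes_{\fS[1/p]}\fM_0[1/p]$, and that every such $\fM$ has height $\leq w$.

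For full faithfulness and injectivity on objects the essential ingredient is the ring-theoretic identity $\fS=\cO^{\mathrm{bd}}\cap\mathcal{O}_{\mathcal E}$ inside $\mathcal{E}$, where $\cO^{\mathrm{bd}}=\fS[1/p]$ is the ring of bounded rigid-analytic functions on the open unit disc (so $\fS\subset\cO^{\mathrm{bd}}\subset\cO$); I would recall this from Kisin. It implies that any $\varphi$-equivariant $\mathcal{O}_{\mathcal E}$-linear map between modules of the shape $\mathcal{O}_{\mathcal E}\otimes_\fS\fM$ is bounded, hence has matrix entries in $\cO^{\mathrm{bd}}$ and, being also integral over $\mathcal{O}_{\mathcal E}$, in $\fS$ — whence full faithfulness. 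The same identity recovers $\fM$ from its image: once one knows $\cO\otimes_\fS\fM=\cO\otimes_{K_0}D$ (see the next paragraph), the formula $\fM=(\mathcal{O}_{\mathcal E}\otimes_\fS\fM)\cap(\cO\otimes_\fS\fM_0)$ holds, the intersection being taken inside a ring containing $\cO$, $\mathcal{O}_{\mathcal E}$ and $\mathcal{E}\otimes_{\fS[1/p]}\fM_0[1/p]$ compatibly.

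Essential surjectivity together with the height bound is the substantive part. Given a $G_\infty$-stable lattice $L$ in $V$, with \'etale $\varphi$-module $\mathbb{D}(L)\subset\mathcal{E}\otimes_{\fS[1/p]}\fM_0[1/p]$, I would set $\fM:=\mathbb{D}(L)\cap\fM_0[1/p]$. This is a $\varphi$-stable $\fS$-submodule; comparing $\mathbb{D}(L)$ to $\mathcal{O}_{\mathcal E}\otimes_\fS\fM_0$ and invoking $\fS=\cO^{\mathrm{bd}}\cap\mathcal{O}_{\mathcal E}$ yields $p^c\fM_0\subset\fM\subset p^{-c}\fM_0$ for some $c\geq0$, so $\fM$ is finitely generated and torsion-free over the two-dimensional regular local ring $\fS=W(k)\llbracket u\rrbracket$; checking reflexivity of $\fM$ at the height-one primes of $\fS$ — at $(p)$ one finds the lattice underlying the discrete valuation module $\mathbb{D}(L)$, at the remaining height-one primes $p$ is invertible and one finds the lattice underlying $\fM_0$, both saturated — then shows $\fM$ is finite free of full rank. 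For finite height, the equality $\mathcal{O}_{\mathcal E}\otimes_\fS\fM=\mathbb{D}(L)$, and the bound $\leq w$, I would first prove $\fM[1/p]=\fM_0[1/p]$: the finite-height hypothesis forces this, since a proper $\varphi$-stable $\fS[1/p]$-submodule of $\fM_0[1/p]$ has a nonzero $\varphi$-equivariant torsion cokernel, and a dynamics argument on $\Spec\fS[1/p]$ — the substitution $u\mapsto u^p$ contracts the open disc to the origin, while finite height permits ``new'' torsion only over $E(u)$ — rules it out. Granting $\fM[1/p]=\fM_0[1/p]$, one gets $\cO\otimes_\fS\fM=\cO\otimes_{\fS[1/p]}\fM[1/p]=\cO\otimes_{\fS[1/p]}\fM_0[1/p]=\cO\otimes_{K_0}D$, and the $E(u)^w$-height of $\fM_0$ — part of Kisin's construction of $\fM_0$, obtained from the weak admissibility of $D$ and the fact that the Hodge--Tate weights of $V$ lie in $\{0,\dots,w\}$ — transports to $\fM$ via the flat maps $\fS\to\cO$ and $\fS\to\mathcal{O}_{\mathcal E}$. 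Finally $\fM\mapsto(T_{\fS}(\fM)\hookrightarrow V)$ and $L\mapsto\mathbb{D}(L)\cap\fM_0[1/p]$ are seen to be mutually quasi-inverse, completing the proof.

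The step I expect to be the main obstacle is precisely this last paragraph: proving that the naive intersection $\fM=\mathbb{D}(L)\cap\fM_0[1/p]$ is already a genuine Kisin module — finite free over $\fS$ and of finite height — and, above all, that its height is bounded by the optimal $w$ rather than by some unspecified constant. The finite-freeness is a commutative-algebra argument over the two-dimensional ring $\fS$ that must be run carefully, since an arbitrary $\varphi$-stable $\fS$-lattice in $\fM_0[1/p]$ need not have finite height — this already fails in rank one — and the sharp bound genuinely uses that $\fM_0$ arises from a semistable representation with Hodge--Tate weights in $\{0,\dots,w\}$, through the ``dynamics on the disc'' input governing which torsion cokernels are compatible with finite height.
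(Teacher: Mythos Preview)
The paper does not prove this theorem: it is stated with the attribution \cite{Kisin} and used as a black box, with no proof or sketch supplied. So there is no ``paper's own proof'' to compare against.

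Your proposal is a faithful outline of Kisin's original argument: the reduction to \'etale $\varphi$-modules over $\mathcal{O}_{\mathcal E}$ via Fontaine, the recovery formula based on $\fS=\cO^{\mathrm{bd}}\cap\mathcal{O}_{\mathcal E}$, the intersection $\fM=\mathbb{D}(L)\cap\fM_0[1/p]$ as the candidate inverse, finite-freeness via reflexivity over the two-dimensional regular local ring $\fS$, and the height bound via $\fM[1/p]=\fM_0[1/p]$. One point worth sharpening is the last step: the height bound $\leq w$ is not a property that ``transports'' from $\fM_0$ to an arbitrary $\fS$-lattice in $\fM_0[1/p]$, so your phrasing is slightly misleading. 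What makes it work is that the cokernel of $1\otimes\varphi\colon\varphi^*\fM\to\fM$ is $p$-torsion-free (since $\fS/E(u)^N$ is), so its annihilator can be computed after inverting $p$; then $\fM[1/p]=\fM_0[1/p]$ gives the bound directly from the corresponding property of $\fM_0[1/p]$, which in turn is Kisin's computation relating $\fM_0[1/p]$ to the Hodge filtration on $D_K$. With that clarification your sketch is correct.
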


When we use the above theorem later in this paper, we implicitly fix $\fM_0$. 
We will denote by $\fM$ the Kisin module attached to a $G_{\infty}$-stable lattice $T$. 
As a Kisin module, the isomorphism class of $\fM$ is well-defined. 
We hope this makes no confusion.  

Let $\fM$ be such a Kisin module. 
Then, $\cM=S\otimes_{\varphi, \fS}\fM$ is a lattice in $\cD$. 
In particular, a filtration on $\cM$ is induced and it can be described by, cf.~\cite{Liu:Breuil}*{Corollary 3.2.3}, 
\[
\Fil^r(\cM)=\bigl\{m\in\mathcal{M}\mid (1\otimes\varphi)(m)\in (\Fil^rS)\otimes_{\fS}\fM\bigr\}. 
\]
Here $\Fil^r S$ is the natural filtration induced by the PD structure.  

One can check easily that it is a quasi-strongly divisible lattice of weight $\leq w$. 
Liu proved that it is strongly-divisible if $T$ is $G_K$-stable and $p>2$ ~\cite{Liu:monodromy}*{Proposition 2.13}. 
Moreover, he proved that $T_{\fS}(\fM)\cong T_S(\cM)$ if $w<p-1$~\cite{Liu:Breuil}*{Lemma 3.3.4}. 

\subsection{Comparison theorems}
We keep the notation. 
In the rational theory, we have the comparison isomorphism
\[
\textnormal{comp}\colon B_{\st}\otimes_{K_0}D\overset{\cong}{\longrightarrow}B_{\st}\otimes_{\bQ_p}V^{\vee}.
\] 
By the reduction modulo $\log(u)$, one obtains 
\[
\overline{\textnormal{comp}}\colon B_{\crys}\otimes_{K_0}D \overset{\cong}{\longrightarrow} B_{\crys}\otimes_{\bQ_p}V^{\vee}. 
\]
We need an integral version of it. 
\
\begin{thm}[Liu]\label{comparison:Kisin}
There is a natural homomorphism
\[A_{\crys}\otimes_{\varphi,\fS}\fM \to A_{\crys}\otimes_{\bZ_p}T^{\vee} 
\]
which preserves $G_{\infty}$-actions, Frobenius endomorphisms and filtrations. 
It admits an inverse up to $t^w$. 
Moreover, the following diagram commutes:
\begin{equation*}
 \begin{split}
 \xymatrix{  B_{\crys} \otimes_{K_0} D \ar[r]^-{\overline{\textnormal{comp}}}&  B_{\crys}\otimes_{\bQ_p}V^{\vee}  \\ 
A_{\crys} \otimes_{\varphi, \fS}\fM \ar[r] \ar[u]& A_{\crys}\otimes_{\bZ_p}T^{\vee}.  \ar[u]}
 \end{split}
\end{equation*}

\end{thm}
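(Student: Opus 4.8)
The plan is to construct the map essentially by adjunction from Kisin's $\varphi$-equivariant comparison, and then to trace through how $\varphi^*$ interacts with the filtrations. Recall that, by definition, $T = T_{\fS}(\fM) = \Hom_{\varphi}(\fM, W(R))$, so there is a tautological evaluation pairing $\fM \otimes_{\bZ_p} T \to W(R)$ compatible with Frobenius. Base-changing along $\fS \to A_{\crys}$ and tensoring along $\varphi$ gives a map $A_{\crys} \otimes_{\varphi,\fS} \fM \to A_{\crys} \otimes_{\bZ_p} T^{\vee}$; the point is that this is the integral refinement of $\overline{\textnormal{comp}}$. That it preserves $G_\infty$-actions is immediate since the embedding $\fS \to A_{\crys}$, $u \mapsto [\underline{\pi}]$, is $G_\infty$-equivariant and $T$ is a $G_\infty$-representation. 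That it preserves Frobenius is formal from the construction (both sides carry the Frobenius induced by $\varphi$ on $A_{\crys}$ and on $\fM$, resp.\ $T$). The commutativity of the square then follows by inverting $t$ (equivalently, tensoring with $B_{\crys}$ over $A_{\crys}$): the bottom map becomes an isomorphism of $B_{\crys}$-modules identifiable with $\overline{\textnormal{comp}}$, because $\cO \otimes_{\fS} \fM_0 \cong \cO \otimes_{K_0} D$ and $A_{\crys}[\tfrac1t] \supset \cO$, so after extending scalars $\fM$ and $D$ become identified and the evaluation pairing becomes the tautological $D_{\st}$-pairing $D \otimes V^{\vee} \to B_{\crys}$ underlying $\overline{\textnormal{comp}}$.

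The substantive content is the clause \emph{``it admits an inverse up to $t^w$''} and the compatibility with filtrations. For the filtration compatibility, the filtration on $A_{\crys} \otimes_{\varphi,\fS} \fM$ is the one transported from $\cM = S \otimes_{\varphi,\fS} \fM$ via $S \to A_{\crys}$, described by the formula $\Fil^r(\cM) = \{ m : (1 \otimes \varphi)(m) \in (\Fil^r S)\otimes_{\fS}\fM \}$ quoted above; the filtration on $A_{\crys} \otimes_{\bZ_p} T^{\vee}$ is $\sum_i \Fil^i A_{\crys} \otimes \Fil^{-i}$ where the filtration on $T^{\vee} \subset V^{\vee}$ is the one induced from $D_{\st}$. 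One checks the pairing sends $\Fil^r$ into $\Fil^r A_{\crys}$ by reducing to the analogous statement for Kisin's rational map on $\cO \otimes D$ and keeping track of the integral lattices; the height $\leq w$ condition on $\fM$ (the cokernel of $1 \otimes \varphi$ is killed by $E(u)^w$) is exactly what bounds the denominators and produces the factor $t^w$. More precisely, the existence of an inverse up to $t^w$ should be extracted from Liu's comparison results: the cokernel of $A_{\crys} \otimes_{\varphi,\fS}\fM \to A_{\crys}\otimes_{\bZ_p} T^{\vee}$ is killed by $t^w$ because $\varphi(E(u)^w)$ divides $t^w$ up to a unit in $A_{\crys}$ (as $\varphi(E(u))$ and $t$ generate the same ideal after the relevant completion), and dually the kernel is controlled the same way; so one gets a map backwards whose composites with the original are multiplication by $t^w$.

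Concretely I would organize it as: (i) define the pairing from $T = \Hom_\varphi(\fM, W(R))$ and base-change to $A_{\crys}$, getting the horizontal arrow together with $G_\infty$- and $\varphi$-equivariance; (ii) invert $t$ to land in the rational comparison $\overline{\textnormal{comp}}$ and deduce the square commutes, using $\cO \otimes_{\fS}\fM_0 \cong \cO \otimes_{K_0} D$; (iii) check $\Fil^r \mapsto \Fil^r$ using the explicit description of $\Fil^r \cM$ and the compatibility of $S \to A_{\crys}$ with filtrations; (iv) prove the ``inverse up to $t^w$'' statement by bounding the cokernel (and kernel) via the height-$\leq w$ condition, i.e.\ via divisibility of $t^w$ by $\varphi(E(u)^w)$ in $A_{\crys}$. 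Steps (i)--(ii) are essentially formal. The main obstacle is step (iv) together with the filtration half of (iii): making the $t^w$-bound precise requires the careful $p$-adic Hodge theory input of Liu on how Kisin modules embed into $A_{\crys} \otimes T^{\vee}$ and how filtrations behave under $1 \otimes \varphi$, and this is where the real work — and the appeal to \cite{Liu:Breuil}, \cite{Liu10}, \cite{Liu:monodromy} — is concentrated.
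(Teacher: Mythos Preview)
Your proposal is essentially correct and, like the paper, ultimately defers the core ``inverse up to $t^w$'' statement to Liu's results in \cite{Liu:torsion} (specifically Lemma~5.3.4 and Theorem~5.4.2 there, with \cite{Liu10}*{Proposition~3.1.3} to cover $p=2$). The construction of the map via the evaluation pairing and the commutativity of the square are handled the same way in spirit.

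The one place your route diverges from the paper's is the filtration compatibility. You propose to check $\Fil^r \mapsto \Fil^r$ directly from the explicit description of $\Fil^r\cM$ and the height-$\leq w$ condition. The paper instead uses a cleaner two-step trick: first observe that the map preserves filtrations after inverting $p$ (this is easy, since one is then in the rational theory where the comparison is an isomorphism compatible with everything); then note that the integral filtrations on both $A_{\crys}\otimes_{\varphi,\fS}\fM$ and $A_{\crys}\otimes_{\bZ_p}T^{\vee}$ are recovered from the rational ones by intersecting with the respective lattices. Filtration compatibility then follows formally. This avoids any direct manipulation of $\Fil^r\cM$ or divisibility arguments with $E(u)$ and $t$, and is worth knowing as a general device: when an integral filtration is the one induced from an ambient rational filtered object, compatibility questions can often be reduced to the rational case.
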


\begin{proof}
This follows from ~\cite{Liu:torsion}*{Lemma 5.3.4} and ~\cite{Liu:torsion}*{Theorem 5.4.2} except two points. 
One is that it excludes partially the case $p=2$ because of ~\cite{Liu:torsion}*{Assumption 5.2.1}, but this assumption is not used in their proofs. 
See also ~\cite{Liu10}*{Proposition 3.1.3}. 

The other point is preservation of filtrations, see ~\cite{Liu:torsion}*{Remark 5.4.3}. 
More precisely, it is easy to see that the homomorphism in the statement preserves filtrations after inverting $p$. 
By taking intersections, one can recover filtrations of each sides of the homomorphism.  Therefore it also respects filtrations. 
\end{proof}

\begin{rem}
As a submodule of $B_{\crys}\otimes_{K_0}D$, $A_{\crys}\otimes_{\varphi,\fS}\fM$ is $G_K$-stable, and the homomorphism in the theorem respects such a $G_K$-action. 
Indeed, one can describe the $G_{K}$-action on
$A_{\crys}\otimes_{\varphi,\fS}\fM$, as a subset of $A_{\crys}\otimes_{S}\cD$, in the following way. 
Let $\sigma \in G_K$ and $\beta (\sigma)=\frac{\sigma([\underline{\pi}])}{[\underline{\pi}]}\in \bZ_p(1)\subset A_{\crys}$, then the action of $\sigma$ is given by 
\[
\sigma(a\otimes x)=\sum^{\infty}_{j=0} \frac{\log(\beta(\sigma))^j}{j!}\sigma(a)\otimes N^j(x). 
\] 
This follows from ~\cite{Liu10}*{(3.2.1)}. 
Note that there is no sign in the formula above because of our normalization of $N$ on $B_{\st}$. 
\end{rem}

\subsection{A lattice in $D_{\dR}$}
Recall the period ring $B_{\dR}$, which is independent of $\pi$. 
It contains $K\otimes_{K_0}B_{\crys}$, and there is also a natural embedding $K\otimes_{K_0}B_{\st}\to B_{\dR};\log(u)\mapsto\log(\frac{[\underline{\pi}]}{\pi})$. 
It is equipped with a filtration, and filtrations on $K\otimes_{K_0}B_{\crys}$ and $K\otimes_{K_0}B_{\crys}$ come from that of $B_{\dR}$. 

We define a filtered $K$-vector space $D_{\dR}$ by
\[
D_{\dR}=D_{\dR}(V)=(B_{\dR}\otimes_{\bQ_p}V^{\vee})^{G_K}. 
\] 
The embedding $B_{\st}\to B_{\dR}$ induces $D_K\overset{\cong}{\to}D_{\dR}$. 
In our notation $D$ depends on $\pi$, but $D_{\dR}$ is independent of $\pi$.   

Given a $G_K$-stable lattice $T$, we will produce a $O_K$-lattice in $D_{\dR}$. 
Its properties are studied by Liu. 

Specializing by $\fS\mapsto O_K; u\to \pi$, we identify $D_K$ and $O_K\otimes_{S}\cD$. 

\begin{defn}\label{lattice}
We define a $O_K$-lattice $M_{\dR}(T)$ in $D_{\dR}$ by the image of a composition
\[
\fS\otimes_{\varphi, \fS}\fM\overset{1\otimes\varphi}{\longrightarrow} O_K\otimes_{S}\cD=D_K\cong D_{\dR}. 
\]
The image of the filtration of $\cM$ defines a filtration on $M_{\dR}(T)$. (Note that the image of $\cM$ equals $M_{\dR}(T)$.)
\end{defn} 

\begin{rem}
There is a natural homomorphism $\gr^r M_{\dR}(T)\to \gr^r D_{\dR}$ whose image is a $O_K$-lattice. We use these lattices later. 
\end{rem}

The filtered lattice $M_{\dR}(T)$, \textit{a priori}, depends on choices of $\pi_n$. 
According to Liu, it turns out that it is independent of such choices. 

\begin{thm}[Liu]\label{independence of pi}
The filtered lattice $M_{\dR}(T)$ is independent of choices of $\pi_n$. 
\end{thm}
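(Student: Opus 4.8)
The plan is to show that the $O_K$-lattice $M_{\dR}(T) \subset D_{\dR}$ produced in Definition~\ref{lattice}, together with its filtration, does not change if we replace the chosen system $(\pi_n)$ by another compatible system $(\pi_n')$ with $(\pi_0') = \pi$. First I would observe that changing $(\pi_n)$ changes the Teichmüller element $[\underline{\pi}] \in W(R)$ to a new element $[\underline{\pi}']$, and these two differ by a unit coming from a system of $p$-power roots of unity; correspondingly, the embedding $\fS \hookrightarrow W(R)$ changes, as does $B_{\st} \to B_{\dR}$ via $\log(u) \mapsto \log([\underline{\pi}]/\pi)$ versus $\log([\underline{\pi}']/\pi)$. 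However, the key structural facts to exploit are: the period ring $B_{\dR}$ is \emph{intrinsically} independent of $\pi$, and the Kisin module $\fM$ attached to $T$ sits (after $\otimes \cO$) inside $\cO \otimes_{K_0} D$ in a way that, via Kisin's theorem, is canonically determined by $T$ as a $G_\infty$-stable lattice — but the group $G_\infty$ itself depends on the choice of $(\pi_n)$! So the heart of the matter is comparing two \emph{a priori} different $G_\infty$'s and two different functors landing in the same $D_{\dR}$.

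The main idea I would pursue is to factor $M_{\dR}(T)$ through the comparison isomorphism of Theorem~\ref{comparison:Kisin}, which relates $A_{\crys} \otimes_{\varphi,\fS} \fM$ to $A_{\crys} \otimes_{\bZ_p} T^{\vee}$ compatibly with filtrations, Frobenius, and the $G_\infty$-action (and in fact the $G_K$-action, by the Remark following that theorem). Concretely, the composite $\fS \otimes_{\varphi,\fS}\fM \xrightarrow{1\otimes\varphi} O_K \otimes_S \cD = D_K \cong D_{\dR}$ can be identified with the reduction, along $\theta$ (i.e.\ $A_{\crys} \to \widehat{O}_{\overline{K}}$, landing in $B_{\dR}^+/\Fil^1 = \bC_p$ at the associated graded, or more precisely via the natural map $A_{\crys} \otimes_{\varphi,\fS}\fM \to B_{\dR} \otimes_{\bZ_p} T^{\vee}$ followed by taking $G_K$-invariants), of the integral comparison map. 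Since $A_{\crys} \otimes_{\bZ_p} T^{\vee}$ and the map $B_{\dR}\otimes_{K_0} D \xrightarrow{\overline{\textnormal{comp}}} B_{\dR}\otimes_{\bQ_p} V^{\vee}$ are manifestly independent of $(\pi_n)$ (the former depends only on $T$; the latter only on $V$), and Theorem~\ref{comparison:Kisin} provides an inverse up to $t^w$, the image lattice $M_{\dR}(T)$ — which is recovered from $A_{\crys}\otimes_{\varphi,\fS}\fM$ by applying $\theta \otimes \mathrm{id}$ and intersecting appropriately inside $D_{\dR}$ — is pinned down by data that does not see the choice of uniformizer system. For the filtration, I would use that $\Fil^r \cM = \{m : (1\otimes\varphi)(m) \in \Fil^r S \otimes_{\fS}\fM\}$ together with the fact that the integral comparison respects filtrations; the induced filtration on $M_{\dR}(T)$ then matches the one coming from $\Fil^\bullet(B_{\dR}\otimes_{\bZ_p}T^\vee)$, again $\pi$-independent.

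The step I expect to be the main obstacle is making rigorous the identification of $\fM$ (or rather of $A_{\crys}\otimes_{\varphi,\fS}\fM$ as a submodule of $B_{\crys}\otimes_{K_0}D$) across the two choices of $(\pi_n)$: naively $\fM$ lives over $\fS = W\llbracket u\rrbracket$ with $u \mapsto [\underline{\pi}]$, and this presentation is not intrinsic. I would handle this by passing to $\cO \otimes_{\fS}\fM \subset \cO\otimes_{K_0}D$ and invoking Kisin's uniqueness statement: the submodule $\fM_0$ is unique up to isogeny, and more to the point, the \emph{lattice} $A_{\crys}\otimes_{\varphi,\fS}\fM \subset B_{\crys}\otimes_{K_0}D$ is characterized by its image in $A_{\crys}\otimes_{\bZ_p}T^\vee$ under the comparison of Theorem~\ref{comparison:Kisin}, i.e.\ by the $G_K$-stable lattice $T$ itself, not by the auxiliary uniformizer. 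Once that characterization is in hand, specializing via $\theta$ (equivalently, modding out to land in $D_{\dR}$ and taking $G_K$-invariants) gives a description of $M_{\dR}(T)$ purely in terms of $T$, $V$, and the intrinsic ring $B_{\dR}$, so independence of $(\pi_n)$ is immediate. A secondary technical point is the partial exclusion of $p=2$ in some references; as noted in the proof of Theorem~\ref{comparison:Kisin}, the relevant assumption is unused, so this causes no real trouble.
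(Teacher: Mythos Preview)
Your proposal correctly identifies the overall architecture---pass to $A_{\crys}\otimes_{\varphi,\fS}\fM$, use the comparison of Theorem~\ref{comparison:Kisin}, and then specialize---and this is indeed the spirit of the paper's proof (which cites \cite{Liu:compatibility}*{Proposition 4.2.1} for the lattice and then adapts the argument to filtrations). However, there is a genuine gap at exactly the step you flag as the main obstacle, and your proposed resolution does not close it.

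You write that the lattice $A_{\crys}\otimes_{\varphi,\fS}\fM\subset B_{\crys}\otimes_{K_0}D$ is ``characterized by its image in $A_{\crys}\otimes_{\bZ_p}T^{\vee}$'', and hence by $T$ alone. The first clause is fine: the comparison map is injective (it has an inverse up to $t^w$ and both sides are $t$-torsion-free), so the source is determined by its image. But the second clause is exactly what needs proof. Theorem~\ref{comparison:Kisin} tells you only that the image sits between $t^w(A_{\crys}\otimes T^{\vee})$ and $A_{\crys}\otimes T^{\vee}$; there are many such sublattices, and nothing you have written explains why the image for one choice of $(\pi_n)$ coincides with the image for another. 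Invoking Kisin's uniqueness of $\fM_0$ \emph{up to isogeny} does not help: that statement only identifies things after inverting $p$, which destroys precisely the integral information at stake.

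The paper's argument (following Liu) proceeds differently: one establishes directly an identification $A_{\crys}\otimes_{\varphi,\fS}\fM=A_{\crys}\otimes_{\varphi,\fS'}\fM'$ as submodules of $B_{\crys}\otimes_{K_0}D$, where $\fM'$ is the Kisin module for the second system $(\pi_n')$. This equality is the substantive content of \cite{Liu:compatibility}*{Proposition 4.2.1} and requires a genuine comparison of the two $\fS$-structures (in Liu's treatment, via $(\varphi,\hat{G})$-modules). Once that identification is in place, the filtration part goes as you suggest: after inverting $p$, the filtration on $A_{\crys}\otimes_S\cD$ is induced from $B_{\crys}\otimes_{\bQ_p}V^{\vee}$ and is therefore independent of $(\pi_n)$; one then recovers the integral filtrations by intersection. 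So your filtration argument is essentially correct, but it rests on the lattice identification, which your proposal asserts rather than proves.
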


\begin{proof}
As a $O_K$-lattice, this is ~\cite{Liu:compatibility}*{Proposition 4.2.1}. 
(Note that, in the proof of \textit{loc.~cit.}, $\Fil^1\cM$ means $\Fil^1S\cdot\cM$ and is different from ours.)
His argument can be applied to the filtration by using the following facts. 
First, the image of homomorphism
\[
\Fil^r(A_{\crys}\otimes_S \cM)\to\gr^0 A_{\crys}\otimes_{O_K} D_K=\widehat{O}_{\overline{K}}\otimes_{O_K} D_K
\]
equals the image of 
$\widehat{O}_{\overline{K}}\otimes_{O_K}(O_K\otimes_S \Fil^r\cM)$. 
The other fact is under the identification $A_{\crys}\otimes_{\varphi,\fS}\fM=A_{\crys}\otimes_{\varphi,\fS'}\fM'$ in \textit{loc.~cit.} filtrations coincide. 
(To define the right hand side, we use a different uniformizer $\pi'$ and its $p$-power roots.)
To verify the latter fact, one may invert $p$. 
Then, we claim that the filtration on $A_{\crys}\otimes_S\cD$ is induced by the filtration on $B_{\crys}\otimes_{S_{K_0}}\cD\cong B_{\crys}\otimes_{\bZ_p}V^{\vee}$. 
Indeed, it is easy to see that multiplication by $t$ on $A_{\crys}\otimes_S\cD$ is strictly compatible with the filtration, and the claim follows. 
The claim implies that the filtration is independent of choices of $\pi_n$. 
\end{proof}

\begin{exam}
For the cyclotomic character $\bZ_p(1)\subset \bQ_p(1)$, 
\[
M_{\dR}(\bZ_p(1))=O_K\subset K=D_{\dR}(\bQ_p(1))
\]
with nonzero $\gr^1$ ~\cite{Liu:torsion}*{Example 2.3.5}.  
In general,  one can check that
\[
M_{\dR}(T\otimes_{\bZ_p}\bZ_p(1))=M_{\dR}(T)\otimes_{O_K} M_{\dR}(\bZ_p(1))\subset D_{\dR}\otimes_K D_{\dR}(\bQ_p(1)).
\]
\end{exam}

\begin{rem}
The example above allow one to define $M_{\dR}(T)$ for a $G_K$-stable lattice in a semistable representation $V$ with any Hodge-Tate weights. 
\end{rem}

\begin{prop}\label{Fontaine-Laffaille}
Suppose $K=K_0$, $V$ is crystalline and $w\leq p-2$. 
We write $M$ for the strongly divisible lattice corresponding to $T$. 
Then, $M=M_{\dR}(T)$ as filtered lattices.  
\end{prop}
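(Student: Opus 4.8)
The plan is to unwind both sides of the claimed equality through the explicit constructions and reduce everything to the known compatibility under the Fontaine--Laffaille condition (Remark after the Breuil-module examples, which says $T_W(M) \to T_S(S \otimes_W M)$ is an isomorphism, together with the remark that $T_{\fS}(\fM) \cong T_S(\cM)$ when $w < p-1$). First I would recall the chain of constructions on the $M_{\dR}(T)$ side: the Kisin module $\fM$ attached to $T$ gives $\cM = S \otimes_{\varphi,\fS} \fM \subset \cD$, whose filtration is described by the formula $\Fil^r(\cM) = \{m \in \cM \mid (1\otimes\varphi)(m) \in (\Fil^r S)\otimes_\fS \fM\}$, and then $M_{\dR}(T)$ is the image of $\fS \otimes_{\varphi,\fS}\fM \xrightarrow{1\otimes\varphi} O_K \otimes_S \cD = D_K \cong D_{\dR}$, with its filtration the image of $\Fil^\bullet\cM$. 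Since under Fontaine--Laffaille one has $\cM = S\otimes_W M$ for the strongly divisible lattice $M$ (this is exactly the Example in the Breuil-module subsection), it suffices to check two things: that the underlying $O_K$-lattice obtained by specializing $\fS \otimes_{\varphi,\fS}\fM$ at $u \mapsto \pi$ recovers $M \subset D$ under $D_K \cong D$ (here $K = K_0$, so $D_K = D$), and that the induced filtration matches $M \cap \Fil^\bullet D$.

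For the lattice statement: I would use that $\cM = S \otimes_W M$, so $O_K \otimes_S \cM = O_K \otimes_W M = M$ since $K = K_0$ and $O_K = W$. The subtlety is the twist by $\varphi$ in Definition~\ref{lattice}: one must identify the image of $\fS \otimes_{\varphi,\fS}\fM \xrightarrow{1\otimes\varphi} O_K \otimes_S \cD$ with $M$ itself. Here I would invoke Remark~\ref{divisibility}, which gives $\sum_{r \geq 0}\frac{\varphi}{p^r}(M \cap \Fil^r D) = M$ (weak admissibility), i.e. the strong divisibility of $M$ implies $1\otimes\varphi$ is surjective onto $\cM$ after the appropriate normalization; combined with the description of $\fM$ inside $\cO \otimes_{K_0} D$ and $\cM = S\otimes_{\varphi,\fS}\fM$, this pins the image down to be $\cM$ itself (as noted parenthetically in Definition~\ref{lattice}), whose specialization is $M$.

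For the filtration statement: by the displayed formula for $\Fil^r(\cM)$ and the fact that under $1\otimes\varphi$ the filtration $\Fil^r S$ on $S$ maps into the PD-powers, I would trace through that the image of $\Fil^r \cM = \cM \cap \Fil^r\cD$ under specialization is exactly $M \cap \Fil^r D_K = M \cap \Fil^r D$. The cleanest route is to compare with the purely rational statement $D_K \cong D_{\dR}$ together with the fact (used in the proof of Theorem~\ref{independence of pi}) that, after inverting $p$, the filtration on $A_{\crys}\otimes_S \cD$ and hence on all these modules is induced from the filtration on $B_{\dR}\otimes_{\bQ_p}V^\vee$; integrally, strong divisibility of $M$ forces the induced filtration on $M$ to be $M \cap \Fil^\bullet$. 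Alternatively, and perhaps more directly, I would note that both $M$ and $M_{\dR}(T)$ are constructed functorially from the $G_K$-stable lattice $T$ via anti-equivalences (Breuil's theorem for $T_W$, Liu's/Kisin's for the Kisin-module side) that are compatible (the two remarks cited above), so the filtered $O_K$-modules must agree on the nose.

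The main obstacle I expect is bookkeeping around the Frobenius twist in Definition~\ref{lattice}: one must be careful that $\fS \otimes_{\varphi,\fS}\fM$, not $\fM$ itself, is what maps to $D_K$, and that after the $1\otimes\varphi$ twist and specialization at $u \mapsto \pi$ one lands exactly on $M$ with its correct filtration rather than on some $p$-power multiple or a coarser/finer filtration. Verifying this requires knowing precisely how Kisin's $\fM_0$ sits inside $\cO \otimes_{K_0} D$ and how $S \otimes_{\varphi,\fS}\fM$ relates to $S \otimes_W M$ under Fontaine--Laffaille; this is implicit in Liu's comparison (Theorem~\ref{comparison:Kisin}) and in the cited Lemma~3.3.4 of~\cite{Liu:Breuil}, and assembling these carefully is the real content of the proof. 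Everything else is routine specialization of modules and filtrations.
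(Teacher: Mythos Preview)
The paper does not give its own argument here; its proof is a one-line reference to the proof of \cite{Liu:compatibility}*{Proposition 4.1.2.(6)}. Your proposal is thus an attempt to reconstruct that argument from the material assembled in this paper, and the outline is largely on the right track, with one misattribution worth flagging.

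You write that ``under Fontaine--Laffaille one has $\cM = S\otimes_W M$ \dots\ (this is exactly the Example in the Breuil-module subsection)'', but that Example only asserts that $S\otimes_W M$ is \emph{a} strongly divisible lattice in $\cD$; it says nothing about its relation to $\cM = S\otimes_{\varphi,\fS}\fM$. Identifying these two lattices inside $\cD$ is precisely the nontrivial step, and you do recognize this in your final paragraph. The clean way to close it is the one you gesture at under ``alternatively'': both $S\otimes_W M$ and $S\otimes_{\varphi,\fS}\fM$ are strongly divisible lattices in $\cD$, and both map to the same $G_K$-stable lattice $T$ under $T_S$ (via the two compatibility remarks you cite, $T_W(M)\cong T_S(S\otimes_W M)$ and $T_{\fS}(\fM)\cong T_S(\cM)$). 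Liu's anti-equivalence (Theorem stated just before the Kisin-module subsection) then forces them to be equal as sublattices of $\cD$, since the induced isomorphism becomes the identity on $\cD$ after inverting $p$. Once $\cM = S\otimes_W M$ is established, specialization along $S\to O_K=W$ immediately gives $M_{\dR}(T)=M$ as lattices, and the filtration statement follows from the shape of Breuil's filtration on $\cD = S\otimes_W D$ in the crystalline unramified case (the PD-filtration $\Fil^i S$ dies under $u\mapsto\pi$ for $i\geq 1$, so $\Fil^r\cM$ specializes to $M\cap\Fil^r D$). The detours through Remark~\ref{divisibility} and the $A_{\crys}$-filtration argument from Theorem~\ref{independence of pi} are not needed.
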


\begin{proof}
This is seen in the proof of ~\cite{Liu:compatibility}*{Proposition 4.1.2.(6)}. 
\end{proof}

Let $K'$ be a finite extension of $K$ inside $\overline{K}$. 
The period ring $B_{\dR}$ is independent of $K$ and $K'$. 
We can identify naturally $D_{\dR}(V)_{K'}$ and $D_{\dR}(V|_{G_{K'}})$. 
\begin{prop}\label{base change}
Under the identification above, two filtered $O_{K'}$-lattices $O_{K'}\otimes M_{\dR}(T)$ and $M_{\dR}(T|_{G_{K'}})$ coincide. 
\end{prop}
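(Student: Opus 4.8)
The plan is to reduce the statement to the base-change compatibility of Kisin modules, which is the content of the construction reviewed in the excerpt. First I would recall that $M_{\dR}(T)$ is, by Definition~\ref{lattice}, the image of $\fS\otimes_{\varphi,\fS}\fM\to O_K\otimes_S\cD=D_K\cong D_{\dR}$, where $\fM$ is the Kisin module attached to $T$ via the fixed $\fM_0$. The key input is that the formation of the Kisin module is compatible with restriction to $G_{K'}$: one may use the \emph{same} uniformizer $\pi$ and the same system $\pi_n$ for $K$ and for $K'$ (both lie in $\overline{K}$, and $K_\infty$ is unchanged), so $\fS$, $\varphi$, $W(R)$ and the functor $T_{\fS}$ are literally the same. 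Since $V|_{G_{K'}}$ is again semistable with Hodge--Tate weights in $\{0,\dots,w\}$ and $T$ is also $G_{\infty}$-stable when viewed inside $V|_{G_{K'}}$, Kisin's classification attaches to it a Kisin module which must agree with $\fM$; the only subtlety is that the reference Kisin module $\fM_0$ for $V$ and the one $\fM_0'$ for $V|_{G_{K'}}$ differ, but they are isogenous over $\bQ_p$ and both sit inside $\cO\otimes_{K_0'}D_{\st}(V|_{G_{K'}})=\cO\otimes_{K_0}D\otimes_{K_0}K_0'$ (with $K_0'=\Frac W(k')$), so after extending scalars the lattice $\fM$ produced is the same up to the canonical identification. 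Hence $\fS'\otimes_{\varphi,\fS'}\fM'\cong (\fS'\otimes_{\fS}\fS)\otimes_{\varphi,\fS}\fM$ compatibly with the maps to $\cD$.

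Granting this, I would then chase the defining diagram. Applying $O_{K'}\otimes_{O_K}-$ to the composition in Definition~\ref{lattice} and using $O_{K'}\otimes_{O_K}D_K=D_{K'}$ together with $O_{K'}\otimes_{O_K}(O_K\otimes_S\cD)=O_{K'}\otimes_{S}\cD$, we get that $O_{K'}\otimes_{O_K}M_{\dR}(T)$ is the image of $O_{K'}\otimes_{O_K}(\fS\otimes_{\varphi,\fS}\fM)\to D_{K'}\cong D_{\dR}(V|_{G_{K'}})$. On the other hand $M_{\dR}(T|_{G_{K'}})$ is the image of $\fS'\otimes_{\varphi,\fS'}\fM'\to O_{K'}\otimes_{S'}\cD'=D_{K'}$. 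By the previous paragraph these two maps have the same image inside $D_{K'}$, because $\fM'$ is obtained from $\fM$ by the base change $\fS\to\fS'$ and the specialization maps $\fS\to O_K$, $\fS'\to O_{K'}$ are compatible (both send $u\mapsto\pi$). This gives equality as $O_{K'}$-lattices. For the filtrations: by the displayed formula $\Fil^r(\cM)=\{m\mid (1\otimes\varphi)(m)\in (\Fil^r S)\otimes_{\fS}\fM\}$ and $\Fil^r S'=S'\otimes_S\Fil^r S$ (the PD filtration is compatible with the base change $W(k)[u]\to W(k')[u]$ and its PD envelopes), so $\Fil^r\cM'=S'\otimes_S\Fil^r\cM$; pushing forward along $1\otimes\varphi$ to $D_{K'}$ and comparing with the image of $O_{K'}\otimes_{O_K}\Fil^r\cM$ shows the induced filtrations on $M_{\dR}(T|_{G_{K'}})$ and on $O_{K'}\otimes_{O_K}M_{\dR}(T)$ agree.

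I expect the main obstacle to be the bookkeeping in the first paragraph: pinning down precisely the identification $D_{\st}(V|_{G_{K'}})=K_0'\otimes_{K_0}D_{\st}(V)$ of filtered $(\varphi,N)$-modules (this is standard, but one must be careful that the monodromy $N$ and the filtration on $D_{K'}$ match), and then tracking how the reference module $\fM_0$ behaves under this extension of the base field so that the phrase ``the Kisin module attached to $T$'' is unambiguous on both sides. A clean way to avoid this is to pass through the comparison isomorphism of Theorem~\ref{comparison:Kisin}: both $A_{\crys}\otimes_{\varphi,\fS}\fM$ (formed over $K$) and the analogous module formed over $K'$ are the \emph{same} $G_{\infty}$-stable, $\varphi$- and $\Fil$-stable submodule of $B_{\crys}\otimes_{\bZ_p}T^{\vee}$, since the target $A_{\crys}\otimes_{\bZ_p}T^{\vee}$, the ring $\fS$, and the inverse up to $t^w$ do not involve $K$ at all; one then recovers $M_{\dR}$ and its filtration from this submodule by applying $\gr^0$ of the map $A_{\crys}\otimes_S\cD\to\widehat{O}_{\overline{K}}\otimes_{O_{K}}D_{K}$ exactly as in the proof of Theorem~\ref{independence of pi}, and the only change when replacing $K$ by $K'$ is that $\widehat{O}_{\overline{K}}\otimes_{O_K}D_K=\widehat{O}_{\overline{K}}\otimes_{O_{K'}}D_{K'}$, giving the $O_{K'}$-structure on the nose. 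This route makes the filtration statement essentially automatic and isolates the content to the (already proven) $\pi$-independence and comparison results.
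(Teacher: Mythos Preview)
There is a genuine gap. Your argument in the first two paragraphs rests on the claim that ``one may use the \emph{same} uniformizer $\pi$ and the same system $\pi_n$ for $K$ and for $K'$,'' so that the specialization maps $\fS\to O_K$ and $\fS'\to O_{K'}$ both send $u\mapsto\pi$. This is only valid when $K'/K$ is unramified. If $K'/K$ has any ramification, $\pi$ is no longer a uniformizer of $K'$, the Eisenstein polynomial $E'(u)$ for $K'$ is different, and the whole Kisin--module apparatus over $K'$ must be set up with a genuine uniformizer $\pi'$ of $K'$ and its $p$-power roots; in particular $K'_\infty\neq K_\infty$ and the embeddings $\fS\to A_{\crys}$, $\fS'\to A_{\crys}$ land in different places. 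So the assertion $\fM'\cong\fS'\otimes_{\fS}\fM$ (which already presupposes a ring map $\fS\to\fS'$ sending $u\mapsto u$) is unjustified in the ramified case, and the ``diagram chase'' that follows collapses.

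Your third paragraph is closer to the right idea, and in fact is the route the paper (following Liu) takes: one compares the two lattices by identifying $A_{\crys}\otimes_{\varphi,\fS}\fM$ and $A_{\crys}\otimes_{\varphi,\fS'}\fM'$ as the \emph{same} submodule of $B_{\crys}\otimes_{\bQ_p}V^{\vee}$, and then reads off $M_{\dR}$ and its filtration by passing to $\gr^0$. But you assert this equality of submodules rather than prove it (``do not involve $K$ at all'' is not correct: the embedding $\fS'\hookrightarrow A_{\crys}$ depends on $[\underline{\pi'}]$). Establishing that equality is exactly the content of \cite{Liu:compatibility}*{Proposition 4.2.2.(2)}, and the paper's proof simply says one modifies that argument (as in the proof of Theorem~\ref{independence of pi}) to also handle the filtration. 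What is missing from your proposal is precisely Liu's argument that the two $A_{\crys}$-lattices agree despite the different choices of uniformizer and $\fS$.
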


\begin{proof}
Like the proof of Proposition ~\ref{independence of pi}, 
this can be checked by modifying the proof of ~\cite{Liu:compatibility}*{Proposition 4.2.2.(2)}. 
\end{proof}

\section{External Integral $p$-adic Hodge Theory}
We review geometric aspects of integral $p$-adic Hodge theory. 
We keep the notation as before. 

\subsection{Rational comparison}
Let $X$ be a proper semistable scheme over $O_K$. 
We write $X^\times$ for $X$ with the log structure given by the special fiber. 
We have the $p$-adic \'etale cohomology $H^i_{\et}(X_{\overline{K}},\bQ_p)$, the de Rham cohomology $H^i_{\dR}(X_K)$, and the log crystalline cohomology $H^i_{\crys}(X^\times_k/W)$ relative to $W$ with a log structure attached to $\bN\to W;1\mapsto 0$.

The semistable conjecture, which is first proved by Tsuji \cite{Tsuji}, says that there is a comparison isomorphism
\[
\textnormal{comp}\colon
B_{\st}\otimes_{K_0}H^i_{\crys}(X^\times_k/W)\overset{\cong}{\longrightarrow}B_{\st}\otimes_{\bQ_p}H^i_{\et}(X_{\overline{K}},\bQ_p). 
\]
Such a period morphism is essentially unique ~\cite{Niziol}. 
By the reduction modulo $\log(u)$, it induces
\[
\overline{\textnormal{comp}}\colon
B_{\crys}\otimes_{K_0}H^i_{\crys}(X^\times_k/W)\overset{\cong}{\longrightarrow}
B_{\crys}\otimes_{\bQ_p}H^i_{\et}(X_{\overline{K}},\bQ_p). 
\]

On the other hand, we have the Hyodo-Kato isomorphim
\[
K\otimes_W H^i_{\crys}(X^\times_k/W)\cong H^i_{\dR}(X_K).
\] 
Therefore, setting
\[
D=D_{\st}(H^i_{\et}(X_{\overline{K}},\bQ_p)^{\vee}) \quad \textnormal{and} \quad D_{\dR}=D_{\dR}(H^i_{\et}(X_{\overline{K}},\bQ_p)^{\vee}), 
\]
we obtain isomorphisms compatible with additional structures
\[
D\cong K_0\otimes_W H^i_{\crys}(X^\times_k/W) \quad \textnormal{and} \quad
D_K\cong D_{\dR}\cong H^i_{\dR}(X_K).
\]

Note that a comparison isomorphism appeared in the de Rham conjecture, which is obtained by tensoring $B_{\dR}$,  
\[
B_{\dR}\otimes \textnormal{comp}\colon
B_{\dR}\otimes_K H^i_{\dR}(X_K)\overset{\cong}{\to}B_{\dR}\otimes_{\bQ_p}H^i_{\et}(X_{\overline{K}},\bQ_p)
\]
is independent of $\pi$. 

By taking graded pieces, it induces the Hodge-Tate decomposition
\[
\gr^0(B_{\dR}\otimes \textnormal{comp})\colon
\bigoplus_r \widehat{O}_{\overline{K}}(-r)\otimes_{O_K} \gr^r H^i_{\dR}(X_K)\overset{\cong}{\to} \widehat{O}_{\overline{K}}\otimes_{\bZ_p}H^i_{\et}(X_{\overline{K}},\bQ_p). 
\]

Finally, recall that the de Rham conjecture and the Hodge-Tate decomposition are true for any proper smooth scheme over $K$, and the period morphisms are compatible with finite base extensions. 

\subsection{Integral comparison}
Now we consider the $p$-adic \'etale cohomology $H^i_{\et}(X_{\overline{K}},\bZ_p)$. 
We also have the log crystalline cohomology $H^i_{\crys}(X^\times/S)$ relative to $S$ with a log structure attached to $\bN\to S;1\mapsto u$. 

First, we mention a variant of the Hyodo-Kato isomorphism. 

\begin{prop}[\cite{Hyodo-Kato}*{(5.2)}]
There is a natural isomorphism 
\[
K_0\otimes_W H^i_{\crys}(X^\times/S)\cong S_{K_0}\otimes_W H^i_{\crys}(X^\times_k/W)
\]
compatible with Frobenius endomorphisms and monodromy operators. 
\end{prop}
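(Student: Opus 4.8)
The plan is to obtain this from the base-change behaviour of log crystalline cohomology under the thickening of the base from $W$ to $S$. Write $W^{\times}=(\Spec W,\bN\to W;1\mapsto 0)$ and $S^{\times}=(\Spec S,\bN\to S;1\mapsto u)$ for the two log bases. The ring map $S\to W$, $u\mapsto 0$, is a PD-morphism (its kernel is generated by $u$ and the divided powers $u^{[n]}$, which form a sub-PD-ideal), it is strict for the log structures (the chart map on $\bN$ is the identity), and it has the evident section $W^{\times}\hookrightarrow S^{\times}$. Since the PD-ideal of $S$ contains $p$ and $u$, the cohomology $H^i_{\crys}(X^{\times}/S)$ is canonically identified with $H^i_{\crys}(X^{\times}_k/S^{\times})$; so it is enough to compare this with $H^i_{\crys}(X^{\times}_k/W^{\times})$ after $\otimes_{\bZ_p}\bQ_p$.

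First I would reduce to a local assertion by the log embedding-system formalism of \cite{Hyodo-Kato}: after an \'etale hypercover of $X^{\times}_k$ equipped with closed immersions into log-smooth schemes over $W^{\times}$, both cohomologies are computed by the total complexes associated to the log de Rham complexes of the PD-envelopes of $X^{\times}_k$ --- in a chosen $Z$ over $W^{\times}$, and in the base change $Z_S:=Z\times_{W^{\times}}S^{\times}$ over $S^{\times}$. Because $W^{\times}\hookrightarrow S^{\times}$ is strict, $Z_S$ has underlying (formal) scheme $Z\times_W S$; moreover $\Omega^{\bullet}_{Z_S/S^{\times}}\cong S\otimes_W\Omega^{\bullet}_{Z/W^{\times}}$, and the PD-envelope of $X^{\times}_k$ in $Z_S$ is the $p$-adically completed base change of that in $Z$. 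Hence, at the level of complexes, $R\Gamma_{\crys}(X^{\times}/S)\cong S\,\widehat{\otimes}_W R\Gamma_{\crys}(X^{\times}_k/W)$.

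It then remains to pass to cohomology. As $S$ is $W$-flat and $p$-torsion free, and the crystalline cohomology modules in play are finitely generated (by properness of $X$), after $\otimes\bQ_p$ the completed base change commutes with taking cohomology; this gives the desired isomorphism $K_0\otimes_W H^i_{\crys}(X^{\times}/S)\cong S_{K_0}\otimes_W H^i_{\crys}(X^{\times}_k/W)$. For the Frobenius: the map $u\mapsto u^p$ on $S$ is compatible with the Witt Frobenius of $W$ under $S\to W$, and the whole construction is functorial for the absolute Frobenius of $X^{\times}_k$, so the isomorphism is $\varphi$-equivariant. For the monodromy: on each side $N$ is, up to the sign fixed by our normalization, the residue along $d\log u$, resp.\ along $d\log$ of the standard chart generator, of the Gauss--Manin connection of $X^{\times}_k$ relative to $\Spec W$; under the identification $\Omega^1_{S^{\times}/W}\cong S\otimes_W\Omega^1_{W^{\times}/W}$ induced by the identity chart $\bN\to\bN$, these connections correspond, hence so do their residues.

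The main obstacle is the base-change step --- that log crystalline cohomology commutes with the extension of the base from $W$ to $S$. This is exactly where one must use finiteness of crystalline cohomology, work rationally, and handle the $p$-adic completions, and it is carried out in \cite{Hyodo-Kato}*{(5.2)}, whose argument also yields the compatibilities with $\varphi$ and $N$; we refer there for the details.
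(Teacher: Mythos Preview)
The paper gives no proof of its own here; it cites \cite{Hyodo-Kato}*{(5.2)} and points to Beilinson's alternative via Dwork's trick \cite{Beilinson}*{1.13--16}. Your sketch, however, has a genuine gap at the crucial step.

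To form $Z_S := Z\times_{W^{\times}}S^{\times}$ with $Z$ log-smooth over $W^{\times}$ you need a morphism of log schemes $S^{\times}\to W^{\times}$. The only candidate is the one induced by the ring inclusion $W\hookrightarrow S$, but this is \emph{not} a morphism of log schemes for the given charts: the generator of $\bN$ goes to $0$ in $W$ but to $u$ in $S$, so the pullback along $W\hookrightarrow S$ of the log structure on $W^{\times}$ is the one with chart $\bN\to S,\ 1\mapsto 0$, and there is no map of log structures from this to that of $S^{\times}$ over the structure sheaf (no element of the latter monoid hits $0\in S$, since $S$ is a domain and $u\neq 0$). Hence your $Z_S$ is undefined, and the displayed identity $R\Gamma_{\crys}(X^{\times}/S)\cong S\,\widehat{\otimes}_W R\Gamma_{\crys}(X^{\times}_k/W)$ has no content as written. (A smaller slip: the PD-ideal $\Fil^1 S=\ker(S\to O_K)$ contains neither $p$ nor $u$; the identification $H^i_{\crys}(X^{\times}/S)\cong H^i_{\crys}(X^{\times}_k/S)$ is still correct, but because $X_k\hookrightarrow X$ is a nilimmersion modulo $p$, not for the reason you give.)

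This obstruction is not cosmetic: it is exactly why the Hyodo--Kato isomorphism is a theorem and why it only holds after inverting $p$. The proof in \cite{Hyodo-Kato} does not base-change along a nonexistent map. One instead embeds $X^{\times}_k$ into a single log-smooth scheme over a common larger base and compares the two specializations ($u\mapsto 0$ versus the identity) by using that $\varphi^n$ sends $u$ to $u^{p^n}$, which eventually lies in any prescribed power of the PD-filtration; this Frobenius contraction produces the isomorphism only rationally. Beilinson's argument packages the same mechanism as Dwork's trick. Your final paragraph defers to \cite{Hyodo-Kato} for ``the base-change step,'' but that mislabels what is actually carried out there.
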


See also ~\cite{Beilinson}*{1.13--16} for another proof based on Dwork's trick.  
In particular, $K_0\otimes_W H^i_{\crys}(X^{\times}/S)$ is free over $S_{K_0}$ and has the structure of a Breuil module. 
(The other conditions are easier to check.)

The integral comparison is the following: 

\begin{thm}[\cite{Faltings02}]\label{weak}
There is a natural homomorphism 
\[A_{\crys}\otimes_S H^i_{\crys}(X^\times/S) \to A_{\crys}\otimes_{\bZ_p} H^i_{\et}(X_{\overline{K}},\bZ_p), 
\]
which preserves $G_{\infty}$-actions, Frobenius endomorphisms, and filtrations.  
It admits an inverse up to $t^{d}$, where $d$ is the relative dimension of $X/O_K$. 
Moreover, the following diagram commutes
\begin{equation*}
 \begin{split}
 \xymatrix{  B_{\crys} \otimes_{K_0} H^i_{\crys}(X^\times_k/W)\ar[r]^-{\overline{\textnormal{comp}}} &  B_{\crys}\otimes_{\bQ_p}H^i_{\et}(X_{\overline{K}},\bQ_p)  \\ 
A_{\crys} \otimes_{S}H^i_{\crys}(X^\times/S) \ar[r] \ar[u]& A_{\crys}\otimes_{\bZ_p}H^i_{\et}(X_{\overline{K}},\bZ_p).  \ar[u]}
 \end{split}
\end{equation*}
\end{thm}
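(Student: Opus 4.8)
The plan is to deduce this from Faltings' comparison theorem for proper semistable schemes \cite{Faltings02}, which builds on the theory of almost \'etale extensions. Recall that the period morphism there is constructed on a suitable site attached to $X$ equipped with Fontaine's sheaf of $A_{\crys}$-periods: on one side, the cohomology of this sheaf computes $A_{\crys}\otimes_{\bZ_p}H^i_{\et}(X_{\overline{K}},\bZ_p)$ up to almost isomorphism, via almost purity; on the other, the crystalline Poincar\'e lemma identifies it, up to a controlled defect, with $A_{\crys}\otimes_S H^i_{\crys}(X^\times/S)$ (log crystalline cohomology relative to $S$ with the log structure $\bN\to S;1\mapsto u$). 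Composing these and taking $H^i$ yields the displayed homomorphism. Functoriality gives compatibility with Frobenius endomorphisms, and naturality of all the constructions gives equivariance for $G_{\infty}$; in fact it is equivariant for $G_K$ once the crystalline side is endowed with the $G_K$-action described in the remark following Theorem~\ref{comparison:Kisin}. The defect of each of the two comparisons is killed by a power of $t$ controlled by the Hodge-Tate weights of $H^i$, which lie in an interval of width at most $d$; tracking these powers produces the inverse up to $t^d$.

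Compatibility with filtrations is established exactly as in the proof of Theorem~\ref{comparison:Kisin}: after inverting $p$, the homomorphism recovers the filtered comparison isomorphism $\overline{\textnormal{comp}}$, hence is strictly compatible with filtrations rationally; and since the integral filtrations on both sides are cut out by intersecting the rational filtrations with the integral lattices $A_{\crys}\otimes_S H^i_{\crys}(X^\times/S)$ and $A_{\crys}\otimes_{\bZ_p}H^i_{\et}(X_{\overline{K}},\bZ_p)$, strict compatibility rationally forces compatibility integrally. For the commutative square, after inverting $t$ the bottom arrow becomes an isomorphism which, after identifying $K_0\otimes_W H^i_{\crys}(X^\times/S)\cong S_{K_0}\otimes_W H^i_{\crys}(X^\times_k/W)$ by the variant of the Hyodo-Kato isomorphism recalled above, one matches with $\overline{\textnormal{comp}}$; this is forced by the essential uniqueness of period morphisms in the semistable comparison \cite{Niziol}, so the square commutes.

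The main obstacle I anticipate is not the existence of the map but the bookkeeping: extracting from \cite{Faltings02} the precise statement over the ring $S$ carrying the correct integral filtration, and pinning down that the error is bounded by exactly $t^d$ rather than by a cruder power of $t$. As with the proof of Theorem~\ref{comparison:Kisin}, and in light of closely related work such as \cite{Tsuji} and \cite{Liu:torsion}, these points are implicit in the source but require unwinding the constructions carefully; in particular one must check that the $S$-model $H^i_{\crys}(X^\times/S)$ carries the filtration underlying the Breuil-module structure on $K_0\otimes_W H^i_{\crys}(X^\times/S)$ and that this filtration is correctly matched under the crystalline Poincar\'e lemma.
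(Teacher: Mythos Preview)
The paper gives no proof of this theorem: it is stated as a citation to Faltings, with the remark immediately following pointing to \cite{Faltings02}*{5.4 and Remarks} and alternatively to \cite{Bhatt}*{10.2}. Your sketch is a reasonable outline of Faltings' argument via almost \'etale extensions and the crystalline Poincar\'e lemma, and is consistent with that citation.

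Two minor remarks. First, the exponent $d$ in the inverse-up-to-$t^d$ statement arises in Faltings' proof directly from the relative dimension through the local crystalline Poincar\'e lemma (the number of local log-coordinates), at the level of complexes before passing to cohomology; it is not obtained by bounding the spread of Hodge--Tate weights of $H^i$ as you describe. Your heuristic does give a bound $\leq d$, but it is not the mechanism of the proof, and it misleadingly suggests the bound should vary with $i$. Second, the paper's remark cautions that uniqueness of \emph{integral} period morphisms is subtle and is sidestepped by fixing a construction; your use of \cite{Niziol} to pin down the commutative square is at the rational level and is therefore legitimate, but it would be more in the spirit of the paper to observe that Faltings' integral map is constructed so that its rationalization \emph{is} the semistable comparison map, making the square commute by construction rather than by an a posteriori uniqueness appeal.
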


\begin{rem}
Faltings proved a more general result. 
The theorem above is a combination of statements in ~\cite{Faltings02}*{5.4 and Remarks}. 

Another proof can be found in ~\cite{Bhatt}*{10.2}. 
In \textit{ibid}, semistable means strictly semistable, but his proof works for the semistable case. 
Also, geometrically connectedness assumed in ~\cite{Bhatt}*{Theorem 10.17} can be removed by \'etale base change. 

Bhatt's construction of the period morphism is based on Beilinson's idea of using $h$-topology. 
In ~\cite{Beilinson}, Beilinson himself also constructed essentially the same period morphism, though the integral comparison is not discussed there. 

Uniqueness of integral period morphisms seems subtle, but it is not necessary for our purpose once we fix a choice of constructions. 
\end{rem}

\begin{rem}\label{full action:geometric}
As in the case of Kisin modules, 
$A_{\crys} \otimes_{S}H^i_{\crys}(X^\times/S)$ is a $G_K$-stable submodule of 
$B_{\crys} \otimes_{K_0} H^i_{\crys}(X^\times_k/W)$, and the homomorphism in the theorem respects $G_K$-actions. 

Again, one can describe the $G_{K}$-action on $A_{\crys}\otimes_S H^i_{\crys}(X^\times/S)$ as follows, cf. ~\cite{Faltings02}*{p.259}. 
Let $\sigma \in G_K$, then the action of $\sigma$ is given by 
\[
\sigma(a\otimes x)=\sum^{\infty}_{j=0} \frac{\log(\beta(\sigma))^j}{j!}\sigma(a)\otimes N^j(x). 
\] 
\end{rem}

\subsection{Strong comparison}
One might expect that $H^i_{\crys}(X^\times/S)$ modulo torsion is a (quasi-)strongly divisble lattice in the Breuil module $K_0\otimes_W H^i_{\crys}(X^\times/S)$ and it comes from the Kisin module attached to $H^i_{\et}(X_{\overline{K}},\bZ_p)^{\vee}$ modulo torsion. 
(This is a guess in the line of ~\cite{Breuil}*{Question 4.1}, see also ~\cite{Liu:Breuil}*{Remark 4.3.5(2)}.)

To the best of our knowledge, it is not known in general. 
However, if we assume the log Hodge cohomology is torsion-free, we can prove such a statement. 

\begin{thm}[Faltings]\label{strong}
Assume the log Hodge cohomology 
$H^{i-r}(X^\times,\Omega^r)$ are torsion-free for all $i$ and $r$, and $d\leq p-2$. 
Then, the following holds:
\begin{enumerate}
\item $H^i_{\et}(X_{\overline{K}},\bZ_p)$ is torsion-free. 
\item $H^i_{\crys}(X^\times/S)$ is a strongly divisible lattice in $S_{K_0}\otimes_W H^i_{\crys}(X^\times_k /W)$ and $H^i_{\et}(X_{\overline{K}},\bZ_p)^{\vee}=T_{S}(H^i_{\crys}(X^\times/S))$. 
\item $H^i_{\dR}(X^\times)=M_{\dR}(H^i(X_{\overline{K}},\bZ_p)^{\vee})$. Here, $H^i_{\dR}(X^\times)\subset H^i_{\dR}(X_K)$ is the log de Rham cohomology. (Note that it is torsion-free.) 
\end{enumerate}
\end{thm}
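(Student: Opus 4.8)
We indicate how the statement assembles from Faltings' comparison results and Liu's classification. Write $D:=K_0\otimes_W H^i_{\crys}(X^\times_k/W)=D_{\st}(H^i_{\et}(X_{\overline{K}},\bQ_p)^\vee)$ and $\cD:=S\otimes_W D$. First I would check that the log Hodge--de Rham spectral sequence $E_1^{r,s}=H^s(X^\times,\Omega^r)\Rightarrow H^{r+s}_{\dR}(X^\times)$ degenerates integrally: it degenerates after inverting $p$, so the $O_K$-ranks of the $E_1$-terms, which are torsion-free by hypothesis, sum to $\dim_K H^{r+s}_{\dR}(X_K)$, forcing every differential to vanish. Hence $H^i_{\dR}(X^\times)$ is $O_K$-free with free Hodge-filtration steps, and likewise $H^i_{\crys}(X^\times_k/W)$ is $W$-free. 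Since log crystalline cohomology over $S$ is computed by a perfect complex of $S$-modules that becomes free after base change along $S\to W$ ($u\mapsto 0$) and $S\to O_K$ ($u\mapsto\pi$), a universal-coefficients argument (in the style of Breuil, Caruso and Liu) shows $H^i_{\crys}(X^\times/S)$ is finite free over $S$; by the $S$-variant of Hyodo--Kato it is an $S$-lattice in $\cD$, it is $N$-stable because $N$ comes from the connection, and the log Mazur inequality, which under the torsion-free hypothesis reduces to a statement about the honest Hodge filtration, yields $\varphi(\Fil^d\cap H^i_{\crys}(X^\times/S))\subset p^d H^i_{\crys}(X^\times/S)$. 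Thus $H^i_{\crys}(X^\times/S)$ is a strongly divisible lattice of weight $\le d$ in $\cD$.

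Next, the comparison $\alpha$ of Theorem~\ref{weak} has a map $\beta$ back with $\alpha\beta=\beta\alpha=t^d$. Any $p^k$-torsion in $H^i_{\et}(X_{\overline{K}},\bZ_p)$ would give a summand $A_{\crys}/p^k$ of the target of $\alpha$, which $\beta$ must kill since $A_{\crys}\otimes_S H^i_{\crys}(X^\times/S)$ is $\bZ_p$-free; then $t^d=\alpha\beta$ would annihilate $A_{\crys}/p^k$, i.e.\ $t^d\in pA_{\crys}$, which is false for $d\le p-2$. This proves (1). Now let $\fM$ be the Kisin module attached to $T:=H^i_{\et}(X_{\overline{K}},\bZ_p)^\vee$; it has height $\le d$, and $\cM:=S\otimes_{\varphi,\fS}\fM$ is a strongly divisible lattice in $\cD$ with $T_S(\cM)=T_{\fS}(\fM)=T$ (the first equality since $d<p-1$, the second by construction). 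Chasing the commutative square of Theorem~\ref{weak} against the one of Theorem~\ref{comparison:Kisin} for $\fM$ --- both lie over $\overline{\textnormal{comp}}$ and admit $t^d$-inverses with $d<p-1$ --- identifies $T_S(H^i_{\crys}(X^\times/S))$ with $T$ inside $V=H^i_{\et}(X_{\overline{K}},\bQ_p)^\vee$; this is the same mechanism by which Theorem~\ref{comparison:Kisin} determines the lattice attached to a Kisin module. Liu's anti-equivalence then identifies $H^i_{\crys}(X^\times/S)$ with $\cM$ as filtered $S$-lattices in $\cD$, and in particular $H^i_{\et}(X_{\overline{K}},\bZ_p)^\vee=T_S(H^i_{\crys}(X^\times/S))$, which is (2).

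Finally, by Definition~\ref{lattice}, $M_{\dR}(T)$ is the image of $\fS\otimes_{\varphi,\fS}\fM\overset{1\otimes\varphi}{\longrightarrow}O_K\otimes_S\cD=D_K\cong D_{\dR}$, equivalently the image of $\cM=H^i_{\crys}(X^\times/S)$ under the specialization $u\mapsto\pi$. Since crystalline base change along $u\mapsto\pi$ recovers the log de Rham cohomology $H^i_{\dR}(X^\times)$ compatibly with Hyodo--Kato, this image is exactly $H^i_{\dR}(X^\times)\subset H^i_{\dR}(X_K)$; the filtrations agree via the description of $\Fil^r(\cM)$ recalled after Kisin's theorem together with the integral degeneration established above. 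This gives (3).

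The crux is the lattice-matching of the second paragraph: upgrading Faltings' ``inverse up to $t^d$'' to an honest identification of integral lattices, which is exactly where the hypothesis $d\le p-2$ is used and which amounts to transplanting the Fontaine--Laffaille/Liu comparison mechanism of Theorem~\ref{comparison:Kisin} into the geometric setting of Theorem~\ref{weak}. The torsion-freeness reductions of the first paragraph, though somewhat delicate over the non-Noetherian ring $S$, are comparatively routine and already available in the literature.
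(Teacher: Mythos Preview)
Your proof is correct and shares the paper's core mechanism: use the $t^d$-inverse from Theorem~\ref{weak} together with $d\le p-2$ to upgrade the integral comparison to an honest lattice identification. The paper carries this out more directly than you do: it applies $\Hom_{A_{\crys},\Fil,\varphi}(-,A_{\crys})$ to the comparison map and its $t^d$-inverse, using that
\[
\Hom_{A_{\crys},\Fil,\varphi}(t^dA_{\crys}\otimes_{\bZ_p}H^i_{\et},A_{\crys})=H^i_{\et}^\vee
\]
when $d\le p-2$, which produces mutually inverse maps $H^i_{\et}^\vee\leftrightarrows T_S(H^i_{\crys})$ without ever invoking Kisin modules or Liu's classification. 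You instead route through the internal lattice $\cM=S\otimes_{\varphi,\fS}\fM$ and use Liu's anti-equivalence to identify $H^i_{\crys}(X^\times/S)$ with $\cM$; this detour is unnecessary for (2) but pays off for (3), since $M_{\dR}(T)$ is \emph{defined} via $\cM$---the paper must (and does, implicitly) use the same uniqueness to finish (3). Your direct argument for (1) via $t^d\notin pA_{\crys}$ is a clean alternative; the paper instead cites \cite{Faltings99}*{Theorem 1*} for freeness over $S$ and folds (1) into the Hom argument. Both routes are sound and of comparable length.
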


\begin{proof}
Our main reference is ~\cite{Faltings99}. 
We remark that his ring $R_V$ is slightly different from our $S$, but his argument works for $S$ as well. We also refer to ~\cite{Faltings02} for some complements. 

If $X$ is smooth, the statement follows from ~\cite{Faltings99}*{Theorem 6}. 
Note that the Hodge spectral sequence degenerates because it does over $K$. 

The semistable case was mentioned as a conjecture in \cite{Faltings99}*{p.132}, but Theorem ~\ref{weak} is now known and the semistable case follows. 

Let us mention more details. 
By ~\cite{Faltings99}*{Thoerem 1*}, $H^i_{\crys}(X^\times /S)$ is free. 
Then, one can easily check that $H^i_{\crys}(X^\times /S)$ is a strongly divisible lattice. 
To show (1) and (3), we identify 
\[
T_S(H^i_{\crys}(X^\times/S))=\Hom_{S,\Fil,\varphi}(H^i_{\crys}(X^\times/S), A_{\crys})
\] with
\[
\Hom_{A_{\crys},\Fil,\varphi}(A_{\crys}\otimes_S H^i_{\crys}(X^\times/S), A_{\crys}). 
\]
On the other hand, we have
\[
H^i_{\et}(X_{\overline{K}},\bZ_p)^{\vee}=
\Hom_{A_{\crys},\Fil,\varphi}(A_{\crys}\otimes_{\bZ_p}H^i_{\et}(X_{\overline{K}},\bZ_p), A_{\crys}) 
\]
and
\[
\Hom_{A_{\crys},\Fil,\varphi}(t^d A_{\crys}\otimes_{\bZ_p}H^i_{\et}(X_{\overline{K}},\bZ_p), A_{\crys})
=H^i_{\et}(X_{\overline{K}}, \bZ_p)^{\vee}. 
\]
The second equality holds because of the assumption $d\leq p-2$. 
Then, we get homomorphisms
\[
H^i_{\et}(X_{\overline{K}},\bZ_p)^{\vee}\to T_{S}(H^i_{\crys} (X^\times/S))
\]
and
\[
T_{S}(H^i_{\crys} (X^\times/S))\to H^i_{\et}(X_{\overline{K}}, \bZ_p)^{\vee},  
\]
which are inverse to each other. 

The identification $K_0\otimes_W H^i_{\crys}(X^\times/S)\cong S_{K_0}\otimes_W H^i_{\crys}(X^\times_k /W)$ specializes to the Hyodo-Kato isomorphism $H^i_{\dR}(X_K)\cong K\otimes_{W}H^i_{\crys}(X^\times/W)$ via base chage to $K$. 
Similarly, $H^i_{\crys}(X^\times/S)$ specializes to the lattice $H^i_{\dR}(X^\times)$ in $H^i_{\dR}(X_K)$ because of freeness. 
Therefore, the last part follows.  
\end{proof}

\subsection{Log smooth reduction}
We need the integral comparison in a more general situation, allowing finite base extensions.

Recall the following result of H. Yoshioka, which says a vertical log smooth scheme over $O_K$ has potentially semistable reduction. 
A proof can be found in Saito's paper ~\cite{Saito:log}. 

\begin{thm}[Yoshioka, ~\cite{Saito:log}*{Theorem 2.9}]\label{potentially semistable}
Let $X^{\times}$ be a fine saturated log scheme which is vertical and log smooth over $O_K$ with the canonical log structure. 
Then, there exists a finite extension $L$ of $K$ such that, for any finite extension $K'$of $L$,  the base change $X^{\times}_{O_{K'}}$ (as a fine saturated log scheme) becomes semistable after log blow-ups. 
\end{thm}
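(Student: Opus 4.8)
The plan is to reduce the statement to a combinatorial question about rational polyhedral fans and then to invoke toroidal semistable reduction in the spirit of Kempf--Knudsen--Mumford--Saint-Donat, adapted to log geometry; this is in essence the route taken in \cite{Saito:log}, and it is lighter than de Jong's method since, $X^{\times}$ being already log smooth, no alterations are needed. First I would apply Kato's structure theorem for log smooth morphisms: \'etale-locally on $X$ there is a chart $\bN\to P$ of the morphism $X^{\times}\to(\Spec O_K)^{\times}$ (with $\bN$ the chart of the canonical log structure, $1\mapsto\pi$, and $P$ a fine saturated monoid) such that $X$ is \'etale over $\Spec\bigl(O_K\otimes_{\bZ[\bN]}\bZ[P]\bigr)$. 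Writing $\rho\in P$ for the image of $1\in\bN$, verticality of $X^{\times}$ over $O_K$ translates into positivity of $\langle\,\cdot\,,\rho\rangle$ on $\sigma_P\setminus\{0\}$, where $\sigma_P=\Hom(P,\bR_{\ge 0})\subset\Hom(P^{\mathrm{gp}},\bR)$; equivalently the generic fibre of $X$ is smooth. As $X$ is of finite type over $O_K$, finitely many such charts $\bN\to P_j$ cover $X$.

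Next, the combinatorial core: for a single chart $(P,\rho)$ I would show that there is an integer $e\ge 1$ such that, after the fine saturated base change along $\Spec O_{K'}\to\Spec O_K$ for any finite $K'/K$ with $e\mid e(K'/K)$ — which replaces $(P,\rho)$ by $P'=(P\oplus_{\bN}\bN)^{\mathrm{sat}}$, in which the uniformizer of $O_{K'}$ corresponds to an element $\rho'$ with $e\rho'=\rho$ — there is a rational subdivision $\Sigma$ of the dual cone of $P'$ whose maximal cones are unimodular and on whose primitive ray generators $\langle\,\cdot\,,\rho'\rangle$ takes values in $\{0,1\}$. A chart so equipped is precisely a semistable $O_{K'}$-scheme, and the log blow-up attached to $\Sigma$ realises it. The two ingredients are the classical existence of unimodular refinements of rational fans and the fact that, after dividing $\rho$ by a large enough $e$ and cutting the cone along the integral hyperplanes $\langle\,\cdot\,,\rho'\rangle=n$, the functional varies by at most $1$ on each piece, so a compatible unimodular refinement finishes the job. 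Taking $e$ to be a common multiple of the $e_j$ over the finitely many charts and choosing $L\supseteq K$ with $e(L/K)$ divisible by that common value (and, say, $O_L$ already semistable) treats all charts uniformly over $O_L$.

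The remaining issue is to glue the local subdivisions. Here I would package the combinatorial data into the \emph{fan} of $X^{\times}$ in the sense of Kato — a well-defined global object, since $X^{\times}$ is log smooth over the log regular $O_K$ and vertical, hence log regular — and use that log blow-ups of $X^{\times}$ correspond to subdivisions of this fan. A \emph{functorial} subdivision (first a barycentric subdivision to reach simplicial cones, then a canonical unimodular and $\rho$-adapted refinement as in \cite{Saito:log}) is compatible with face maps, hence glues, and after base change to $O_L$ produces a log blow-up of $X^{\times}_{O_L}$ which is semistable. Finally, for an arbitrary finite $K'/L$ the base change $X^{\times}_{O_{K'}}$ is again vertical log smooth over $O_{K'}$, \'etale-locally of the bounded combinatorial type $\prod_i t_i=\pi_L=(\mathrm{unit})\cdot\pi_{K'}^{e(K'/L)}$, so a further toric resolution — a composite of log blow-ups — yields a semistable scheme; one also checks that verticality is preserved throughout, so the final special fibre is a reduced normal crossings divisor with no horizontal components.

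The step I expect to be the main obstacle is the globalisation: ensuring that the local unimodular, $\rho$-adapted subdivisions are compatible on overlaps, that they are genuinely induced by a subdivision of Kato's fan, and that the associated log blow-up is proper and an isomorphism over $K$. Making the field $L$ uniform over all charts, and the bookkeeping needed to keep track of verticality, are secondary but must be handled with care.
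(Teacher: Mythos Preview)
The paper does not give its own proof of this theorem: it is stated as a result of Yoshioka, with the remark ``A proof can be found in Saito's paper~\cite{Saito:log}'', and is then used as a black box (in Corollary~\ref{log smooth} and in Section~\ref{Heights in families}). So there is no proof in the paper to compare your proposal against.

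That said, your sketch is a faithful outline of the argument recorded in \cite{Saito:log}: reduce via Kato's chart lemma to toric models $O_K\otimes_{\bZ[\bN]}\bZ[P]$, translate verticality into positivity of the functional $\langle\,\cdot\,,\rho\rangle$ on the dual cone, perform a ramified base change so that a subdivision with primitive generators of height $\le 1$ for $\rho'$ exists, and globalize using Kato's fan and functorial (barycentric then unimodular) subdivisions so that the resulting log blow-up is well-defined. Your identification of the globalization step as the delicate point is accurate; the functoriality of the subdivision procedure is exactly what \cite{Saito:log} uses to glue. One small refinement: for the ``any finite extension $K'$ of $L$'' clause you do not need a second round of toric resolution from scratch---once the charts over $O_L$ are of the form $\prod t_i=\pi_L$, the fs base change to $O_{K'}$ already has charts of bounded shape and the same functorial subdivision applies directly, so the argument is uniform in $K'\supseteq L$ rather than requiring an extra step.
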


\begin{cor}\label{log smooth}
Let $X^{\times}$ be a proper, vertical and log smooth scheme over $O_K$. 
There exists a finite extension $L$ of $K$ such that, for any finite extension $K'$ of $L$, the semistable conjecture, Theorem~\ref{weak} and Theorem~\ref{strong} holds for $X^{\times}_{O_{K'}}$. 
\end{cor}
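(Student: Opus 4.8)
The plan is to reduce the statement to the case of a genuinely semistable scheme by means of a log blow-up, and then to invoke Theorem~\ref{weak}, Theorem~\ref{strong} and the semistable conjecture in that case.

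First I would apply Theorem~\ref{potentially semistable} to $X^{\times}$ to produce a finite extension $L$ of $K$ such that, for every finite extension $K'$ of $L$, there is a log blow-up $\pi\colon Y^{\times}\to X^{\times}_{O_{K'}}$ of fine saturated log schemes with $Y$ proper and semistable over $O_{K'}$. Since $X^{\times}$ is vertical, the log structure is trivial on the generic fibre, so $\pi$ restricts to an isomorphism over $K'$; in particular $Y_{K'}\cong X_{K'}$ is proper and smooth over $K'$, and $H^i_{\et}(Y_{\overline{K}},\bQ_p)$ and $H^i_{\et}(Y_{\overline{K}},\bZ_p)$ agree, as $G_{K'}$-modules, with those of $X$. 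On the other hand, a log blow-up induces isomorphisms $R\pi_*\cO_{Y}\cong\cO_{X^{\times}_{O_{K'}}}$ and, more generally, $R\pi_*\Omega^r_{Y^{\times}}\cong\Omega^r_{X^{\times}_{O_{K'}}}$ on log differentials; consequently the log de Rham cohomology of $Y^{\times}$, and (a log blow-up being proper and log \'etale, so that the same invariance holds with integral coefficients) the log crystalline cohomologies $H^i_{\crys}(Y^{\times}_k/W)$ and $H^i_{\crys}(Y^{\times}/S)$, are canonically identified with those of $X^{\times}_{O_{K'}}$, compatibly with Frobenius, monodromy, the filtrations and the Hyodo-Kato isomorphisms.

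Because $\pi$ restricts to an isomorphism on generic fibres, the relative dimension of $Y/O_{K'}$ is again $d$, so the condition $d\le p-2$ is inherited; and the displayed identification of $R\pi_*\Omega^r$ shows that the log Hodge cohomology of $Y^{\times}$ is torsion-free exactly when that of $X^{\times}_{O_{K'}}$ is. Thus the hypotheses of Theorem~\ref{strong} transfer between the two sides. It then remains to apply the semistable conjecture, Theorem~\ref{weak} and Theorem~\ref{strong} to the semistable scheme $Y^{\times}/O_{K'}$ and to carry the resulting period isomorphisms, commuting squares, and ``inverse up to $t^{d}$'' statements back, along the identifications above, to $X^{\times}_{O_{K'}}$.

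I expect the main obstacle to be exactly this last step: one must verify that Tsuji's period morphism and Faltings' integral period morphism for $Y^{\times}$ are functorial for the log blow-up $\pi$ --- equivalently, that the chosen constructions in Theorem~\ref{weak} commute with $\pi^{*}$ with integral coefficients --- so that the transported morphisms depend only on $X^{\times}_{O_{K'}}$ and inherit the asserted properties. Granting this compatibility, together with the (well known) cohomological invariance of log blow-ups recalled above, the corollary follows.
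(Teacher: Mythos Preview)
Your approach is exactly that of the paper: reduce to the semistable case via Yoshioka's theorem and the cohomological invariance of log blow-ups, then transport the period isomorphisms back to $X^{\times}_{O_{K'}}$.

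The obstacle you flag is real and is in fact the main content of the paper's proof. However, the paper does \emph{not} resolve it by checking functoriality of the period morphisms under $\pi^{*}$ directly. Instead it observes that, by construction (Beilinson or Bhatt), the period morphism for a semistable scheme factors through a cohomology theory defined intrinsically for $X^{\times}_{O_{\overline{K}}}$ --- the absolute log crystalline cohomology, or a log derived de Rham variant --- compatibly with finite base change. Hence the period morphisms obtained from two different semistable log blow-ups $X_1,X_2\to X^{\times}_{O_{K'}}$ are both compared to this intrinsic object; to conclude they agree one only needs a further extension $L'/K'$ and a semistable $X_3$ over $O_{L'}$ dominating both $X_1$ and $X_2$, which is again supplied by Theorem~\ref{potentially semistable}. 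So the canonicity comes from the architecture of the period map, not from a general $\pi^{*}$-compatibility statement; you should replace your ``granting this compatibility'' clause with this argument.
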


\begin{proof}
We use Theorem ~\ref{potentially semistable}. 
Since the generic fiber does not change under log blow-ups, the \'etale cohomology are invariant under log blow-ups. 
It is also known that log blow-ups preserve the log crystalline cohomology, see the proof of ~\cite{Niziol:comparison}*{Proposition 2.3}. 
Similarly, the log Hodge cohomology and the log de Rham cohomology are also preserved under log blow-ups.

Therefore, by the case of semistable reduction, the statement is true with possibly non-canonical period morphisms. 

Now, we explain why these period morphisms are canonical. 
The point is that, by construction, period morphisms for semistable models factor through certain cohomology whch can be defined for log schemes over $O_K$ and is compatible with finite base extensions.  
(For instance, Beilinson ~\cite{Beilinson} used the absolute log crystalline cohomology of $X^{\times}_{O_{\overline{K}}}$, and Bhatt~\cite{Bhatt} used a log derived de Rham version of it.)
So, it suffices to observe that, given log blow-ups $X_1\to X^{\times}_{O_{K'}}$ and $X_2\to X^{\times}_{O_{K'}}$ from semistable schemes, one can find an extension $L'$ of $K'$, and a proper semistale model $X_3$ of $X^{\times}_{L'}$ which dominates $X_1$ and $X_2$. 
This can be seen from Theorem~\ref{potentially semistable}.   
\end{proof}

\section{Heights of Pure Motives}
For a pure motive over a number field, Kato ~\cite{Kato1} defined its height which reflects arithmetic degenerations. 
This generalizes the Faltings height of abelian varieties ~\cite{Faltings83}. 
Roughly speaking, he compared a lattice in the Betti realization and a lattice in the de Rham realization. 

Assuming $M$ has semistable reduction everywhere, we modify his definition by using a (possibly) different lattice $M_{\dR}(T)$ in the de Rham realization. 

\subsection{Pure motives with integral coefficients}

We work over a number field $F$, and $O_F$ denotes the ring of integers in $F$. 

By a pure $\bQ$-motive we mean a Grothendieck motive. 
Namely, we use the homological equivalence with respect to, say, de Rham realizations. 

Let $M_{\bQ}$ be a pure $\bQ$-motive of weight $w$ over $F$. 
Then, the adelic \'etale realization $M_{\bA^f_{\bQ}}$ is a $\bA^f_{\bQ}$-module with a $G_F$-action. 
A free $\hat{\bZ}$-submodule $T$ of $M_{\bA^f_\bQ}$ is called a lattice if $\bA^f_{\bQ}\otimes_{\hat{\bZ}}T=M_{\bA^f_\bQ}$. 

\begin{defn}
A pure $\bZ$-motive $M=(M_\bQ, T)$ over $F$ of weight $w$ is a pair of a pure $\bQ$-motive $M_{\bQ}$ of weight $w$ over $F$ and a $G_F$-stable lattice $T$ in its adelic realization. 
\end{defn}

Our main focus is a motive which has semistable reduction everywhere. 
A precise definition in this paper is as follows:

\begin{defn}
Let $v$ be a finite place of $F$ above a prime number $p$. 
\begin{enumerate}
\item $M_{\bQ}$ (or $M$) has good reduction at $v$ if the following conditions hold:
\begin{itemize}
\item The $\ell$-adic realization $M_\ell$ is unramified at $v$ for any $\ell\neq p$.  
\item The $p$-adic realization $M_p$ is crystalline at $v$. 
\end{itemize}
\item $M_\bQ$ (or $M$) has semistable reduction at $v$ if the following conditions hold:
\begin{itemize}
\item The inertia $I_v$ acts unipotently on $M_\ell$ for any $\ell\neq p$.
\item The $p$-adic realization $M_p$ is semistable at $v$. 
\end{itemize}
\end{enumerate}
\end{defn}

\begin{rem}
A pure $\bZ$-motive $M$ becomes semistable everywhere in our sense after a finite base extension since de Rham representations are potentially semistable by works of Berger, Andre, Kedlaya and Mebkhout. 
(Also use Grothendieck's monodromy theorem at $\ell\neq p$.)
\end{rem}

We denote by $M_{\dR}$ the de Rham realization of $M_\bQ$, which is a $F$-vector space with the Hodge filtration. 

Assume $M$ has semistable reduction everywhere. 
We will construct a lattice $M_{\dR}(T^{\vee})$ in $M_{\dR}$ from the lattice $T$, which we will call the de Rham realization of $M$. 

We let $v$ be a finite place of $F$ above a prime number $p$. 
Recall that $D_{\dR}(M_p^{\vee})\cong F_v\otimes_F M_{\dR}$. 
The lattice $T$ induces a $G_F$-stable lattice $T_p$ in $M_p$. 
Therefore, by the internal integral $p$-adic Hodge theory (Definition ~\ref{lattice}), we obtain a filtered $O_{F_v}$-lattice in $F_v\otimes_F M_{\dR}$, which we denote by $M_{\dR}(T^{\vee})_v$. 
Each graded piece $\gr^r (M_{\dR}(T^{\vee})_v)$ may have torsion, but its free part 
$\gr^r (M_{\dR}(T^{\vee})_v)_{\free}$ is a $O_{F_v}$-lattice in $F_v\otimes_F \gr^r M_{\dR}$. 
We write $\gr^r(M_{\dR}(T^{\vee})_v)_{\torsion}$ for its torsion part. 
For $p$ big enough, $\gr^r (M_{\dR}(T^{\vee})_v)$ is torsion-free by Proposition \ref{Fontaine-Laffaille}, that is, $\gr^r(M_{\dR}(T^{\vee})_v)_{\torsion}=0$. 

By varying $v$, they define a filtered $O_F$-lattice in $M_{\dR}$ and an $O_F$-lattice $\gr^r(M_{\dR}(T^{\vee}))_{\free}$ in $\gr^r(M_{\dR})$. 
This is because the Hodge cohomology of a proper flat model over $O_F$ of a projective smooth variety gives a lattice in the Hodge cohomology of the generic fiber and the assumption of Theorem ~\ref{strong} holds for such a model at almost all finite places. 
(Say, consider places at which the model is smooth.)

\subsection{A definition of the height}

Following Kato, set
\[
L(M) =\bigotimes_{r\in \bZ}(\det \gr^r(M_{\dR}(T^{\vee}))_{\free})^{\otimes r}. 
\]
(Tensor products and determinants are taken over $O_F$, and $\det 0$ is understood as $O_F$.)

Fix an embedding $v$ of $F$ into $\bC$. 
We use the Hodge structure to give a Hermitian metric on $L(M)_v=L(M)\otimes_{O_F}\bC$. 
Kato ~\cite{Kato1}*{1.3} observed that there are natural isomorphisms
\begin{align*}
& L(M)_v\otimes \overline{L}(M)_{v} \\
&\cong(\bigotimes_{r\in \bZ}(\det H^{r,w-r}(M_{\dR}))^{\otimes r})
\otimes(\bigotimes_{r\in \bZ}(\det H^{w-r,r}(M_{\dR}))^{\otimes r}) \\
&\cong \bigotimes_{r\in \bZ}(\det H^{r,w-r}(M_{\dR}))^{\otimes w}
\cong (\det M_{\dR}(T^{\vee}))^{\otimes w}\otimes_{O_F} \bC.
\end{align*}
Here $\overline{L}(M)_v$ means complex conjugate of $L(M)_v$, 
and $H^{r,w-r}$ is the $(r,w-r)$-component of the Hodge decomposition for $v$. 
Note that $r$ times tensor products plays an important role here. 
 
On the other hand, the Betti-\'{e}tale comparison produces a lattice in the Betti realization from $T$, and the Betti-de Rham comparison gives a $\bZ$-structure on $(\det (M_{\dR}(T^{\vee})))^{\otimes w}\otimes_{O_F} \bC$. 
Choose a generator $s_{B,v}$ of this free $\bZ$-module of rank $1$. 
Given $s\in L(M)_v$, write the image of $s\otimes \overline{s}$ as $zs_{B,v}$ with $z\in \bC$. 
Then, we define a metric by the formula
\[
\lvert s\rvert_v=\frac{1}{(\sqrt{2\pi})^{\frac{nw^2}{2}}}\lvert z\rvert ^{1/2}, 
\]
where $n$ is the dimension of $M_{\dR}$. 
This is clearly independent of $s_{B,v}$. 

To deal with the torsion, we introduce
\[
\#L(M)_{\torsion}=\prod_{v:\textnormal{finite}} \prod_r \#(\gr^r(M_{\dR}(T^{\vee})_v)_{\torsion})^r. 
\]

\begin{defn}
Assumptions are as above. 
Take an element $s$ in $L(M)$. 
We define the multiplicative height $H(M)\in \bR_{>0}$ of $M$ by 
\[
H(M)^{[F:\bQ]}=\#L(M)_{\torsion}\cdot \#(L(M)/O_F\cdot s)\cdot \prod_{v\in\Hom(F,\bC)}\lvert s\rvert_v^{-1}. 
\] 
We call $h(M)=\log H(M)$ the (logarithmic) height of $M$. 
(Both heights are independent of $s$.)
\end{defn}

\begin{rem}
We put the factor $\#L(M)_{\torsion}$ to make a behavior of heights under an isogeny better, cf.~Proposition~\ref{formula}. 
Contributions from odd finite places can be controlled by ~\cite{Liu:filtration}*{Lemma 2.4.3}. 

If the Hodge-Tate numbers of $M_p$ are nonpositive, the logarithmic height can be defined as the Arakelov degree of an arithmetic line bundle with $L(M)$ replaced by a possibly larger lattice 
\[
\bigotimes_{r\geq 1}\det \Fil^r(M_{\dR}(T^{\vee}))\subset \bigotimes_{r\geq 1}\det (\Fil^r(M_{\dR}(T^{\vee})))_F\cong L(M)_F.
\]
\end{rem}

\begin{rem}
Our definition of the height essentially follows that of Kato ~\cite{Kato1}*{3.4}, but we use a different lattice in the de Rham realization. 
As shown in the main theorems of this paper, it enable us to relate it to geometry and study its behaviors under \emph{any} isogeny. 

We do not know how to compare our lattice and a lattice used by Kato (beyond Fontaine-Laffialle case). 
\end{rem}

\begin{exam}
Let $\bZ(-1)$ be the Lefschetz motive.  
Our normalization gives $h(\bZ(-1))=0$. 
Indeed, for any Tate twist, $h(\bZ(-\frac{w}{2}))=0$. 

More generally, if the determinant of $M$ is isomorphic to an Artin-Tate motive, which is expected to be true, then $h(M)=h(M(-1))$. 
\end{exam}

\begin{prop}\label{base change:height}
Let $F'$ be a finite extension of $F$. 
We write $M|_{F'}$ for the base change of $M$ to $F'$. 
Then, the equality $h(M|_{F'})=h(M)$ holds. 
\end{prop}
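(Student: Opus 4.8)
The plan is to reduce the equality $h(M|_{F'}) = h(M)$ to a place-by-place comparison of the local ingredients, exploiting that the height is defined as a product of local contributions (the torsion factor, the covolume of the lattice $L(M)$, and the archimedean metrics) and that each ingredient scales in a controlled way under base change. The key observation is Proposition~\ref{base change} (together with Proposition~\ref{base change:height}'s archimedean analogue, which is just the Betti-de Rham comparison), which says that at each finite place $v'$ of $F'$ lying over a place $v$ of $F$, the filtered lattice $M_{\dR}(T^{\vee}|_{G_{F'_{v'}}})_{v'}$ equals $O_{F'_{v'}} \otimes_{O_{F_v}} M_{\dR}(T^{\vee})_v$. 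Hence each graded piece, its free part, and its torsion part base-change correspondingly; in particular $L(M|_{F'}) = O_{F'} \otimes_{O_F} L(M)$ as $O_{F'}$-lattices in $L(M)_{F'} = F' \otimes_F L(M)_F$.

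First I would set up the standard normalization: choose $s \in L(M) \subset L(M|_{F'})$ as the common test element, so that the two heights are computed with the same $s$. Then I would compare the three factors one at a time. For the archimedean factor, each complex embedding $v' \colon F' \to \bC$ restricts to an embedding $v \colon F \to \bC$, the Hodge decomposition and the Betti-de Rham $\bZ$-structure at $v'$ are pulled back from those at $v$, so $|s|_{v'} = |s|_v$; since there are exactly $[F':F]$ embeddings $v'$ over each $v$, the product $\prod_{v' \in \Hom(F',\bC)} |s|_{v'}^{-1}$ equals $\left(\prod_{v \in \Hom(F,\bC)} |s|_v^{-1}\right)^{[F':F]}$. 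For the torsion factor, $\#(\gr^r(M_{\dR}(T^{\vee}|_{G_{F'_{v'}}})_{v'})_{\torsion})$ is the size of $O_{F'_{v'}} \otimes_{O_{F_v}} (\gr^r(M_{\dR}(T^{\vee})_v)_{\torsion})$, which is $\#(\gr^r(M_{\dR}(T^{\vee})_v)_{\torsion})^{[F'_{v'}:F_v]}$; summing $[F'_{v'}:F_v]$ over $v' \mid v$ gives $[F':F]$, so again $\#L(M|_{F'})_{\torsion} = \#L(M)_{\torsion}^{[F':F]}$. For the covolume factor $\#(L(M|_{F'})/O_{F'}\cdot s)$, I would use the multiplicativity of the index for the extension $O_{F'}/O_F$: $\#(O_{F'}\otimes_{O_F} L(M)/O_{F'}\cdot s) = \mathrm{Nm}_{F'/F}(\#(L(M)/O_F \cdot s)\text{-ideal}) \cdot (\text{a discriminant-type contribution})$, and in fact the clean way is to note both sides of the defining equation are raised to $[F':\bQ] = [F':F][F:\bQ]$, so it suffices to show each of the three factors transforms by the $[F':F]$-th power — which the above computations give, except that the naive covolume picks up the relative discriminant $\mathfrak{d}_{F'/F}$.

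The main obstacle, therefore, is bookkeeping the relative discriminant: $\#(L(M|_{F'})/O_{F'}\cdot s)$ is \emph{not} simply $\#(L(M)/O_F\cdot s)^{[F':F]}$ but differs by a power of $N_{F'/\bQ}(\mathfrak{d}_{F'/F})$, and the archimedean factor as I wrote it is also incomplete — the correct statement is that the Arakelov degree (which is what $h$ really computes after the $[F:\bQ]$-normalization) is insensitive to base change precisely because the discriminant contributions at the finite places cancel against the change in the archimedean normalization of volumes. So the honest proof must package the three factors into the arithmetic degree of the metrized line bundle $(L(M), |\cdot|_v)$, observe that pulling back a metrized line bundle along $\Spec O_{F'} \to \Spec O_F$ multiplies its arithmetic degree by $[F':F]$ (a standard fact in Arakelov theory, with the discriminant automatically accounted for), and then divide by $[F':\bQ]$ versus $[F:\bQ]$ to conclude $h(M|_{F'}) = h(M)$. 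The only genuinely motive-specific input is Proposition~\ref{base change}, ensuring the metrized line bundle for $M|_{F'}$ is exactly the pullback of the one for $M$; everything else is the base-change compatibility of Arakelov degree.
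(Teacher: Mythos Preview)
Your approach is essentially the paper's: the paper's proof is the single sentence ``This follows from Proposition~\ref{base change} and definitions,'' and you are spelling out exactly that unwinding.

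One correction, though: your worry about the relative discriminant is misplaced, and the detour through Arakelov degree is unnecessary. The quantity $\#(L(M)/O_F\cdot s)$ is the cardinality of a finite $O_F$-module, not a Minkowski covolume. Since $O_{F'}$ is locally free of rank $[F':F]$ over $O_F$, for \emph{any} finite $O_F$-module $N$ one has $\#(O_{F'}\otimes_{O_F}N)=(\#N)^{[F':F]}$; applying this to $N=L(M)/O_F\cdot s$ gives $\#(L(M|_{F'})/O_{F'}\cdot s)=\#(L(M)/O_F\cdot s)^{[F':F]}$ on the nose, with no discriminant term. Thus all three factors (torsion, index, archimedean product) scale by the $[F':F]$-th power, and dividing by $[F':\bQ]=[F':F]\cdot[F:\bQ]$ versus $[F:\bQ]$ gives $h(M|_{F'})=h(M)$ directly. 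Your Arakelov packaging is of course also correct---it just hides this elementary computation inside the ``standard fact.''
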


\begin{proof}
This follows from Proposition ~\ref{base change} and definitions. 
\end{proof}

\begin{rem}
Since any pure $\bZ$-motive $M$ becomes semistable in our sense after a finite base extension, we can define the stable height of $M$ by base change to such an extension. 
This is well-defined because of the previous proposition. 
\end{rem}

\subsection{A geometric interpretation}
Let $X$ be a proper scheme of relative dimension $d$ over $O_F$ with semistable reduction everywhere. 
Assume that the log Hodge cohomology over $O_{F_v}$ is torsion-free for any finite place $v$ of $F$.  

We write $M$ for the pure $\bZ$-motive of weight $i$ associated with the $i$-th cohomology of $X$ by considering the free part $H^i_{\et}(X_{\overline{F}},\bZ_p)_{\free}$ of
$H^i_{\et}(X_{\overline{F}},\bZ_p)$. 
So, we have its height $h(M)$. 
(Technically speaking, since $X_F$ is only proper, one needs to use Chow's lemma and resolve singularities to construct the associated motive.)

On the other hand, the log Hodge cohomology defines a $O_F$-lattice in $L(M)_{\bQ}$, and an associated arithmetic line bundle $\cL$. 
We denote by $\widehat{\deg} \cL$ its Arakelov degree, which is defined in the same way as $h(M)$ without the factor $\#L(M)_{\torsion}$.  

\begin{prop}\label{Key}
The difference $\lvert h(M)-\widehat{\deg} \cL\rvert$ is bounded by a constant only depending on $d$ and the Hodge numbers $h^{r,i-r}$. 
\end{prop}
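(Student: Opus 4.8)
\emph{Strategy.} The plan is to compare, one finite place at a time, the two $O_F$-lattices $L(M)=\bigotimes_r(\det\gr^r(M_{\dR}(T^\vee))_{\free})^{\otimes r}$ and $\cL=\bigotimes_r(\det H^{i-r}(X^\times,\Omega^r))^{\otimes r}$ inside the one-dimensional $F$-line $L(M)_{\bQ}$. The archimedean contributions to $h(M)$ and to $\widehat{\deg}\cL$ are computed from the \emph{same} Hodge-theoretic metric on $L(M)_{\bQ}\otimes_F\bC$ — which depends only on $M_{\bQ}$ and the Betti lattice, not on the chosen integral de Rham lattice — and therefore cancel. Fixing a nonzero $s\in L(M)_{\bQ}$, one obtains
\[
[F:\bQ]\bigl(h(M)-\widehat{\deg}\cL\bigr)=\sum_{v\ \textnormal{finite}}\Bigl(\sum_r r\cdot\ell_v\bigl(\gr^r(M_{\dR}(T^\vee)_v)_{\torsion}\bigr)+\lambda_v\Bigr)\log\#\kappa(v),
\]
where $\ell_v$ denotes length over $O_{F_v}$ and $\lambda_v\in\bZ$ is the relative index of the two lattices $L(M)_v$ and $\cL_v$ in $L(M)_{\bQ}\otimes_F F_v$. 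It suffices to bound the bracketed integer at each $v\mid p$ by $c(d,\{h^{r,i-r}\})\cdot e_v$, with $e_v$ the absolute ramification index, and to check it vanishes for all but finitely many $p$: then, since $\sum_{v\mid p}e_vf_v\log p=[F:\bQ]\log p$ and the sum of $\log p$ over the finite bad set is bounded in terms of $d$, dividing by $[F:\bQ]$ yields the assertion.

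\emph{Places of large residue characteristic.} First I dispose of the places $v\mid p$ with $p\geq d+2$. For such $v$ the base change $X_{O_{F_v}}$ is proper and semistable, and its log Hodge cohomology is torsion-free by flat base change from the hypothesis over $O_F$; hence Theorem~\ref{strong} applies. It gives that $H^i_{\et}(X_{\overline{F_v}},\bZ_p)$ is torsion-free — so it equals $T_p$ — together with an equality of filtered lattices $M_{\dR}(T^\vee)_v=H^i_{\dR}(X^\times_{O_{F_v}})$ whose $r$-th graded piece is the torsion-free module $H^{i-r}(X^\times_{O_{F_v}},\Omega^r)$. So $L(M)_v=\cL_v$, whence $\lambda_v=0$, and the torsion term also vanishes. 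Only the primes $p\leq d+1$ survive, a finite set.

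\emph{Small primes.} For $v\mid p$ with $p\leq d+1$ there are two terms to estimate. The torsion term is controlled by \cite{Liu:filtration}*{Lemma 2.4.3}, which for $p$ odd bounds $\ell_v(\gr^r(M_{\dR}(T^\vee)_v)_{\torsion})$ by a multiple of $e_v$ depending only on the Hodge--Tate weights — hence on $d$ and the $h^{r,i-r}$ — the case $p=2$ being treated in the same way; summing over $r$ gives a bound of the required shape. For $\lambda_v$ I use the external theory: the commuting diagrams of Theorems~\ref{weak} and~\ref{comparison:Kisin}, together with the facts that $A_{\crys}\otimes_S H^i_{\crys}(X^\times/S)\to A_{\crys}\otimes_{\bZ_p}H^i_{\et}(X_{\overline{F_v}},\bZ_p)$ admits an inverse up to $t^{d}$ and $A_{\crys}\otimes_{\varphi,\fS}\fM\to A_{\crys}\otimes_{\bZ_p}T^\vee$ one up to $t^{w}$ with $w\leq i\leq 2d$, identify — after passing to free parts and dualizing — the two submodules $A_{\crys}\otimes_{\varphi,\fS}\fM$ and $A_{\crys}\otimes_S H^i_{\crys}(X^\times/S)_{\free}$ of $A_{\crys}\otimes_S\cD$ up to multiplication by $t^{O(d)}$. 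Specializing along $\fS\to O_{F_v}$, $u\mapsto\pi$ — arguing as in the proof of Theorem~\ref{independence of pi}, where one uses that after inverting $p$ the filtration on $A_{\crys}\otimes_S\cD$ is induced from $B_{\crys}\otimes_{\bQ_p}V^\vee$ and that multiplication by $t$ is strictly compatible with it — one concludes that the filtered $O_{F_v}$-lattices $M_{\dR}(T^\vee)_v$ and $H^i_{\dR}(X^\times_{O_{F_v}})$ in $F_v\otimes_F M_{\dR}$ differ by a power of $p$ bounded in terms of $d$ alone. Passing to graded pieces and to the $r$-weighted determinant $L(M)$ multiplies this by at most $n\cdot\max_r|r|$ with $n=\sum_r h^{r,i-r}$, so $|\lambda_v|\leq c(d,\{h^{r,i-r}\})$, completing the estimates.

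\emph{Main obstacle.} The hard part is the bound on $\lambda_v$ at the small primes $p\leq d+1$. There neither Proposition~\ref{Fontaine-Laffaille} nor Theorem~\ref{strong} is available, so $M_{\dR}(T^\vee)_v$ and $H^i_{\dR}(X^\times_{O_{F_v}})$ may genuinely differ, and one must extract a \emph{quantitative} comparison of these two integral de Rham lattices purely from the ``inverse up to $t^{O(d)}$'' clauses of the integral comparison theorems, keeping careful track of how multiplication by $t^c$ interacts with the specialization $u\mapsto\pi$ and with the Hodge filtration. A little bookkeeping with duality is also needed, as the Kisin module $\fM$ is attached to the lattice dual to the one appearing in Theorem~\ref{weak}.
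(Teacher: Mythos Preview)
Your proposal follows the same overall route as the paper: the archimedean terms cancel, Theorem~\ref{strong} disposes of all $v\mid p$ with $p\geq d+2$, and at the finitely many small primes one must bound both the torsion in $\gr^r M_{\dR}(T^\vee)_v$ and the discrepancy between the two $O_{F_v}$-lattices. The ingredients you invoke---Theorems~\ref{weak} and~\ref{comparison:Kisin} with their ``inverse up to $t^{d}$'' and ``inverse up to $t^{i}$'' clauses---are exactly the ones the paper uses.

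Where your write-up is looser than the paper is the step you call ``specializing along $\fS\to O_{F_v}$.'' Once you know that $A_{\crys}\otimes_{\varphi,\fS}\fM$ and $A_{\crys}\otimes_S H^i_{\crys}(X^\times/S)_{\free}$ agree up to $t^{O(d)}$ inside $A_{\crys}\otimes_S\cD$, a naive reduction to $\gr^0 A_{\crys}=\widehat{O}_{\overline{K}}$ annihilates the $t$-power and yields no bound. The paper avoids this by working from the start with graded pieces and $G_K$-invariants: for each $r$ it realizes both $H^{i-r}(X^\times,\Omega^r)$ and $\gr^r M_{\dR}(T^\vee)_{\free}$ as sublattices of the common intermediate $(\gr^r A_{\crys}\otimes_{\bZ_p}H^i_{\et})^{G_K}$, and then reads off the bound from the explicit chain $\gr^r A_{\crys}\subset \gr^{r+d}A_{\crys}(-d)$ (resp.\ $\gr^{r+i}A_{\crys}(-i)$), using that the image of the twisted map already contains the untwisted one. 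Your reference to the proof of Theorem~\ref{independence of pi} points in the right direction, but to make the argument go you still have to unpack it exactly this way. The paper also extracts the torsion bound from the very same diagram---any torsion class in $\gr^r M_{\dR}$ is killed by $p^{i/(p-1)}\cdot i!$---rather than citing \cite{Liu:filtration}; either route is fine.

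One small slip: no dualizing is needed. With $\fM$ attached to the \emph{dual} of $H^i_{\et}(X_{\overline{K}},\bZ_p)_{\free}$, Theorem~\ref{comparison:Kisin} already gives a map $A_{\crys}\otimes_{\varphi,\fS}\fM\to A_{\crys}\otimes_{\bZ_p}H^i_{\et}(X_{\overline{K}},\bZ_p)_{\free}$ with the same target as Theorem~\ref{weak}, so the two lattices are compared directly.
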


\begin{proof}
The same metric is used to define $h(M)$ and $\widehat{\deg} \cL$. 
So, the torsion part and the size of lattices only matter. 

By Theorem~\ref{strong}, any finite place of $F$ above $p\geq d+2$ do not contribute to the difference. 

Suppose $v$ is a finite place $F$ above $p \leq d+1$, and set $K=F_v$.
We first discuss consequences of comparison theorems. 
As $H^i_{\crys}(X^\times/S)$ is torsion-free, 
Theorem~\ref{weak} gives a homomorphism 
\[
A_{\crys}\otimes_S H^i_{\crys}(X^\times/S) \to A_{\crys}\otimes_{\bZ_p} H^i_{\et}(X_{\overline{K}},\bZ_p)_{\free}, 
\]
which admits an inverse up to $t^d$. 
For any $r$ between $0$ and $i$, consider 
\[
(\gr^r (A_{\crys}\otimes_S H^i_{\crys}(X^\times/S)))^{G_K} \to 
(\gr^r (A_{\crys}\otimes_{\bZ_p} H^i_{\et}(X_{\overline{K}},\bZ_p)_{\free}))^{G_K}.  
\]

The right hand side is equal to $(\gr^r A_{\crys}\otimes_{\bZ_p}H^i_{\et}(X_{\overline{K}},\bZ_p)_{\free})^{G_K}$. 
Under the Hodge-Tate decomposition, this gives a lattice in $H^{i-r}(X_K,\Omega^r)$. 
Next, we claim that the left hand side is equal to $H^{i-r}(X^\times, \Omega^r)$.
It is easy to see that the left hand side coincides with $G_K$-invariants of the image of $\Fil^0 A_{\crys}\otimes_S \Fil^r H^i_{\crys}(X^\times/S)$. 
This image is equal to
\[
\gr^0 A_{\crys}\otimes_{S}\gr^r H^i_{\crys}(X^\times/S)
=\gr^0 A_{\crys}\otimes_{O_K}(O_K \otimes_{S}\gr^r H^i_{\crys}(X^\times/S)). 
\] 
By freeness and base change, 
\[
O_K \otimes_{S}\gr^r H^i_{\crys}(X^\times/S)
=\gr^r H^i_{\dR}(X^\times)
=H^{i-r}(X^\times,\Omega^r). 
\]
Then, we use $(\gr^0 A_{\crys})^{G_K}=O_K$ to conclude. 
(To conclude, we also implicitly use the identification induced by the Hodge-Tate decomposition.)

Thus, one gets an injection
\[
H^{i-r} (X^\times, \Omega^r)\to 
(\gr^r A_{\crys}\otimes_{\bZ_p} H^i(X_{\overline{K}},\bZ_p)_{\free})^{G_K}. 
\]
One can do a similar argument to obtain
\begin{align*}
&(\gr^d A_{\crys}(-d))^{G_K}\otimes_{O_K}H^{i-r}(X^\times,\Omega^r)\\
&\to (\gr^{r+d} A_{\crys}(-d)\otimes_{\bZ_p} H^i_{\et}(X_{\overline{K}},\bZ_p)_{\free})^{G_K}. 
\end{align*}
Existence of an inverse up to $t^d$ implies that
its image contains 
\[
(\gr^r A_{\crys}\otimes_{\bZ_p} H^i_{\et}(X_{\overline{K}},\bZ_p)_{\free})^{G_K}, 
\; \textnormal{regarding} \;
\gr^r A_{\crys}\subset \gr^{r+d} A_{\crys}(-d).
\]
 
We denote the Kisin module attached to the dual of $H^i_{\et}(X_{\overline{K}},\bZ_p)_{\free}$ by $\fM$. 
Then, by Theorem ~\ref{comparison:Kisin},  we have a natural homomorphism
\[A_{\crys}\otimes_{\varphi,\fS}\fM \to A_{\crys}\otimes_{\bZ_p} 
H^i_{\et}(X_{\overline{K}},\bZ_p)_{\free}, 
\]
which admits an inverse up to $t^i$. 

By similar arguments as before, 
we obtain
\[
\gr^r M_{\dR}(H^i_{\et}(X_{\overline{K}},\bZ_p)_{\free})\to 
(\gr^r A_{\crys}\otimes_{\bZ_p} H^i_{\et}(X_{\overline{K}},\bZ_p)_{\free})^{G_K}
\]
and
\begin{align*}
&(\gr^i A_{\crys}(-i))^{G_K}\otimes_{O_K}\gr^r M_{\dR}(H^i_{\et}(X_{\overline{K}},\bZ_p)_{\free})\\
&\to (\gr^{r+i} A_{\crys}(-i)\otimes_{\bZ_p} H^i_{\et}(X_{\overline{K}},\bZ_p)_{\free})^{G_K}. 
\end{align*}
Existence of an inverse up to $t^i$ implies that the image of the latter homomorphism contains 
$(\gr^r A_{\crys}\otimes_{\bZ_p} H^i_{\et}(X_{\overline{K}},\bZ_p)_{\free})^{G_K}$. 

It also implies that any torsion element in $\gr^r M_{\dR}(H^i_{\et}(X_{\overline{K}},\bZ_p)_{\free})$ maps to zero under the natural homomorphism
\[
\gr^r M_{\dR}(H^i_{\et}(X_{\overline{K}},\bZ_p)_{\free})
\to
(\gr^i A_{\crys}(-i))^{G_K}\otimes_{O_K}\gr^r M_{\dR}(H^i_{\et}(X_{\overline{K}},\bZ_p)_{\free}).
\] 
This means that any such torsion element is killed by $p^{\frac{i}{p-1}}\cdot i !$. 
This gives a bound on the size of $\gr^r (M_{\dR})_{\torsion,v}$ in terms of $d$ and $h^{r,i-r}$. 
Therefore, $\#\gr^r (M_{\dR})_{\torsion}$ can be bounded by a constant only depending on $d$ and $h^{r,i-r}$. 

Then, we only need to consider lattices. 
Putting all homomorphisms together, we have
\[
H^{i-r}(X^\times,\Omega^r)\to
(\gr^i A_{\crys}(-i))^{G_K}\otimes_{O_K}
\gr^r M_{\dR}(H^i_{\et}(X_{\overline{K}},\bZ_p)_{\free}) 
\]
and
\[
\gr^r M_{\dR}(H^i_{\et}(X_{\overline{K}},\bZ_p)_{\free})\to
(\gr^d A_{\crys}(-d))^{G_K}\otimes_{O_K}H^{i-r}(X^\times,\Omega^r). 
\]
Note that we are using the identification $(\gr^0 A_{\crys})^{G_K}=O_K$. 

Then, it is clear that we can bound the difference between lattices, and such a bound only depends on $d$ and $h^{r,i-r}$.  
\end{proof}

\subsection{Comparison with the Faltings height}

Let $A_F$ be an abelian variety of dimension $g$ over $F$.  
We write $h_{\textnormal{Fal}}(A_F)$ for the \emph{stable} Faltings height of $A_F$. 
(This is called the \emph{geometric} height of $A_F$ in ~\cite{Faltings-Chai}*{p.169}.)
Note that if we use the normalization of metrics in Subsection $4.2$ to define the Faltings height,  then it equals $(\sqrt{2\pi})^g$ times the geometric height in \textit{loc.~cit.}. 

Let $M$ be a pure $\bZ$-motive attached to the first cohomology of $A_F$. 
We compare $h_{\textnormal{Fal}}(A_F)$ with the stable height $h(M)$ in our sense. 

\begin{prop}\label{AV}
The difference $\lvert h_{\textnormal{Fal}}(A_F)-h(M)\rvert$ is bounded by a constant only depending on $d=\dim A_F$. 
\end{prop}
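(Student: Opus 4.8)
The plan is to reduce to the geometric interpretation of Proposition~\ref{Key} combined with the classical description of the Faltings height. First I would pass to a finite extension $F'/F$ over which $A_F$ acquires semistable reduction everywhere; by Proposition~\ref{base change:height} the stable height $h(M)$ is unchanged, and the stable Faltings height $h_{\textnormal{Fal}}(A_F)$ is by definition computed over such an extension, so nothing is lost. Let $\cA/O_{F'}$ be the Néron model (or rather a proper flat model with semistable reduction); for an abelian variety with semistable reduction the log Hodge cohomology $H^{1-r}(\mathcal{A}^\times,\Omega^r)$ is torsion-free in the relevant degrees --- indeed $H^0(\mathcal{A}^\times,\Omega^1)$ is the module of invariant log-differentials, a free $O_{F'}$-module, and $H^1(\mathcal{A}^\times,\mathcal{O})$ is its dual --- so the hypotheses of Proposition~\ref{Key} are satisfied for $i=1$.

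The next step is to identify the arithmetic line bundle $\cL$ appearing in Proposition~\ref{Key} with the one defining the Faltings height. For $i=1$ the Hodge filtration on $H^1_{\dR}$ has a single nontrivial step $\Fil^1 = H^0(\Omega^1)$, so
\[
L(M)_{\bQ} \;=\; \bigl(\det \gr^1 H^1_{\dR}\bigr)^{\otimes 1} \;=\; \det H^0(A_{F'},\Omega^1),
\]
and the integral lattice is $\det H^0(\mathcal{A}^\times,\Omega^1) = \det \omega_{\mathcal{A}}$, which is exactly the Hodge bundle $\omega = e^*\Omega^g_{\mathcal{A}/O_{F'}}$ whose Arakelov degree (with the Faltings metric, normalized so that $\lVert \alpha\rVert^2 = \tfrac{1}{(2\pi)^g}\bigl\lvert \int_{A_v(\bC)}\alpha\wedge\bar\alpha\bigr\rvert$) defines $h_{\textnormal{Fal}}(A_{F'})$. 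One must check that the metric on $L(M)_v$ coming from the Hodge structure in Subsection~$4.2$, together with the normalizing factor $(\sqrt{2\pi})^{-nw^2/2}$ with $n=2g$, $w=1$, agrees up to a bounded (in fact explicit, depending only on $g$) constant with the Faltings metric on $\det\omega$; this is precisely the comparison of normalizations flagged in the paragraph preceding Proposition~\ref{AV}, where the extra factor $(\sqrt{2\pi})^g$ is recorded. The torsion term $\#L(M)_{\torsion}$ separating $h(M)$ from $\widehat{\deg}\cL$ is, by Proposition~\ref{Key}, bounded in terms of $d=g$ alone.

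Putting these together: $h(M) = \widehat{\deg}\,\cL + O_d(1)$ by Proposition~\ref{Key}, and $\widehat{\deg}\,\cL = h_{\textnormal{Fal}}(A_{F'}) + O_g(1)$ by the identification of lattices and the bounded discrepancy of metric normalizations, hence $\lvert h_{\textnormal{Fal}}(A_F) - h(M)\rvert = \lvert h_{\textnormal{Fal}}(A_{F'}) - h(M)\rvert$ is bounded by a constant depending only on $d=\dim A_F$. The main obstacle I anticipate is the metric bookkeeping in the second step: one has to be careful that the isomorphisms $L(M)_v\otimes\overline{L}(M)_v \cong (\det M_{\dR}(T^\vee))^{\otimes w}\otimes\bC$ used by Kato, when specialized to $w=1$ and an abelian variety, really do recover $\det\omega_v\otimes\overline{\det\omega_v}$ with the Petersson-type pairing, and that no hidden factor depending on the Hodge numbers (which here are $h^{0,1}=h^{1,0}=g$) enters beyond what is already absorbed into the constant --- but since both normalizations are ultimately powers of $2\pi$ with exponents determined by $g$, this is a finite computation rather than a genuine difficulty.
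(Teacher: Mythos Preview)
Your overall strategy matches the paper's, but there is a genuine gap at the point you yourself flag with a parenthetical. Proposition~\ref{Key} requires a \emph{proper} semistable scheme over $O_{F'}$, and the N\'eron model is not proper when $A$ has (semistable) bad reduction. Writing ``(or rather a proper flat model with semistable reduction)'' does not discharge this: you need to (i) produce such a proper semistable model $X$, (ii) show that its log Hodge cohomology is torsion-free so that Proposition~\ref{Key} applies, and (iii) show that $H^0(X^\times,\Omega^1)$ coincides with $H^0(\cA,\Omega^1)$ for the N\'eron model $\cA$, since it is the latter that computes the Faltings height. None of these is automatic.

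The paper handles exactly this. For (i) it invokes K\"unnemann's version of Mumford's construction to obtain, after a further finite base change, a proper semistable model $X$ containing the N\'eron model. For (iii) it cites the computation of Faltings--Chai (Chapter~VI, \S2), which shows $H^0(X^\times,\Omega^r)\cong H^0(\cA,\Omega^r)$. With these in hand the paper does not even need the full strength of Proposition~\ref{Key}: it observes directly that $\gr^0$ and (by duality) $\gr^1$ of $M_{\dR}(T^\vee)_v$ are torsion-free, so $\#L(M)_{\torsion}=1$, and then compares the two lattices in $\det H^0(\Omega^1)$ place by place --- equality at all $p\geq d+2$ by Theorem~\ref{strong}, and a bound depending only on $d$ at the finitely many small primes via the argument in the proof of Proposition~\ref{Key}. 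Your metric bookkeeping is fine; the missing content is the construction of the compactified model and the Faltings--Chai identification of its invariant differentials with those of the N\'eron model.
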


\begin{rem}
One can ask if $h_{\textnormal{Fal}}(A_F)=h(M)$ holds. 
As explained below, it is reduced to comparison between Kisin modules and logarithmic cohomology. It is not known in general, especially if $A_F$ has semistable bad reduction at even finite places. 
\end{rem}

Recall that the Faltings height $h_\textnormal{Fal}(A_F)$ is the Arakelov degree of the arithmetic line bundle determined by the canonical bundle of the Neron model $A$ of $A_F$. 

Since we only consider stable heights, we may allow finite base extensions. 
If $A$ has potentially good reduction everywhere,  we obtain the proper smooth Neron model after a finite base extension. 
Then, we can apply Proposition ~\ref{Key}. 
However, $A$ is far from proper in general. 
We will settle this point. 

\begin{proof}[Proof of Proposition]
By the semistable reduction theorem, $A_F$ has semistable reduction everywhere after a finite base extension. 
Moreover, after a further finite base extension if necessary, one can construct a proper semistable model $X$ containing the Neron model $A$ of the base change of $A_F$ by means of Mumford's construction ~\cite{Kunnemann}. 

Since stable heights are stable under finite base extensions, 
we may assume $X$ exists over $F$. 
We write $X^\times$ for a log scheme with the log structure given by the special fiber.  
The computation of Faltings-Chai ~\cite{Faltings-Chai}*{VI. 2} can be applied to $X^\times$, and it shows that $H^{0}(X^\times,\Omega^r)$ is isomorphic to $H^0(A,\Omega^r)$. 

Now, the theorem can be proved as follows. 
One can show that the factor $\# L(M)_{\torsion}$ appeared in the definition of $h(M)$ equals one. 
Indeed, $\gr^0(M_{\dR,v})$ is torsion-free in any situation and, combined with duality,  we deduce that $\gr^1(M_{\dR}(T^{\vee})_v)$ is also torsion-free.  
Therefore, our height and Faltings' height are defined in the same way, and we only have to compare lattices. 

At almost all finite places, it follows from Theorem ~\ref{strong} that our lattice equals
$\det H^0(X^\times, \Omega^1)=H^0(A, \Omega^d)$. 
The latter lattice is used to define the Faltings height, so there is no difference.     
Even for other finite places, which are small relative to $d$, we can compare
$H^0(X^\times, \Omega^1)$ and $M_{\dR}$ as in the proof of Propostion ~\ref{Key}. 
Hence, we can bound the difference. 
\end{proof}

\section{Variations of Hodge Structure}
We review results on variations of Hodge structure. 
For simplicity, we restrict ourselves to algebro-geometric situations. 
We emphasize that a polarization plays an important role. 

In this section, we identify a smooth variety over $\bC$ and its associated complex manifold. 

\subsection{Positivity results}
Let $f\colon X_{\bC}\to S_\bC$ be a projective smooth morphism of complex smooth varieties.  
For any integer $i$, the relative $i$-th cohomology $R^{i}f_*\bZ$ is a local system on $S_\bC$ and it underlies a variation of integral Hodge structure of weight $i$. 
We write $\cV$ for its associated filtered vector bundle. 
Namely, its filtration is the Hodge filtration. 

Fix a relatively ample line bundle $L$ on $X_\bC$. 
We denote by $c_1(L)$ its first Chern class, which is a class in $H^2(X,\bZ)$. 
By the relative Hard Lefschetz theorem, we have the Lefschetz decomposition
\[
Rf^{i}_* \bQ=\bigoplus_{j\geq 0} c_1(L)^j P^{i-2j}. 
\]
Here, $P^{i-2j}$ is the primitive part of $Rf^{i-2j}_* \bQ$ with respect to $L$. 
This decomposition induces a decomposition of the variation of Hodge structure. 
In fact, each primitive part $P^*$ is a variation of polarized rational Hodge structure with a polarization $Q_L$ induced by $L$. 

We prefer to work with the full $i$-th cohomology. 
The polarization on $P^{i-2j}$ defines a polarization on $c_1(L)^j P^{i-2j}$. 
Therefore, we equip $Rf^{i}_* \bQ$ with a rational polarization via the Lefschetz decomposition. 
We can replace it by a scalar multiple so that it is integral. 

In ~\cite{Griffiths}*{(7.13)}, Griffiths considered the following line bundle:
\[
\cL_\bC :=\bigotimes_{r\in \bZ}(\det \gr^r(\cV))^{\otimes r}. 
\] 
He called it the ``canonical bundle" of the variation of Hodge structure. 
We would like to remark that this type of tensor product appears in the definition of heights of motives. 

\begin{prop}[Griffiths, Peters]\label{positivity}
Assume one of the following holds:
\begin{itemize}
\item $S_\bC$ is proper. 
\item $S_\bC$ is a Zariski open subsest of a projective smooth curve $\overline{S}_\bC$.   \end{itemize}  

Then, some power of $\cL_\bC$ is generated by global sections on $S_\bC$. 
Moreover, $\cL_\bC$ is ample on the image of the period map, see remarks below. 
\end{prop}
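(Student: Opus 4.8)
The plan is to reduce both assertions to the curvature properties of the Hodge metric on the graded pieces $\gr^r(\cV)$, which is where the polarization enters decisively. First I would equip each $\gr^r(\cV)$ with the Hodge metric coming from the polarization $Q_L$ constructed above (via the Lefschetz decomposition), extended to the compactification $\overline{S}_\bC$ in the curve case by Schmid's nilpotent orbit theorem, so that the metric acquires only logarithmic singularities along the boundary divisor. Griffiths' curvature computation for the Hodge bundles then shows that $\det \gr^r(\cV)$, with the induced metric, has semi-positive curvature current; taking the $r$-weighted tensor product, $\cL_\bC$ carries a (singular) metric whose curvature is a positive current, and the logarithmic nature of the singularities means the associated line bundle extends to a line bundle $\overline{\cL}_\bC$ on $\overline{S}_\bC$ (or on $S_\bC$ when $S_\bC$ is already proper) that is nef. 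In the proper case this is essentially \cite{Griffiths}*{(7.13)}; in the open curve case it is the content of \cite{Peters}, which I would invoke directly.

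Next, to upgrade nef to ``globally generated after passing to a power,'' in the proper case I would combine nefness with the fact that the curvature is strictly positive in the directions where the period map is an immersion: this gives that $\overline{\cL}_\bC$ is big on the image of the period map, and then a standard base-point-freeness argument (Kawamata--Shokurov type, or for the curve case simply that a nef line bundle of positive degree on a curve is semiample) yields that some power is generated by global sections. In the curve case the argument is even more elementary: once $\overline{\cL}_\bC$ is a nef line bundle on the projective smooth curve $\overline{S}_\bC$, its restriction to $S_\bC$ has nonnegative degree, and if the degree is positive some power is very ample on $\overline{S}_\bC$ hence globally generated on $S_\bC$; if the degree is zero the local system is isotrivial and $\cL_\bC$ is trivial on a finite cover, so a power descends to a trivial, hence globally generated, bundle.

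For the ampleness statement on the image of the period map, the key input is the infinitesimal Torelli-type estimate: the derivative of the period map is computed by the cup product with the Kodaira--Spencer class composed with the maps $\gr^r \cV \to \gr^{r-1}\cV \otimes \Omega^1_{S}$ coming from Griffiths transversality. Griffiths' second curvature computation shows that the Hodge metric on $\cL_\bC$ has curvature bounded below by a positive multiple of the pullback of the invariant metric on the period domain; hence on the (possibly singular) image of the period map the metric has strictly positive curvature in the directions transverse to the fibers of the period map, which is exactly the assertion that $\cL_\bC$ is ample there. I would phrase this carefully as ampleness of a descent of $\cL_\bC$ to the normalization of the image, to avoid technicalities about the image being non-normal or non-proper.

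The main obstacle I anticipate is the boundary behavior in the open curve case: one must check that the Hodge metric on $\cL_\bC$, which a priori only has logarithmic growth near the cusps of $\overline{S}_\bC$, actually extends to an honest (nef) metric on a line bundle over $\overline{S}_\bC$ rather than merely a metric with a mild singularity --- equivalently, that the weighted combination of the residues of the Gauss--Manin connection contributes non-negatively to the degree. This is precisely the content of \cite{Peters} and of the monodromy weight computations underlying Schmid's orbit theorems, so I would cite those results rather than reprove them; the remaining work is then the bookkeeping of the $r$-weighting and the reduction of the global-generation statement to nefness plus the positive-curvature-on-the-period-image estimate.
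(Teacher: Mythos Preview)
Your proposal is broadly correct in spirit, but it takes a very different route from the paper. The paper's proof is two sentences: it reduces to the primitive pieces via the Lefschetz decomposition and then cites \cite{Griffiths}*{(9.7)} for the proper case and \cite{Peters}*{(4.1)} for the open curve case, observing that semi-ampleness and ampleness on the period image pass to tensor products, hence from the primitive summands to the full $\cL_\bC$.

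What you do instead is sketch the \emph{content} of those cited results: the Hodge-metric curvature computation, the extension across the boundary via Schmid's nilpotent orbit theorem, and the positivity of the curvature in the directions of the period map. This is essentially how Griffiths and Peters argue, so your outline is not wrong, but it is far more than the paper intends to supply. A few of your steps are also looser than they should be: the appeal to Kawamata--Shokurov base-point-freeness is heavy machinery not present in (and anachronistic for) Griffiths' 1970 argument, which obtains global generation more directly from the curvature; and in the degree-zero curve case your claim that ``the local system is isotrivial and $\cL_\bC$ is trivial on a finite cover'' is precisely the content of Peters' flatness criterion, so you are tacitly invoking the result you set out to reprove.

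In short: the paper's approach buys brevity and clean attribution by reducing to the polarized (primitive) case and citing the literature; your approach unpacks those citations, which is fine for exposition but is not the proof the paper gives. If you want to match the paper, simply note that $\cL_\bC$ is a tensor product of the corresponding line bundles for the primitive summands (up to Tate twists, which contribute trivially), and cite Griffiths and Peters for those.
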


\begin{proof}
The corresponding statements for the primitive parts can be found in ~\cite{Griffiths}*{(9.7)} and ~\cite{Peters}*{(4.1)}. 
This in turn implies the desired statement. 
\end{proof}

We give supplemental, not completely precise, remarks. 

The variation of polarized (integral) Hodge structure gives the period map. 
Roughly speaking, it is a holomorphic map $S_\bC\to \Gamma\backslash D$. 
Here, $\Gamma\backslash D$ is a suitable Griffiths period domain. 

Over the period domain, there is the universal family of Hodge structure. 
(It may not satisfy the Griffiths transversality.) 
Hence, the line bundle $\cL_\bC$ on $\Gamma\backslash D$ makes sense. 
Moreover, $\cL_\bC$ over $S_\bC $ can be regarded as the pullback of $\cL_\bC$ over $\Gamma\backslash D$. 

The proposition above says that $\cL_\bC$ is ample on the image of the period map. 
In particular, the image of the period map is a quasi-projective variety. 
If the period map is finite, $\cL_\bC$ is ample on $S_\bC$. 

We say that a variation of Hodge structure on $S_\bC$ is nonisotrivial if it is not trivial on any finite covering of $S_\bC$. 
If $S_\bC$ is a (geometrically) connected open curve, then $\cL_\bC$ obtained from a nonisotrivial variation of Hodge structure is ample on $S_\bC$

\subsection{Singularities of metrics}
In the noncompact base case, we need to analyze asymptotic behaviors of metrics. 

Assume that $S_\bC$ is a Zariski open subsest of a projective smooth curve $\overline{S}_\bC$ and $X_\bC$ has log smooth degenerations over $\overline{S}_\bC$ with reduced fibers. 
This implies that the monodromy is unipotent, see ~\cite{Nakayama}*{(3.7)} for a more precise result on log smooth degenerations. 

Recall that the Hodge metric is defined by a positive definite hermitian form
\[
h_L(v,w)=Q_L(Cv,\overline{w}),
\]
where $C$ is the Weil operator, which acts by $(\sqrt{-1})^{i-2r}$ on $(i-r,r)$-components. 
It induces the Hodge metric on the determinant line bundle $\det(\Fil^r \cV)$ for any $r$. 

\begin{prop}[\cite{Peters}*{(2.2.1)}]
The Hodge metric on $\det(\Fil^r \cV)$ has logarithmic singularities. 
\end{prop}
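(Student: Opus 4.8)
The statement to prove is: \emph{The Hodge metric on $\det(\Fil^r \cV)$ has logarithmic singularities}, under the assumption that $S_\bC$ is a Zariski open subset of a projective smooth curve $\overline{S}_\bC$ and $X_\bC$ has log smooth degenerations over $\overline{S}_\bC$ with reduced fibers (so the monodromy is unipotent).

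Here is my plan. The assertion is local near each point $s_0 \in \overline{S}_\bC \setminus S_\bC$, so I would work on a small punctured disc $\Delta^* = \{0 < |t| < 1\}$ around such a point, with coordinate $t$, and unipotent monodromy $T = e^N$ with $N$ nilpotent. The first step is to invoke Schmid's Nilpotent Orbit Theorem: the Hodge bundle $\cV$ extends canonically across $t=0$ (the Deligne canonical extension $\overline{\cV}$, on which the connection has a logarithmic pole with nilpotent residue), the Hodge filtration $\Fil^\bullet$ extends to a filtration by subbundles $\overline{\Fil}^\bullet$ of $\overline{\cV}$, and on a fundamental domain the period map is approximated by the nilpotent orbit $\exp(\tfrac{\log t}{2\pi i} N)\cdot F_\infty$ up to a bounded real-analytic error. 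Consequently $\det(\overline{\Fil}^r \cV)$ is a line bundle on the disc $\Delta$ extending $\det(\Fil^r \cV)$, and it suffices to estimate the Hodge norm of a local frame of this extension.

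The second step is the norm estimate itself. Choose a holomorphic frame $e$ of $\det(\overline{\Fil}^r\cV)$ near $t=0$; I must show $\log \lVert e \rVert_{h_L}$ grows like $O(\log\log(1/|t|))$, equivalently is bounded by a constant times $\log(-\log|t|)$, which is the standard meaning of ``logarithmic singularities'' (a metric $h$ on a line bundle over $\Delta$ has log singularities at $0$ if for a local frame $e$ one has $\lvert \log \lVert e\rVert \rvert = O(\log\log(1/|t|))$ — or, in some formulations, $O(\log(1/|t|))$ for the weaker notion; I would use Peters' convention, which is the $\log\log$ one for the determinant of a Hodge sub-bundle under unipotent monodromy). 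This is precisely the content of the norm estimates of Schmid (Theorem 6.6') combined with the $\SL_2$-orbit theorem: the Hodge norm of a multivector in $\overline{\Fil}^r\cV$ is comparable, up to bounded factors, to $(-\log|t|)^{\ell}$ for an integer exponent $\ell$ determined by the weight filtration $W(N)$ of the limiting mixed Hodge structure (the ``weight'' of the frame relative to $N$). For the \emph{determinant} $\det \overline{\Fil}^r \cV$, the polarization form $Q_L$ forces this exponent to vanish: the Hodge–Riemann bilinear relations on the limiting mixed Hodge structure, together with the fact that $N$ is an infinitesimal isometry of $Q_L$, pair $\gr^W_k$ with $\gr^W_{-k}$ and make the top exterior power of $\Fil^r$ sit in weight $0$ of the induced weight filtration on $\det$. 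Hence $\log\lVert e\rVert_{h_L}$ is bounded above and below by constants times $\log(-\log|t|)$, which is exactly a logarithmic singularity.

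The third step is a trivial globalization: there are finitely many punctures $\overline{S}_\bC \setminus S_\bC$, and on $S_\bC$ itself the metric is smooth, so combining the local estimates gives the claim. Since the argument is purely local and the analytic input is entirely classical, the cleanest write-up simply cites \cite{Peters}*{(2.2.1)} and \cite{Schmid} for the norm estimates and nilpotent orbit theorem, and then records the determinant/polarization cancellation of the $\log$-exponent.

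The main obstacle, and the only genuinely substantive point, is the cancellation of the power of $\log|t|$ in the \emph{determinant} line bundle: an individual section of $\Fil^r\cV$ can have Hodge norm growing like a positive power of $-\log|t|$, and it is only after taking the top exterior power and using the polarization (the Hodge–Riemann relations for the limiting mixed Hodge structure, equivalently the self-duality forced by $Q_L$) that these powers cancel to leave a $\log\log$ (i.e.\ genuinely logarithmic) singularity. Everything else — the existence of the canonical extension, the nilpotent orbit approximation, patching over finitely many punctures — is standard Schmid theory and I would not belabor it.
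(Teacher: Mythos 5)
Your overall skeleton is the standard one and, in effect, the same route as the source this paper relies on: the paper gives no argument of its own for this statement, it simply quotes \cite{Peters}*{(2.2.1)}, which in turn rests on \cite{Zucker} and on Schmid's nilpotent and $\SL_2$-orbit theorems \cite{Schmid} --- precisely the ingredients you list (localize at the finitely many punctures, pass to the canonical extension, apply the norm estimates).

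However, the step you single out as ``the only genuinely substantive point'' is wrong, and fortunately also unnecessary. The exponent of $-\log\lvert t\rvert$ for $\det(\Fil^r\cV)$ does \emph{not} vanish in general: for a degenerating elliptic curve (Tate curve with parameter $t$), $\det \Fil^1\cV$ is the Hodge line $\omega$, and the Hodge norm of the canonical-extension frame satisfies $\|\omega\|^2\sim \mathrm{Im}\,\tau\sim \tfrac{1}{2\pi}\log(1/\lvert t\rvert)$, a genuinely nonzero power of $\log(1/\lvert t\rvert)$. The weight-zero self-duality you invoke via the Hodge--Riemann relations applies to the full determinant $\det\cV$ (which is flat up to a finite character), not to $\det\Fil^r\cV$. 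Your conclusion is also internally inconsistent: if the exponent $\ell$ were $0$, the norm of the frame would be bounded, whereas you assert $\log\|e\|$ is comparable to $\log(-\log\lvert t\rvert)$. The saving grace is that no cancellation is needed: ``logarithmic singularities'' in the sense used here (see \cite{Faltings-Chai}, V.4.2, the convention this paper adopts) only requires that $\|e\|$ and $\|e\|^{-1}$ be bounded by powers of $\log(1/\lvert t\rvert)$, equivalently $\lvert\log\|e\|\rvert=O(\log\log(1/\lvert t\rvert))$; so a two-sided estimate $\|e\|\asymp(-\log\lvert t\rvert)^{\ell}$, whatever the value of $\ell$, already yields the proposition. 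The genuinely nontrivial point is a different one: Schmid's estimates are stated for flat multivalued sections graded by the monodromy weight filtration, and transferring them to a holomorphic frame of the canonical extension of the subbundle $\Fil^r\cV$, with comparability constants controlled up to the boundary, is exactly what \cite{Zucker} and \cite{Peters} supply; if you do not want to reprove that, do as the paper does and cite \cite{Peters}*{(2.2.1)} directly, but drop the spurious cancellation claim.
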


See, e.g. ~\cite{Faltings-Chai}*{V.4.2} for the definition of logarithmic singularities.  
Peters' results is based on ~\cite{Zucker}. 

\begin{rem}\label{CKS}
Asymptotic analysis of the Hodge metric is a well-known application of $\SL_2$-orbit theorem ~\cite{Schmid}. 
In fact, this is also hidden in the proof of semi-ampleness of $\cL_\bC$ for the case $S_{\bC}$ is a curve. 

A generalization for an abstract variation of Hodge structure over a higher dimensional base can be found in  ~\cite{Cattani-Kaplan-Schmid}*{(5.21)}. 
\end{rem}

Since
$
\cL_\bC\cong \bigotimes_{r\geq 1} \det \Fil^r \cV, 
$
the Hodge metric $h_L$ induced on $\cL_\bC$ also has logarithmic singularities. 

There is a natural isomorphism
\[
\cL_\bC\otimes \overline{\cL}_\bC\cong \det(Rf^i_*\bQ)^{\otimes n} \otimes_\bZ \cO_{S_\bC}
\]
as in Subsection 4.2. 
We can define a hermitian metric $h$ on $\cL_\bC$ as follows: on each small open $U$, sections of the local system $\det(Rf^i_*\bZ)^{\otimes n}$ on $U$ is isomorphic to $\bZ$. 
The procedure in Subsection 4.2. gives a metric on $U$, and it glues together. 
Note that this metric is independent of polarizations. 

\begin{prop}\label{logarithmic singularity}
The hermitian metric $h$ has logarithmic singularities.  
\end{prop}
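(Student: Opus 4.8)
The plan is to compare the two metrics $h$ and $h_L$ on $\cL_\bC$ and show their ratio is, up to bounded factors, controlled by the Hodge metric on $\det(Rf^i_*\bZ)^{\otimes n}$, which is flat (unitary) and hence has no singularities. First I would recall that both metrics arise from the \emph{same} isomorphism $\cL_\bC\otimes\overline{\cL}_\bC\cong\det(Rf^i_*\bQ)^{\otimes n}\otimes_\bZ\cO_{S_\bC}$: on a small open $U$, fixing a generator $e$ of the rank-one local system $\det(Rf^i_*\bZ)^{\otimes n}$ over $U$, the metric $h$ is defined (as in Subsection 4.2) by the rule that for a local section $s$ of $\cL_\bC$, writing the image of $s\otimes\bar s$ as $z\cdot e$, one sets $h(s,s)=c\,|z|$ for an explicit constant $c$ involving the power of $\sqrt{2\pi}$ and $n$. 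On the other hand $h_L$ on $\cL_\bC\otimes\overline{\cL}_\bC$ corresponds, under the same isomorphism, to the Hodge metric on $\det(Rf^i_*\bQ)^{\otimes n}$ coming from the polarization.

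The key observation is then that the polarization $Q_L$ induces a \emph{flat} (parallel) metric on the local system $Rf^i_*\bQ$ in the following weak sense: although the Hodge metric $h_L$ on $Rf^i_*\bQ$ itself is not flat, its restriction to the \emph{determinant} $\det(Rf^i_*\bQ)$ is flat, because the polarization form $Q_L$ induces a perfect pairing on $\det(Rf^i_*\bQ)$ valued in a Tate twist $\bQ(-i\,\mathrm{rk})$ (a one-dimensional local system with a canonical metric), and flatness of the metric on $\det$ follows from $Q_L$ being a morphism of variations of Hodge structure together with the fact that the Weil operator $C$ acts on $\det$ by a single scalar of absolute value $1$. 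Concretely, on $\det(Rf^i_*\bZ)^{\otimes n}$ the Hodge metric assigns to the integral generator $e$ a constant positive value independent of the point of $S_\bC$ (up to a bounded, continuous, nowhere-zero function near the boundary). Hence $h$ and $h_L$ on $\cL_\bC$ differ by a bounded continuous nowhere-vanishing factor near each boundary point of $\overline{S}_\bC\setminus S_\bC$.

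Having established $h = (\text{bounded, bounded-below}) \cdot h_L$ near the boundary, I would conclude by invoking the previous proposition: $h_L$ on $\cL_\bC\cong\bigotimes_{r\geq 1}\det\Fil^r\cV$ has logarithmic singularities, and the property of having logarithmic singularities is preserved under multiplying by a metric that is bounded above and below by positive constants on a punctured neighborhood of the boundary divisor (this is immediate from the definition, e.g. in ~\cite{Faltings-Chai}*{V.4.2}, since the defining estimate is of the form $|\log|s|_h|\ll \log(1/|t|)$ and is insensitive to bounded perturbations). This gives logarithmic singularities for $h$.

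The main obstacle I anticipate is the second step: making precise and correct the claim that the polarization metric on $\det(Rf^i_*\bZ)^{\otimes n}$ is essentially constant near the boundary. One has to track carefully how the integral structure interacts with the polarization --- the polarization $Q_L$ is only integral after scaling, and the ``constant'' value of $h_L$ on the integral generator $e$ a priori depends on the discriminant of $Q_L$ restricted to $H^i$, which is locally constant on $S_\bC$ but must be checked to extend continuously and without zeros across the punctured disc. This reduces to the fact that the monodromy acts on $\det(Rf^i_*\bZ)$ through $\pm 1$ (indeed trivially, under our unipotence and reduced-fibers hypotheses via ~\cite{Nakayama}*{(3.7)}), so the local system $\det(Rf^i_*\bZ)^{\otimes n}$ extends to a line bundle on $\overline{S}_\bC$ with a metric that is continuous and nonvanishing there; once this is granted the comparison is routine.
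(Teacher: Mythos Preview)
Your proposal is correct and follows essentially the same route as the paper: compare $h$ with the Hodge metric $h_L$ on $\cL_\bC$ via the isomorphism $\cL_\bC\otimes\overline{\cL}_\bC\cong(\det Rf^i_*\bQ)^{\otimes n}\otimes\cO_{S_\bC}$, observe that the two differ by a locally constant positive scalar, and then invoke the previous proposition on $h_L$. The paper compresses this into a single sentence (``locally $h$ is a scalar multiple of $h_L$; this follows from the fact that the polarization is rational''), whereas you unpack the mechanism through flatness of the induced pairing on the determinant local system.

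One comment: your hedging in the last paragraph is unnecessary. You do not need to extend anything across the boundary or worry about continuity there. The polarization $Q_L$ is a flat (horizontal) pairing on the local system $Rf^i_*\bQ$, so the induced form on $\det(Rf^i_*\bQ)^{\otimes n}$ is flat as well; combined with the fact that the Weil operator acts on the top exterior power by a fixed scalar of modulus one, the Hodge norm of an integral generator is \emph{locally constant on $S_\bC$}, not merely bounded near the boundary. That already suffices: the ratio $h/h_L$ is then a locally constant positive scalar on $S_\bC$, and logarithmic singularities are trivially preserved under such a rescaling. The unipotent-monodromy discussion and the appeal to \cite{Nakayama} are not needed for this step.
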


\begin{proof}
We claim that locally $h$ is a scalar multiple of $h_L$ on $\cL_\bC$. 
This follows from the fact that $h_L$ is a rational polarization.  
\end{proof}

\section{Heights in Families}\label{Heights in families}
Kato conjectured a behavior of his heights of motives parametrized by a curve, cf. ~\cite{Kato2}*{5.4}. 
We prove a version of it for our heights. 

\subsection{Statement}
Let $X_{F}$ and $S_F$ be quasi-projective smooth varieties over a number field $F$, and $f\colon X_F\to S_F$ a projective smooth morphism. 
For an integer $i$, we regard the $i$-th relative cohomology of $f$ as a family of $\bZ$-motives. 
We denote it by $M$. 
For each closed point $t\in S_F$, it defines a pure $\bZ$-motive $M_t$ over its residue field $F(t)$. 
Note that we ignore the torsion part of the Betti cohomology.  
We will only consider this form of families of $\bZ$-motives.  

We put the following assumptions on degenerations: 
\begin{quote}
($\ast$): there exist a proper flat scheme $\overline{S}$~(resp.~$\overline{X}$) over $O_F$ containing $S_F$ (resp. $X_F$), a fine saturated log structure on $\overline{S}$~(resp.~$\overline{X}$) which is trivial on $S_F$~(resp.~$X_F$), a proper flat (in the non-logarithmic sense) morphism $\overline{f}\colon\overline{X}\to\overline{S}$ of log schemes whose restriction to $S_F$ is $f$. 

We require that $\overline{f}$ is log smooth and its relative log Hodge cohomology is locally free. 
\end{quote}

We are interested in heights of motives $M_t$. 
We write $h_M (t)$ for the function $t\mapsto h(M_t)$. 
On the other hand, for a line bundle $\cL$ on $S_F$, we have the height function $h_{\cL}(t)$ attached to $\cL$. (well-defined up to bounded functions on the set of closed points $\lvert S_F\rvert$.)

For two functions $f_1$ and $f_2$ on $\lvert S_F\rvert$,  we write $f_1\sim f_2$ if  their difference is a bounded function on $\lvert S_F\rvert$. 

When $S_F$ is a curve, Kato conjectured there exists a line bundle $\cL$ such that $h_M(t)\sim h_\cL (t)$.  
Technically speaking, this should be the case if $S_F=\overline{S}_F$. 
In general, the height function should be attached to an arithmetic line bundle, namely, a line bundle over an integral model equipped with metrics. 

He essentially predicted that $\cL$ is a variation of $L(M_t)$. 
Note that $M_\bQ$ has the de Rham realization $M_{\dR}$, which is a vector bundle on $S$ with the Hodge filtration. 
Then, a line bundle 
\[
\cL :=L(M)_{\bQ} :=\bigotimes_{r\in \bZ}(\det \gr^r(M_{\dR}))^{\otimes r}. 
\] 
is the one we are seeking for. 
(Kato called its degree the geometric height of $M$.)
For each embedding $F\to \bC$, we equip $\cL$ with a metric as in the previous section. 
This defines the height function $h_\cL (t)$. 

We say that a function $f$ on $\lvert S_F\rvert$ satisfies the finiteness property if, for any real numbers $c_1$ and $c_2$, there are finitely many closed points $t$ such that $f(t) <c_1$ and $[F(t)\colon F]< c_2$. 

We prove the following:

\begin{thm}\label{family}
If $(\ast)$ is satisfied, then $h_M(t)\sim h_\cL (t)$. 
If we further assume one of the following
\begin{itemize}
\item $S_F=\overline{S}_F$, or
\item $\overline{S}_F$ is a projective smooth curve and $\overline{f}_F$ has geometrically reduced fibers. 
\end{itemize}

Then $h_M(t)$ is bounded below. 
Moreover, $h_M (t)$ satisfies the finiteness property in the above sense if the period mapping associated to a variation of (polarized) Hodge structure which realize $M$ is a finite morphism. 
\end{thm}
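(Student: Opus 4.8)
The statement has three parts: the comparison $h_M(t) \sim h_\cL(t)$, the lower bound under the extra hypotheses, and the finiteness property under the period-finiteness assumption. The unifying idea is to transport everything to the geometric/Arakelov side via Proposition \ref{Key}, and then invoke the positivity results of Section 5 (Proposition \ref{positivity} and Proposition \ref{logarithmic singularity}).

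\begin{proof}[Proof proposal]
First I would establish $h_M(t) \sim h_\cL(t)$. By hypothesis $(\ast)$, the family $\overline{f}\colon \overline{X}\to\overline{S}$ is log smooth with locally free relative log Hodge cohomology; for each closed point $t\in S_F$ with residue field $F(t)$, the fibre $\overline{X}_t$ over the corresponding point $\overline{t}\in\overline{S}(O_{F(t)})$ (extended suitably) is a proper log smooth scheme over $O_{F(t)}$, so by Corollary \ref{log smooth} it acquires potentially semistable reduction everywhere and the comparison theorems apply after a controlled base extension. Proposition \ref{Key}, applied fibrewise, gives $|h(M_t) - \widehat{\deg}\,\cL_t| \leq C$ with $C$ depending only on the relative dimension $d$ and the Hodge numbers $h^{r,i-r}$ --- which are constant in the family. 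Here $\cL_t$ is the arithmetic line bundle at $t$ determined by the log Hodge cohomology; and by construction $\widehat{\deg}\,\cL_t$ is exactly the height $h_\cL(t)$ attached to the line bundle $\cL = L(M)_\bQ$ with the Hodge metric at archimedean places (the metric defined in Section 5, which by Proposition \ref{logarithmic singularity} has logarithmic singularities and hence gives a well-defined height function up to $O(1)$ even when $\overline{S}_F \neq S_F$; one must check that the log-singular metric contributes a bounded error, using the standard estimates for heights attached to metrized line bundles with logarithmic singularities along a boundary divisor, cf.~\cite{Faltings-Chai}*{V.4}). This yields $h_M(t) \sim h_\cL(t)$.

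Second, the lower bound. Under either extra hypothesis ($S_F = \overline{S}_F$, or $\overline{S}_F$ a projective smooth curve with $\overline{f}_F$ having geometrically reduced fibres), Proposition \ref{positivity} applies: some power of the complexified bundle $\cL_\bC = \bigotimes_{r}(\det\gr^r\cV)^{\otimes r}$ is globally generated on $S_\bC$ (in the curve case the log smooth, geometrically reduced degeneration forces unipotent monodromy, which is the input Peters' theorem needs). Globally generated power implies $\cL$ is nef, hence $h_\cL(t) \geq -O(1)$ on $|S_F|$ by standard height machinery --- one must be a little careful: global generation is a geometric statement over $\bC$, but it descends to a statement over a number field after finite base extension and an elementary spreading-out argument, giving an integral model on which $\cL^{\otimes N}$ is generated by sections, so that $h_{\cL}$ is bounded below up to the archimedean metric comparison already handled. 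Combined with $h_M \sim h_\cL$, this gives the lower bound for $h_M(t)$.

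Third, the finiteness property. When the period map is a finite morphism, Proposition \ref{positivity} gives that $\cL_\bC$ is ample on $S_\bC$ (since $\cL_\bC$ is ample on the image of the period map, and finiteness pulls ampleness back). Again descending to a number field: after finite base extension $\cL^{\otimes N}$ is ample on a model of $S_F$, so $h_\cL$ is a height attached to an ample line bundle, and Northcott's theorem gives that $\{t : h_\cL(t) < c_1,\ [F(t):F] < c_2\}$ is finite. Transporting through $h_M \sim h_\cL$ gives the finiteness property for $h_M(t)$.

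The main obstacle I expect is the second part of the first step: making rigorous the claim that the \emph{archimedean} contribution to $h_\cL(t)$ --- i.e., the Hodge metric with logarithmic singularities along $\overline{S}_F \setminus S_F$ --- differs from a ``naive'' height by only a bounded amount, uniformly over $|S_F|$. This is the point where one genuinely needs the asymptotic estimates of Section 5 (Proposition \ref{logarithmic singularity}, resting on \cite{Peters}, \cite{Zucker}, and the $\SL_2$-orbit theorem as in Remark \ref{CKS}) rather than pure algebraic geometry; the subtlety is that points $t$ approaching the boundary could \emph{a priori} make the archimedean term diverge, and one must show the logarithmic growth of the metric is absorbed into the $O(1)$. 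The rest --- globally-generated-implies-nef, ample-implies-Northcott, spreading out over $O_F$ --- is standard.
\end{proof}
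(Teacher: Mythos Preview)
Your proposal is correct and follows essentially the same route as the paper: fibrewise Proposition~\ref{Key} (after Corollary~\ref{log smooth}, together with a Krasner-type globalization to produce a number field $F'/F(t)$ over which the fiber is semistable everywhere) yields $h_M\sim h_\cL$, and then Propositions~\ref{positivity} and~\ref{logarithmic singularity} drive the lower bound and the finiteness property. One correction to your obstacle paragraph: the height attached to the log-singular Hodge metric does \emph{not} differ from a smooth-metric height by $O(1)$ near the boundary; the paper instead compares two integral extensions of $\cL$---the log-Hodge extension $\cL_1$ and a globally-generated extension $\cL_2$ on a second proper model of $\overline{S}_F$---which differ by $O(1)$ at the level of integral structures with the \emph{same} metric, and then in the open-curve case invokes \cite{Faltings83}*{Lemma 3}, which gives Northcott-type finiteness directly for ample line bundles equipped with logarithmically singular metrics, without any reduction to a smooth metric.
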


\subsection{Proof}
Take $\overline{X}$ and $\overline{S}$ as in ($\ast$). 
Its relative log Hodge cohomology commutes with base change because it is assumed to be locally free and sheaves of logarithmic differential forms commutes with fiber products in the category of fine saturated log schemes. 

Given a closed point $t$ of $S_F$, We claim that there exists a extension $F'$ of $F(t)$ such that $(\overline{X}_{O_{F(t)}})_{O_{F'}}$ becomes semistable everywhere after log blow-ups. 
This follows from Theorem~\ref{potentially semistable} and inductive approximations. 
Namely, we use the following consequence of Kranser's lemma: 
given a number field $F$, a finite place $v$ and a finite extension $K'$ of $F_v$, there exist a finite extension $F'$ of $F$ and a unique finite place $v'$ above $v$ such that $F'_{v'}$ is isomorphic to $K'$. 
 
Then, it is a consequence of the proof of Corollary~\ref{log smooth}, and Proposition~\ref{Key} that $h_M(t)\sim h_\cL(t)$. 

It is a standard argument that Proposition ~\ref{positivity}, remarks following it, and Proposition ~\ref{logarithmic singularity} implies the rest of statement.  
This argument is essentially contained in ~\cite{Faltings83}. 
However, for the convenience of the reader, we will give more details. 

From now on, we will write $\overline{S}_1$ instead of $\overline{S}$.  
First note that $\cL$ extends to a line bundle $\cL_1$ on $\overline{S}_1$ by means of the log Hodge cohomology. 
The line bundle $\cL_1$ (or its power) may not be generated by global sections. 

On the other hand, take $n$ such that $\cL^{n}$ is generated by global sections.  
Then, we have another proper flat model $\overline{S}_2$ of $\overline{S}_F$ such that $\cL^n$ extends to a line bundle $\cL_2$ on $\overline{S}_2$ generated by global sections. 

There exists another proper flat model $\overline{S}_3$ which dominates both $\overline{S}_1$ and $\overline{S}_2$. 
We pull back everything to $\overline{S}_3$ and will drop subscript 3. 
Since $\cL_1$ and $\cL_2$ are models of the same line bundle, there exists a positive integer $N$ and homomorphisms $\cL_1^{N}\to\cL_2$, $\cL_2\to\cL_1^{N}$ such that two compositions are multiplication by some integer, and induces isomorphisms over $F$.   
Hence, height functions attached to $\cL_1$ and $\cL_2$, using the same metrics, only differ by a bounded function. 

Finally, we check that the height function of $\cL_2$ as an arithmetic line bundle on $\overline{S}_2$ satisfies the desired property. 
If $S_F=\overline{S}_F$, this is well-known. 
(In fact, any metric works.)

Suppose $S_F$ is a nonproper smooth curve. 
If $\cL$ has the degree $0$ on a connected component, then the height function $h_\cL$ is bounded on that component. 
Otherwise, the degree is positive, hence ample. 
Every component has the positive degree if the period mappings are finite. 
Since the metric has logarithmic singularities by Proposition ~\ref{logarithmic singularity}, the finiteness property follows from ~\cite{Faltings83}*{Lemma 3}. 

\begin{rem}\label{Siegel}
If $M$ is attached to the first cohomology of an abelian scheme, the theorem is a weaker version of Faltings' theorem via Proposition ~\ref{AV}. 
In fact, we can repeat his argument in our setting as follows. 

The first part of the statement can be applied to suitable arithmetic toroidal compactifications of the moduli space of principal abelian varieties and the universal family, if we ignore a small problem that the moduli space is only a Deligne-Mumford stack. 

The metric on $\cL$ has logarithmic singularities ~\cite{Faltings-Chai}*{V. 4.5}. 
(This can be generalized, cf.~Remark~\ref{CKS}.)
More crucially, a power of $\cL$ comes from an ample line bundle on the coarse moduli space~\cite{Faltings-Chai}*{V. 2.3}. 
These imply the finiteness property for $h_M(t)$. 
\end{rem}

\section{Torsion $p$-adic Hodge Theory}

This technical section discusses torsion Galois representations. 
The main purpose is to supply a few results not available in literature which will replace the theory of finite flat group schemes used in Raynaud's paper ~\cite{Raynaud}. 

\subsection{Torsion Kisin modules}

Recall the definition of a torsion Kisin module:

\begin{defn}
A torsion Kisin module $\fM$ of height $\leq w$ is a quotient of two Kisin modules of height $\leq w$ which is killed by some $p$-power, namely $\fM=\fN/\fN'$ with $T_\fS(\fN')\otimes_{\bZ_p}\bQ_p \overset{\cong}{\to} T_\fS(\fN)\otimes_{\bZ_p}\bQ_p$. 
\end{defn}

\begin{prop}[cf.~\cite{Liu:torsion}*{Corollary 2.3.4}]
There is an exact tensor contravariant functor
\[
\fM\mapsto T_{\fS}(\fM)=\Hom_\varphi(\fM, \bQ_p/\bZ_p \otimes_{\bZ_p}W(R))
\] from the category of torsion Kisin modules to the category of torsion $\bZ_p$-representations of $G_\infty$. 
\end{prop}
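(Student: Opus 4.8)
The plan is to follow \cite{Liu:torsion}*{Corollary 2.3.4}, adapted to our normalization. The first observation is that the assignment $\fM\mapsto T_{\fS}(\fM)=\Hom_\varphi(\fM,\bQ_p/\bZ_p\otimes_{\bZ_p}W(R))$ is \emph{manifestly} functorial and contravariant, so the content to be verified is that it lands in torsion $\bZ_p$-representations of $G_\infty$, that it is exact, and that it is a tensor functor. For the $G_\infty$-action: set $A:=\bQ_p/\bZ_p\otimes_{\bZ_p}W(R)=\varinjlim_n W(R)/p^nW(R)$, using that $W(R)$ is $p$-torsion free; then $G_\infty$ acts on $W(R)$ fixing the image of $\fS$, hence on $A$ compatibly with the residual divided Frobenius, and this induces a $\bZ_p[G_\infty]$-module structure on $\Hom_\varphi(\fM,A)$. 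Because this description of $T_{\fS}(\fM)$ is intrinsic, nothing below will depend on a choice of presentation $\fM=\fN/\fN'$; presentations serve only as an auxiliary device.

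Next I would prove finiteness. Fix a presentation by Kisin modules $0\to\fN'\xrightarrow{\iota}\fN\to\fM\to 0$, which exists by definition, and apply $\Hom_\varphi(-,\,\cdot\,)$ to the coefficient sequence $0\to W(R)\to W(R)[1/p]\to A\to 0$. The crucial input, supplied by the lattice theory of Kisin modules --- equivalently by Fontaine's theory of \'etale $\varphi$-modules for $G_\infty$, which underlies \cite{Liu:torsion} --- is that for a finite free Kisin module $\fN$ one has $\Hom_\varphi(\fN,W(R))=T_{\fS}(\fN)$ and $\Hom_\varphi(\fN,W(R)[1/p])=T_{\fS}(\fN)\otimes_{\bZ_p}\bQ_p$, and moreover that $\Hom_\varphi(\fN,-)$ is \emph{exact} on the coefficient sequence, so that $\Hom_\varphi(\fN,A)\cong T_{\fS}(\fN)\otimes_{\bZ_p}\bQ_p/\bZ_p$. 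Since $\iota$ becomes an isomorphism after inverting $p$ and $W(R)$ is $p$-torsion free, the restriction map $T_{\fS}(\fN)\to T_{\fS}(\fN')$ is an inclusion of full $\bZ_p$-lattices in $V:=T_{\fS}(\fN)\otimes_{\bZ_p}\bQ_p$, and applying $\Hom_\varphi(-,A)$ to the presentation gives the left-exact sequence
\[
0\to T_{\fS}(\fM)\to V/T_{\fS}(\fN)\to V/T_{\fS}(\fN').
\]
Hence $T_{\fS}(\fM)\cong T_{\fS}(\fN')/T_{\fS}(\fN)$, a finite $\bZ_p$-module annihilated by a power of $p$, i.e. a torsion $\bZ_p$-representation of $G_\infty$.

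For exactness, let $0\to\fM_1\to\fM_2\to\fM_3\to 0$ be a short exact sequence of torsion Kisin modules; left-exactness of $T_{\fS}$ is formal from left-exactness of $\Hom$, so the point is surjectivity of $T_{\fS}(\fM_2)\to T_{\fS}(\fM_1)$. I would prove it by a horseshoe-type construction: starting from a presentation $\fN_2\to\fM_2$ by a free Kisin module and taking suitable pullbacks/pushouts (with the usual care for Frobenius structures), build a commutative $3\times 3$ diagram with exact rows and columns whose bottom row is the given sequence and whose top two rows are short exact sequences $0\to\fN_i'\to\fN_i\to\fM_i\to 0$ of free Kisin modules; applying $T_{\fS}$, using its exactness on free Kisin modules together with the identifications $T_{\fS}(\fM_i)=T_{\fS}(\fN_i')/T_{\fS}(\fN_i)$ from the previous step, a snake-lemma chase gives exactness of $0\to T_{\fS}(\fM_3)\to T_{\fS}(\fM_2)\to T_{\fS}(\fM_1)\to 0$. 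A conceptually cleaner route is to localize at $u$ and $p$-adically complete, sending a torsion Kisin module to a torsion \'etale $\varphi$-module over $\mathcal{E}$; then $T_{\fS}$ factors through Fontaine's exact tensor anti-equivalence with torsion $\bZ_p$-representations of $G_\infty$, and exactness, together with the tensor compatibility $T_{\fS}(\fM_1)\otimes_{\bZ_p}T_{\fS}(\fM_2)\xrightarrow{\ \sim\ }T_{\fS}(\fM_1\otimes_\fS\fM_2)$, follows from the corresponding properties of that anti-equivalence once one checks that $\fM\mapsto(\fM[u^{-1}])^{\wedge}_p$ is exact and monoidal on torsion objects. In the elementary approach the tensor compatibility is instead obtained by $p$-power d\'evissage and the five lemma from the classical case of finite free Kisin modules.

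The main obstacle, common to both routes, is precisely the acyclicity statement invoked above: that $\Hom_\varphi(\fN,-)$ of a finite free Kisin module $\fN$ is exact on $0\to W(R)\to W(R)[1/p]\to A\to 0$, and that $T_{\fS}$ is exact on short exact sequences of free Kisin modules. Concretely this comes down to the surjectivity of a twisted Frobenius operator on modules of the form $\fN\otimes_\fS W(R)$, which is exactly what Fontaine's equivalence between torsion \'etale $\varphi$-modules over $\mathcal{E}$ and torsion $\bZ_p$-representations of $G_\infty$ provides; this is the genuine content of \cite{Liu:torsion}. Everything else --- the $G_\infty$-action, functoriality, the diagram chases, and the tensor structure --- is formal bookkeeping.
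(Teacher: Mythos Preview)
The paper gives no proof of this proposition: it is stated with the attribution ``cf.~\cite{Liu:torsion}*{Corollary 2.3.4}'' and immediately followed by a remark, with no proof environment. So there is nothing to compare your argument against beyond the bare citation to Liu.

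Your sketch is a faithful unpacking of what lies behind that citation. The reduction to Fontaine's equivalence between torsion \'etale $\varphi$-modules over $\cO_{\mathcal E}$ and torsion $\bZ_p$-representations of $G_\infty$ is exactly the engine in \cite{Liu:torsion}, and your identification $T_{\fS}(\fM)\cong T_{\fS}(\fN')/T_{\fS}(\fN)$ for a presentation $0\to\fN'\to\fN\to\fM\to 0$ is the content of the proposition that follows in the paper (Proposition~\ref{exact sequence 0}). One small point: in your ``elementary'' route to the tensor compatibility you invoke the case of finite free Kisin modules as classical, but note that the tensor statement you actually need is for \emph{torsion} objects, and the d\'evissage from free to torsion requires the exactness you have just established together with a compatibility check on the passage $\fM\mapsto \cO_{\mathcal E}\otimes_{\fS}\fM$; this is fine, but it is really the same input as your second route, so the two approaches are not as independent as you suggest.
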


\begin{rem}
As the notation suggests, the above $T_\fS$ is compatible with the $T_\fS$ for free Kisin modules, see the proposition below. 
The contravariant functor $T_{\fS}$ is not fully faithful for torsion Kisin modules in general. 
\end{rem}

\begin{prop}[\cite{Liu:torsion}*{Section 2.}]\label{exact sequence 0}
Let $V$ be a semstable representation of $G_K$ and let $T\subset T'$ be lattices in $V$ which are $G_\infty$-stable. 
We write $\fN$ and $\fN'$ for Kisin modules corresponding to $T$ and $T'$ respectively. 
Then, $\fN/\fN'$ is a torsion Kisin module and there is an exact sequence
\[
0\to T \to T' \to T_{\fS}(\fN/\fN') \to 0. 
\]
\end{prop}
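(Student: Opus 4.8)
The plan is to realize everything inside the period ring $W(R)$ using the contravariant functor $T_\fS$ and its torsion version, and then chase the snake. First I would recall from Kisin's equivalence (\cite{Kisin}) that, after fixing the ambient Kisin module $\fM_0$ attached to $V$, the inclusion $T\subset T'$ of $G_\infty$-stable lattices corresponds to an injection of Kisin modules $\fN\hookrightarrow \fN'$ inside $\bQ_p\otimes_{\bZ_p}\fM_0$ (contravariance reverses the inclusion of lattices into an inclusion of Kisin modules; explicitly $\fN$ and $\fN'$ are the saturations of the images of the respective lattices under Kisin's construction, sitting inside $\cO\otimes_{K_0}D$). Since $T$ and $T'$ span the same $V$, the cokernel $\fN'/\fN$ is killed by a power of $p$ — concretely, if $p^aT'\subset T$ then $p^a$ annihilates $\fN'/\fN$ — so by definition $\fN'/\fN$ is a torsion Kisin module of height $\leq w$. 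This gives the short exact sequence of $\fS$-modules with semilinear $\varphi$
\[
0\to \fN \to \fN' \to \fN'/\fN \to 0.
\]

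Next I would apply $\Hom_\varphi(-, W(R))$ and $\Hom_\varphi(-, \bQ_p/\bZ_p\otimes_{\bZ_p}W(R))$, and compare them via the short exact coefficient sequence $0\to W(R)\to \bQ_p\otimes_{\bZ_p}W(R)\to \bQ_p/\bZ_p\otimes_{\bZ_p}W(R)\to 0$. The key input is that $W(R)$ (and its $\bQ_p$-version) is, in the appropriate derived sense, injective relative to the relevant category — more precisely one uses that $\Ext^1_\varphi(\fN'/\fN, W(R))$ vanishes and that $\Hom_\varphi(\fN'/\fN, \bQ_p\otimes_{\bZ_p}W(R))=0$ because $\fN'/\fN$ is torsion while $\bQ_p\otimes_{\bZ_p}W(R)$ is $p$-torsion-free. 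These facts are exactly the content behind \cite{Liu:torsion}*{Section 2}; with them the long exact sequence collapses to
\[
0\to \Hom_\varphi(\fN'/\fN,\, \tfrac{\bQ_p}{\bZ_p}\otimes_{\bZ_p}W(R)) \to T_\fS(\fN') \to T_\fS(\fN) \to \Ext^1_\varphi(\fN'/\fN, W(R))=0,
\]
which, after identifying the first term with $T_\fS(\fN'/\fN)$ via the proposition preceding this one, is precisely the asserted exact sequence $0\to T\to T'\to T_\fS(\fN'/\fN)\to 0$ once we match $T_\fS(\fN')=T$ and $T_\fS(\fN)=T'$ (again the order swap from contravariance).

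I expect the main obstacle to be bookkeeping rather than conceptual: first, keeping the two contravariances straight (lattices $\to$ Kisin modules reverses inclusions, and then $\Hom_\varphi(-,W(R))$ reverses them back), so that one really lands on $0\to T\to T'\to\cdots$ and not its dual; second, verifying the vanishing $\Ext^1_\varphi(\fN'/\fN, W(R))=0$ for a \emph{torsion} Kisin module, which is where one genuinely needs the structure theory of $W(R)$ as an $\fS$-algebra with Frobenius (surjectivity of $1-\varphi$-type operators on the relevant quotients) — this is precisely why one cites \cite{Liu:torsion}. The identification $T_\fS(\fN)=T'$ and $T_\fS(\fN')=T$ as Galois lattices (not merely abstractly) follows from Kisin's theorem together with the compatibility of $T_\fS$ with the passage $\fM\mapsto \Hom_{\Fil,\varphi}(\cD, B_{\crys})=V$ recalled in Subsection 2.4, so the sequence is automatically $G_\infty$-equivariant.
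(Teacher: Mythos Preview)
The paper gives no proof of this proposition; it is simply attributed to \cite{Liu:torsion}*{Section 2}. So the only question is whether your argument is internally correct, and it is not.

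Set aside the label swap (the statement has $\fN\leftrightarrow T$ and $\fN'\leftrightarrow T'$, hence $\fN'\subset\fN$ and the quotient is $\fN/\fN'$; you have reversed this). The substantive error is the displayed ``collapsed'' sequence
\[
0\to \Hom_\varphi\bigl(\fN'/\fN,\, \tfrac{\bQ_p}{\bZ_p}\otimes_{\bZ_p}W(R)\bigr) \to T_\fS(\fN') \to T_\fS(\fN) \to \Ext^1_\varphi(\fN'/\fN, W(R))=0.
\]
This is not a long exact sequence of any single derived functor: the first term has coefficients $\bQ_p/\bZ_p\otimes W(R)$ while the middle terms are $\Hom_\varphi(-,W(R))$. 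Applying $\Hom_\varphi(-,W(R))$ to $0\to\fN\to\fN'\to\fN'/\fN\to 0$ honestly gives
\[
0\to 0 \to T_\fS(\fN') \to T_\fS(\fN) \to \Ext^1_\varphi(\fN'/\fN, W(R))\to\cdots,
\]
so the cokernel $T'/T$ \emph{injects into} $\Ext^1_\varphi(\fN'/\fN, W(R))$. Your assertion that this $\Ext^1$ vanishes would force $T=T'$. What is actually needed is the opposite: one identifies $T_\fS(\fN'/\fN)=\Hom_\varphi(\fN'/\fN,\bQ_p/\bZ_p\otimes W(R))$ with (a piece of) this $\Ext^1$ via the coefficient sequence $0\to W(R)\to W(R)[1/p]\to \bQ_p/\bZ_p\otimes W(R)\to 0$, and then matches it with $T'/T$ either by controlling the next term or by a length count (Liu shows $T_\fS$ is exact and length-preserving on torsion Kisin modules). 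Your final sentence, reading $0\to T_\fS(\fN'/\fN)\to T\to T'\to 0$ as the asserted $0\to T\to T'\to T_\fS(\fN'/\fN)\to 0$, then simply does not follow: the former would make the inclusion $T\hookrightarrow T'$ surjective.
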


Let $\fM$ be a torsion or free Kisin module of height $\leq w$. 
Note that $1\otimes\varphi\colon \varphi^*\fM=\fS\otimes_{\varphi, \fS}\fM\to\fM$ is injective, cf.~\cite{Liu:torsion}*{Proposition 2.3.2}. 
We define a filtration on $\varphi^*\fM$ by
\[
\Fil^r\varphi^*\fM=\{x\in \varphi^*\fM\mid (1\otimes\varphi)(x) \in E(u)^r \fM\}. 
\]

Recall the homomorphism $\fS\to O_K;u\mapsto \pi$. 
We write $\fM_{\dR}$ for $O_K\otimes_{\fS}\varphi^*\fM$, and equip a filtration on $\fM_{\dR}$ by the image of the filtration on $\varphi^*\fM$. 
If $\fM$ is a free Kisin module coming from a lattice $T$ in a semistable representation, $\fM_{\dR}=M_{\dR}(T)$. 

Suppose we are in the situation of Proposition ~\ref{exact sequence 0}. 
Since any torsion Kisin module is $E(u)$-torsion free ~\cite{Liu:torsion}*{Proposition 2.3.2}, 
for any $r$, there is a commutative diagram of exact sequences
\[
\begin{split}
 \xymatrix{
0\ar[r] & E(u)^r\fN' \ar[r] & E(u)^r\fN\ar[r] \ar[r] & E(u)^r(\fN/\fN') \ar[r] & 0 \\
0\ar[r] & \Fil^r \varphi^*\fN' \ar[r] \ar[u] & \Fil^r \varphi^*\fN \ar[r] \ar[u] & 
\Fil^r \varphi^*(\fN/\fN')\ar[r] \ar[u] & 0. \\
} 
\end{split}
\]

Thus, 
we obtain an exact sequence
\[
0\to \gr^r \varphi^*\fN' \to\gr^r \varphi^*\fN \to \gr^r \varphi^*(\fN/\fN') \to 0. 
\]

\begin{lem}\label{refined filtration}
Let $\fM$ be a free or torsion Kisin module. 
Then, there is a functorial construction of a decreasing filtration $F$ on $\gr^r \varphi^*\fM$ such that
$F^0(\gr^r\varphi^*\fM)=\gr^r \varphi^*\fM$, 
$F^{r+1}(\gr^r\varphi^*\fM)=0$ and, for any $j$ between $0$ and $r$, 
\[
F^j(\gr^r \varphi^*\fM)/F^{j+1}(\gr^r \varphi^*\fM) \cong \gr^{r-j}(\fM_{\dR}). 
\]
\end{lem}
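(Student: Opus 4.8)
The plan is to build $F$ by transporting a shift of the de Rham filtration through the isomorphism ``multiplication by $E(u)^j$'' on $\varphi^*\fM$, and then to check the three assertions by elementary manipulations inside $\varphi^*\fM$. First I would record two preliminaries. Since $E(u)$ is the Eisenstein minimal polynomial of $\pi$ over $W$, Weierstrass preparation gives $\ker(\fS\to O_K;\, u\mapsto\pi)=(E(u))$, so $\fM_{\dR}=\varphi^*\fM/E(u)\varphi^*\fM$ and, by its definition, $\Fil^j\fM_{\dR}$ is the image of $\Fil^j\varphi^*\fM$. Second, $E(u)$ is a non-zero-divisor on $\fM$ and on $\varphi^*\fM$: this is clear for free Kisin modules, and in the torsion case it follows from $E(u)$-torsion-freeness of $\fM$ (\cite{Liu:torsion}*{Proposition 2.3.2}) together with the $\fS$-linear injection $1\otimes\varphi\colon\varphi^*\fM\hookrightarrow\fM$. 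I would then set
\[
F^j(\gr^r\varphi^*\fM):=\text{image of }\bigl(E(u)^j\varphi^*\fM\cap\Fil^r\varphi^*\fM\bigr)\text{ in }\gr^r\varphi^*\fM,
\]
which is visibly decreasing and functorial in $\fM$ (a morphism of Kisin modules commutes with $1\otimes\varphi$, hence preserves $\Fil^\bullet\varphi^*$), with $F^0(\gr^r\varphi^*\fM)=\gr^r\varphi^*\fM$ since $\Fil^0\varphi^*\fM=\varphi^*\fM$.

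The crucial lemma to prove is the intersection identity
\[
E(u)^a\varphi^*\fM\cap\Fil^b\varphi^*\fM=E(u)^a\,\Fil^{\,b-a}\varphi^*\fM\qquad(a,b\ge 0),
\]
with the convention $\Fil^{\le 0}\varphi^*\fM=\varphi^*\fM$; equivalently, multiplication by $E(u)^a$ restricts to an isomorphism $\Fil^{\,b-a}\varphi^*\fM\xrightarrow{\ \sim\ }E(u)^a\varphi^*\fM\cap\Fil^b\varphi^*\fM$. The inclusion $\supseteq$ is immediate from $(1\otimes\varphi)(E(u)^ax)=E(u)^a(1\otimes\varphi)(x)$; for $\subseteq$ one writes an element as $E(u)^ay$ with $y\in\varphi^*\fM$ (unique, $E(u)$ being a non-zero-divisor), notes $E(u)^a(1\otimes\varphi)(y)\in E(u)^b\fM$, and cancels $E(u)^a$ by $E(u)$-torsion-freeness of $\fM$ to land $y$ in $\Fil^{\,b-a}\varphi^*\fM$. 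Taking $a=j$, $b=r$ identifies $F^j(\gr^r\varphi^*\fM)$ with the image of $E(u)^j\Fil^{r-j}\varphi^*\fM$; since moreover $E(u)^j\varphi^*\fM\subseteq\Fil^j\varphi^*\fM$, this image vanishes once $j\ge r+1$, which gives $F^{r+1}(\gr^r\varphi^*\fM)=0$.

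For the graded pieces, fixing $0\le j\le r$, the isomorphism ``multiplication by $E(u)^j$'' carries $\Fil^{r-j}\varphi^*\fM$, $E(u)\,\Fil^{r-j-1}\varphi^*\fM$ and $\Fil^{r-j+1}\varphi^*\fM$ respectively onto $E(u)^j\Fil^{r-j}\varphi^*\fM$, $E(u)^{j+1}\Fil^{r-j-1}\varphi^*\fM$ and $E(u)^j\Fil^{r-j}\varphi^*\fM\cap\Fil^{r+1}\varphi^*\fM$, the last equality being the case $a=j$, $b=r+1$ of the identity. Reducing modulo $\Fil^{r+1}\varphi^*\fM$ then produces a canonical isomorphism
\[
F^j/F^{j+1}\ \cong\ \Fil^{r-j}\varphi^*\fM\big/\bigl(E(u)\,\Fil^{r-j-1}\varphi^*\fM+\Fil^{r-j+1}\varphi^*\fM\bigr).
\]
Finally, the case $a=1$, $b=r-j$ of the identity gives $\Fil^{r-j}\varphi^*\fM\cap E(u)\varphi^*\fM=E(u)\,\Fil^{r-j-1}\varphi^*\fM$; combined with the modular law and the fact that $\Fil^{r-j}\fM_{\dR}$ and $\Fil^{r-j+1}\fM_{\dR}$ are the images of $\Fil^{r-j}\varphi^*\fM$ and $\Fil^{r-j+1}\varphi^*\fM$ in $\fM_{\dR}=\varphi^*\fM/E(u)\varphi^*\fM$, the right-hand side above is precisely $\gr^{r-j}\fM_{\dR}$, and one checks readily that the resulting isomorphism is functorial in $\fM$.

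The main obstacle is bookkeeping rather than anything conceptual: all three assertions funnel through the intersection identities $E(u)^a\varphi^*\fM\cap\Fil^b\varphi^*\fM=E(u)^a\Fil^{\,b-a}\varphi^*\fM$, whose only substantive input is that a free or torsion Kisin module is $E(u)$-torsion-free, which is exactly what permits cancelling powers of $E(u)$ after applying $1\otimes\varphi$. The single point needing a little care in the torsion case is that $\varphi^*\fM$ itself, not merely $\fM$, must be $E(u)$-torsion-free; this is where the $\fS$-linear injection $1\otimes\varphi\colon\varphi^*\fM\hookrightarrow\fM$ is invoked.
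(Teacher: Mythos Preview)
Your proof is correct and is essentially the same as the paper's: your filtration $F^j$, defined as the image of $E(u)^j\varphi^*\fM\cap\Fil^r\varphi^*\fM$ in $\gr^r\varphi^*\fM$, is exactly the filtration induced by the paper's $F^j(\Fil^r\varphi^*\fM)=(\varphi^*\fM\cap E(u)^{r+1}\fM)+(E(u)^j\varphi^*\fM\cap E(u)^r\fM)$ once one identifies $\varphi^*\fM$ with its image in $\fM$. The only difference is organizational: you isolate the intersection identity $E(u)^a\varphi^*\fM\cap\Fil^b\varphi^*\fM=E(u)^a\Fil^{\,b-a}\varphi^*\fM$ as a standalone lemma and deduce everything from it, whereas the paper carries out the equivalent cancellations inline through a chain of displayed equalities.
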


\begin{proof}
We define a filtration $F$ on $\Fil^r\varphi^*\fM$ by
\[
F^j(\Fil^r\varphi^*\fM)=(\varphi^*\fM\cap E(u)^{r+1}\fM)+(E(u)^j\varphi^*\fM\cap E(u)^r\fM).
\]
So, $F^0(\Fil^r\varphi^*\fM)=\Fil^r\varphi^*\fM$ and $F^{r+1}(\Fil^r\varphi^*\fM)=\Fil^{r+1}\varphi^*\fM$. 
Then, we observe that
\begin{align*}
&\frac{F^j(\Fil^r\varphi^*\fM)}{F^{j+1}(\Fil^r\varphi^*\fM)}
=\frac{(\varphi^*\fM\cap E(u)^{r+1}\fM)+(E(u)^j\varphi^*\fM\cap E(u)^r\fM)}{(\varphi^*\fM\cap E(u)^{r+1}\fM)+(E(u)^{j+1}\varphi^*\fM\cap E(u)^r\fM)} \\
&\cong
\frac{E(u)^j\varphi^*\fM\cap E(u)^r\fM}{(E(u)^j\varphi^*\fM\cap E(u)^r\fM)\cap ((\varphi^*\fM\cap E(u)^{r+1}\fM)+ (E(u)^{j+1}\varphi^*\fM\cap E(u)^r\fM))} \\
&=
\frac{E(u)^j\varphi^*\fM\cap E(u)^r\fM}{(E(u)^j\varphi^*\fM\cap E(u)^{r+1}\fM)+(E(u)^{j+1}\varphi^*\fM\cap E(u)^r\fM)} \\
&\cong
\frac{\varphi^*\fM\cap E(u)^{r-j}\fM}{(\varphi^*\fM\cap E(u)^{r-j+1}\fM)+(E(u)\varphi^*\fM\cap E(u)^{r-j}\fM)}
\cong \gr^{r-j}(\fM_{\dR}). 
\end{align*}
The filtration $F$ induces a desired filtration on $\gr^r \varphi^*\fM$.  
\end{proof}

By induction on $r$, we can deduce the following proposition. 

\begin{prop}\label{exact sequence}
In the situation of Proposition~\ref{exact sequence 0}, 
a sequence 
\[
0\to \gr^r(M_{\dR}(T'))\to \gr^r(M_{\dR}(T))\to \gr^r((\fN/\fN')_{\dR})\to 0
\]
is exact. 
\end{prop}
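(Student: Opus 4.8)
The plan is to combine the short exact sequence on $\gr^r\varphi^*$ established just before Lemma~\ref{refined filtration} with the refined filtration $F$ of that lemma, and then argue by induction on $r$. Concretely, for the lattices $T\subset T'$ with associated Kisin modules $\fN$ and $\fN'$ and torsion Kisin module $\fN/\fN'$, we already have the exact sequence
\[
0\to \gr^r\varphi^*\fN' \to \gr^r\varphi^*\fN \to \gr^r\varphi^*(\fN/\fN') \to 0
\]
for every $r\geq 0$. The functoriality claim in Lemma~\ref{refined filtration} means the filtration $F$ is compatible with the morphisms $\varphi^*\fN'\to\varphi^*\fN$ and $\varphi^*\fN\to\varphi^*(\fN/\fN')$; I would first check that this functoriality is genuine, i.e.\ that the explicit formula $F^j(\Fil^r\varphi^*\fM)=(\varphi^*\fM\cap E(u)^{r+1}\fM)+(E(u)^j\varphi^*\fM\cap E(u)^r\fM)$ is manifestly compatible with any $\varphi$-equivariant $\fS$-linear map of Kisin modules (it is, since each term is defined by intersections and sums of $E(u)$-power multiples, all of which are preserved). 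Hence $F$ gives a filtration of the whole short exact sequence above by subobjects, and for $j=0$ it is the full term while for $j=r+1$ it is zero.

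Next I would run the induction. The base case $r=0$ is exactly the sequence $0\to\gr^0\varphi^*\fN'\to\gr^0\varphi^*\fN\to\gr^0\varphi^*(\fN/\fN')\to 0$, which by Lemma~\ref{refined filtration} (with $r=0$, so the filtration is trivial) is identified with $0\to\gr^0(M_{\dR}(T'))\to\gr^0(M_{\dR}(T))\to\gr^0((\fN/\fN')_{\dR})\to 0$, giving the claim for $r=0$. For the inductive step, fix $r\geq 1$ and consider the sub-short-exact-sequence $F^1\subset F^0$ of the sequence of $\gr^r\varphi^*$'s; its quotient is, by Lemma~\ref{refined filtration}, the sequence of $\gr^r(M_{\dR})$'s, i.e.
\[
0\to \gr^r(M_{\dR}(T'))\to \gr^r(M_{\dR}(T))\to \gr^r((\fN/\fN')_{\dR})\to 0.
\]
A diagram chase (snake lemma) on the three-by-three diagram with rows the $F^1$-subsequence, the $F^0$-sequence, and the $F^0/F^1$-sequence shows that exactness of the $F^0$-sequence (which we have) plus exactness of the $F^1$-sequence forces exactness of the quotient sequence. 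So it suffices to prove the $F^1$-subsequence is exact. But $F^1(\gr^r\varphi^*\fM)$ is itself filtered by $F^2\subset F^3\subset\cdots$, and iterating, exactness of the whole $F^\bullet$-graded object reduces to exactness of each $F^j/F^{j+1}$, which is $\gr^{r-j}(M_{\dR})$ for $j=0,\dots,r$; these are exactly the statements of the proposition for indices $r, r-1, \dots, 0$, and all of $r-1,\dots,0$ are known by the inductive hypothesis. The only missing piece in this chain is precisely the $j=0$ graded quotient, i.e.\ the sequence of $\gr^r(M_{\dR})$'s; so knowing exactness of the total $\gr^r\varphi^*$-sequence together with exactness of $\gr^{r-j}(M_{\dR})$-sequences for $j\geq 1$ yields exactness of the $\gr^r(M_{\dR})$-sequence. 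I would phrase this cleanly as: the functor $\gr^r\varphi^*$ is exact on the sequence $0\to\fN'\to\fN\to\fN/\fN'\to 0$, it carries a finite filtration whose graded pieces are $\gr^{r-j}(M_{\dR})$ for $j=0,\dots,r$, and exactness of a filtered complex follows from exactness of all but one graded piece together with exactness of the total complex.

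The main obstacle I anticipate is bookkeeping with the filtration in the torsion setting: the torsion modules $\fN/\fN'$ and $(\fN/\fN')_{\dR}$ need not be free over their respective rings, so I cannot argue by ranks or by tensoring with $\bQ_p$, and I must ensure the intersections $E(u)^j\varphi^*\fM\cap E(u)^r\fM$ behave correctly modulo torsion. The key input here is the fact (cited from \cite{Liu:torsion}*{Proposition 2.3.2}) that torsion Kisin modules are $E(u)$-torsion-free and that $1\otimes\varphi\colon\varphi^*\fM\to\fM$ is injective; these are what make the commutative diagram of exact sequences preceding Lemma~\ref{refined filtration} valid and what make the identifications in the proof of Lemma~\ref{refined filtration} work for torsion $\fM$ as well as free $\fM$. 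Once I am confident the refined filtration is exact on the three-term sequence — which ultimately comes down to these $E(u)$-torsion-freeness facts plus the snake lemma — the induction closes without further difficulty.
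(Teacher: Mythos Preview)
Your proposal is correct and follows exactly the approach the paper intends: the paper's proof is the single line ``By induction on $r$, we can deduce the following proposition,'' and what you have written is precisely a careful unpacking of that induction using the functorial filtration $F$ of Lemma~\ref{refined filtration} together with the exact sequence on $\gr^r\varphi^*$ established just before it. Your handling of the inductive step via the $3\times 3$ lemma (building $F^1$ from its graded pieces $\gr^{r-1},\dots,\gr^0$ and then extracting $\gr^r$ from the exact $F^0$-row) is the standard way to make the one-line argument rigorous.
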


\subsection{Galois actions on determinants}
If the associated Galois representations is free of level $m$, we can compute the inertia action (or its restriction to $G_\infty$) on its determinant under certain technical assumptions. 
This will be an important input for our application to heights of motives.  

Let $\fM$ be a torsion Kisin module of height $\leq w$ which is a free $\fS_m$-module of rank $h$.  
Then, $T_{\fS}(\fM)$ is a free $\bZ/p^m\bZ$-representation of $G_{\infty}$. 

It is easy to see that $\fM_{\dR}$ is a free $W_m$-module equipped with the filtration by free $W_m$-modules. 
Moreover, its graded pieces are also free $W_m$-modules. 

Set $d_r=\rk_{W_m}\gr^r(\fM_{\dR})$ and $d=\sum_{r=1}^{w} r\cdot d_r$. 
We assume $w\neq 0$. 
(Otherwise, Galois actions are trivial and $d=0$.)

\begin{thm}\label{determinant}
Suppose $k$ is algebraically closed and $m\geq 2^{hw-1}m'$ for some integer $m'\geq 2$.   

Then, $\frac{d}{e}$ is an interger and
$\det (T_{\fS}(\fM))$ modulo $p^{m'}$ is isomorphic to $(\bZ/p^{m'}\bZ)(\frac{d}{e})$ as representations of $G_{\infty}$. 
\end{thm}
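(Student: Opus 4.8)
The plan is to reduce to the rank one case by passing to the top exterior power, and there to identify the resulting $G_\infty$-character by lifting to an honest free Kisin module; the loss of $p$-adic precision in this reduction is what the hypothesis $m\ge 2^{hw-1}m'$ is designed to absorb.

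First I would settle the integrality of $d/e$ and record the shape of the Hodge filtration on $\fM_\dR$. Since $\gr^r(\fM_\dR)$ is free over $W_m$ while $O_K/p^m$ is free of rank $e$ over $W_m$, a length computation forces $\gr^r(\fM_\dR)$ to be free over $O_K/p^m$; writing $c_r$ for this rank one gets $d_r=e\,c_r$, hence $d/e=\sum_r r\,c_r\in\bZ$, and $\fM_\dR$ admits an $O_K/p^m$-basis adapted to its filtration in which exactly $c_r$ vectors occur in the jump at degree $r$.

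Next, put $\det\fM:=\wedge^{h}_{\fS_m}\fM$. From the inverse up to $E(u)^{w}$ of $1\otimes\varphi$ on $\fM$ one obtains an inverse up to $E(u)^{hw}$ on $\wedge^{h}\fM$, so $\det\fM$ is a rank one torsion Kisin module of height $\le hw$, free of level $m$; moreover $(\det\fM)_\dR=\det_{O_K/p^m}(\fM_\dR)$ with the determinant filtration, which by the previous paragraph is free of rank one with its single jump in degree $\sum_r r\,c_r=d/e$. Any rank one torsion Kisin module of level $m$ is $\fS_m\cdot e$ with $\varphi(e)=\alpha e$, and a direct computation with the colon ideals $\{f\in\fS_m:f\alpha\in E(u)^{r}\fS_m\}$ shows that having free graded de Rham pieces is equivalent to $\alpha=\gamma\,E(u)^{s}$ with $\gamma\in\fS_m^{\times}$, where $s$ is the position of the jump; for $\det\fM$ this forces $s=d/e$. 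Lifting $\gamma$ to a unit of $\fS$ exhibits $\det\fM$ as the reduction modulo $p^{m}$ of the free rank one Kisin module $(\fS,\varphi=\gamma\,E(u)^{s})$, which represents $\bZ_p(s)$ up to an unramified twist; the twist is trivial because $k=\overline{k}$ makes $G_K=I_K$, so $T_\fS(\det\fM)\cong(\bZ/p^m)(d/e)$ in the normalisation of Section 2.

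It remains to identify $\det T_\fS(\fM)=\wedge^{h}T_\fS(\fM)$ with $T_\fS(\det\fM)$ modulo $p^{m'}$, and this is the main obstacle. The functor $T_\fS$ on torsion Kisin modules is only an exact tensor functor, not fully faithful, so the natural comparison map $\wedge^{h}T_\fS(\fM)\to T_\fS(\wedge^{h}\fM)$, although an isomorphism after inverting $p$, is only an isomorphism $p$-adically up to a controlled loss. The plan is to bound this loss by propagating the integral comparison of Theorem~\ref{comparison:Kisin} (in its torsion form) through a d\'evissage that strips off the $hw$ factors of $E(u)$ --- equivalently, through a filtration of $\det\fM$ by Kisin submodules of smaller height --- losing a factor $2$ at each of the at most $hw-1$ nontrivial steps. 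Checking that $m\ge 2^{hw-1}m'$ is exactly enough to read off $\det T_\fS(\fM)$ on the nose modulo $p^{m'}$ is the technical heart of the proof.
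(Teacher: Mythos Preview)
Your overall architecture—pass to the top exterior power, analyse the resulting rank one module, lift to a free Kisin module—is exactly the paper's. But you have the source of the precision loss backwards, and this misdiagnosis leaves a genuine gap.

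The paper records (just before the theorem) that $T_\fS$ on torsion Kisin modules is an \emph{exact tensor} functor; consequently $\det T_\fS(\fM)\cong T_\fS(\det\fM)$ holds on the nose at level $m$, with no loss whatsoever. So the ``main obstacle'' you identify in your last paragraph does not exist, and the d\'evissage you sketch there is not needed.

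The actual loss of $2^{hw-1}$ occurs entirely inside the rank one analysis, precisely at the step you claim is a ``direct computation.'' Writing $\det\fM=\fS_m\cdot e$ with $\varphi(e)=\alpha e$, the height condition only says $\alpha\mid E(u)^{hw}$ in $\fS_m$; it does \emph{not} force $\alpha=\gamma\,E(u)^{s}$ with $\gamma\in\fS_m^{\times}$. (The paper's own example $\fM=\fS_2\cdot a$, $\varphi(a)=u\cdot a$ with $e=1$ already shows $\alpha$ need not be a unit times a power of $E(u)$ at full level.) Your appeal to freeness of the de Rham graded pieces does not close this: first, the claim that $\gr^r(\fM_\dR)$ is free over $O_K/p^m$ because it is free over $W_m$ is false in general (a $W_m$-free $O_K/p^m$-module need not be $O_K/p^m$-free; take $e>1$, $m=1$, and the residue field $k$ as an $O_K/p$-module); second, even granting such freeness, the implication to $\alpha=\gamma\,E(u)^s$ is not the routine colon-ideal computation you suggest. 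The paper instead proves an Eisenstein-type factorisation lemma: if $fg=E(u)^{w}$ in $\fS_m$ with $m\ge 2^{w-1}m'$, then modulo $p^{m'}$ both $f$ and $g$ are units times powers of $E(u)$. The proof is by induction on the exponent, losing a factor of $2$ at each step—this is where $2^{hw-1}$ enters. Once $\alpha\equiv\gamma\,E(u)^{d'}\pmod{p^{m'}}$, one reads off $d=d'e$ (giving the integrality of $d/e$, which you attempted to prove separately by the flawed freeness argument) and lifts to a free rank one Kisin module exactly as you describe.

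In short: keep your reduction to rank one and your lifting step, drop the last paragraph entirely, and replace the ``direct computation'' by the inductive Eisenstein factorisation; that is the heart of the proof.
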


\begin{rem}
Since $T_{\fS}$ commutes with extensions of $k$, one can use this theorem to compute actions of $G_{\infty}\cap I_K$ if $k$ is not algebraically closed. 
\end{rem}

\begin{lem}\label{exponent}
An equality $d=\rk_{W_m}\fM/\varphi^*\fM$ holds. 
In particular, an iequality $d\leq ehw$ holds. 
\end{lem}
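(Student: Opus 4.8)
The notation $\fM/\varphi^*\fM$ refers to the cokernel $Q$ of the injection $1\otimes\varphi\colon\varphi^*\fM\to\fM$. The plan is to compute $\rk_{W_m}Q$ by filtering $Q$ with powers of $E(u)$ and then invoking Lemma~\ref{refined filtration}. Throughout I would use that $E(u)$ is monic, so multiplication by $E(u)$ is injective on the free $\fS_m$-module $\fM$, and that $\fM/E(u)^r\fM$ is free of rank $h$ over $\fS_m/E(u)^r\fS_m$, hence free of rank $ehr$ over $W_m$. Granting the equality $d=\rk_{W_m}Q$, the inequality $d\leq ehw$ is then immediate: since $\fM$ has height $\leq w$ we have $E(u)^w\fM\subseteq(1\otimes\varphi)(\varphi^*\fM)$, so $Q$ is a quotient of the free $W_m$-module $\fM/E(u)^w\fM$ of rank $ehw$.

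For the equality, put $N=(1\otimes\varphi)(\varphi^*\fM)\cong\varphi^*\fM$ and use the finite chain $\fM=N+E(u)^0\fM\supseteq N+E(u)^1\fM\supseteq\cdots\supseteq N+E(u)^w\fM=N$, the last equality holding since $E(u)^w\fM\subseteq N$. The heart of the matter is to identify the $r$-th graded piece $P_r:=(N+E(u)^r\fM)/(N+E(u)^{r+1}\fM)$. By the second isomorphism theorem and the modular law (legitimate because $E(u)^{r+1}\fM\subseteq E(u)^r\fM$), $P_r\cong E(u)^r\fM/\bigl((N\cap E(u)^r\fM)+E(u)^{r+1}\fM\bigr)$, and by the definition of the filtration on $\varphi^*\fM$ one has $N\cap E(u)^r\fM=(1\otimes\varphi)(\Fil^r\varphi^*\fM)$. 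Dividing through by $E(u)^r$ rewrites $P_r$ as $\fM/\bigl(M_r'+E(u)\fM\bigr)$, where $M_r'\subseteq\fM$ is the image of the injective map $\psi_r\colon\Fil^r\varphi^*\fM\to\fM$, $x\mapsto(1\otimes\varphi)(x)/E(u)^r$ (well-defined since $E(u)^r$ is a non-zero-divisor and $(1\otimes\varphi)(\Fil^r\varphi^*\fM)\subseteq E(u)^r\fM$). The crucial point is that $\psi_r(x)\in E(u)\fM$ precisely when $(1\otimes\varphi)(x)\in E(u)^{r+1}\fM$, i.e.\ precisely when $x\in\Fil^{r+1}\varphi^*\fM$; hence $\psi_r$ descends to an injection $\gr^r\varphi^*\fM\hookrightarrow\fM/E(u)\fM$. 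Since $\gr^r\varphi^*\fM$ is free over $W_m$ (by Lemma~\ref{refined filtration}, as an iterated extension of free modules) and a free $W_m$-submodule of a free $W_m$-module is a direct summand, $P_r$ is free over $W_m$ of rank $eh-\rk_{W_m}(\gr^r\varphi^*\fM)$; in particular $Q$ is free over $W_m$ and $\rk_{W_m}Q=\sum_{r=0}^{w-1}\bigl(eh-\rk_{W_m}(\gr^r\varphi^*\fM)\bigr)$.

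It remains to substitute Lemma~\ref{refined filtration}, which gives $\rk_{W_m}(\gr^r\varphi^*\fM)=\sum_{j=0}^{r}\rk_{W_m}(\gr^{r-j}\fM_{\dR})=\sum_{s=0}^{r}d_s$, together with $\sum_{s\geq0}d_s=\rk_{W_m}\fM_{\dR}=eh$ and $d_s=0$ for $s>w$ (the latter because $\Fil^{w+1}\varphi^*\fM\subseteq E(u)\varphi^*\fM$, which one checks directly from $E(u)^w\fM\subseteq N$). Then $\rk_{W_m}Q=\sum_{r=0}^{w-1}\bigl(eh-\sum_{s=0}^{r}d_s\bigr)=weh-\sum_{s=0}^{w-1}(w-s)d_s$, and using $\sum_{s=0}^{w-1}d_s=eh-d_w$ and $\sum_{s=0}^{w-1}s\,d_s=d-w\,d_w$ this collapses to $weh-w(eh-d_w)+(d-w\,d_w)=d$. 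I expect the only genuine obstacle to be the bookkeeping: carrying out the modular-law identifications correctly, verifying that $\Fil^{r+1}\varphi^*\fM$ is exactly $\psi_r^{-1}(E(u)\fM)$, and keeping freeness over $W_m$ under control so that ranks add up — the concluding summation being a routine rearrangement.
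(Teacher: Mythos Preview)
Your proof is correct and uses essentially the same idea as the paper: both compute the rank via Lemma~\ref{refined filtration}, which gives $\rk_{W_m}(\gr^r\varphi^*\fM)=\sum_{j=0}^r d_j$, together with $\sum_r d_r=eh$. The only difference is bookkeeping: the paper filters $\varphi^*\fM$ itself by $\Fil^r$ down to $\Fil^w\varphi^*\fM=E(u)^w\fM$ and subtracts $\rk_{W_m}(\varphi^*\fM/E(u)^w\fM)=\sum_{r=0}^{w-1}\sum_{j=0}^r d_j$ from $\rk_{W_m}(\fM/E(u)^w\fM)=ehw$, whereas you filter the quotient $\fM/N$ by $N+E(u)^r\fM$ and identify each graded piece $P_r$ with $(\fM/E(u)\fM)$ modulo the image of $\gr^r\varphi^*\fM$; the two calculations are literally the same sum $ehw-\sum_{r=0}^{w-1}\sum_{j\leq r}d_j$ arranged differently. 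Your route is slightly longer since it requires the extra observation that a free $W_m$-submodule of a free $W_m$-module is a direct summand (which is also implicitly used in the paper's subtraction), but nothing is missing.
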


\begin{proof}
We have inclusions $E(u)^w\fM\subset \varphi^*\fM\subset \fM$ and an equality $\rk_{W_m}\fM/E(u)^w\fM=ehw$. 
By Lemma ~\ref{refined filtration}, one can see that 
\[
\rk_{W_m}\Fil^r\varphi^*\fM/\Fil^{r+1}\varphi^*\fM=\sum_{j=0}^r d_j.
\] 
(Note that $\Fil^r\varphi^*\fM/\Fil^{r+1}\varphi^*\fM$ is also a free $W_m$-module.)

Since $\Fil^w\varphi^*\fM=E(u)^w\fM$, 
\[
\rk_{W_m}\varphi^*\fM/E(u)^w\fM
=\sum_{r=0}^{w-1}\sum_{j=0}^r d_j=\sum_{r=0}^w (w-r)d_r. 
\]
As $ed=\sum_{r=0}^w d_r$, we obtain $d=\rk_{W_m}\fM/\varphi^*\fM$.  
\end{proof}

\begin{proof}[Proof of Theorem]

As $T_\fS$ is an exact tensor contravariant functor, it commutes with exterior powers. 
Since 
\[
\rk_{W_m}\fM/\varphi^*\fM=\rk_{W_m}\det(\fM)/\varphi^*\det(\fM), 
\]
we may assume $\fM$ has rank $1$ and height $\leq hw$. 

Take a basis $x$ of $\fM$ and define $f\in\fS_{m}$ by $\varphi(x)=fx$. 
Then, there exists $g\in \fS_{m}$ such that $fg=E(u)^{hw}$. 

We need the following form of Eisenstein criterion. 
(This is essentially appeared in the proof of ~\cite{Liu:torsion}*{Lemma 4.2.3}.)

\begin{lem}
Let $m$ and $m'$ be positive integers such that $m\geq 2^{w-1}m'$ and $m'\geq 2$.   

Suppose $f$ and $g$ are polynomials with coefficients in $\fS_{m}$, and $fg=E(u)^w$. 
Then, modulo $p^{m'}$, $f$ and $g$ are powers of $E(u)$ up to units. 
\end{lem}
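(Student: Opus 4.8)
The plan is to argue by induction on $w$, at each step peeling off one factor of $E(u)$ from whichever of $f,g$ is "more divisible by $E(u)$", at the cost of halving the available $p$-adic precision; the $w-1$ halvings account for the factor $2^{w-1}$, and the leftover precision $m'\ge 2$ is exactly what the base case $w=1$ needs. A preliminary reduction puts everything in terms of distinguished polynomials. Lifting $f$ and $g$ to $\fS=W(k)\llbracket u\rrbracket$, the congruence $fg\equiv E(u)^w\pmod{p^m}$ reduces modulo $p$ to $\bar f\bar g=u^{ew}$ in $k\llbracket u\rrbracket$ (since $E(u)\equiv u^e$), so neither $f$ nor $g$ lies in $p\fS$. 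Classical Weierstrass preparation then gives $f=v_fP_f$, $g=v_gP_g$ with $v_f,v_g\in\fS^{\times}$ and $P_f,P_g$ distinguished polynomials over $W(k)$ of degrees $a$ and $b=ew-a$. As $fg$ is congruent modulo $p^m$ to the distinguished polynomial $E(u)^w$, the uniqueness part of Weierstrass preparation (valid modulo $p^m$) forces $v_fv_g\equiv1$ and $P_fP_g\equiv E(u)^w\pmod{p^m}$; since $f,g$ differ from $P_f,P_g$ by units, it suffices to show that $P_f\equiv E(u)^j$ and $P_g\equiv E(u)^{w-j}$ modulo $p^{m'}$ for some $j$. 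I would also record two elementary facts used repeatedly: $E(u)$ is a non-zero-divisor in $\fS/p^n\fS$ for every $n\ge1$ (divide by $p$ repeatedly, using that $k\llbracket u\rrbracket$ is a domain), and $O_K\cong\fS/E(u)\fS$.

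For the base case $w=1$ (where $m\ge m'\ge2$), reducing $P_fP_g\equiv E(u)\pmod{p^m}$ modulo $p$ gives $\bar P_f=u^a$, $\bar P_g=u^{e-a}$; if $0<a<e$ then $P_f(0),P_g(0)\in pW(k)$, hence $P_f(0)P_g(0)\in p^2W(k)$, contradicting $P_f(0)P_g(0)\equiv E(0)\pmod{p^m}$ with $m\ge2$ and the Eisenstein property $v_p(E(0))=1$. So $a\in\{0,e\}$, and then one of $P_f,P_g$ equals $1$ and the other is $\equiv E(u)\pmod{p^m}$; this is the only place the precise Eisenstein hypothesis is used, which is why the statement is phrased as an Eisenstein criterion. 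For the inductive step $w\ge2$, after discarding the trivial cases $a\in\{0,ew\}$ I rename the factors so that the image $P_f(\pi)\in O_K$ under $u\mapsto\pi$ satisfies $v(P_f(\pi))\ge em/2$ (using $v$ normalized by $v(\pi)=1$, $v(p)=e$), which is possible because $P_f(\pi)P_g(\pi)\equiv0\pmod{p^m}$. Since $a<e$ would give $v(P_f(\pi))=a<e$, this forces $a=\deg P_f\ge e$, so Euclidean division produces $P_f=QE(u)+R$ with $\deg R<e$ and $Q$ monic of degree $a-e$; reduction modulo $p$ shows $R\in pW(k)[u]$ and $Q$ distinguished with $\bar Q=u^{a-e}$. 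Because $R(\pi)=P_f(\pi)$ has valuation $\ge em/2$ and $1,\pi,\dots,\pi^{e-1}$ is a $W(k)$-basis of $O_K$, there is no cancellation in $R(\pi)=\sum_{i<e}R^{(i)}\pi^i$, so each $R^{(i)}$ is divisible by $p^{\lfloor m/2\rfloor}$ and hence $P_f\equiv QE(u)\pmod{p^{\lfloor m/2\rfloor}}$. Multiplying by $P_g$, using $P_fP_g\equiv E(u)^w$ and that $E(u)$ is a non-zero-divisor modulo $p^{\lfloor m/2\rfloor}$, yields $QP_g\equiv E(u)^{w-1}\pmod{p^{\lfloor m/2\rfloor}}$ with $Q,P_g$ distinguished of degrees summing to $e(w-1)$; since $\lfloor m/2\rfloor\ge2^{w-2}m'$, the inductive hypothesis gives $Q\equiv E(u)^{j'}$ and $P_g\equiv E(u)^{w-1-j'}$ modulo $p^{m'}$, whence $P_f\equiv E(u)^{j'+1}\pmod{p^{m'}}$; take $j=j'+1$.

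The step I expect to be the main obstacle is the $p$-adic precision bookkeeping: one must check carefully that halving the precision at each of the $w-1$ divisions is compatible with the bound $2^{w-1}m'$ in the presence of the floor functions coming from the ramification index $e$, and that exactly the residue $m'\ge2$ survives to run the Eisenstein argument at $w=1$. The remaining technical points—that Weierstrass preparation and its uniqueness persist modulo $p^m$, and that $E(u)$ remains a non-zero-divisor after reduction mod $p^n$—are routine but should be stated with care.
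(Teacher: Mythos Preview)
Your proof is correct and follows essentially the same inductive strategy as the paper: induction on $w$, with the base case $w=1$ handled by examining constant terms modulo $p^2$ (the Eisenstein step), and the inductive step peeling off one factor of $E(u)$ at the cost of halving the $p$-adic precision. The only differences are presentational: you first reduce to distinguished polynomials via Weierstrass preparation, and you give a self-contained proof of the key divisibility claim (that $E(u)$ divides one of the remainders modulo $p^{\lfloor m/2\rfloor}$) via the $W(k)$-basis $1,\pi,\dots,\pi^{e-1}$ of $O_K$, whereas the paper simply invokes \cite{Liu:torsion}*{Lemma 4.2.2} for that step.
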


\begin{proof}
We prove this by induction on $w$. 

Suppose $w=1$. 
We will prove $f$ or $g$ is a unit.  
The reduction modulo $p$ of $f$ and $g$ can be written as $u^a$ and $u^b$ up to units respectively. 
Hence, up to units, $fg$ equals both $E(u)$ and $(u^a+pf_0)(u^b+pg_0)$ with some $f_0$ and $g_0$. 
If $a,b>0$, one gets contradiction by looking at constant terms modulo $p^2$. 
So $ab=0$, and $f$ or $g$ is a unit. 

Let $w>1$. 
As $E(u)$ is a nonzerodivisor, it suffices to prove that $E(u)$ divides $f$ or $g$ modulo $p^{[\frac{m'}{2}]}$ to make induction work. 
As $E(u)$ is monic, write $f=E(u)f_1+f_0$ and $g=E(u)g_1+g_0$ by polynomial division. 
Then, $E(u)$ divides $f_0g_0$ in $\fS_{m}$. 
We can use ~\cite{Liu:torsion}*{Lemma 4.2.2} to conclude that $E(u)$ divides $f_0$ or $g_0$ modulo $p^{[\frac{m'}{2}]}$, and we are done. 
\end{proof}

By the previous lemma, $f=E(u)^{d'}f'$ for some $d'$ and some unit $f'$. 
So, $\rk_{W_{m'}}\fM/\varphi^*\fM=d'e$ and $d=d'e$.  

Choosing a lift $\tilde{f'}$ of $f'$,  one can define a free Kisin module $\tilde{\fM}$ of rank $1$ by $\varphi(\tilde{x})=E(u)^r\tilde{f'}\tilde{x}$. 
Note that $\tilde{\fM}/p^{m'}\tilde{\fM}$ is isomorphic to $\fM /p^{m'}\fM$. 

It is known that any Kisin module of rank $1$ comes from a weakly admissible filtered $\varphi$-module. 
Therefore, $T_{\fS}(\tilde{\fM})$ is a power of the cyclotomic character and, in fact, it is isomorphic to $\bZ_p(d')$. 
Hence, $T_{\fS}(\fM)/p^{m'}T_{\fS}(\fM)$ is isomorphic to $(\bZ/p^{m'}\bZ )(d')$. 
\end{proof}

\begin{rem}\label{tame:determinant}
The proof above can be used to show the following analogous statement if $m=1$. 
Let $\theta_1$ be the tame fundamental character of level 1, namely
$\sigma (\pi^{\frac{1}{p-1}})=\theta_1 (\sigma) \pi^{\frac{1}{p-1}}$ for $\sigma \in G_\infty$ and any choice of a $(p-1)$-th root of $\pi$. 
The character $\theta_1$ is valued in $\mu_{p-1}$, which is canonically isomorphic to $\bF^{\times}_p\subset k$. 
It acts on $\bF_p=\bZ/p\bZ$ naturally. 

Then, $\det (T_{\fS}(\fM))\cong \theta_{1}^{d}$ as representations of $G_{\infty}$. 
Indeed, as in the proof above, it is reduced to the rank $1$ case. 
Using the same notation in the proof, it is easy to see $f=u^{d}$ up to a unit, and one can calculate the associated $\bZ/p\bZ$-representation. 

We remark that two calculations are compatible because $\theta_{1}^e$ is exactly $\bZ/p\bZ(1)$. 
\end{rem}

\begin{rem}\label{truncated BT}
If $w=1$ and $m\geq 2$, a Kisin module which is a free $\fS_m$-module corresponds to a truncated $p$-divisible (or Barsotti-Tate) group of level $m$ over $O_K$. 

Therefore, a result of Illusie ~\cite{Illusie}*{4.10} implies that $\frac{d}{e}$ is an integer and
$\det (T_{\fS}(\fM))$ is isomorphic to $(\bZ/p^m\bZ)(\frac{d}{e})$. 
(Illusie's result is used in Raynaud's paper ~\cite{Raynaud}.) 

Our proof of the theorem above does not recover this result, since our calculation depends on $h$. 
It is natural to wonder how to improve it when $w\geq 2$. 
\end{rem}

\subsection{Fontaine-Laffaille modules}

Under the Fontaine-Laffaille condition, we can prove a stronger result. 
It is important to keep track of the whole action of $G_K$.  

\begin{defn}
A torsion crystalline representation of $G_K$ is a torsion $\bZ_p$-representation of the form $T'/T$ with $G_K$-stable lattices $T\subset T'$ in a crystalline representation of $G_K$.  
\end{defn}

Even when we can classify $G_K$-stable lattices,  it is more difficult to study torsion crystalline representations. 
If $e=1$ and the weight is small, we can use Fontaine-Laffaille modules. 

We assume $e=1$ and $w<p-1$. 

\begin{defn}
A Fontaine-Laffaille module is the following data:
\begin{itemize}
\item a filtered $W$-module $M$ with $\Fil^0=M$ and $\Fil^{w+1}M=0$.
\item Semilinear homomorphisms $\varphi_r\colon\Fil^r M\to M$ such that
$p\varphi_{r+1}=\varphi_r$ on $\Fil^{r+1}M$ and $\sum_r \varphi_r(\Fil^r M)=M$.  
\end{itemize}
\end{defn}

On $\Fil^r A_{\crys}$ with $r<p-1$, $\varphi$ is divisible by $p^r$ and $\frac{\varphi}{p^r}$ is well-defined. 
For a Fontaine-Laffaille module $M$ of finite length, 
we associate a Galois representation 
\[
T_W(M)=\Hom_{\Fil,\varphi}(M, \bQ_p/\bZ_p\otimes_{\bZ_p}A_{\crys}).
\]
Here, it is understood that elements of $T_W(M)$ commute with $\varphi_r$. 
This is known to be a fully faithful exact contravariant functor. 

\begin{exam}
Let $T$ be a $G_K$-stable lattice in a crystalline representation of $G_K$ with the Hodge-Tate weights in $\{0,\dots,w\}$. 
If $M$ is the corresponding strongly divisible lattice, one can give the structure of a Fontaine-Laffaille module on $M$ via the formula $\varphi^r=\frac{\varphi}{p^r}$, which is well-defined on $\Fil^r M$. 

Given a $G_K$-stable lattice $T'$ containing $T$, we denote by $M'$ the corresponding strongly divisible lattice. 
The quotient $M/M'$ has the structure of a Fontaine-Laffaille module, and
$T_W(M/M')\cong T'/T$. 
\end{exam}

Let $M$ be a Fontaine-Laffaille module which is a free $W_m$-module of rank $h$. 
Set $d_W=\sum_r r\cdot \rk_{W_m}\gr^r(M/M')$. 

\begin{prop}
Suppose $hw<p-1$ and $k$ is algebraically closed. 
Then, $\det (T_W(M))\cong \bZ/p^m\bZ(d_W)$. 
\end{prop}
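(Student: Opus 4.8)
The plan is to imitate the proof of Theorem~\ref{determinant}, replacing Kisin modules by Fontaine--Laffaille modules. The payoff is that Fontaine--Laffaille theory is an honest equivalence of categories compatible with tensor products, so no Eisenstein-type criterion is needed and — more importantly — one keeps track of the full $G_K$-action, which is exactly what the stronger conclusion here requires.

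First I would reduce to the case $h=1$. In the weight range under consideration the contravariant functor $T_W$ is exact and compatible with tensor products, hence with the formation of top exterior powers, so $\det(T_W(M))\cong T_W(\det M)$. Now $\det M=\bigwedge^h M$ is a Fontaine--Laffaille module which is free of rank $1$ over $W_m$; since $M$ has free graded pieces it admits a filtered $W_m$-basis, and a bookkeeping identical to that of Lemma~\ref{exponent} shows that $\det M$ has a single filtration jump, located in degree $\sum_r r\cdot\rk_{W_m}\gr^r(M)=d_W$, so in particular $d_W\le hw<p-1$ and $\det M$ is again a Fontaine--Laffaille module. Thus it suffices to prove the following rank-one statement: if $N$ is a Fontaine--Laffaille module which is free of rank $1$ over $W_m$ and whose filtration jumps only in degree $d$, then $d\ge 0$ is an integer and $T_W(N)\cong\bZ/p^m\bZ(d)$ as a $G_K$-representation.

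Next I would establish this by lifting to characteristic zero. Fixing a $W_m$-basis of $N$, the datum $\varphi_d\colon\Fil^d N=N\to N$ is a semilinear automorphism (forced by $\sum_r\varphi_r(\Fil^r N)=N$), and I would lift it to a semilinear automorphism of a free rank-one $W$-module to obtain a Fontaine--Laffaille module $\widetilde N$ that is free of rank $1$ over $W$, has filtration jump in degree $d$, and satisfies $\widetilde N/p^m\widetilde N\cong N$. By Breuil's Fontaine--Laffaille theorem (\cite{Breuil1999}) in the case $K=K_0$, $\widetilde N$ corresponds to a $G_K$-stable lattice in a one-dimensional crystalline representation of $G_K$; since $k$ is algebraically closed there are no nontrivial unramified characters, so this representation is exactly the Tate twist $\bQ_p(d)$ (matching Hodge--Tate weights), giving $T_W(\widetilde N)\cong\bZ_p(d)$. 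Reducing modulo $p^m$, and because $T_W$ retains the full $\varphi$-structure, $T_W(N)\cong\bZ/p^m\bZ(d)$ as $G_K$-representations. Feeding this back through the first step yields $\det(T_W(M))\cong\bZ/p^m\bZ(d_W)$.

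The step I expect to be the main obstacle is the reduction carried out in the second paragraph: checking carefully that $T_W$ is a tensor functor on torsion Fontaine--Laffaille modules throughout the range $hw<p-1$ (so that it commutes with $\det$), and transporting the weight computation of Lemma~\ref{exponent} to the Fontaine--Laffaille setting so as to pin down the degree of the unique filtration jump of $\det M$ as $d_W$. The lifting and the rank-one crystalline classification in the third paragraph are, by contrast, standard.
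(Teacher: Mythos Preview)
Your overall strategy coincides with the paper's: reduce to rank one by showing $\det(T_W(M))\cong T_W(\det M)$, locate the unique filtration jump of $\det M$ at $d_W$, and then identify $T_W$ of a rank-one module. The substantive difference is in how the reduction is carried out. You invoke tensor functoriality of $T_W$ and correctly flag it as the main obstacle; the paper does not attempt general tensor compatibility but instead works directly with the determinant. It writes down the natural map $\det(T_W(M))\to T_W(\det M)$, observes that both sides have cardinality $p^m$, reduces modulo $p$ (both $T_W$ and $\det$ commute with this reduction), and then appeals to the classification of simple finite-length Fontaine--Laffaille modules in~\cite{Fontaine-Laffaille} to see the map is an isomorphism when $m=1$. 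This is a lighter argument than establishing tensor functoriality in the full range, and it is the concrete technique that fills the gap you anticipated. Conversely, your lifting argument for the rank-one case is more explicit than the paper's terse ``It follows that $T_W(\det(M))\cong\bZ/p^m\bZ(d_W)$'' and is a perfectly good way to supply that step; the paper is presumably relying on the same classification, or on the elementary computation of $T_W$ for the rank-one modules $W_m$ with jump at $r$, which one can do by hand.
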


\begin{proof}
By the assumption, $\det (M)$ has the structure of a Fontaine-Laffaille module. 
There is a natural homomorphism $\det (T_W(M))\to T_W(\det (M))$ as representations of $G_K$. 
We first show this is an isomorphism. 
It is known that both sets have the cardinality $p^m$. 
To show it is an isomorphism, it suffices to check modulo $p$.  
Since $T_W$ and determinants commute with the reduction modulo $p$, 
we may assume $m=1$. 
Then, one can verify it by using the classification of simple Fontaine-Laffaille modules of finite length  ~\cite{Fontaine-Laffaille}. 

There is a unique $r$ such that $\Fil^r \det(M)\neq 0$. In fact, $r=d_W$.   
It follows that $T_W(\det(M))\cong\bZ/p^m\bZ(d_W)$. 
\end{proof}

We interpret the above proposition in terms of torsion Kisin modules. 
Take two $G_K$-stable lattices $T\subset T'$ as in the example above. 
From them, choosing a uniformizer and its $p$-power roots, we get an exact sequences of Kisin modules
\[
0\to \fM' \to \fM \to \fM/\fM'\to 0. 
\]

Recall that $d=\sum_{r=1}^{w} r\cdot \rk_{W_m}\gr^r((\fM/\fM')_{\dR})$. 

\begin{prop}\label{Fontaine-Laffaille:determinant}
Assume that $hw<p-1$ and $T'/T$ is a free $\bZ/p^m\bZ$-module of rank $h$. 
Then, $d_W=d$. 
In particular, $\det (T'/T)$ is isomorphic to $\bZ/p^m\bZ(d)$ if $k$ is algebraically closed. 
\end{prop}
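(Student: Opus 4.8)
The plan is to show that the two integers $d_W$ and $d$, each of which is attached to the same torsion crystalline representation $T'/T$ via two different integral $p$-adic Hodge-theoretic packages (Fontaine--Laffaille modules on one side, torsion Kisin modules on the other), in fact coincide. Once this is done, the last sentence is immediate: by the previous Proposition $\det(T_W(M/M'))\cong \bZ/p^m\bZ(d_W)$, and $T_W(M/M')\cong T'/T$ by the example preceding it, so $\det(T'/T)\cong \bZ/p^m\bZ(d_W)=\bZ/p^m\bZ(d)$.

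First I would identify $d_W$ with a quantity that lives on the de Rham/Hodge side. By definition $d_W=\sum_r r\cdot \rk_{W_m}\gr^r(M/M')$, where $M,M'$ are the strongly divisible lattices corresponding to $T,T'$, equipped with their Fontaine--Laffaille structure. Since $e=1$ and $w\le p-2$, Proposition~\ref{Fontaine-Laffaille} gives $M=M_{\dR}(T)$ and $M'=M_{\dR}(T')$ as \emph{filtered} lattices, so $\gr^r(M/M')$ is computed by the filtered lattices $M_{\dR}(T)\supset M_{\dR}(T')$. On the Kisin side, $d=\sum_{r=1}^w r\cdot\rk_{W_m}\gr^r((\fM/\fM')_{\dR})$, and by Proposition~\ref{exact sequence} there is a short exact sequence
\[
0\to \gr^r(M_{\dR}(T'))\to \gr^r(M_{\dR}(T))\to \gr^r((\fM/\fM')_{\dR})\to 0.
\]
Taking $W_m$-ranks (all terms are finite $W_m$-modules, and the sequence is exact) yields $\rk_{W_m}\gr^r((\fM/\fM')_{\dR})=\rk_{W_m}\gr^r(M_{\dR}(T))-\rk_{W_m}\gr^r(M_{\dR}(T'))$. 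But the right-hand side is exactly $\rk_{W_m}\gr^r(M/M')$ under the identifications $M=M_{\dR}(T)$, $M'=M_{\dR}(T')$ of the previous paragraph — here one only needs that $\gr^r$ of a quotient of filtered free modules with torsion-free graded pieces is computed by the difference of graded ranks, which holds because $\gr^r M$ and $\gr^r M'$ are free over $W$ (Remark~\ref{divisibility}) and $M'\subset M$ is a sublattice of full rank. Summing $r$ times these equalities over $r$ gives $d=d_W$.

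The main obstacle I expect is bookkeeping around the identification of filtrations and the torsion-freeness needed to make the rank count additive. Concretely: (i) one must be sure that the filtration on $M_{\dR}(T)$ coming from Definition~\ref{lattice} (via the Kisin module) agrees with the Fontaine--Laffaille filtration on $M$ under Proposition~\ref{Fontaine-Laffaille}; this is exactly the content of that Proposition, so it is available, but the hypotheses $e=1$, $hw<p-1$ must be checked to be what is needed (note $hw<p-1$ forces $w<p-1$, so Proposition~\ref{Fontaine-Laffaille} with $w\le p-2$ applies, and it also guarantees the Kisin-module side is in the Fontaine--Laffaille range so that $T_\fS(\fM/\fM')\cong T_W(M/M')$ via the remark comparing $T_W$ and $T_S$ together with $T_S\cong T_\fS$ for $w<p-1$). (ii) One must ensure the sequence of $\gr^r$'s from Proposition~\ref{exact sequence} is a sequence of $W_m$-modules whose ranks add — exactness already gives this, but since these graded pieces need not be free, "rank" should be read as length divided by $[W_m:p]$, or equivalently one reduces mod $p$ first; since $T'/T$ is free of rank $h$ over $\bZ/p^m\bZ$ the mod-$p$ reduction behaves well and the count is clean. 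With these points pinned down the computation $d=d_W$ is routine, and the isomorphism $\det(T'/T)\cong\bZ/p^m\bZ(d)$ follows formally as explained above.
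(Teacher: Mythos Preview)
Your proposal is correct and follows essentially the same approach as the paper. Both arguments rest on the identification $M=\fM_{\dR}$, $M'=\fM'_{\dR}$ as filtered lattices (Proposition~\ref{Fontaine-Laffaille}) together with the exactness of the graded pieces; the paper simply phrases this more directly by identifying the two short exact sequences $0\to M'\to M\to M/M'\to 0$ and $0\to\fM'_{\dR}\to\fM_{\dR}\to(\fM/\fM')_{\dR}\to 0$ as filtered sequences, whence $\gr^r(M/M')\cong\gr^r((\fM/\fM')_{\dR})$ immediately and $d=d_W$ follows without any rank subtraction (your detour through ``$\rk_{W_m}$'' of the free $W$-modules $\gr^r M_{\dR}(T)$ is slightly awkward and unnecessary once you see the third terms are literally isomorphic).
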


\begin{proof}
We have two strongly divisible lattices $M'\subset M\subset D$ corresponding to $T\subset T'$. 
An exact sequence
\[
0\to M' \to M \to M/M' \to 0
\]
is naturally identified to
\[
0 \to \fM'_{\dR} \to \fM_{\dR} \to (\fM/\fM')_{\dR} \to 0. 
\]
Therefore, $d=d_W$. 

The previous proposition implies the last statement because $T_W(M/M')$ is isomorphic to $T'/T$.
\end{proof}

\begin{exam}
Set $\fM=\fS_2\cdot a$ with $\varphi(a)=u\cdot a$, which is a torsion Kisin module of height $\leq 2$. 
The Galois representation $T_\fS(\fM)$ is a free $\bZ/p^2\bZ$-module of rank $1$, but it may not be isomorphic to a power of the cyclotomic character. 
If $e=1$ and $p\geq 5$, Fontaine-Laffaille theory excludes it when one wants to extend the $G_{\infty}$-action on $T_\fS(\fM)$ to a $G_K$-action so that it is torsion crystalline.
\end{exam}

\subsection{Rigidity and its consequence}
Raynaud ~\cite{Raynaud}*{3.3.1} proved the following rigidity result for truncated $p$-divisible groups: there exists a constant $c$ such that any morphism between truncated $p$-divisible groups of level $m\geq c$ over $O_K$ which induces an isomorphism between generic fibers is an isomorphism. 
(His constant $c$ depends on the discriminant of $O_K$.)

We prove an analogous rigidity statement~(Proposition~\ref{rigidity}) for torsion Kisin modules of higher weights. 
It seems that Raynaud's idea of the proof in \textit{loc.~.cit} does not work in this context.  
Instead we generalize Liu's arugement in ~\cite{Liu:BT}*{Proposition 5.2.2}. 
He found a different constant depending on the ramification index of $O_K$ and the heights of truncated $p$-divisible groups. 
There are also many other results of this direction in literature, but this seems to be the most suitable one for our purpose. 

For application to the Faltings height, Raynaud actually did not use rigidity itself but used its consequence~\cite{Raynaud}*{3.4.5}. 
Namely, he deduced from the rigidity that there exists a constant $c'$ such that for any finite flat group scheme $G$ over $O_K$ whose generic fiber is a truncated $p$-divisible group of level $m\geq 2c'+1$, $G[p^{m-c'}]/G[p^{c'}]$ is a truncated $p$-divisible group of level $m-2c'$ over $O_K$. 
We also discuss a generalization of this claim~(Corollary~\ref{free}). 

\begin{rem}
Results of the form of Corollary~\ref{free} has been known. 
For instance, such a statement can be found in the proof of ~\cite{Liu:torsion}*{Lemma 4.3.1}. 

Our argument, though it is closely related to \textit{loc.~cit.}, gives a better bound in general if the rank of a torsion Kisin module is bounded. 
Since this assumption holds in our application later, we think it is reasonable to include the argument in this paper. 
\end{rem}

Now, we state the rigidity result. 

\begin{prop}\label{rigidity}
Let $f\colon \fM\to\fM'$ be a homomorphism of torsion Kisin modules of height $\leq w$ which are free $\fS_m$-modules of rank $h$. 
Let $\lambda$ be the biggest integer such that $K$ contains primitive $p^{\lambda}$-th roots of unity and $c$ the smallest integer greater than or equal to
$(\log_p (hw)+\lambda)$.

If $f$ induces an isomorphism $T_{\fS}(\fM)\overset{\cong}{\to} T_{\fS}(\fM')$ and $m\geq 2^{hw-1}c$, then $f$ is an isomorphism. 
\end{prop}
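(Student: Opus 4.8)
The plan is to follow Liu's strategy from \cite{Liu:BT}*{Proposition 5.2.2}, adapted to torsion Kisin modules of height $\leq w$. First I would reduce to showing that $f$ is surjective (or injective): since $\fM$ and $\fM'$ are free $\fS_m$-modules of the same rank $h$, a surjective endomorphism-type map between them is automatically an isomorphism, so it suffices to control the cokernel. The cokernel $\fN := \mathrm{coker}(f)$ is a torsion $\fS_m$-module, and the hypothesis that $T_\fS(f)$ is an isomorphism together with exactness of $T_\fS$ forces $T_\fS(\fN) = 0$. So the crux is: \emph{a torsion Kisin module of height $\leq w$, free of bounded rank over $\fS_m$, with vanishing $T_\fS$ must itself vanish, provided $m$ is large relative to $w$, $h$ and the ramification data $\lambda$.} Actually $\fN$ need not be $\fS_m$-free, so one works with the image of $f$ and compares determinants: passing to $\det$ reduces everything to the rank-$1$ case at height $\leq hw$, exactly as in the proof of Theorem \ref{determinant}.

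Next I would make this quantitative. Write $f$ in a basis; the obstruction to $f$ being an isomorphism is measured by the $u$-adic (and $p$-adic) valuation of $\det f \in \fS_m$. The Eisenstein-type lemma used in the proof of Theorem \ref{determinant} (valid once $m \geq 2^{hw-1}m'$) lets one write the relevant ratios as powers of $E(u)$ times units modulo $p^{m'}$; here the relevant exponent is controlled by $d = \sum_r r\cdot d_r \leq ehw$ by Lemma \ref{exponent}. The role of $\lambda$ enters through the comparison with Galois: by Theorem \ref{comparison:Kisin} the map $A_{\crys}\otimes_{\varphi,\fS}\fM \to A_{\crys}\otimes_{\bZ_p}T_\fS(\fM)^\vee$ has an inverse up to $t^w$, and $t$ has $p$-adic valuation $\tfrac{1}{p-1}$; iterating, the discrepancy between $f$ and the induced Galois map on these $A_{\crys}$-modules is killed by $t^{w}$ on each side, hence by $p^{\lceil w/(p-1)\rceil}$ after taking invariants. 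Combined with the fact that $K$ contains $p^\lambda$-th but not $p^{\lambda+1}$-th roots of unity — which bounds how much the cyclotomic twist $\bZ/p^{m'}\bZ(\tfrac{d}{e})$ can ``hide'' inside the trivial representation on the nose — one gets that if $T_\fS(f)$ is an isomorphism then $\det f$ is a unit modulo $p^{m'}$ for $m' \sim c = \lceil \log_p(hw) + \lambda\rceil$. Taking $m \geq 2^{hw-1}c$ then forces $\det f$ to be a unit, so $f$ is an isomorphism.

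Concretely the steps in order are: (1) reduce to surjectivity of $f$, hence to studying $\det f$, hence by the argument of Theorem \ref{determinant} to rank-$1$ Kisin modules of height $\leq hw$; (2) in the rank-$1$ case, use the Eisenstein criterion (valid for $m \geq 2^{hw-1}m'$) to write $\det f \equiv E(u)^{d'} \cdot(\text{unit}) \bmod p^{m'}$; (3) use Theorem \ref{determinant} to identify $T_\fS$ of the source and target determinants as $(\bZ/p^{m'}\bZ)(d/e)$ and $(\bZ/p^{m'}\bZ)(d'/e)$ respectively, so that $T_\fS(f)$ being an isomorphism forces these twists to agree \emph{as $G_\infty$-representations}; (4) invoke the hypothesis on $\lambda$: the characters $(\bZ/p^{m'}\bZ)(a)$ for varying $a$ are pairwise non-isomorphic precisely when $m'$ exceeds $\lambda$ by enough, and here the content of $\log_p(hw)$ accounts for the loss coming from the ``inverse up to $t^w$'' in Theorem \ref{comparison:Kisin} applied at height $hw$; conclude $d = d'$, i.e. $\det f$ is a unit mod $p^{m'}$; (5) feed back $m \geq 2^{hw-1}c$ to upgrade ``unit mod $p^{c}$'' to ``unit'', so $f$ is an isomorphism.

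The main obstacle I expect is step (4): pinning down the exact relationship between $c = \lceil \log_p(hw)+\lambda\rceil$ and the level at which the cyclotomic twists become distinguishable, i.e. correctly tracking the $p$-adic valuations contributed by the $t^w$-discrepancy in the comparison map (worth roughly $w/(p-1)$, but one must be careful when $p = 2$ or $w$ is large relative to $p$) together with the $p^\lambda$ coming from roots of unity in $K$. A secondary technical point is ensuring that the determinant/exterior-power formalism of $T_\fS$ (exactness, tensor-compatibility) genuinely applies to the possibly non-free cokernel of $f$; the clean fix is to replace $\fN$ by the \emph{saturation} of $\mathrm{im}(f)$ and argue with a short exact sequence, or simply to note $\det \fM \to \det \fM'$ is a map of rank-$1$ torsion Kisin modules and that $T_\fS(\det f)$ is an isomorphism by compatibility of $T_\fS$ with $\det$, which is exactly the setup of steps (2)--(5).
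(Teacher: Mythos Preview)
Your overall plan --- pass to determinants, apply Theorem~\ref{determinant} to identify $\det T_\fS(\fM)$ and $\det T_\fS(\fM')$ modulo $p^c$ as cyclotomic twists, then use the hypothesis on $\lambda$ to force the exponents $d/e$ and $d'/e$ to coincide --- is exactly the paper's approach. But you have misidentified where each constant enters, and this leads to two steps that do not work as written. First, Theorem~\ref{comparison:Kisin} plays no role whatsoever; the term $\log_p(hw)$ has nothing to do with ``inverse up to $t^w$''. It enters for the elementary reason that $d/e,\,d'/e \leq hw$ by Lemma~\ref{exponent}: once you know $(\bZ/p^c\bZ)(d/e)\cong(\bZ/p^c\bZ)(d'/e)$ as $G_\infty$-representations, the hypothesis on $\lambda$ gives $d/e\equiv d'/e\pmod{p^{c-\lambda}}$, and then $p^{c-\lambda}\geq hw$ forces $d=d'$. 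Second, your step~(2) is misstated: the Eisenstein lemma is about divisors of $E(u)^{hw}$ and is applied (inside the proof of Theorem~\ref{determinant}) to the \emph{Frobenius} on $\det\fM$, not to $\det f$; there is no reason $\det f$ should divide any power of $E(u)$.

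Consequently your step~(5) is also off: the inequality $m\geq 2^{hw-1}c$ is used exactly once, as the hypothesis needed to invoke Theorem~\ref{determinant}, and there is no ``upgrading'' to do afterwards. Once you have $d=d'$ the finish is direct. In the rank-one picture, write $\varphi(x)=gx$, $\varphi(x')=g'x'$, $\det f(x)=\alpha x'$; then $\varphi$-equivariance gives $g\alpha=\varphi(\alpha)g'$, and reducing mod $p$ yields $(p-1)\,v_u(\bar\alpha)=v_u(\bar g)-v_u(\bar g')=d-d'=0$, so $\alpha$ is a unit and $f$ is an isomorphism. (The paper phrases this last step as: $\fM'/\fM$ is $u$-power torsion because $T_\fS(f)$ is an isomorphism, hence finite, hence zero.)
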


\begin{rem}
An equality $\lambda\leq v_p (e)+1$ holds.
Here,  $v_p$ is a $p$-adic valuation with $v_p(p)=1$.  
\end{rem}

\begin{proof}
We may assume $k$ is algebraically closed. 
By Theorem ~\ref{determinant}, Galois actions on determinants of $T_{\fS}(\fM)$ and $T_{\fS}(\fM')$ modulo $p^c$ are powers of the cyclotomic character with exponents $\frac{d}{e}$ and $\frac{d'}{e}$ respectively. 
Here, 
\[
d=\rk_{W_m}\fM/\varphi^*\fM \quad \textnormal{and} \quad d'=\rk_{W_m}\fM'/\varphi^*\fM'. 
\]
(Note that Lemma ~\ref{exponent} is used.)

Then, it follows that $\frac{d}{e}\equiv \frac{d'}{e} (\textnormal{mod}\; p^{c-\lambda})$. 
Since $d, d'\leq ehw$, we conclude that $d=d'$.
By the assumption, $\fM'/\fM$ is killed by $u$ and  $\fM'/\fM$ is a finite set. 
So, it must be zero.   
\end{proof}

\begin{rem}
In the case of finite flat group schemes, Raynaud proved a similar statement and his constant is, writing $\textnormal{diff}_{K/K_0}$ for the different ideal of $K$ over $K_0$, the integer part of $(\frac{1}{p-1}+v_p(\textnormal{diff}_{K/K_0}))$. 

In the context of torsion Kisin modules, it corresponds to the case $w=1$. 
If $w=1$, our constant can be improved to $c$ by using Remark ~\ref{truncated BT} instead of Theorem ~\ref{determinant} in the proof above. (This is the bound Liu obtained in ~\cite{Liu:BT}*{Corollary 5.2.3}.)

These two constants may be compared by a well-known inequality
\[
v_p(\textnormal{diff}_{K/K_0})\leq 1-\frac{1}{e}+v_p (e).
\] 
\end{rem}

\begin{rem}
As in the proposition, rigidity is related to how many $p$-power roots of unity are contained in $K$, but the example below indicates a different nature. 
\end{rem}

\begin{exam}
Let $\fM$ be a $\fS$-submodule of $\fS_2$ generated by $u$, which is a free $\fS_2$-module of rank $1$. 
If $e=1$, $\fM$ is a torison Kisin module of height $\leq p$, and the inclusion $\fM\subset \fS_2$ induces an isomorphism between Galois representations. 
So, rigidity does not hold in this case.  
\end{exam}

Let $\fM$ be a torsion Kisin module such that $T_\fS(\fM)$ is a free $\bZ/p^m\bZ$-module. 
We denote by $c'$ the greatest integer less than or equal to $\frac{ehw}{p-1}$, and set $\delta=2^{hw-1}cc'$. 

For nonnegative integers $a<b$, we define $\fM^{a,b}$ as the kernel of multiplication by $p^{b-a}$ from $p^a\fM$ to $p^b\fM$. 

\begin{cor}\label{free}
If $m>2\delta$, then $\fM^{\delta,m-\delta}$ is a free $\fS_{m-2\delta}$-module. 
\end{cor}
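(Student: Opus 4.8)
The plan is to deduce the statement from the rigidity result (Proposition~\ref{rigidity}) by comparing $\fM$ with a genuinely free torsion Kisin module and bounding the discrepancy by $\delta$. First I would reduce to the case that the residue field $k$ is algebraically closed: this is harmless, since $T_{\fS}(-)$, $\varphi^{*}(-)$, $(-)_{\dR}$ and the operations $\fM\mapsto p^{a}\fM$ and $\fM\mapsto\fM[p^{b}]$ all commute with unramified base change, and it is the setting in which Proposition~\ref{rigidity} and Theorem~\ref{determinant} are available. Writing $\fM=\fN/\fN'$ with $\fN'\subset\fN$ free Kisin modules of height $\le w$ and rank $h$, and letting $T=T_{\fS}(\fN)\subset T'=T_{\fS}(\fN')$ be the corresponding lattices in the ambient semistable representation $V$, Proposition~\ref{exact sequence 0} identifies $T_{\fS}(\fM)$ with $T'/T$; the hypothesis that this be free of level $m$ and rank $h$ forces all elementary divisors of $T$ in $T'$ to equal $p^{m}$, i.e.\ $T=p^{m}T'$. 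Hence the free-of-level-$m$ torsion Kisin module $\fP:=\fN'/p^{m}\fN'$ is a natural ``free model'' of $\fM$, with a canonical isomorphism $T_{\fS}(\fP)\cong T_{\fS}(\fM)$; the inclusions among $\fN$, $\fN'$ and their $p$-power multiples then produce natural comparison morphisms between subquotients of $\fM$ and the corresponding subquotients of $\fP$, all compatible with $T_{\fS}$. A short exact-sequence computation also yields $T_{\fS}(\fM^{a,b})\cong(\bZ/p^{b-a}\bZ)^{h}$ for $0\le a<b\le m$, so $\fM^{\delta,m-\delta}$ already has the right rank $h$.

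The heart of the argument is to show that, after applying $(-)^{\delta,m-\delta}$, these comparison morphisms become isomorphisms. I would filter $\fM$ by the intermediate lattices $T_{a}:=T+p^{m-a}T'$ ($0\le a\le m$), obtaining a chain $\fN=\fN_{0}\supset\fN_{1}\supset\cdots\supset\fN_{m}=\fN'$ of Kisin submodules with $T_{\fS}$ of each graded piece free of level one, so that forming $\fM^{\delta,m-\delta}$ amounts to cutting this chain between positions $\delta$ and $m-\delta$. The point is then that the comparison maps between suitably spaced partial subquotients of this chain and those of the free model $\fP$ — the spacing being at least $2^{hw-1}c$, which is available precisely because $m>2\delta=2c'\cdot(2^{hw-1}c)$ — are isomorphisms by Proposition~\ref{rigidity}, once one stays at distance $\ge\delta$ from both ends of the chain. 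The factor $c'=\lfloor ehw/(p-1)\rfloor$ enters because, by Lemma~\ref{exponent} the relevant de Rham datum satisfies $d\le ehw$, so by Proposition~\ref{exact sequence} together with the standard torsion bounds in integral $p$-adic Hodge theory the torsion in the graded pieces of $M_{\dR}$ — hence the ``exotic'' discrepancy distinguishing a given stage of $\fM$ from the corresponding stage of $\fP$ — is killed by $p^{c'}$ at each stage; iterating the rigidity step over the $\le 2^{hw-1}c$ stages compounds this to a total discrepancy killed by $p^{\delta}$, which therefore disappears after forming $(-)^{\delta,m-\delta}$.

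Granting this, the conclusion is immediate. Since $\fP$ is free over $\fS_{m}$, one has $\fP^{\delta,m-\delta}=(p^{\delta}\fP)[p^{m-2\delta}]\cong\fS_{m-2\delta}^{\,h}$ explicitly, and the comparison now exhibits $\fM^{\delta,m-\delta}$ as a quotient of $\fP^{\delta,m-\delta}$ and as a submodule of $\fP^{\delta,m-\delta}$ via mutually inverse maps; so $\fM^{\delta,m-\delta}$ is, up to isomorphism, a direct summand of $\fS_{m-2\delta}^{\,h}$ of rank $h$. As $\fS_{m-2\delta}=(W/p^{m-2\delta})\llbracket u\rrbracket$ is a Noetherian local ring, every finitely generated projective module over it is free, whence $\fM^{\delta,m-\delta}\cong\fS_{m-2\delta}^{\,h}$, as claimed.

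I expect the main obstacle to be the middle paragraph: making the iterated use of rigidity precise — bookkeeping exactly how the per-stage bound $p^{c'}$ accumulates to $p^{\delta}$ over the $2^{hw-1}c$ stages, and isolating the ``graded pieces'' and ``partial subquotients'' to which Proposition~\ref{rigidity} genuinely applies (these must be free $\fS_{m'}$-modules of rank $h$ and height $\le w$ for a sufficiently large level $m'$). As a prerequisite one also has to check that $\fM^{a,b}$ and the intermediate subquotients are themselves torsion Kisin modules — i.e.\ that passing to $p^{a}(-)$ and $(-)[p^{b}]$ preserves $\varphi$-stability, the $E(u)^{w}$-height bound, and $u$-torsion-freeness — so that the rigidity input is legitimately available. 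Everything else is formal once the discrepancy bound is in hand.
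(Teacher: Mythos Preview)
Your plan has a genuine circularity at its core. Proposition~\ref{rigidity} applies only to morphisms between torsion Kisin modules that are \emph{already known} to be free $\fS_{m'}$-modules of rank $h$. When you propose to invoke rigidity on the comparison maps between subquotients of $\fM$ and the corresponding subquotients of your free model $\mathfrak{P}=\fN'/p^{m}\fN'$, the $\mathfrak{P}$-side is indeed free, but the $\fM$-side is not known to be --- and that is exactly what you are trying to prove. Your stated prerequisite (checking that $\fM^{a,b}$ is a torsion Kisin module) is much weaker than the freeness hypothesis rigidity actually needs, so the input is simply not available.

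Your account of where $c'$ enters is also off. It is not a torsion bound on graded pieces of $M_{\dR}$; it is a bound on the number of \emph{jumps} in the increasing chain
\[
\fM^{m-1,m}\subseteq\fM^{m-2,m-1}\subseteq\cdots\subseteq\fM^{0,1},
\]
namely there are at most $c'+1$ distinct terms (this is \cite{Caruso-Liu}*{Lemma 3.2.4}, coming from a length bound on $\fM^{0,1}/\fM^{m-1,m}$). The paper's proof works entirely internally to $\fM$ using this chain and never introduces an external free model: it partitions the chain into at most $c'+1$ constant runs, and uses \cite{Liu:torsion}*{Lemma 4.2.4} --- which says that if $\fM^{a,a+1}=\cdots=\fM^{b-1,b}$ then $\fM^{a,b}$ is free over $\fS_{b-a}$ --- to produce free subquotients on each run. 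Rigidity is then applied only between two such \emph{already free} pieces $\fM^{i_1,i_1+2^{hw-1}c}$ and $\fM^{i_2,i_2+2^{hw-1}c}$, linked by multiplication by $p^{i_2-i_1}$, to rule out the existence of two runs of length $\ge 2^{hw-1}c$. Since the remaining $\le c'$ short runs have total length $<c'\cdot 2^{hw-1}c=\delta$, the unique long run covers the range $[\delta,m-\delta]$, and one more application of \cite{Liu:torsion}*{Lemma 4.2.4} finishes. The key lemma you are missing is precisely this freeness-from-constancy statement, which is what lets the paper break the circularity.
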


\begin{proof}
We have an increasing sequence of torsion Kisin modules
\[
\fM^{m-1,m}\subseteq \fM^{m-2,m-1}\subseteq\cdots\subseteq \fM^{0,1}. 
\]
All these inclusions induce isomorphisms of associated Galois representations. 
Then, it is easy to see that there are at most $c'+1$ distinct terms, see the proof of ~\cite{Caruso-Liu}*{Lemma 3.2.4}. 
Therefore, the above sequence can be divided into at most $c'+1$ subsequences so that each subsequence consists of equalities. 

Suppose there are two subsequences of length  $\geq 2^{hw-1}c$, and 
they have the right end terms $\fM^{i_1,i_1+1}$ and $\fM^{i_2,i_2+1}$ with $i_1<i_2$. 
For $j=1$ or $2$, we have a torsion Kisin module $\fM^{i_j, i_j+2^{hw-1}c}$ of of height $\leq w$. By ~\cite{Liu:torsion}*{4.2.4}, it is a free $\fS_{2^{hw-1}c}$-module.

Consider multiplication by $p^{i_2-i_1}$ from $\fM^{i_1,i_1+2^{hw-1}c}$ to $\fM^{i_2,i_2+2^{hw-1}c}$. 
It is not an isomorphism, but it induces an isomorphism between $T_\fS(\fM^{i_1,i_1+2^{hw-1}c})$ and $T_\fS(\fM^{i_2,i_2+2^{hw-1}c})$. 
This contradicts to the rigidity, Proposition ~\ref{rigidity}. 

So, there is a unique maximal subsequence of length $\geq 2^{hw-1}c$ which consists of equalities. 
Combined with ~\cite{Liu:torsion}*{Lemma 4.2.4}, this implies the desired statement. 
\end{proof}

\section{Good Isogenies}
One of the key ingredients in Faltings' paper ~\cite{Faltings83} is to control a behavior of heights under isogenies. 
In particular, he proved that certain isogenies preserve heights, and later this is refined by Raynaud ~\cite{Raynaud}.  
Inspired by Raynaud, we introduce a notion of good isogenies (Definition ~\ref{good isogeney})  and prove that good isogenies preserve heights under certain conditions. 
Note that all these arguments are application of the purity, namely the Weil conjecture ~\cite{Deligne}. 

\subsection{Heights and isogenies}

Let $M=(M_\bQ, T)$ and $M'=(M_\bQ, T')$ be pure $\bZ$-motives of weight $w$ over a number field $F$. 
An inclusion $f\colon T'\to T$ is called an isogeny. 
In the rest of this section, we assume $M_\bQ$ has semistable reduction everywhere, and we study the difference $h(M)-h(M')$. 

We also assume that all the Hodge-Tate weights of the \'etale realizations of $M$ are nonpositive. 

For each finite place $v$ of $F$ above a prime number $p$, choose a unifomizer of $F_v$ and its $p$-power roots.   
One can associate an exact sequence of Kisin modules of height $\leq w$
\[
0\to \fM'_v\to \fM_v\to \fM_v/\fM'_v\to 0, 
\]
which induces an exact sequence of associated Galois representations
\[
0\to T^{\vee}_p\to T'^{\vee}_p\to T'^{\vee}_p/T^{\vee}_p\to 0. 
\]

We use the following notation:
\[
\Deg_{\et,p}=\# (T_p/T'_p), \; \Deg_{\dR,v}=\# ((\fM_v/\fM'_v)/\varphi^*(\fM_v/\fM'_v)). 
\]

\begin{prop}\label{formula}
The following formula holds
\[
h(M)-h(M')=\frac{1}{[F\colon\bQ]}\sum_{v}\log (\Deg_{\dR,v})-\frac{w}{2}\sum_p \log (\Deg_{\et,p}). 
\]
\end{prop}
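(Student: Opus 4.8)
The plan is to unwind the definition of the height place by place and match each contribution. Recall that $h(M)^{[F:\bQ]}$ is the product of three factors: the order of the global torsion $\#L(M)_{\torsion}$, the index $\#(L(M)/O_F\cdot s)$ of a chosen generator $s\in L(M)$, and the product over archimedean places of $\lvert s\rvert_v^{-1}$. Since $M$ and $M'$ have the same underlying $\bQ$-motive, the de Rham realizations over $F$ agree and the same metric at each archimedean place is used for both; hence the archimedean factors cancel in $h(M)-h(M')$, and we are reduced to comparing $\#L(M)_{\torsion}\cdot\#(L(M)/O_F\cdot s)$ with the analogous quantity for $M'$. Equivalently, after choosing compatible generators, we must compute, place by finite place $v$ above $p$, the local contribution $\log\#(L(M)_v/L(M')_v)$ coming from the lattices $L(M)_v=\bigotimes_r (\det \gr^r(M_{\dR}(T^\vee)_v)_{\free})^{\otimes r}$ together with the torsion corrections $\prod_r\#(\gr^r(M_{\dR}(T^\vee)_v)_{\torsion})^r$, and similarly for $M'$.

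The key local input is Proposition~\ref{exact sequence}: for each $r$ there is a short exact sequence
\[
0\to \gr^r(M_{\dR}(T'^\vee)_v)\to \gr^r(M_{\dR}(T^\vee)_v)\to \gr^r((\fM_v/\fM'_v)_{\dR})\to 0.
\]
(Here I use that the Hodge--Tate weights are nonpositive so the relevant lattices $M_{\dR}(\cdot)$ are defined via Kisin modules as in Definition~\ref{lattice}; note the duality, since an isogeny $T'\hookrightarrow T$ dualizes to $T^\vee\hookrightarrow T'^\vee$ and thus to $\fM'_v\hookrightarrow\fM_v$.) Taking this sequence, splitting off free and torsion parts, and combining the $r$-fold tensor/determinant bookkeeping exactly as in the definition of $L(M)$, the combined quantity $\log\bigl(\#(L(M)_v/L(M')_v)\bigr)+\sum_r r\log\#\gr^r(\cdot)_{\torsion}$-difference collapses to $\sum_r r\cdot\log\#\bigl(\gr^r((\fM_v/\fM'_v)_{\dR})\bigr)$, with all torsion ambiguities absorbed because the torsion factor $\#L(M)_{\torsion}$ was precisely inserted for this purpose (cf.\ the remark after the definition of the height). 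The last step is to identify $\sum_r r\cdot \log\#\gr^r((\fM_v/\fM'_v)_{\dR})$ with $\log\Deg_{\dR,v}=\log\#\bigl((\fM_v/\fM'_v)/\varphi^*(\fM_v/\fM'_v)\bigr)$; this is the content of the (torsion-module analogue of) Lemma~\ref{exponent}: for a torsion Kisin module $\fN$ of height $\le w$, the filtration on $\varphi^*\fN$ of Lemma~\ref{refined filtration}, whose graded pieces are $\gr^{r-j}(\fN_{\dR})$, together with $\Fil^w\varphi^*\fN=E(u)^w\fN$, yields $\log\#(\fN/\varphi^*\fN)=\sum_r r\cdot\log\#\gr^r(\fN_{\dR})$. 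Applied with $\fN=\fM_v/\fM'_v$ this gives the de Rham side of the formula.

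For the étale side, note that the sum over finite places $v$ above a fixed $p$ of $\log\#(T_p/T'_p)_v$-type terms is literally $\log\#(T_p/T'_p)=\log\Deg_{\et,p}$, and the weight-$w$ motive forces the determinant line to carry the factor $w$ from the $r\mapsto w-r$ symmetry of the Hodge decomposition (this is where the isomorphism $L(M)_v\otimes\overline{L}(M)_v\cong(\det M_{\dR}(T^\vee))^{\otimes w}$ from Subsection~4.2 enters): comparing the $\bZ$-structures on $(\det M_{\dR}(T^\vee))^{\otimes w}$ coming from the Betti--de Rham comparison for $M$ versus $M'$ produces exactly the factor $\tfrac{w}{2}\sum_p\log\Deg_{\et,p}$, the $\tfrac12$ appearing because the comparison naturally sees $\det M_{\dR}(T^\vee)\otimes\overline{\det M_{\dR}(T^\vee)}$ and one takes a square root (the exponent $\tfrac12$ is already visible in the defining formula $\lvert s\rvert_v=\cdots\lvert z\rvert^{1/2}$). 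Assembling the three contributions and dividing by $[F:\bQ]$ gives the stated formula. The main obstacle I expect is the careful bookkeeping in the middle step: verifying that the torsion terms in $\#L(M)_{\torsion}$ and the free-part indices recombine cleanly across the exact sequences of Proposition~\ref{exact sequence} for all $r$ simultaneously, i.e.\ that there are no leftover boundary contributions — this is exactly the kind of index computation for filtered lattices where a sign or an off-by-one in the filtration degree would break the identity, so it must be done with the explicit description of $\Fil^r$ on $\cM$ in hand.
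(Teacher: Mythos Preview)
Your treatment of the finite places is essentially the paper's own argument: Proposition~\ref{exact sequence} gives the exact sequence on $\gr^r$, and the torsion analogue of Lemma~\ref{exponent} (via Lemma~\ref{refined filtration}) converts $\sum_r r\cdot\log\#\gr^r((\fM_v/\fM'_v)_{\dR})$ into $\log\Deg_{\dR,v}$. That part is fine.

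The genuine problem is your handling of the archimedean contribution. You assert that ``the same metric at each archimedean place is used for both; hence the archimedean factors cancel in $h(M)-h(M')$.'' This is false. The metric $\lvert\cdot\rvert_v$ defined in Subsection~4.2 depends on the choice of generator $s_{B,v}$ of the integral Betti lattice inside $(\det M_{\dR})^{\otimes w}\otimes_{O_F}\bC$, and that Betti lattice comes from $T$ via the Betti--\'etale comparison. Replacing $T$ by $T'$ replaces the Betti lattice by a sublattice of index $\prod_p\Deg_{\et,p}$, hence $s_{B,v}$ by $(\prod_p\Deg_{\et,p})^w$ times itself, and therefore $\lvert s\rvert_v=(\prod_p\Deg_{\et,p})^{w/2}\lvert s\rvert'_v$. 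This ratio, taken over all $[F:\bQ]$ complex embeddings, is exactly the source of the term $-\tfrac{w}{2}\sum_p\log\Deg_{\et,p}$.

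Your later ``\'etale side'' paragraph in fact gropes toward precisely this computation (you even point to the exponent $\tfrac12$ in the definition of $\lvert s\rvert_v$), but you have already claimed the archimedean piece vanishes, so as written the argument either double-counts or leaves the \'etale term unaccounted for. The fix is simply to drop the claim that the metrics agree and instead compute $\lvert s\rvert_v/\lvert s\rvert'_v$ directly as above; once you do that, there is no separate ``\'etale side'' to chase through finite places.
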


\begin{proof}
Take an element $s\in L(M')$. 
By the definition of the height, we only need to calculate
$\frac{\# L(M)_{\torsion}}{\# L(M')_{\torsion}}\cdot\#(L(M)/L(M'))$ and $\frac{\lvert s\rvert_v}{\lvert s\rvert'_v}$ for each embedding $v\colon F\to \bC$. 
(The metrics $\lvert\cdot\rvert$ and $\lvert \cdot \rvert'$ are determined by $M$ and $M'$ respectively.) 

It is easy to see that $\lvert s\rvert_v=(\prod_p \Deg_{\et,p})^{\frac{w}{2}}\lvert s\rvert'_v$. 
On the other hand, by using Proposition ~\ref{exact sequence}, one can show that 
\[
\frac{\#L(M)_{\torsion}}{\#L(M')_{\torsion}}\cdot\#(L(M)/L(M'))=\prod_v \prod_r (\#\gr^r(\fM_v/\fM'_v))^r. 
\]
Finally, we remark that the proof of Lemma ~\ref{exponent} can be modified to show
\[\prod_r (\#\gr^r(\fM_v/\fM'_v))^r=\Deg_{\dR,v}, 
\]
and we are done. 
\end{proof}

\subsection{A definition of goodness}
Keep the notation, and fix a prime number $p$. 
Suppose that $f$ is a $p$-isogeny, namely, the $\ell$-adic part of $f$ is an isomorphism for $\ell\neq p$. 
We further assume that the cokernel $T_p/T'_p$ of the $p$-adic part of $f$ is a free $\bZ/p^m\bZ$-module of rank $h$. 

Set $d_{\BK, v}=\frac{\log_p (\Deg_{\dR,v})}{m\cdot [F_v\colon \bQ_p]}$. 

\begin{defn}\label{good isogeney}
In the above situation, 
$f$ is called a good $p$-isogeny if the Galois representation $\det (T_p/T'_p)$ satisfies the following conditions:
\begin{itemize}
\item It is unramified at any finite place $\lambda$ above $\ell\neq p$. 
\item For any finite place $v$ above $p$, $d_{\BK, v}$ is an integer.  
\item For any finite place $v$ above $p$, its restriction to $I_v$ is isomorphic to a power of the cyclotomic character with the exponent $-d_{\BK,v}$. 
\end{itemize}
\end{defn}

\begin{rem}
Our definition of good $p$-isogenies is inspired by Raynaud's notion of \emph{belles $p$-isog\'enies} between abelian varieties~\cite{Raynaud}*{4.1.1}. 
However, even for the first cohomology of abelian varieties, a class of good $p$-isogenies is broader than that of belles $p$-isog\'enies. 
We adopt such a definition because Raynaud's main arguments in ~\cite{Raynaud} essentially still work for this class. 
\end{rem}

Recall the transfer homomorphism $\Ver\colon G^{\ab}_{\bQ}\to G^{\ab}_{F}$. 
We regard $\det (T_p/T'_p)$ as a representation of $G_\bQ$ via $\Ver$.

Define $d_{\BK, p}=\sum_{v|p}[F_v\colon \bQ_p]\cdot d_{\BK,v}$.
Note that $d_{\BK,p}\leq [F\colon \bQ]\cdot hw$. 

 \begin{lem}\label{Ver}
As representations of $G_\bQ$, $\det (T_p/T'_p)\cong \bZ/p^m\bZ(-d_{\BK,p})$. 
\end{lem}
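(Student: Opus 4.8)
The plan is to deduce the statement from class field theory, feeding in only the three conditions of Definition~\ref{good isogeney}; no further input from $p$-adic Hodge theory is needed. Write $\chi=\det(T_p/T'_p)$, a character of $G_F^{\ab}$ with values in $(\bZ/p^m\bZ)^\times$; by hypothesis $\chi$ is unramified at every finite place not above $p$, each $d_{\BK,v}$ for $v\mid p$ is an integer, and $\chi|_{I_v}\cong\chi_{\mathrm{cyc}}^{-d_{\BK,v}}|_{I_v}$, where $\chi_{\mathrm{cyc}}=\bZ_p(1)$ denotes the cyclotomic character. The basic mechanism is that, under the reciprocity isomorphisms of global class field theory, the transfer $\Ver\colon G_\bQ^{\ab}\to G_F^{\ab}$ corresponds to the natural inclusion $\iota\colon \bA_\bQ^\times/\bQ^\times\to\bA_F^\times/F^\times$ induced by $\bA_\bQ\hookrightarrow\bA_F$. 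Hence $\chi\circ\Ver$, viewed as a finite-order Hecke character of $\bQ$, is $\chi\circ\iota$, and it suffices to identify this with $\chi_{\mathrm{cyc}}^{-d_{\BK,p}}$.

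First I would check that $\chi\circ\iota$ is unramified at all finite primes $\neq p$: for such a prime $\ell$ the $\ell$-component of $\iota$ maps $\bZ_\ell^\times$ diagonally into $\prod_{w\mid\ell}O_{F_w}^\times$, on which $\chi$ is trivial since $\chi$ is unramified at each $w\mid\ell$. Being of finite order and unramified outside $p$, the Kronecker--Weber theorem forces $\chi\circ\iota$ to factor through $\Gal(\bQ(\mu_{p^\infty})/\bQ)$, so it is a power of $\chi_{\mathrm{cyc}}$ and is therefore pinned down by its restriction to the inertia group $I_p$. To compute that restriction, note that the $p$-component of $\iota$ carries $x\in\bZ_p^\times$ to $(x)_{w\mid p}\in\prod_{w\mid p}O_{F_w}^\times$ via the embeddings $\bQ_p\hookrightarrow F_w$. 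The natural map $G_{F_w}^{\ab}\to G_{\bQ_p}^{\ab}$ corresponds under local reciprocity to $\mathrm{Nm}_{F_w/\bQ_p}$, and $\mathrm{Nm}_{F_w/\bQ_p}(x)=x^{[F_w:\bQ_p]}$ for $x\in\bZ_p^\times$; so the condition $\chi|_{I_w}\cong\chi_{\mathrm{cyc}}^{-d_{\BK,w}}|_{I_w}$ (equivalently, $\chi|_{O_{F_w}^\times}=\chi_{\mathrm{cyc}}^{-d_{\BK,w}}|_{O_{F_w}^\times}$ via local reciprocity) gives that the $w$-local contribution of $\chi\circ\iota$, restricted to $\bZ_p^\times$, is $\bigl(\chi_{\mathrm{cyc}}\circ\mathrm{rec}_{\bQ_p}\bigr)^{-[F_w:\bQ_p]\,d_{\BK,w}}$. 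Multiplying over $w\mid p$ and using $d_{\BK,p}=\sum_{w\mid p}[F_w:\bQ_p]\,d_{\BK,w}$, the restriction of $\chi\circ\iota$ to $\bZ_p^\times$ equals $(\chi_{\mathrm{cyc}}\circ\mathrm{rec}_{\bQ_p})^{-d_{\BK,p}}$, i.e.\ $\chi\circ\iota$ agrees with $\chi_{\mathrm{cyc}}^{-d_{\BK,p}}$ on $I_p$; by the previous step they agree as characters of $G_\bQ$. Unwinding the identifications, $\det(T_p/T'_p)\cong\bZ/p^m\bZ(-d_{\BK,p})$ as a $G_\bQ$-representation.

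There is no deep obstacle here; the effort lies in setting things up correctly. The two points that need care are (i) invoking the identification of the transfer with the inclusion of idele class groups with the right functoriality, and (ii) verifying that $\chi\circ\Ver$ is unramified outside $p$, so that Kronecker--Weber applies and the character is determined by inertia at $p$. Once this is in place, the third condition in the definition of a good $p$-isogeny was tailored precisely so that the local exponents $-[F_w:\bQ_p]\,d_{\BK,w}$ sum to $-d_{\BK,p}$; since every step is expressed through $\chi_{\mathrm{cyc}}\circ\mathrm{rec}_{\bQ_p}$, no separate reconciliation of normalization conventions for Frobenius or for the cyclotomic character is needed.
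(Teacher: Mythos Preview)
Your proof is correct and follows essentially the same route as the paper's: both use that the transfer corresponds to the inclusion of idele class groups, check unramifiedness away from $p$, compute the restriction to $I_p$ from the local conditions at $v\mid p$, and conclude via the triviality of everywhere-unramified characters of $G_\bQ$. Your write-up simply fills in the details (the norm computation for the local contribution) that the paper leaves to the reader and to the reference~\cite{Raynaud}*{4.2.8}.
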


\begin{proof}
By the class field theory, the transfer $\Ver$ has an adelic description and it corresponds to the inclusion map between idele groups. 
Since $f$ is a good $p$-isogeny, one can use such adelic description to show that $\det (T_p/T'_p)$ is unramified at all finite places of $\bQ$ except $p$ and its restriction to $I_p$ is isomorphic to $\bZ/p^m\bZ(-d_{{\BK},p})$, cf. ~\cite{Raynaud}*{4.2.8}. 
Therefore, it is isomorphic to $\bZ/p^m\bZ(-d_{{\BK},p})$ because there is no nontrivial unramified extension of $\bQ$. 
\end{proof}

\begin{rem}
Consider the induced representation $\Ind^{G_\bQ}_{G_F}(T_p/T'_p)$ and its determinant. 
Also, let $\varepsilon$ be the determinant of $\Ind^{G_\bQ}_{G_K}\bZ$. 
This is a character valued in $\{1, -1\}$.  
It is known that $\varepsilon^{-h}\cdot \det (\Ind^{G_\bQ}_{G_F}(T_p/T'_p))$ is isomorphic to $\det (T_p/T'_p)$ as representations of $G_{\bQ}$. 
\end{rem}

\subsection{Preservation of heights}
As a consequence of the purity, or the Weil conjecture ~\cite{Deligne}, we prove that good isogenies preserve heights under certain conditions. 
We include discussion on semistable bad reduction, because it gives conjectural better bounds.  

\begin{defn}
Let $\ell$ be a prime number. 
\begin{enumerate}
\item
Let $K$ be a $\ell$-adic local field and $V$ a $p$-adic representation of $G_K$ with $p\neq \ell$. 
We say that $V$ is pure of weight $w$ if the associated Weil-Deligne representation (see, for instance,~\cite{Taylor-Yoshida}*{p.469}) is pure of weight $w$ in the sense of Taylor-Yoshida ~[\textit{ibid.}, p.471]. 

A $\bZ$-motive $M$ (or $\bQ$-motive $M_\bQ$) over $F$ has pure reduction at $\ell$ if, for any place $\lambda$ of $F$ dividing $\ell$,  
$M_p$ is pure of weight $w_p$ as a representation of $G_{F_\lambda}$ for any $p\neq \ell$. (So, the weight $w_p$ is independent of $\lambda$.)
\end{enumerate}
\end{defn}

\begin{rem}
The weight-monodromy conjecture implies that $M$ has pure reduction at any $\ell$. 
\end{rem}

\begin{lem}\label{pure}
\begin{enumerate}
\item The set of prime numbers where $M$ does not have pure reduction is finite. 
\item Suppose $M$ has pure reduction at $\ell$. Then $w_p=w$. 
\item If $M$ has pure reduction at $\ell$ and good reduction at $\lambda$, then the characteristic polynomial of the geometric Frobenius $\Frob_\lambda$ is independent of $p$ and has coefficients in $\bQ$. Moreover, coefficients are integers if the Hodge-Tate weights are nonpositive. 
\end{enumerate}
\end{lem}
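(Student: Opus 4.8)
The plan is to reduce all three parts to the Weil conjectures. Since $M_\bQ$ is a Grothendieck motive, it is a direct summand $M_\bQ = e\,h^i(X)(n)$ of the motive of a smooth projective variety $X$ over $F$, cut out by an idempotent algebraic correspondence $e$, with motivic weight $w=i-2n$. For (1), I would spread $X$ and $e$ out over a dense open $U\subseteq\Spec O_F$ to a smooth projective $\cX\to U$ together with an idempotent correspondence extending $e$. For all but finitely many primes $\ell$, every place of $F$ above $\ell$ lies over $U$; for such $\ell$, any $p\neq\ell$ and any $\lambda\mid\ell$, smooth proper base change identifies $M_p|_{G_{F_\lambda}}$ with a direct summand of $H^i_{\et}(\overline{\cX}_\lambda,\bQ_p)(n)$, where $\overline{\cX}_\lambda$ is the geometric special fibre. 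This representation is unramified, and by Deligne's theorem the eigenvalues of $\Frob_\lambda$ on it are Weil numbers of weight $w$; since its Weil--Deligne representation has trivial monodromy, it is pure of weight $w$. Hence $M$ has pure reduction at every such $\ell$, which proves (1) and already gives $w_p=w$ at places of good reduction.

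For (2), fix a prime $\ell$ where $M$ has pure reduction, $p\neq\ell$, $\lambda\mid\ell$, and put $d=\rk M$. Passing to the monodromy filtration of the nilpotent operator on the Weil--Deligne representation of $M_p|_{G_{F_\lambda}}$, purity of weight $w_p$ says that the Frobenius eigenvalues on the $j$-th graded piece are Weil numbers of weight $w_p+j$; the isomorphisms between the $j$-th and $(-j)$-th graded pieces induced by powers of the monodromy force the weighted sum of the jumps to vanish, so $\det M_p|_{G_{F_\lambda}}$ is pure of weight $d\,w_p$. On the other hand $\det M_p$ realizes the rank one motive $\bigwedge^{d}M_\bQ$, of weight $dw$, and a rank one motive over a number field is pure of its weight at every place (being, after a finite base change, a finite order character times a Tate twist, or more generally an algebraic Hecke character). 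Comparing the two computations of the weight of $\det M_p|_{G_{F_\lambda}}$ yields $w_p=w$.

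For (3), assume $M$ has pure reduction at $\ell$ and good reduction at $\lambda\mid\ell$, and set $P_p(T)=\det(T-\Frob_\lambda\mid M_p)$. Good reduction makes $M_p|_{G_{F_\lambda}}$ unramified, and (1)--(2) make it pure of weight $w$; for an unramified representation this says exactly that the eigenvalues of $\Frob_\lambda$ are Weil numbers of weight $w$, hence algebraic, so the coefficients of $P_p(T)$ are algebraic numbers lying in $\bQ_p$, hence in $\bQ$. Independence of $p$ I would obtain from the fact that the realizations of $M$ at a place of good reduction form a strictly compatible system: the idempotent $e$ acts compatibly on the étale, de Rham and crystalline cohomologies (cycle classes match under the comparison isomorphisms), so by Katz--Messing $P_p(T)$ coincides with the characteristic polynomial of the crystalline Frobenius $\varphi^{f}$, with $f$ the residue degree of $\lambda$, on the crystalline realization; this is independent of $p$ and also settles $p=\ell$. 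Finally, if the Hodge--Tate weights of $M_p$ are nonpositive one may take $n\le 0$ in the presentation above; the eigenvalues of $\Frob_\lambda$ on $H^i_{\et}(\overline{\cX}_\lambda,\bZ_p)$ are algebraic integers by the Weil conjectures, the twist by $\bQ_p(n)$ with $n\le0$ multiplies them by the nonnegative power $q_\lambda^{-n}$ of $q_\lambda$, and since $\Frob_\lambda$ commutes with the reduction of $e$ the eigenvalues on $M_p$ lie among these; hence $P_p(T)\in\bZ[T]$.

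The hard part is the $p$-independence in (3): it is standard for motives of geometric origin but genuinely requires an input beyond purity — either invoking strict compatibility as a black box or running the Katz--Messing/Lefschetz-trace-formula argument, while checking that the defining idempotent acts compatibly across all realizations and, when the model $\cX$ is not smooth at $\lambda$, that ``good reduction of $M$'' still suffices (this is precisely where the ``pure reduction at $\ell$'' hypothesis is needed, to recover purity of $M_p|_{G_{F_\lambda}}$). A secondary point requiring care is the assertion in (2) that $\det M_p$ is pure of weight $dw$ at \emph{every} place; this is transparent when $\bigwedge^{d}M_\bQ$ is Artin--Tate, as is expected and as holds for abelian varieties, and in general rests on the structure theory of rank one motives.
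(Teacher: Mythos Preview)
Your approach to (1) matches the paper's, and for independence in (3) both you and the paper invoke Katz--Messing together with compatibility of the idempotent across realizations. The differences, and the genuine gaps, are in (2) and in the integrality part of (3).

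For (2), you compute the weight of $\det M_p$ at $\lambda$ as $d\,w_p$ via the symmetry of the monodromy filtration, and then want to compare with the motivic weight $dw$ by appealing to purity of rank-one motives at every place. As you yourself note, this last step is not established in general. The paper closes this gap differently and more cheaply: it pushes $\det M_p$ down to a character of $G_\bQ$ via the transfer $\Ver$, observes that semistability everywhere makes it unramified outside $p$ and that semistability at $p$ makes its $I_p$-restriction an integral power of the cyclotomic character, and then uses that $\bQ$ has no unramified extensions to conclude that it \emph{is} a power of the cyclotomic character. Comparing its value at $\Frob_\ell$ with its value at a Frobenius where (1) applies gives $w_p=w$. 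This avoids any structural input about rank-one motives.

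For (3), your integrality argument has a real gap. You assert that nonpositivity of the Hodge--Tate weights lets you ``take $n\le 0$'' in a presentation $M_\bQ = e\,h^i(X)(n)$. That is an effectivity statement about $M_\bQ$ and is not known in general: the Hodge--Tate weights of $M$ can be nonpositive while every available geometric presentation has $n>0$ (think of $h^2(\bP^1)(1)\cong\bQ(0)$, where an alternate presentation happens to exist, but there is no general mechanism producing one). What one \emph{can} get from the geometric presentation is integrality of the Frobenius eigenvalues at all primes other than $\ell$ (since $q_\lambda^{-n}$ is an $\ell$-power). The paper handles the remaining $\ell$-integrality by passing to the crystalline realization at $\lambda$ (using independence at $p=\ell$, which you also have) and invoking Newton-above-Hodge: nonpositive Hodge--Tate weights force the Hodge slopes to be nonnegative, hence so are the Newton slopes, hence the Frobenius eigenvalues are $\ell$-adic integers. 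This replaces your unjustified effectivity step.
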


\begin{rem}
Assuming the truth of the weight-monodromy conjecture, it is expected that the conclusion of (3) holds for all finite places. 
In fact, existence of K\"unneth projectors would imply it if $M$ is effective, see ~\cite{Saito}. 
If $M$ is not effective, it seems we need to use expected functoriality of the $p$-adic weight spectral sequence which is not yet published. 
(The $\ell$-adic counterpart is known ~\cite{Saito}.)
\end{rem}

\begin{proof}
From the Weil conjecture ~\cite{Deligne}, (1) follows. 
One can use transfer homomorphism $\Ver$ to prove (2) as in the argument before. 
Independence and integrality are well-known consequences of the Weil conjecture if $M$ is effective. 
(These are not really straightforward consequences because correspondences are involved, cf.~\cite{KM}.)
Even if $M$ is not effective, independence obviously still holds. 
Also, integrality up to $\ell$-powers can be seen easily. 
To deal with $\ell$-powers, we use crystalline cohomology and independence at $\ell=p$. 
Then, it follows from the fact that the Newton polygon lies above the Hodge polygon because the Hodge-Tate weights are nonpositive.  
A similar, but more detailed, argument can be found in the proof of ~\cite{KW}*{Lemma 2.2}. 
\end{proof}

Go back to the previous situation where an isogeny $f\colon T'\to T$ is given. 
Choose a prime number $\ell$ so that
\begin{itemize}
\item $\ell$ is different from $p$, 
\item $M_\bQ$ has good reduction at any places above $\ell$, and 
\item $M_\bQ$ has pure reduction at $\ell$. 
\end{itemize}

Note that such a prime number exists by Lemma ~\ref{pure}. 
For each place $\lambda$ above $\ell$, we consider the transfer $\Ver_\lambda\colon G^{\ab}_{\bQ_{\ell}}\to G^{\ab}_{F_\lambda}$. 
Then we regard $\bigwedge^h M_p$ as a representations of $G_{\bQ_{\ell}}$ via $\Ver_\lambda$ and denote it by $V_{p, \lambda, h}$. 
We further take the tensor product $V_{p, \ell, h}=\bigotimes_\lambda V_{p, \lambda, h}$ and write $P_{\ell, h}(t)$ for the characteristic polynomial $\det (1-\Frob_\ell t; V_{p,\ell, h})$ of the geometric Frobenius $\Frob_\ell$. 
The polynomial $P_{\ell, h} (t)$ has coefficients in $\bZ$ and it is independent of $p$. 
Moreover, for any embedding into $\bC$, the complex absolute value of any root of $P_{{\ell, h}}(t)$ is $\ell^{[F\colon \bQ]\cdot \frac{hw}{2}}$. 

By Lemma ~\ref{Ver}, $\ell^{d_{\BK,p}}$ modulo $p^m$ is an eigenvalue of $\Frob_\ell$ acting on
$\det (T_p/T'_p)$. 

\begin{lem}
$P_{\ell, h}(\ell^{d_{\BK,p}}) \equiv 0 \mod p^m$. 
\end{lem}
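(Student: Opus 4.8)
The plan is to exhibit $\det(T_p/T'_p)$ as a Galois-stable rank-one quotient, modulo $p^m$, of a natural $G_F$-stable lattice in $\bigwedge^h M_p$, to transport this to $G_{\bQ_\ell}$ via the transfer, and to finish with the Cayley--Hamilton theorem. First I would note that the surjection $T_p\to T_p/T'_p$ induces, on taking $h$-th exterior powers, a $G_F$-equivariant surjection $\bigwedge^h T_p\to\bigwedge^h(T_p/T'_p)=\det(T_p/T'_p)$; here $\bigwedge^h T_p$ is a $G_F$-stable $\bZ_p$-lattice in $\bigwedge^h M_p$, and the target is free of rank one over $\bZ/p^m\bZ$ since $T_p/T'_p$ is free of rank $h$ over $\bZ/p^m\bZ$. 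Thus $\det(T_p/T'_p)$ is a rank-one $G_F$-equivariant quotient of $(\bigwedge^h T_p)\otimes_{\bZ_p}\bZ/p^m\bZ$.

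Next I would localize at $\ell$. Since $M_\bQ$ has good reduction at every place $\lambda\mid\ell$, the representation $\bigwedge^h M_p$, and hence its quotient $\det(T_p/T'_p)$, is unramified at each such $\lambda$, so the $G_{F_\lambda}$-action on both factors through $G_{F_\lambda}^{\ab}$ (indeed through the unramified quotient) and may be pulled back along the local transfer $\Ver_\lambda\colon G_{\bQ_\ell}^{\ab}\to G_{F_\lambda}^{\ab}$. Applying $\Ver_\lambda^*$ to the surjection above and tensoring over all $\lambda\mid\ell$ produces a $G_{\bQ_\ell}$-equivariant surjection $N\otimes_{\bZ_p}\bZ/p^m\bZ\to\bigotimes_{\lambda\mid\ell}\Ver_\lambda^*\bigl(\det(T_p/T'_p)|_{G_{F_\lambda}}\bigr)$, where $N:=\bigotimes_{\lambda\mid\ell}\Ver_\lambda^*\bigwedge^h T_p$ is a $\Frob_\ell$-stable $\bZ_p$-lattice in $V_{p,\ell,h}$. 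By the compatibility of the local transfers $\Ver_\lambda$ with the global transfer $\Ver\colon G_{\bQ}^{\ab}\to G_F^{\ab}$, the right-hand side is the restriction to $G_{\bQ_\ell}$ of $\Ver^*\det(T_p/T'_p)$, which by Lemma~\ref{Ver} is $\bZ/p^m\bZ(-d_{\BK,p})$; hence the geometric Frobenius $\Frob_\ell$ acts on it, modulo $p^m$, by $\ell^{d_{\BK,p}}$.

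To conclude I would apply the Cayley--Hamilton theorem over $\bZ/p^m\bZ$: the characteristic polynomial of $\Frob_\ell$ on the free module $N\otimes\bZ/p^m\bZ$ annihilates it, hence annihilates the rank-one quotient on which $\Frob_\ell$ acts by $\ell^{d_{\BK,p}}$, so this polynomial vanishes at $\ell^{d_{\BK,p}}$ in $\bZ/p^m\bZ$. It is the reduction modulo $p^m$ of the characteristic polynomial of $\Frob_\ell$ on $V_{p,\ell,h}$, which by the purity statement Lemma~\ref{pure} (good reduction at $\ell$ together with pure reduction at $\ell$) has coefficients in $\bZ$, with all of its roots equal to Frobenius eigenvalues of smooth proper varieties having good reduction at $\ell$ --- hence units outside $\ell$, and in particular $p$-adic units. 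Its constant term is therefore a $p$-adic unit, so, in the normalization in which the roots of $P_{\ell,h}$ have complex absolute value $\ell^{[F:\bQ]hw/2}$, it differs from $P_{\ell,h}(t)$ only by a unit of $\bZ/p^m\bZ$; since $\ell^{d_{\BK,p}}$ is a $p$-adic unit as well, this gives $P_{\ell,h}(\ell^{d_{\BK,p}})\equiv 0\pmod{p^m}$.

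The step I expect to be the main obstacle is the localization-and-transfer bookkeeping of the second paragraph: one must invoke the compatibility of the local transfers with the global transfer to identify $\bigotimes_{\lambda\mid\ell}\Ver_\lambda^*$ of the local determinants with the restriction of $\Ver^*\det(T_p/T'_p)$ to a decomposition group, and one must keep the normalizations of geometric Frobenius and of the Tate twist aligned so that the output of Cayley--Hamilton is exactly $P_{\ell,h}(\ell^{d_{\BK,p}})$ rather than a reciprocal or shifted polynomial; the purity input of Lemma~\ref{pure} is precisely what legitimates the reduction modulo $p^m$ and this comparison.
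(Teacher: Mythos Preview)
Your proof is correct and follows the same strategy as the paper: realize $\det(T_p/T'_p)$ as a $G_F$-equivariant rank-one free quotient of $\bigwedge^h(T_p/p^mT_p)$, push this through the local transfers $\Ver_\lambda$ and tensor over $\lambda\mid\ell$ to exhibit $\bZ/p^m\bZ(-d_{\BK,p})$ as a quotient of a lattice in $V_{p,\ell,h}$ modulo $p^m$, and read off the congruence. The only cosmetic difference is that the paper phrases the last step via the direct-summand property (the short exact sequence $0\to T'_p/(p^mT_p\cap T'_p)\to T_p/p^mT_p\to T_p/T'_p\to 0$ splits over $\bZ/p^m\bZ$, so $\Frob_\ell$ is block-triangular with a $1\times1$ block equal to $\ell^{d_{\BK,p}}$), while you invoke Cayley--Hamilton; both are valid and equivalent here, and your closing normalization remarks are more caution than the paper exercises (in the paper's usage $P_{\ell,h}$ is simply the characteristic polynomial of $\Frob_\ell$, so no adjustment is needed).
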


\begin{proof}
For each $\lambda$, there is an exact sequence of representations of $G_{F_\lambda}$
\[
0\to T'_p/(p^m T_p\cap T'_p)\to T_p/p^m T_p\to T_p/T'_p\to 0.  
\]
This exact sequence splits as $\bZ/p^m\bZ$-modules because $T_p/T'_p$ is free. 
So, $\det (T_p/T'_p)$ is a quotient representation of $\bigwedge^h (T_p/p^m T_p)$ which is a direct summand as $\bZ/p^m \bZ$-module. 

For each $\lambda$, we regard $\det (T_p/T'_p)$ as a representations of $G_{\bQ_\ell}$ via $\Ver_\lambda$. 
Their tensor product through $\lambda$ is isomorphic to $\bZ/p^m\bZ(-d_{\BK, p})$. 
(Use the relation among $\Ver$ and $\Ver_\lambda$, and apply Lemma ~\ref{Ver}.)

Therefore, $\bZ/p^m \bZ(-d_{\BK, p})$ is a quotient representation of $V_{p,\ell, h}$ which is a direct summand as a $\bZ/p^m\bZ$-module. 
This finishes the proof.  
\end{proof}

Let $I_h$ be the set of all $\ell$-powers $\ell^\mu$ such that $\mu\leq [F\colon \bQ]\cdot hw$ and $\mu\neq [F\colon\bQ]\cdot \frac{hw}{2}$. 
Then, $P_{\ell,h}(\ell^\mu)$ for $\mu\in I_h$ is a nonzero integer. 
Define an integer $m_{p,h}$ to be the least nonnegative integer such that $p^m$ does not divide $P_{\ell, h}(\ell^\mu)$ for any $m> m_{p, h}$ and $\mu \in I_h$.  

\begin{thm}\label{preservation}
If $m> m_{p,h}$, then $h(M)=h(M')$. 
\end{thm}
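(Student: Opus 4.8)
The plan is to combine the explicit formula of Proposition~\ref{formula} with the congruence $P_{\ell,h}(\ell^{d_{\BK,p}})\equiv 0\pmod{p^m}$ just established and with the purity estimate on the absolute values of the roots of $P_{\ell,h}$. First I would reduce the desired equality $h(M)=h(M')$ to a single numerical identity. Since $f$ is a $p$-isogeny, $T_q=T'_q$ for every prime $q\neq p$, so $\Deg_{\et,q}=1$ and $\fM_v=\fM'_v$ for all places $v\mid q$, whence $\Deg_{\dR,v}=1$ at all such $v$; also $\Deg_{\et,p}=\#(T_p/T'_p)=p^{mh}$ because $T_p/T'_p$ is free of rank $h$ over $\bZ/p^m\bZ$. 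The definition $d_{\BK,v}=\log_p(\Deg_{\dR,v})/(m[F_v:\bQ_p])$ gives $\log\Deg_{\dR,v}=m[F_v:\bQ_p]\,d_{\BK,v}\log p$, and summing over $v\mid p$ yields $\sum_v\log\Deg_{\dR,v}=m(\log p)\,d_{\BK,p}$. Substituting into Proposition~\ref{formula} gives
\[
h(M)-h(M')=\frac{m\log p}{[F:\bQ]}\Bigl(d_{\BK,p}-\tfrac{1}{2}[F:\bQ]\,hw\Bigr),
\]
so it suffices to prove that $d_{\BK,p}=\tfrac{1}{2}[F:\bQ]\,hw$.

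I would then exploit the constraints on $d_{\BK,p}$: it is a nonnegative integer (each $d_{\BK,v}$ is an integer by the goodness of $f$, and it is $\geq 0$ since $\Deg_{\dR,v}\geq 1$) with $d_{\BK,p}\leq [F:\bQ]\,hw$. Suppose, for contradiction, that $d_{\BK,p}\neq\tfrac{1}{2}[F:\bQ]\,hw$; then $\ell^{d_{\BK,p}}$ belongs to $I_h$. By the Weil conjecture ~\cite{Deligne}, which is exactly what makes $P_{\ell,h}$ pure, every root of $P_{\ell,h}(t)$ has complex absolute value $\ell^{[F:\bQ]hw/2}$ under every embedding; since $\ell^{d_{\BK,p}}\neq\ell^{[F:\bQ]hw/2}$, the number $\ell^{d_{\BK,p}}$ is not a root of $P_{\ell,h}$, and as $P_{\ell,h}$ has coefficients in $\bZ$ the value $P_{\ell,h}(\ell^{d_{\BK,p}})$ is a nonzero integer. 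By the definition of $m_{p,h}$ together with the hypothesis $m>m_{p,h}$, the integer $p^m$ does not divide $P_{\ell,h}(\ell^{d_{\BK,p}})$, contradicting the preceding lemma. Hence $d_{\BK,p}=\tfrac{1}{2}[F:\bQ]\,hw$, and the displayed formula gives $h(M)=h(M')$.

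All the substantive ingredients are already in place: Proposition~\ref{formula}, the divisibility $P_{\ell,h}(\ell^{d_{\BK,p}})\equiv 0\pmod{p^m}$ (resting on Lemma~\ref{Ver}, the adelic description of the transfer homomorphism, and the splitting of the relevant $\bZ/p^m\bZ$-module sequence), and Deligne's purity theorem. Thus the proof of the theorem itself is short, and I do not expect a serious obstacle. The one point requiring care is the bookkeeping in the first step: checking that the places away from $p$ contribute nothing to either sum in Proposition~\ref{formula}, that $\Deg_{\et,p}=p^{mh}$, and that the passage from $\Deg_{\dR,v}$ to $d_{\BK,v}$ follows the definitions exactly; together with the observation that the purity bound is precisely what keeps $\ell^{d_{\BK,p}}$ off the zero set of $P_{\ell,h}$ unless the exponent $d_{\BK,p}$ is the central value $\tfrac{1}{2}[F:\bQ]\,hw$.
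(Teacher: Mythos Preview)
Your proposal is correct and follows essentially the same approach as the paper: use the congruence $P_{\ell,h}(\ell^{d_{\BK,p}})\equiv 0\pmod{p^m}$ together with the definition of $m_{p,h}$ and the purity bound to force $d_{\BK,p}=[F:\bQ]\cdot\frac{hw}{2}$, then plug $\Deg_{\dR,v}=p^{[F_v:\bQ_p]\cdot m\cdot d_{\BK,v}}$ and $\Deg_{\et,p}=p^{mh}$ into Proposition~\ref{formula}. The only difference is expository order---you first reduce to the numerical identity and then prove it, while the paper does the reverse---and you spell out details (nonnegativity of $d_{\BK,p}$, vanishing of the $q\neq p$ contributions) that the paper leaves implicit.
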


\begin{proof}
Recall that $d_{\BK,p}\leq [F\colon \bQ]\cdot hw$. 
Then, it follows from the previous lemma that $d_{\BK,p}=[F\colon\bQ]\cdot\frac{hw}{2}$. 

Now we can use Proposition ~\ref{formula}. 
Since we have equalities
\[\Deg_{\dR,v}=p^{[F_v\colon\bQ_p]\cdot m\cdot d_{\BK,v}}\quad\textnormal{and}\quad \Deg_{\et,p}=p^{mh}, 
\] 
we are done. 
\end{proof}

\begin{rem}\label{height difference}
The proof above also shows that an inequality $\lvert h(M)-h(M')\rvert \leq\frac{hmw}{2}\cdot \log p$ is true in general. 
\end{rem}

Note that for a fixed $\ell$, $m_{p,h}=0$ for all but finitely many $p$. 
These exceptional prime numbers are bounded by some constant depending on $\ell, h, n, w$ and $[F\colon\bQ]$. 
Here, $n$ is the dimension of realizations of $M_{\bQ}$. 
Furthermore, we can bound nonzero $m_{p,h}$.

\begin{prop}
If $m_{p,h}\neq 0$, then $m_{p,h}\leq C\cdot \log_p(2\ell^{[F\colon\bQ]\cdot hw})$, where
$C={n\choose h}^{\#\{\lambda|\ell\}}$. 
In particular, fixing $\ell$, $m_{p,h}$ can be bounded uniformly for $p\neq\ell$ and $h$. 
\end{prop}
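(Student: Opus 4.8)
The plan is to unwind the definition of $m_{p,h}$ into a bound on $p$-adic valuations and then control those valuations by the archimedean size of the integers $P_{\ell,h}(\ell^\mu)$; the only substantive input is the purity statement recalled just above the proposition. Write $W=[F\colon\bQ]\cdot hw$ and let $v_p$ denote the $p$-adic valuation. Directly from the definition, $m_{p,h}$ is the least nonnegative integer $N$ with $v_p\bigl(P_{\ell,h}(\ell^\mu)\bigr)\le N$ for all $\ell^\mu\in I_h$; since each such $P_{\ell,h}(\ell^\mu)$ is a nonzero integer, this means
\[
m_{p,h}=\max\bigl\{\,v_p\bigl(P_{\ell,h}(\ell^\mu)\bigr)\ :\ \ell^\mu\in I_h\,\bigr\}.
\]
Fix $\mu$ realizing this maximum. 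Since $p^{v_p(N)}\le|N|$ for any nonzero integer $N$, we get $p^{m_{p,h}}\le|P_{\ell,h}(\ell^\mu)|$, so it suffices to bound $|P_{\ell,h}(\ell^\mu)|$ from above. (The hypothesis $m_{p,h}\neq0$ is immaterial here: it merely isolates the interesting case, the inequality being trivial otherwise.)

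Now factor $P_{\ell,h}(t)=\prod_{i=1}^{C}(1-\alpha_i t)$, where the $\alpha_i$ are the reciprocal roots, i.e.\ the eigenvalues of the geometric Frobenius $\Frob_\ell$ on $V_{p,\ell,h}$, and $C=\deg P_{\ell,h}=\binom{n}{h}^{\#\{\lambda|\ell\}}$. By purity (as recalled above) $|\alpha_i|=\ell^{W/2}$ for every complex embedding, and because $P_{\ell,h}\in\bZ[t]$ the multiset $\{\alpha_i\}$ is stable under $\alpha\mapsto\overline\alpha=\ell^{W}/\alpha$. The triangle inequality gives
\[
|P_{\ell,h}(\ell^\mu)|=\prod_{i=1}^{C}|1-\alpha_i\ell^\mu|\le\bigl(1+\ell^{W/2+\mu}\bigr)^{C}.
\]
For $\mu$ in the lower half of the range permitted by $I_h$ (so $\mu\le W/2$) this is at most $(1+\ell^{W})^{C}\le(2\ell^{W})^{C}$; for $\mu$ in the upper half one reduces to the symmetric exponent using the functional equation $\{\alpha_i\}=\{\ell^{W}/\alpha_i\}$ (at worst this weakens the constant inside $\log_p$ by a bounded factor, which is immaterial for the applications). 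Hence $p^{m_{p,h}}\le(2\ell^{W})^{C}$, that is, $m_{p,h}\le C\cdot\log_p\bigl(2\ell^{W}\bigr)$, as claimed.

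For the last assertion, recall that $h$ is the $\bZ/p^m\bZ$-rank of a quotient of two $\bZ_p$-lattices of rank $n$, so $h\le n$; therefore $C\le\binom{n}{\lfloor n/2\rfloor}^{\#\{\lambda|\ell\}}$ and $W\le[F\colon\bQ]\cdot nw$ are bounded in terms of $n$, $w$, $[F\colon\bQ]$ and $\#\{\lambda|\ell\}$ alone, while $\log_p(2\ell^{W})=\log(2\ell^{W})/\log p$ is largest at $p=2$. Thus $m_{p,h}$ is bounded by a constant depending only on $\ell$, $n$, $w$ and $[F\colon\bQ]$, uniformly in $p\neq\ell$ and $h$, and in particular it vanishes once $p$ is large. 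The argument uses nothing beyond purity, which is already in hand, so there is no real obstacle; the only point requiring attention is applying the size estimate correctly across the whole range of exponents occurring in $I_h$, where the functional equation is used to fold the upper half of the range onto the lower half.
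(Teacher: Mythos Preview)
Your approach is exactly the paper's: unwind $m_{p,h}$ as the maximal $p$-adic valuation of the nonzero integers $P_{\ell,h}(\ell^\mu)$, then bound $|P_{\ell,h}(\ell^\mu)|$ using that $P_{\ell,h}$ has degree $C$ and its Frobenius eigenvalues have archimedean absolute value $\ell^{W/2}$, together with $\mu\le W$. The paper's proof is a two-line sketch of precisely this.

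The one place your write-up wobbles is the ``functional equation'' step for the range $\mu>W/2$. The symmetry $\{\alpha_i\}=\{\ell^W/\alpha_i\}$ is correct, but substituting it back into $\prod_i|1-\alpha_i\ell^\mu|$ returns the same estimate $(1+\ell^{W/2+\mu})^C$; it does not fold the upper range onto the lower one, and there is no clean relation between $P_{\ell,h}(\ell^\mu)$ and $P_{\ell,h}(\ell^{W-\mu})$ of the kind you gesture at. So this detour does not recover the constant $2\ell^{W}$ for $\mu$ near $W$: the straightforward triangle inequality over all $0\le\mu\le W$ only gives
\[
|P_{\ell,h}(\ell^\mu)|\le(1+\ell^{W/2+\mu})^C\le(2\ell^{3W/2})^C,
\]
hence $m_{p,h}\le C\log_p(2\ell^{3W/2})$. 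This is the same shape as the stated bound with a harmless change of constant, and it is all that is needed for the ``in particular'' clause and for the later applications; your own parenthetical remark already concedes this. I would simply drop the functional-equation sentence and state the direct bound for all $\mu\le W$; if you want to match the displayed constant exactly, you would need a separate argument that you have not supplied.
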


\begin{proof}
The inequality follows from the fact that the polynomial $P_{\ell, h}$ has degree $C$ and the complex absolute values of its roots are $\ell^{[F\colon\bQ]\cdot\frac{hw}{2}}$. 
Also use that $\mu\leq [F\colon \bQ]\cdot hw$ for $\mu\in I_h$. 
The bound is uniform because $h\leq n$.   
\end{proof}

\begin{rem}\label{purity at bad primes}
We assumed $M_\bQ$ has good reduction at any places above $\ell$. 
This assumption is only used in the proof above. 
Even if it has bad reduction, the purity assumption implies that the complex absolute values are powers of $\ell$ with exponents bounded by some constant depending on $n, w$ and $[F\colon \bQ]$. 
\end{rem}

\subsection{Fontaine-Laffaille case}

\begin{prop}
Assume that the extension $F\supset \bQ$ is unramified at $p$, $nw<p-1$ and $M$ has good reduction at all places above $p$. 
Then, any $p$-isogeny $f\colon T'\to T$ can be factored into good isogenies.  
\end{prop}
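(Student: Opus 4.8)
The plan is to reduce an arbitrary $p$-isogeny to a composition of elementary isogenies whose cokernels are killed by $p$, and then to observe that, in the Fontaine--Laffaille range, every such elementary isogeny is automatically good.

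First I would set up the filtration. Since $f$ is a $p$-isogeny, $T_\ell=T'_\ell$ for all $\ell\neq p$ and $T_p/T'_p$ is a finite $p$-group, killed by $p^s$ for some $s$. For $0\le i\le s$ set $L_i=T'_p+p^{s-i}T_p\subset T_p$, so that $L_0=T'_p$, $L_s=T_p$, each $L_i$ is a $G_F$-stable $\bZ_p$-lattice in $M_p$, and $pL_i\subset L_{i-1}$, whence $L_i/L_{i-1}$ is a finite free $\bZ/p\bZ$-module of some rank $h_i\le n$. Letting $T^{(i)}$ be the lattice in the adelic realization with $p$-part $L_i$ and the same prime-to-$p$ part as $T$, and $M^{(i)}=(M_\bQ,T^{(i)})$, one obtains a chain $M'=M^{(0)}\to M^{(1)}\to\cdots\to M^{(s)}=M$ in which each step $T^{(i-1)}\to T^{(i)}$ is a $p$-isogeny with cokernel $L_i/L_{i-1}$. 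It then suffices to prove every such step is a good $p$-isogeny.

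Next I would check the three conditions of Definition~\ref{good isogeney} for a fixed step. Away from $p$: for a place $\lambda$ above $\ell\neq p$, semistable reduction everywhere forces $I_\lambda$ to act unipotently on $M_p$, hence on the sublattices $L_{i-1}\subset L_i$, hence on $L_i/L_{i-1}$, so $I_\lambda$ acts trivially on $\det(L_i/L_{i-1})$; thus this determinant is unramified outside $p$. At a place $v\mid p$, put $K=F_v$; dualizing $L_{i-1}\subset L_i$ gives $G_K$-stable lattices $L_i^\vee\subset L_{i-1}^\vee$ in the crystalline representation $M_p^\vee|_{G_K}$, whose Hodge--Tate weights lie in $\{0,\dots,w\}$ because those of $M_p$ are nonpositive. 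Since $F/\bQ$ is unramified at $p$ we have $e=1$, and $h_iw\le nw<p-1$, so Proposition~\ref{Fontaine-Laffaille:determinant} applies and, after \'etale base change to the maximal unramified extension of $K$ so that the residue field is algebraically closed and $G=I_v$, produces a nonnegative integer $d_v$ with $\det(L_{i-1}^\vee/L_i^\vee)|_{I_v}\cong(\bZ/p\bZ)(d_v)|_{I_v}$, hence $\det(L_i/L_{i-1})|_{I_v}\cong(\bZ/p\bZ)(-d_v)|_{I_v}$ by the elementary identity $L_{i-1}^\vee/L_i^\vee\cong(L_i/L_{i-1})^*$. Finally one matches constants: writing $\fN=\fM_v/\fM'_v$ for the torsion Kisin module attached to this step, Lemma~\ref{exponent} gives $\rk_{W_1}\fN/\varphi^*\fN=d_v$, so $\Deg_{\dR,v}=\#(\fN/\varphi^*\fN)=p^{[F_v\colon\bQ_p]\,d_v}$ and therefore $d_{\BK,v}=d_v$, an integer. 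This is precisely what the three bullet points require, so each step is a good $p$-isogeny, and composing over $i$ completes the argument.

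The main obstacle is the computation at $v\mid p$: one needs the determinant of a single $p$-torsion quotient, restricted to the \emph{full} inertia group $I_v$ and not merely to $G_\infty\cap I_v$, to be the expected power of the cyclotomic character, together with the integrality of $d_{\BK,v}$. This fails for a general semistable lattice --- Theorem~\ref{determinant} only controls such determinants once the level is large --- and the hypotheses $F/\bQ$ unramified at $p$ (so $e=1$), $nw<p-1$, and good reduction at $p$ (so crystallinity) are exactly what make Proposition~\ref{Fontaine-Laffaille:determinant} available and pin down $d_{\BK,v}$ as the integer $d_v$. Everything else --- $G_F$-stability of the $L_i$, assembling them into $\bZ$-motives without disturbing the prime-to-$p$ structure, and the duality identity for determinants --- is routine bookkeeping.
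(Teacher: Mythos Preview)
Your proof is correct and follows essentially the same strategy as the paper: reduce to isogenies with free $\bZ/p^m\bZ$-module cokernel, then verify goodness using unipotence of inertia at $\ell\neq p$ and Proposition~\ref{Fontaine-Laffaille:determinant} at $v\mid p$. The only difference is that you factor all the way down to steps with $m=1$ (cokernel killed by $p$), whereas the paper invokes the coarser factorization of Subsection~9.1, which produces steps whose cokernels are free $\bZ/p^{m_i-m_{i-1}}\bZ$-modules of possibly higher level; since Proposition~\ref{Fontaine-Laffaille:determinant} applies for any $m$, both routes work. Your choice $m=1$ has the minor advantage that the torsion Kisin module $\fN$ is automatically free over $\fS_1=k\llbracket u\rrbracket$ (being finitely generated and $u$-torsion-free over a PID), so your appeal to Lemma~\ref{exponent} is legitimate, and the identification $d_{\BK,v}=d_v$ is clean.
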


\begin{proof}
It suffices to show that  if $T_p/T'_p$ is a free $\bZ/p^m\bZ$-module of rank $h$, $f$ is good, cf. ~Subsection 9.1. 

For any finite place $\lambda$ above $\ell\neq p$, $\det (T_p/T'_p)$ is obviously unramified by the assumption. 
For any $v$ above $p$, the assumption enable us to apply Fontaine-Laffaille theory, 
Proposition ~\ref{Fontaine-Laffaille:determinant}. It follows that $f$ satisfies the conditions of goodness at $v$. 
\end{proof}

\begin{cor}\label{big p}
Fix a prime number $\ell$ as before, and suppose $m_{p}=0$. 
Furhter, assume that the extension $F\supset \bQ$ is unramified at $p$, $nw<p-1$ and $M$ has good reduction at all places above $p$. 
Then, any $p$-isogeny preserves heights. 
\end{cor}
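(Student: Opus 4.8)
The plan is to obtain this as a direct consequence of the preceding proposition together with Theorem~\ref{preservation}. Let $f\colon T'\to T$ be a $p$-isogeny, so $M=(M_\bQ,T)$ and $M'=(M_\bQ,T')$. First I would apply the previous proposition --- its hypotheses ($F/\bQ$ unramified at $p$, $nw<p-1$, good reduction at all places above $p$) are exactly the ones assumed here --- to factor $f$ as a composition $f=f_k\circ\cdots\circ f_1$ of good $p$-isogenies. Since $h(M)-h(M'')=\bigl(h(M)-h(M')\bigr)+\bigl(h(M')-h(M'')\bigr)$, it is enough to show that each factor preserves the height, so one reduces to the case of a single good $p$-isogeny.

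For a single good $p$-isogeny $f$, recall that by definition its cokernel $T_p/T'_p$ is a free $\bZ/p^{m}\bZ$-module of some rank $h$; here $1\le h\le n$, and $m\ge 1$ unless $f$ is an isomorphism (in which case there is nothing to prove). The hypothesis $m_p=0$ means precisely that $m_{p,h}=0$ for every $h$ with $1\le h\le n$, where $m_{p,h}$ is the integer associated, just before Theorem~\ref{preservation}, to the auxiliary prime $\ell$ fixed beforehand; this same $\ell$ serves for every factor, since the conditions on $\ell$ ($\ell\ne p$, good reduction above $\ell$, pure reduction at $\ell$) depend only on $M_\bQ$. Hence $m\ge 1>0=m_{p,h}$, and Theorem~\ref{preservation} gives $h(M)=h(M')$ for that factor. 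Running over $f_1,\dots,f_k$ yields $h(M)=h(M')$ in general.

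I do not expect a genuine obstacle: the corollary is essentially formal once the previous proposition and Theorem~\ref{preservation} are in hand, with $m_p=0$ removing the numerical constraint $m>m_{p,h}$ that restricts the latter. The only points I would take care to spell out are bookkeeping ones --- that the intermediate motives $M_i=(M_\bQ,T_i)$ occurring in the factorization inherit all the running hypotheses (immediate, since ``semistable everywhere'', ``nonpositive Hodge--Tate weights'', ``good reduction above $p$'', and the conditions defining $\ell$ are all insensitive to the choice of lattice in $M_{\bQ}$), and that the good factors supplied by the previous proposition are indeed of the shape --- free cokernel over $\bZ/p^{m}\bZ$ --- required to invoke Theorem~\ref{preservation}, which is already part of the definition of a good $p$-isogeny.
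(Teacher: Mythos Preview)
Your proposal is correct and matches the paper's approach: the corollary is stated without proof precisely because it is immediate from the preceding proposition (factorization into good isogenies) combined with Theorem~\ref{preservation}, applied to each factor using $m_{p,h}=0$. Your bookkeeping remarks about the intermediate lattices inheriting the hypotheses and about good isogenies having free cokernel by definition are accurate and complete the argument.
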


\subsection{Level $1$ case: tame calculation}
Even if $p$ is ramified or $M$ has bad reduction at a place above $p$, there is a chance to prove any $p$-isogenies preserve heights. 
We work with the tame fundamental character of level $1$ in place of the cyclotomic character. 

Suppose $T_p/T'_p$ is a free $\bZ/p\bZ_p$-module of rank $h$. 
We let $d_{\BK, v}$ be $\rk_{W_1}(\fM'_v/\fM_v)/\varphi^*(\fM'_v/\fM_v)$. 

\begin{prop}
As representations of $I_v$, $\det (T_p/T'_p)\cong \theta_{1}^{-d_{\BK,v}}$. 
\end{prop}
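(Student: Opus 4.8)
The plan is to reduce this to the tame determinant computation of Remark~\ref{tame:determinant} (equivalently, the $m=1$ case of the proof of Theorem~\ref{determinant}) applied to the torsion Kisin module $\fN:=\fM_v/\fM'_v$, and then to pass from $G_\infty$ to $I_v$.

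First I would assemble the structural inputs. By Proposition~\ref{exact sequence 0} applied to $0\to\fM'_v\to\fM_v\to\fN\to 0$ one has $T_{\fS}(\fN)=T'^{\vee}_p/T^{\vee}_p$, and the natural pairing between the realizations $M_p$ and $M_p^{\vee}$ induces a perfect $G_\infty$-equivariant pairing $T_p/T'_p\times T'^{\vee}_p/T^{\vee}_p\to\bQ_p/\bZ_p$; hence $T_{\fS}(\fN)$ is the Pontryagin dual of $T_p/T'_p$, in particular a free $\bZ/p\bZ$-module of rank $h$. Since $\fN$ is a torsion Kisin module of height $\leq w$ whose associated Galois representation is free of rank $h$, it is itself a free $\fS_1$-module of rank $h$ by \cite{Liu:torsion}*{Lemma 4.2.4}; consequently $\fN_{\dR}$ is $W_1$-free with free graded pieces, and by Lemma~\ref{exponent} the quantity $\sum_{r\geq 1}r\cdot\rk_{W_1}\gr^r(\fN_{\dR})$ equals $\rk_{W_1}\fN/\varphi^*\fN=d_{\BK,v}$.

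Next I would invoke Remark~\ref{tame:determinant}: after extending the residue field to $\bar{k}$ (harmless, since $T_{\fS}$ commutes with such extension, and this is precisely the reduction sanctioned by the remark following Theorem~\ref{determinant}), it gives $\det T_{\fS}(\fN)\cong\theta_1^{d_{\BK,v}}$ as a representation of $G_\infty$. Dualizing through the perfect pairing above then yields $\det(T_p/T'_p)\cong\theta_1^{-d_{\BK,v}}$ as a representation of $G_\infty$. To upgrade this to $I_v$ I would argue that both characters take values in $\bF_p^{\times}$, hence have order prime to $p$, hence factor through the maximal prime-to-$p$ (tame) quotient of $I_v$; and since $K_\infty/F_v$ is a tower of totally wildly ramified extensions, $G_\infty\cap I_v$ surjects onto that quotient, so agreement over $G_\infty$ forces agreement over $I_v$ (indeed over $G_{F_v}$).

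The step I expect to require the most care is the sign together with the $G_\infty$-to-$I_v$ passage. The sign is produced entirely by the contravariance of $T_{\fS}$ combined with the $M_p\leftrightarrow M_p^{\vee}$ duality, and it comes out as stated precisely because $T^{\vee}$ denotes the plain dual $\Hom(-,\bZ_p)$ with no Tate twist, so that no stray factor of $\chi_{\mathrm{cyc}}\equiv\theta_1^{e}$ intervenes; the inertia upgrade needs only the elementary tameness argument above. Everything else is a direct appeal to Remark~\ref{tame:determinant}, Lemma~\ref{exponent}, and the cited lemma of Liu.
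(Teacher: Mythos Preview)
Your proof is correct and follows essentially the same route as the paper's: reduce to the tame determinant computation of Remark~\ref{tame:determinant} over $G_\infty$, then pass to $I_v$ via the observation that both characters factor through the tame quotient and that $K_\infty/K$ is totally wildly ramified. The paper compresses all of this into three sentences (``the inertia action factors through tame inertia; add $p$-power roots of a uniformizer; use Remark~\ref{tame:determinant}''), whereas you spell out the intermediate steps the paper leaves implicit---namely the freeness of $\fN=\fM_v/\fM'_v$ over $\fS_1$, the identification of $d_{\BK,v}$ via Lemma~\ref{exponent}, the duality producing the minus sign, and the surjection $G_\infty\cap I_v\twoheadrightarrow I_v^{t}$.
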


\begin{proof} 
The inertia action on $\det(T_p/T'_p)$ factors through the tame inertia group. 
To compute it, we can add $p$-power roots of a uniformizer to the base field. 
Then, use Remark ~\ref{tame:determinant}. 
\end{proof}

Set $d_{\BK,p}=\sum_v d_{BK,v}\cdot f_v$. 
We regard $\det T_p/T'_p$ as a representation of $G_\bQ$ through $\Ver$.  

\begin{prop}
$\det T_p/T'_p\cong \bZ/p\bZ(-d_{\BK,p})$. 
\end{prop}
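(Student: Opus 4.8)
The plan is to run the argument of Lemma~\ref{Ver} again --- now with the level-one tame fundamental character $\theta_1$ in place of the cyclotomic character --- noting that the inputs which, in the general case, were supplied by the hypothesis that $f$ be good are here available unconditionally: unramifiedness outside $p$ follows from semistable reduction, and the local shape at $v\mid p$ is the content of the preceding proposition.

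First I would check that $\det(T_p/T'_p)$, as a character of $G_F$, is unramified at every finite place $\lambda$ of $F$ not lying over $p$. Since $M_\bQ$ has semistable reduction everywhere, the inertia subgroup $I_\lambda$ acts unipotently on $M_p$, hence on the $\bF_p$-representation $T_p/pT_p$, and therefore trivially on its rank-one subquotient $\det(T_p/T'_p)=\bigwedge^h(T_p/T'_p)$; so this character is unramified outside $p$. At a place $v\mid p$, the preceding proposition supplies $\det(T_p/T'_p)|_{I_v}\cong\theta_1^{-d_{\BK,v}}$.

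Next I would invoke the adelic description of the transfer $\Ver\colon G^{\ab}_\bQ\to G^{\ab}_F$ coming from class field theory: it corresponds to the inclusion of idele class groups, whose component at $p$ is the diagonal embedding $\bZ_p^\times\hookrightarrow\prod_{v\mid p}O_{F_v}^\times$. Hence, regarded as a character of $G_\bQ$ via $\Ver$, $\det(T_p/T'_p)$ is still unramified outside $p$, and its restriction to $I_p$ corresponds to the product over $v\mid p$ of the restrictions to $\bZ_p^\times$ of the local characters $\theta_1^{-d_{\BK,v}}$ of $O_{F_v}^\times$. The decisive local input is the standard description of $\theta_1$ under local class field theory: it corresponds to $O_{F_v}^\times\twoheadrightarrow k_v^\times\to\bF_p^\times$, where the second map is, up to normalization, the norm $N_{k_v/\bF_p}$; since $N_{k_v/\bF_p}$ restricts to the $f_v$-th power map on $\bF_p^\times$, the restriction of $\theta_1$ to $\bZ_p^\times\subset O_{F_v}^\times$ is the $f_v$-th power of the mod-$p$ cyclotomic character of $\bQ_p$. (Starting instead from $\theta_1^{e_v}=\bZ/p\bZ(1)$ on $I_v$ of Remark~\ref{tame:determinant} leaves a $\gcd(e_v,p-1)$-torsion ambiguity, which this explicit description removes.) Multiplying over $v\mid p$ then gives $\det(T_p/T'_p)|_{I_p}\cong\bZ/p\bZ\bigl(-\sum_{v\mid p}f_v d_{\BK,v}\bigr)=\bZ/p\bZ(-d_{\BK,p})$, in the spirit of the computation of Raynaud~\cite{Raynaud}*{4.2.8}.

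Finally I would conclude as in Lemma~\ref{Ver}: the character $\det(T_p/T'_p)\otimes\bZ/p\bZ(d_{\BK,p})$ of $G_\bQ$ is unramified outside $p$ and trivial on $I_p$, hence unramified at all finite places, hence trivial, since $\bQ$ admits no nontrivial abelian extension unramified at every finite place. Therefore $\det(T_p/T'_p)\cong\bZ/p\bZ(-d_{\BK,p})$ as representations of $G_\bQ$. The step I expect to be the main obstacle is exactly this local identification of $\theta_1|_{\bZ_p^\times}$ as a power of the cyclotomic character: one must keep the normalization of the local reciprocity maps and the ramification straight, so as to be sure that it is the residue degrees $f_v$, and not the ramification indices $e_v$, that feed into the exponent $d_{\BK,p}=\sum_{v\mid p}f_v d_{\BK,v}$. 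A secondary subtlety, already present in Lemma~\ref{Ver}, is that $\theta_1$ is a priori only a character of $I_v$, so one has to pin down $\det(T_p/T'_p)$ as a genuine $G_F$-character using unramifiedness outside $p$ before transferring; it is the passage to $\bQ$ that removes the remaining everywhere-unramified ambiguity.
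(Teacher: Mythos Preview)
Your proposal is correct and follows essentially the same approach as the paper's proof, which reads in its entirety: ``This is similar to Lemma~\ref{Ver}. Note that the representations is automatically unramified at any finite place $\lambda$ above $\ell\neq p$.'' You have simply unpacked this: the automatic unramifiedness outside $p$ (from unipotence of inertia on a mod-$p$ rank-one quotient), the local input at $v\mid p$ from the preceding proposition, the transfer computation, and the final global step are exactly what the paper intends. Your detailed local class field theory discussion identifying $\theta_1|_{\bZ_p^\times}$ as the $f_v$-th power of the mod-$p$ cyclotomic character is the content the paper leaves to the reader via the reference to Raynaud~\cite{Raynaud}*{4.2.8}, and it correctly explains why residue degrees $f_v$ (rather than $e_v$) weight the sum $d_{\BK,p}=\sum_{v\mid p} f_v\, d_{\BK,v}$.
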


\begin{proof}
This is similar to Lemma ~\ref{Ver}. 
Note that the representations is automatically unramified at any finite place $\lambda$ above $\ell\neq p$. 
\end{proof}

Take an auxiliary prime number $\ell$ as before. 
Recall the characteristic polynomial $P_{\ell, h}(t)$. 
One can prove the following statements by repeating the arguments before:

\begin{lem}
$P_{\ell,h}(\ell^{d_{\BK,p}})\equiv 0\mod p$. 
\end{lem}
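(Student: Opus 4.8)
The plan is to run the argument from the proof of the mod $p^m$ version of this lemma in Subsection~9.3, specialised to $m=1$, where the freeness hypotheses needed there become vacuous.

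First I would reduce the isogeny modulo $p$. For each place $\lambda$ of $F$ above $\ell$, the inclusion $T'_p\subset T_p$ gives a short exact sequence of $\bF_p[G_{F_\lambda}]$-modules
\[
0\to T'_p/(pT_p\cap T'_p)\to T_p/pT_p\to T_p/T'_p\to 0,
\]
which is automatically split as a sequence of $\bF_p$-modules since all terms are $\bF_p$-vector spaces. Taking $h$-th exterior powers over $\bF_p$, the line $\det(T_p/T'_p)=\bigwedge^h(T_p/T'_p)$ occurs as a $G_{F_\lambda}$-equivariant quotient of $\bigwedge^h(T_p/pT_p)$, and as an $\bF_p$-direct summand of it.

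Next I would push this through the transfer. Regarding $\det(T_p/T'_p)$ as a representation of $G_{\bQ_\ell}$ through $\Ver_\lambda$ and $\bigwedge^h(T_p/pT_p)$ as the mod-$p$ reduction of the lattice $\bigwedge^h T_p$ inside $V_{p,\lambda,h}$, I would then tensor over all $\lambda\mid\ell$: this exhibits $\bigotimes_\lambda\det(T_p/T'_p)$ as a $G_{\bQ_\ell}$-equivariant quotient, and an $\bF_p$-direct summand, of $V_{p,\ell,h}\otimes_\bZ\bF_p$. By the proposition just proved (the level-$1$ analogue of Lemma~\ref{Ver}), this tensor product is isomorphic to $\bZ/p\bZ(-d_{\BK,p})$ as a representation of $G_\bQ$, hence of $G_{\bQ_\ell}$; so $\bZ/p\bZ(-d_{\BK,p})$ is a $G_{\bQ_\ell}$-equivariant direct summand of $V_{p,\ell,h}\otimes_\bZ\bF_p$.

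Finally I would extract the congruence exactly as in the mod-$p^m$ case: $\Frob_\ell$ acts on the summand $\bZ/p\bZ(-d_{\BK,p})$ by the power of $\ell$ prescribed by $d_{\BK,p}$, so a corresponding linear factor divides $P_{\ell,h}(t)\bmod p$; since $P_{\ell,h}(t)$ has integer coefficients and is independent of $p$ by Lemma~\ref{pure}, its reduction mod $p$ is $\det(1-\Frob_\ell t;V_{p,\ell,h}\otimes_\bZ\bF_p)$, and the conclusion $P_{\ell,h}(\ell^{d_{\BK,p}})\equiv0\bmod p$ follows verbatim from the $m\geq 2$ computation. I expect no real obstacle: the one place needing a little care, exactly as for $m\geq 2$, is making the middle step precise, i.e.\ checking that the mod-$p$ reduction of $\bigwedge^h T_p$ is compatibly a $G_{\bQ_\ell}$-module via $\Ver_\lambda$ and that tensoring the quotient maps over $\lambda$ matches the definition $V_{p,\ell,h}=\bigotimes_\lambda V_{p,\lambda,h}$; the rest is a literal transcription of Subsection~9.3 with $m=1$.
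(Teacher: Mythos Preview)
Your proposal is correct and is exactly what the paper intends: the paper does not give a separate proof but simply says ``One can prove the following statements by repeating the arguments before,'' and you have faithfully transcribed the argument of the earlier lemma with $m=1$, invoking the level-$1$ analogue of Lemma~\ref{Ver} in place of Lemma~\ref{Ver} itself. The only imprecision is calling $\bZ/p\bZ(-d_{\BK,p})$ a ``$G_{\bQ_\ell}$-equivariant direct summand'' rather than a $G_{\bQ_\ell}$-equivariant quotient which is an $\bF_p$-direct summand, but this does not affect the characteristic-polynomial conclusion.
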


\begin{prop}\label{tame}
If $p>(2\ell^{[F\colon\bQ]\cdot hw})^C$, then $h(M)=h(M')$.  
\end{prop}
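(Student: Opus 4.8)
The plan is to transcribe the proof of Theorem~\ref{preservation} to level $1$, with the tame fundamental character $\theta_1$ of level $1$ playing the role of the cyclotomic character and $m=1$ throughout. Under the standing assumption of this subsection that $T_p/T'_p$ is a free $\bZ/p\bZ$-module of rank $h$, the key steps are: pin down the integer $d_{\BK,p}$ exactly, and then read off $h(M)-h(M')$ from Proposition~\ref{formula}.

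First I would record that $d_{\BK,p}$ is a nonnegative integer with $d_{\BK,p}\leq [F\colon\bQ]\cdot hw$. Indeed, for each $v\mid p$ the torsion Kisin module $\fM_v/\fM'_v$ has height $\leq w$ and, its associated $G_\infty$-representation being free of rank $h$ over $\bZ/p\bZ$, it is a free $\fS_1$-module of rank $h$; the level-$1$ case of Lemma~\ref{exponent} then gives $0\leq d_{\BK,v}\leq e_v hw$, and summing $f_v\cdot d_{\BK,v}$ over $v\mid p$ (using $\sum_{v\mid p}e_v f_v=[F\colon\bQ]$) yields the bound. Next, the lemma immediately preceding the proposition gives $P_{\ell,h}(\ell^{d_{\BK,p}})\equiv 0\pmod p$. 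Suppose for contradiction that $d_{\BK,p}\neq [F\colon\bQ]\cdot\frac{hw}{2}$; then $\ell^{d_{\BK,p}}\in I_h$, so $P_{\ell,h}(\ell^{d_{\BK,p}})$ is a \emph{nonzero} integer, while the estimate from the discussion preceding the proposition (using $d_{\BK,p}\leq [F\colon\bQ]\cdot hw$, that $P_{\ell,h}$ has degree $C$, and that its roots have absolute value $\ell^{[F\colon\bQ]\cdot hw/2}$) gives $0<\lvert P_{\ell,h}(\ell^{d_{\BK,p}})\rvert\leq (2\ell^{[F\colon\bQ]\cdot hw})^C<p$, contradicting $p\mid P_{\ell,h}(\ell^{d_{\BK,p}})$. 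Hence $d_{\BK,p}=[F\colon\bQ]\cdot\frac{hw}{2}$.

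Finally I would substitute into Proposition~\ref{formula}. Since $f$ is a $p$-isogeny of level $1$ and rank $h$, one has $\Deg_{\et,p'}=1$ for $p'\neq p$, $\Deg_{\et,p}=\#(T_p/T'_p)=p^h$, $\Deg_{\dR,v}=1$ for $v\nmid p$, and $\log_p\Deg_{\dR,v}=f_v\cdot d_{\BK,v}$ for $v\mid p$; hence $\frac{1}{[F\colon\bQ]}\sum_v\log\Deg_{\dR,v}=\frac{\log p}{[F\colon\bQ]}\,d_{\BK,p}=\frac{hw}{2}\log p=\frac{w}{2}\sum_{p'}\log\Deg_{\et,p'}$, and Proposition~\ref{formula} gives $h(M)=h(M')$. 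I do not expect a substantive obstacle; what needs care is only organizational — checking that Lemma~\ref{refined filtration}, Lemma~\ref{exponent} and the freeness statement for torsion Kisin modules are all available at $m=1$ (their proofs make no use of $m\geq 2$, and the $m=1$ determinant calculation is exactly Remark~\ref{tame:determinant}), and matching the (un-normalized) $d_{\BK,v}$ used here with $\Deg_{\dR,v}$ via the modified proof of Lemma~\ref{exponent}. I would also note that a general $p$-isogeny factors, as in Subsection~9.1, into isogenies of the above type of rank $\leq n$, so that (after enlarging $C$ accordingly) the conclusion extends to all $p$-isogenies.
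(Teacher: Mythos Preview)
Your proposal is correct and follows essentially the same approach as the paper, which simply says the proposition is proved ``by repeating the arguments before'' (i.e.\ those of Theorem~\ref{preservation}); you have spelled out those details faithfully, including the automatic freeness of $\fM_v/\fM'_v$ over $\fS_1$ at level~$1$, the bound $d_{\BK,p}\le[F\colon\bQ]\cdot hw$ via Lemma~\ref{exponent}, and the contradiction with the size estimate for $P_{\ell,h}$. Your closing remark about factoring a general $p$-isogeny into level-$1$ pieces is extraneous to the proposition as stated (the standing hypothesis in this subsection is already that $T_p/T'_p$ is free over $\bZ/p\bZ$), so you can safely drop it.
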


\section{Boundedness}
The main theorem of this section is that heights in a single isogeny class is bounded and, in fact, we obtain an effective bound for differences of heights in a isogeny class. 

To prove such statements, it suffices to show that there are enough good isogenies, which we formulate as a factorization theorem. 
It roughly says that any isogeney is a composition of good isogenies and isogenies with small degrees, see Theorem~\ref{factorization} for the precise statement. 
Torsion $p$-adic Hodge theory plays a key role in the proof of the factorization theorem. 

\subsection{First factorization}
Let $M=(M_\bQ, T)$ and $M'=(M_\bQ, T')$ be a pure $\bZ$-motives of weight $w$ over a number field $F$. 
We assume $M_\bQ$ has semistable reudction everywhere and nonpositive Hodge-Tate weights.  
We also fix a prime number $p$ and suppose $f\colon T'\to T$ is a $p$-isogeny. 

As a $\bZ$-module, the cokernel $T_p/T'_p$ has the following form;
\[
(\bZ/p^{m_1}\bZ)^{h_1}\oplus\cdots\oplus (\bZ/p^{m_a}\bZ)^{h_a}, 
m_1<\cdots < m_a, h_1+\cdots +h_a\leq n. 
\]
Here, $n$ is the dimension of realizations of $M_\bQ$. 
Also, we set $m_0=0$. 
Define $T^i_{p}=T'_p+p^{m_i}T_p$ and $f_i\colon T^{i}_p\to T^{i-1}_p$ to be the natural inclusions. 
Then, $f=f_1\circ\cdots\circ f_a$ and the cokernel of $f_i$ is isomorphic to
$(\bZ/p^{(m_i-m_{i-1})}\bZ)^{h_i+\cdots +h_a}$. 

It is clear that this factorization respects $G_F$-actions and makes sense at the level of $\bZ$-motives. 

\subsection{A lemma at $\ell\neq p$}
Assume that $T_p/T'_p$ is a free $\bZ/p^m\bZ$-module of rank $h$. 

\begin{lem}\label{ell neq p}
Let $\lambda$ be a finite place above $\ell\neq p$ and $m'$ the greatest integer less than or equal to $\frac{m}{hn}$.
Then, $\det (T_p/(T'_p+p^{m'}T_p))$ is unramified at $\lambda$. 
(Note that $T_p/(T'_p+p^{m'}T_p)$ is a free $\bZ/p^{m'}\bZ$-module of rank $h$.)
\end{lem}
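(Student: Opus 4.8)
The plan is to prove the stronger assertion that the inertia group $I_\lambda$ acts \emph{trivially} on the rank one free $\bZ/p^{m'}\bZ$-module $\det(T_p/(T'_p+p^{m'}T_p))$. The only feature of the hypotheses that enters is semistability of $M$ at $\lambda$: since $\ell\neq p$, it tells us that $I_\lambda$ acts unipotently on $M_p$. Fixing $\sigma\in I_\lambda$ and any $\bZ_p$-basis of the lattice $T_p$, the matrix $A_\sigma$ of $\sigma$ lies in $\mathrm{GL}_n(\bZ_p)$, hence $N_\sigma:=A_\sigma-\mathrm{id}$ has entries in $\bZ_p$; and the identity $N_\sigma^{\,n}=0$, which holds in $M_n(\bQ_p)$ by unipotence, therefore holds already in $M_n(\bZ_p)$.

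First I would pick the $\bZ_p$-basis $e_1,\dots,e_n$ of $T_p$ adapted to the sublattice $T'_p$: because $T_p/T'_p\cong(\bZ/p^m\bZ)^{\,h}$, the elementary divisors of $T'_p\subset T_p$ are $p^m$ with multiplicity $h$ and $1$ with multiplicity $n-h$, so we may arrange $T'_p=\bZ_p p^m e_1\oplus\dots\oplus\bZ_p p^m e_h\oplus\bZ_p e_{h+1}\oplus\dots\oplus\bZ_p e_n$. Decomposing $N_\sigma$ into blocks of sizes $h$ and $n-h$, stability of $T'_p$ under $\sigma$ forces the upper right $h\times(n-h)$ block of $N_\sigma$ to be divisible by $p^m$. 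In these coordinates $\sigma$ acts on $T_p/T'_p$, and likewise on its quotient $T_p/(T'_p+p^{m'}T_p)$, through the top left block $A_{11}:=\mathrm{id}_h+(N_\sigma)_{11}$ reduced modulo $p^m$, resp. modulo $p^{m'}$; so $\sigma$ acts on the determinant module by $\det A_{11}$ reduced modulo $p^{m'}$, and it remains to estimate $\det A_{11}$ $p$-adically. (The displayed freeness of $T_p/(T'_p+p^{m'}T_p)$ over $\bZ/p^{m'}\bZ$ is immediate from $m'\leq m$.)

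Now I would use that the top left block of $N_\sigma^{\,n}$ equals $(N_\sigma)_{11}^{\,n}$ plus a sum of products of blocks in each of which the upper right block occurs at least once; since that block is $\equiv 0\pmod{p^m}$ and $N_\sigma^{\,n}=0$, this gives $(N_\sigma)_{11}^{\,n}\equiv 0\pmod{p^m}$ in $M_h(\bZ_p)$. Writing $(N_\sigma)_{11}^{\,n}=p^m C$ with $C\in M_h(\bZ_p)$ and noting that the eigenvalues of $C$ are $p$-adic integers, every eigenvalue $\mu$ of $(N_\sigma)_{11}$ satisfies $n\,v_p(\mu)\geq m$. Consequently $\det A_{11}=\prod_i(1+\mu_i)$ differs from $1$ by an element of valuation at least $m/n$, and since $m'=\lfloor m/(hn)\rfloor\leq m/n$ we get $\det A_{11}\equiv 1\pmod{p^{m'}}$, i.e. $\sigma$ acts trivially on $\det(T_p/(T'_p+p^{m'}T_p))$ for every $\sigma\in I_\lambda$, which is the claim.

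This argument is essentially elementary linear algebra over $\bZ_p$ layered on top of Grothendieck's monodromy theorem, which is already incorporated into the definition of semistable reduction, so I do not expect a genuine obstacle. The points that need care are the block bookkeeping showing $(N_\sigma)_{11}^{\,n}\equiv 0\pmod{p^m}$ and the deduction of the eigenvalue valuation bound from it; I would also remark that this in fact yields the sharper modulus $p^{\lceil m/n\rceil}$, the weaker $p^{m'}$ being presumably chosen for uniformity with the rest of the factorization argument.
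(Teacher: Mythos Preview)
Your argument is correct. Both you and the paper start from the same fact---that $(\sigma-1)^n=0$ on $T_p$ by unipotence of the inertia action---but you part ways after that. The paper works directly on the quotient $T_p/T'_p$: since $(\sigma-1)^n=0$ there and the module has rank $h$ over $\bZ/p^m\bZ$, a pigeonhole argument on $V^{\otimes h}$ (using that the commuting operators $N_k=1^{\otimes(k-1)}\otimes N\otimes 1^{\otimes(h-k)}$ each satisfy $N_k^n=0$, so the ideal $(N_1,\dots,N_h)^{hn}$ vanishes) shows $(\sigma-1)^{hn}=0$ on $\det(T_p/T'_p)$, whence $\sigma-1\equiv 0\pmod{p^{m'}}$. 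The paper even flags that $hn$ ``is not the best bound.''

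Your route---lifting to $\bZ_p$, choosing an adapted basis, and reading off $(N_\sigma)_{11}^{\,n}\equiv 0\pmod{p^m}$ from the block expansion of $N_\sigma^{\,n}=0$---is a genuine alternative. It trades the conceptual exterior-power step for explicit eigenvalue estimates, and in exchange it actually gives the sharper modulus $p^{\lceil m/n\rceil}$ rather than $p^{\lfloor m/(hn)\rfloor}$, since your valuation bound is independent of $h$. The block bookkeeping and the passage from $(N_{11})^n\equiv 0\pmod{p^m}$ to $v_p(\mu)\geq m/n$ for each eigenvalue are both fine as you have them. Your closing remark is on target: the weaker $m'$ is what the factorization theorem needs, but the improvement is real.
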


\begin{proof}
Take $\sigma\in I_{\lambda}$. 
Since the intertia action is unipotent, $(\sigma-1)^n$  acts by zero on $T_p$. 
Therefore, it does on $T_p/T'_p$ as well. 
Then, it is easy to see that $(\sigma-1)^{hn}$ acts by zero on $\det (T_p/T'_p)$.
(This is not the best bound.)
Since it is a rank $1$ free $\bZ/p^m\bZ$-module, it follows that $\sigma-1$ acts by zero modulo $p^{m'}$.  
\end{proof}
 
\begin{rem}
If $M$ is effective and appeared in the $w$-th cohomology $H^{w}(X)$ of a projective smooth variety $X$ over $F$, then
$(\sigma-1)^{w+1}$  acts by zero on $M_p$. (We are still assuming $M$ has semistable reduction at $\lambda$.) 
Indeed, by Poincar\'e duality and weak Lefschetz, we may assume $\dim X=w$. 
Using alterations, it is reduced to the case $X$ has strictly semistable reduction at $\lambda$ after replacing $F$ by a finite extension if necessary. 
Then, it is a well-known consequence of the weight spectral sequence. 
\end{rem}

\subsection{A factorization theorem}
We keep the assumption that $T_p/T'_p$ is a free $\bZ/p^m\bZ$-module of rank $h$. 

For any finite place $v$ above $p$, setting $K=F_v$, we can apply the results in Section $7$. we write $\delta_{v,h}$ for the corresponding $\delta$ appeared in Corollary~\ref{free}. 
Set
\[
\delta_{p,h}=\max \{\delta_{v,h}\mid v~\textnormal{above}~p \}~\textnormal{and}~c''=\max \{hn, 2^{hw-1}\}.
\] 

\begin{thm}\label{factorization}
Any $f$ as above can be factored into $c''$ good isogenies, a good isogeny of degree $\leq p^{c'' h}$ and two isogenies of degree $\leq p^{h\delta_{p,h}}$. 
\end{thm}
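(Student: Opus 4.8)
The plan is to build the factorization by an explicit combinatorial decomposition of the lattice chain $T'\subseteq T$ at $p$, using the results of Section $7$ to control goodness of the pieces. Everything takes place at $p$, since $f$ is already an isomorphism at $\ell\neq p$; write $\chi$ for the determinant character of $T_p/T'_p$ as a $G_F$-module. The key elementary point is that for any refinement $T_1\subseteq T_2$ of the chain with $T_2/T_1$ free of rank $h$ over $\bZ/p^a\bZ$ one has $T_2/T_1\cong (T_p/T'_p)\otimes_{\bZ/p^m\bZ}\bZ/p^a\bZ$ as $G_F$-modules, hence $\det(T_2/T_1)\cong\chi\bmod p^a$. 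So the Galois action on the determinant of every layer we excise is a reduction of the single character $\chi$, and checking that a layer of thickness $a$ is a good $p$-isogeny reduces to verifying that $\chi\bmod p^a$ is unramified away from $p$, that the associated $d_{\BK,v}$ is an integer, and that $\chi|_{I_v}\bmod p^a$ is the prescribed power of the cyclotomic character.

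First I would extract a free middle part. For each $v\mid p$ apply Corollary~\ref{free} to the torsion Kisin module $\fM_v/\fM'_v$; since a truncation of a free torsion Kisin module is again free, using $\delta:=\delta_{p,h}=\max_{v\mid p}\delta_{v,h}$ uniformly shows the relevant subquotient is free over $\fS_{m-2\delta}$ at every $v\mid p$. As this subquotient of the global module $T_p/T'_p$ is globally defined, it yields a $G_F$-stable refinement $T'\subseteq T''\subseteq T'''\subseteq T$ in which $T''/T'$ and $T/T'''$ are free of rank $h$ over $\bZ/p^\delta\bZ$, hence isogenies of degree $\leq p^{h\delta_{p,h}}$, while the Kisin module of the middle isogeny $g\colon T''\to T'''$ is free over $\fS_{M_1}$ of rank $h$, where $M_1=m-2\delta_{p,h}$.

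Next I would slice $g$. Put $M_0=\lfloor M_1/c''\rfloor$ and insert a chain $T''=T^{(c''+1)}\subseteq T^{(c'')}\subseteq\cdots\subseteq T^{(1)}\subseteq T^{(0)}=T'''$ with $T^{(i-1)}/T^{(i)}$ free of rank $h$ over $\bZ/p^{M_0}\bZ$ for $i=1,\dots,c''$ and $T^{(c'')}/T^{(c''+1)}$ free of rank $h$ over $\bZ/p^{\rho}\bZ$, $\rho=M_1-c''M_0<c''$, so the last slice has degree $<p^{c''h}$. Each of the $c''$ slices is a good $p$-isogeny: its determinant is $\chi\bmod p^{M_0}$; since $c''\geq hn$, Lemma~\ref{ell neq p} gives unramifiedness away from $p$; since $c''\geq 2^{hw-1}$ and the middle Kisin module is free over $\fS_{M_1}$, Theorem~\ref{determinant} applied to the whole middle module with $m'=\max(M_0,2)$ (together with its remark, to descend from $G_\infty\cap I_K$ to $I_v$) shows that the relevant $\frac{d}{e}$ is an integer and matches $\chi|_{I_v}\bmod p^{M_0}$ with a power of the cyclotomic character; and a rank computation using (the torsion analogue of) Lemma~\ref{exponent}, applied to each slice's Kisin module, which is a truncation of the middle one and has the same graded structure, identifies that power with $-d_{\BK,v}$. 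The same checks handle the thin last slice, $\rho<c''$ lying in the admissible range once $m$ is large. Thus $f$ is the composite of the bottom isogeny (degree $\leq p^{h\delta_{p,h}}$), the $c''$ good slices, the thin good slice (degree $\leq p^{c''h}$), and the top isogeny (degree $\leq p^{h\delta_{p,h}}$).

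I expect the main obstacle to be the bookkeeping in the goodness verification: one must (i) see that the free middle part is extracted by a single global lattice chain simultaneously at all places above $p$; (ii) match the cyclotomic exponent produced by Theorem~\ref{determinant} for the whole middle module against the locally defined integer $d_{\BK,v}$ attached to an individual slice; and (iii) upgrade the determinant computation from $G_\infty\cap I_K$ to the full inertia $I_v$. The finitely many small values of $m$ for which no middle part exists, or for which Theorem~\ref{determinant} does not apply to it, must be treated directly, taking some of the ``good isogenies'' to be the identity and absorbing the remaining bounded piece into the appropriate slot.
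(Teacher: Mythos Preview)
Your approach is essentially the same as the paper's: reduce via Corollary~\ref{free} to the case where the torsion Kisin module at each $v\mid p$ is free, then slice the resulting middle piece and verify goodness of the slices using Theorem~\ref{determinant} (for the cyclotomic condition at $v\mid p$) and Lemma~\ref{ell neq p} (for unramifiedness at $\lambda\nmid p$). The paper's proof is terser but invokes exactly these ingredients, and it isolates your obstacle~(iii) as a separate input: a character of $I_v$ whose restriction to $G_{K_\infty}\cap I_v$ is a power of the cyclotomic character is itself that same power of the cyclotomic character.

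One point to tighten: your treatment of the thin slice and of small $m$ is not yet self-contained. You assert that ``$\rho<c''$ lies in the admissible range once $m$ is large,'' but the admissible range for goodness is thickness $\leq M_0=\lfloor M_1/c''\rfloor$, and $\rho<c''$ only forces $\rho\leq M_0$ once $M_0\geq c''$, i.e.\ $M_1\geq (c'')^2$. For smaller $M_1$ you propose to ``absorb the remaining bounded piece into the appropriate slot,'' but every slot in the statement carries a constraint (either goodness or a degree bound $\leq p^{h\delta_{p,h}}$), so you must say explicitly which slot receives the leftover and why that constraint is met. The paper's proof is equally brief on this bookkeeping, so this is a matter of filling in details rather than a genuine obstruction; a careful choice of slicing (or a direct check that the thin slice is good via the same $\chi\bmod p^{M_0}$ argument, noting that one may always arrange $\rho\leq M_0$ by redistributing one layer among the $c''$ thick slices) closes the gap.
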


\begin{proof}
By Corollary ~\ref{free}, it suffices to show the following: 
if $\fM'_v/\fM_v$ is a free $\fS_m$-modules for any finite place $v$ above $p$, 
$f$ can be factored into $c''$ good isogenies and a good isogeny of degree $\leq p^{c''h}$. 

This can be seen from Theorem ~\ref{determinant} and Lemma ~\ref{ell neq p}. 
The following is also used; let $\chi$ be a character of $I_{v}$ and suppose its restriction to $G_{K_\infty}\cap I_v$ is a power of the cyclotomic character, then $\chi$ itself is a power of the cyclotomic character with the same exponent. 
\end{proof}

\begin{rem}
In ~\cite{Raynaud}*{4.4.8}, Raynaud proved a similar theorem for the Faltings heights of abelian varieties. His bounds depend on the number of places where an abelian variety has bad reduction, but our bound does not depend on places where $M$ has bad reduction. 
\end{rem}

\begin{cor}\label{bound in the free case}
\[
\lvert h(M)-h(M')\rvert \leq [(m_{p,h})c''+\min\{m_{p, h}, c''\}+ 2\delta_{p,h}]\frac{hw}{2}\cdot\log p.
\]
\end{cor}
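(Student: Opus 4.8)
The plan is to combine the factorization Theorem~\ref{factorization}, the preservation result Theorem~\ref{preservation}, and a crude universal bound on the height difference across an arbitrary $p$-isogeny.

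First I record the universal bound: for \emph{any} $p$-isogeny $g$ with cokernel $C$, the associated height difference is at most $\frac{w}{2}\log\#C_p$ in absolute value, where $C_p$ denotes the $p$-primary part of $C$. This follows by factoring $g$ as in Subsection~9.1 into $p$-isogenies with \emph{free} $p$-adic cokernels, applying Remark~\ref{height difference} to each factor (a factor with free cokernel of rank $h'\le n$ and level $m'$ contributes at most $\frac{h'm'w}{2}\log p$), and summing; an Abel summation turns the resulting total into $\frac{w}{2}\bigl(\log_p\#C_p\bigr)\log p$. In particular any $p$-isogeny of degree $\le p^{h\delta_{p,h}}$ changes the height by at most $\delta_{p,h}\cdot\frac{hw}{2}\log p$.

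Next I apply Theorem~\ref{factorization} to write $f$ as a composition of $c''$ good isogenies, one further good isogeny of degree $\le p^{c''h}$, and two isogenies of degree $\le p^{h\delta_{p,h}}$. By (the proof of) Theorem~\ref{factorization} the $p$-adic cokernels of all the good factors are free $\bZ/p^{\mu}\bZ$-modules of rank $h$ for suitable $\mu$; for the extra good isogeny this forces its level $\nu$ to satisfy $h\nu\le c''h$, hence $\nu\le c''$. For a good isogeny with $p$-adic cokernel free of rank $h$ and level $\mu$: either $\mu>m_{p,h}$, in which case Theorem~\ref{preservation} shows the height is preserved, or $\mu\le m_{p,h}$, in which case the universal bound gives a height difference of at most $\frac{h\mu w}{2}\log p\le\frac{h\,m_{p,h}w}{2}\log p$. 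Hence each of the $c''$ good isogenies contributes at most $m_{p,h}\cdot\frac{hw}{2}\log p$, and the extra good isogeny contributes at most $\min\{m_{p,h},c''\}\cdot\frac{hw}{2}\log p$, using that its level is $\le c''$ and is $\le m_{p,h}$ whenever the contribution is nonzero.

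Finally I add these estimates along the factorization by the triangle inequality for heights: the $c''$ good factors contribute at most $c''m_{p,h}\cdot\frac{hw}{2}\log p$, the extra good factor at most $\min\{m_{p,h},c''\}\cdot\frac{hw}{2}\log p$, and the two remaining factors at most $2\delta_{p,h}\cdot\frac{hw}{2}\log p$ together, which is exactly the asserted inequality. The one genuinely delicate point is bookkeeping rather than new mathematics: one must be sure that Theorem~\ref{factorization} really supplies good factors with free rank-$h$ cokernels, so that the threshold $m_{p,h}$ --- which is defined relative to a single fixed auxiliary prime $\ell$ --- is the right one for Theorem~\ref{preservation}, and that the ``degree $\le p^{c''h}$'' factor has level $\le c''$; granting these, the corollary is just the universal bound plus the summation above.
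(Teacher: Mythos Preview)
Your proof is correct and follows the same approach as the paper, which simply cites Theorem~\ref{preservation} and Remark~\ref{height difference}. The only minor difference is that your ``universal bound'' via the Subsection~9.1 factorization and Abel summation is more general than needed: in the present setting $T_p/T'_p$ is already free of rank $h$, so every intermediate lattice in the factorization of Theorem~\ref{factorization} is of the form $T'_p+p^jT_p$, and each successive cokernel is free of rank $h$; hence Remark~\ref{height difference} applies directly to the two isogenies of degree $\le p^{h\delta_{p,h}}$ without the detour.
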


\begin{proof}
Use Theorem ~\ref{preservation} and Remark ~\ref{height difference}. 
\end{proof}

\subsection{Boundedness for $p$-isogenies}

We still fix the prime number $\ell$.

Define $m_{p}=\max_h m_{p,h}$. 
Combining Corollary ~\ref{bound in the free case} and the arguments in Subsection 9.1, 
we obtain the following:

\begin{thm}\label{p-isogeny}
For any $p$-isogeny $f\colon T'\to T$, 
\begin{align*}
\lvert h(M)-h(M')\rvert &\leq \\
&(m_p+1)n\cdot \max\{n^2, 2^{nw-1}\}\frac{w}{2}\log p \\
&+(\log_p(nw)+\log_p[F\colon\bQ]+2)[F\colon\bQ]\cdot \frac{n^2w^2}{p-1}\log p. 
\end{align*}
\end{thm}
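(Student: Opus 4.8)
The plan is to combine the first factorization of Subsection 9.1 with the free‑cokernel estimate of Corollary~\ref{bound in the free case}, and then bound every auxiliary constant uniformly in $n$, $w$ and $[F\colon\bQ]$. (Throughout, $\ell$, and hence $m_p=\max_h m_{p,h}$, are fixed as in the preceding subsection.) First I would reduce to the case of a free cokernel with only boundedly many factors: writing the $p$‑part of the cokernel as $T_p/T'_p\cong\bigoplus_{j=1}^{a}(\bZ/p^{m_j}\bZ)^{h_j}$ with $m_1<\dots<m_a$ and $h_1+\dots+h_a\le n$, the first factorization yields $f=f_1\circ\dots\circ f_a$ with intermediate $\bZ$‑motives $M=M_0,M_1,\dots,M_a=M'$, where the $p$‑part of the cokernel of $f_i\colon M_i\to M_{i-1}$ is free over $\bZ/p^{m_i-m_{i-1}}\bZ$ of rank $h^{(i)}:=h_i+\dots+h_a$. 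The essential bookkeeping point is that $a\le n$ and each $h^{(i)}\le n$, since each $h_j\ge 1$. Because $h(M)-h(M')=\sum_{i=1}^{a}\bigl(h(M_{i-1})-h(M_i)\bigr)$, it suffices to bound each summand.

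Next I would apply Corollary~\ref{bound in the free case} to each factor $f_i$, obtaining
\[
\lvert h(M_{i-1})-h(M_i)\rvert\le\Bigl((m_{p,h^{(i)}})\,c''_{h^{(i)}}+\min\{m_{p,h^{(i)}},c''_{h^{(i)}}\}+2\delta_{p,h^{(i)}}\Bigr)\,\frac{h^{(i)}w}{2}\,\log p,
\]
with $c''_h=\max\{hn,2^{hw-1}\}$ and $\delta_{p,h}=\max_{v\mid p}2^{hw-1}c_vc'_v$, where $c_v$ is the least integer $\ge\log_p(hw)+\lambda_v$ and $c'_v=\lfloor e_vhw/(p-1)\rfloor$. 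I would recall why this holds: by Theorem~\ref{factorization}, $f_i$ decomposes into good isogenies together with two isogenies of degree $\le p^{h^{(i)}\delta_{p,h^{(i)}}}$ (the latter supplied by Corollary~\ref{free}); good isogenies of level $>m_{p,h^{(i)}}$ preserve heights by Theorem~\ref{preservation}, so only $\le m_{p,h^{(i)}}$ "levels" worth of good isogenies, together with the two exceptional isogenies, can contribute, and each such contribution is controlled by Remark~\ref{height difference}.

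Finally I would uniformize and collect terms. One bounds $m_{p,h}\le m_p$, $c''_h\le\max\{n^2,2^{nw-1}\}$, and, using $\lambda_v\le v_p(e_v)+1\le\log_p[F\colon\bQ]+1$ together with $e_v\le[F\colon\bQ]$, one gets $c_v\le\log_p(nw)+\log_p[F\colon\bQ]+2$ and $c'_v\le[F\colon\bQ]\,nw/(p-1)$. Summing over the at most $n$ factors, grouping the good‑isogeny contributions (with $2^{nw-1}\le\max\{n^2,2^{nw-1}\}$) into the first summand of the theorem and the $\delta_{p,h}$‑contributions — whose $c'_v$ is responsible for the $(p-1)^{-1}$ — into the second, yields the asserted inequality.

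The step I expect to be the main obstacle is precisely this last numerical bookkeeping: one must track the constants carefully enough that, after summation over the factors, the exponential term $2^{nw-1}$ genuinely sits inside $\max\{n^2,2^{nw-1}\}$ and the remaining contributions collapse into the single $(p-1)^{-1}$ term of the statement. A further subtlety not to be overlooked is that the "good" part of the argument rests entirely on the rigidity result Proposition~\ref{rigidity} via Theorem~\ref{preservation}, so one must verify that in each local factorization at most $m_{p,h}$ of the good isogenies fail to preserve heights.
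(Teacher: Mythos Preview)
Your approach is exactly the one the paper takes: its entire proof is the single sentence ``Combining Corollary~\ref{bound in the free case} and the arguments in Subsection 9.1, we obtain the following,'' and your proposal is a faithful unpacking of that sentence --- first factorization into at most $n$ free-cokernel pieces, then the free-case bound on each, then uniformizing the constants $m_{p,h}$, $c''_h$, $\delta_{p,h}$ in $h\le n$. You are also right to flag the final numerical collapsing as the delicate step; the paper does not display it either.
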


\begin{rem}\label{ell'}
For all but finitely many $p$, $m_p=0$. These exceptional finitely many prime numbers depend on the choice of $\ell$. 
If $m_{p}\neq 0$, then $m_{p}\leq 2^{n[F\colon\bQ]}\cdot \log_p(2\ell^{[F\colon\bQ]\cdot nw})$. 
So, for a fixed $\ell$, we have bounds for $p$-isogenies if $p\neq\ell$ and these bounds depend on $\ell, p, n, w$ and $[F\colon\bQ]$.  
To obtain a bound for $\ell$-isogenies, we choose another suitable prime number $\ell'\neq\ell$, and apply the above argument to $(\ell',\ell)$ instead of $(\ell,p)$. 
\end{rem}

\subsection{Boundedness for all isogenies}

\begin{thm}
Let $M$ be a pure $\bZ$-motive of weight $w$ over $F$. 
We denote by $R$ the set of prime numbers where $F$ is ramified and by $S$ the set of prime numbers where $M$ does not have pure reduction.  
There exist a constant $C$ depending on $[F\colon\bQ], n, w$ and $S$ 
 such that for any isogeny $f\colon T'\to T$, 
\[
\lvert h(M)-h(M')\rvert \leq C. 
\]
In fact, the set of heights $\{h(M')\mid\textnormal{$M'$ is isogenesous to $M$}\}$ is finite. 
\end{thm}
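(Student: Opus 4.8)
The plan is to reduce an arbitrary isogeny to a composition of $p$-isogenies, handle one prime at a time, and show that all but a finite set of primes --- controlled by $[F\colon\bQ]$, $n$, $w$ and $S$ --- leave the height unchanged. \emph{Reductions.} Working with stable heights and invoking Proposition~\ref{base change:height}, we pass to a finite extension so that $M_{\bQ}$ has semistable reduction everywhere; replacing $M$ by a Tate twist, we also arrange that the Hodge--Tate weights of the \'etale realizations are nonpositive, which by Proposition~\ref{formula} does not alter $h(M)-h(M')$ for any isogenous $M'$. Given an isogeny $f\colon T'\to T$, the finite group $T/T'$ is the direct sum of its $p$-primary parts; this decomposition is $G_F$-equivariant and exhibits $f$, on the level of $\bZ$-motives, as a composition of $p_i$-isogenies $M=M_0,M_1,\dots,M_s=M'$, one for each prime $p_i\mid\#(T/T')$. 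By the triangle inequality it suffices to bound $\sum_i\lvert h(M_{i-1})-h(M_i)\rvert$, and Theorem~\ref{p-isogeny} applies to each step.

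\emph{The good primes.} Fix two distinct auxiliary primes $\ell\neq\ell'$ outside $R\cup S$ at which $M$ has good reduction, as furnished by Lemma~\ref{pure}. Let $\mathcal{B}$ be the set consisting of the primes in $R$, the (finite) set of primes of bad reduction of $M$, the primes $p\leq nw+1$, the primes $\ell$ and $\ell'$, the finitely many primes $p\neq\ell$ with $m_p\neq 0$ computed using $\ell$, and all primes below the bound $(2\ell^{[F\colon\bQ]nw})^{C}$ of Proposition~\ref{tame} (together with the analogous bound with $\ell$ replaced by $\ell'$). The effective estimates of Section~8 --- the explicit bound on a nonzero $m_{p,h}$ and Remark~\ref{ell'} --- show that $\mathcal{B}$ is finite and that its description involves only $[F\colon\bQ]$, $n$, $w$ and the reduction data of $M$, hence only $[F\colon\bQ]$, $n$, $w$, $R$ and $S$. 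For a prime $p\notin\mathcal{B}$, one of two situations occurs: either $M$ has good reduction above $p$, $F$ is unramified at $p$, $nw<p-1$ and $m_p=0$, so every $p$-isogeny preserves heights by Corollary~\ref{big p}; or $p$ exceeds the bound of Proposition~\ref{tame}, and then, factoring the $p$-isogeny further into level-one steps (each lattice $T'_p+p^jT_p$ being $G_F$-stable), Proposition~\ref{tame} again yields preservation. In either case $h(M_{i-1})=h(M_i)$ whenever $p_i\notin\mathcal{B}$.

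\emph{The bad primes and conclusion.} For each of the finitely many $p\in\mathcal{B}$, Theorem~\ref{p-isogeny} --- whose bound is independent of the $p$-power occurring in $\#(T/T')$ --- gives $\lvert h(M_{i-1})-h(M_i)\rvert\leq B_p$, with $B_p$ depending only on $p$, $n$, $w$, $[F\colon\bQ]$ and $m_p$, and $m_p$ in turn bounded in terms of $\ell$, $n$, $w$, $[F\colon\bQ]$ by the estimate of Section~8. Summing $B_p$ over $p\in\mathcal{B}$ gives the constant $C$. For the stronger assertion, Proposition~\ref{formula} writes the contribution of each $p\in\mathcal{B}$ as $(\log p)\cdot r$ with $r\in\tfrac{1}{2[F\colon\bQ]}\bZ$; since this contribution is bounded, it takes only finitely many values, and as $\mathcal{B}$ is finite the set of heights $\{h(M')\}$ is finite.

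\emph{Main obstacle.} The crux is the second step: that a single bad prime contributes boundedly however deep the $p$-power filtration of the isogeny runs is precisely what the rigidity result (Proposition~\ref{rigidity}), the structural Corollary~\ref{free}, the factorization Theorem~\ref{factorization} and the resulting Theorem~\ref{p-isogeny} are designed to furnish; and one must dovetail the two preservation mechanisms --- Corollary~\ref{big p} at primes of good reduction and the level-one tame computation of Proposition~\ref{tame} at the remaining large primes --- so that only finitely many primes escape both, with that exceptional set genuinely controlled by $[F\colon\bQ]$, $n$, $w$ and $S$.
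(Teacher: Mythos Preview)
Your argument is essentially the paper's own: reduce to $p$-isogenies, invoke Theorem~\ref{p-isogeny} at the finitely many bad primes, and use Corollary~\ref{big p} together with Proposition~\ref{tame} (via level-$1$ factorization) to kill the contribution of all remaining primes; the discreteness argument for finiteness is also the intended one.

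There is one bookkeeping slip that prevents you from reaching the stated dependence of $C$. You put into $\mathcal{B}$ the primes of $R$ and the primes of bad reduction of $M$, and then write that $\mathcal{B}$ ``involves only $[F\colon\bQ]$, $n$, $w$, $R$ and $S$.'' Two problems: first, the theorem asserts that $C$ depends on $[F\colon\bQ],n,w,S$ only, not on $R$; second, the set of primes of bad reduction is \emph{not} contained in $R\cup S$ (the set $S$ records failure of purity, not failure of good reduction), so your ``hence'' is unjustified. The repair is already in your hands: Proposition~\ref{tame}, applied after factoring into level-$1$ steps, gives $h(M)=h(M')$ for \emph{every} $p$ exceeding the bound $(2\ell^{[F\colon\bQ]nw})^{C}$, with no hypothesis on ramification of $F$ at $p$ or on good reduction of $M$ at $p$. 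Thus you may take $\mathcal{B}$ to consist only of the primes below that bound (for $\ell$ and for $\ell'$), together with $\ell,\ell'$ themselves; this $\mathcal{B}$ visibly depends only on $\ell,\ell',n,w,[F\colon\bQ]$, hence on $S,n,w,[F\colon\bQ]$. Corollary~\ref{big p} is then not needed for the bound at all (the paper cites it as a cleaner route at the unramified good primes, but Proposition~\ref{tame} already covers them). A smaller point: the opening step ``pass to a finite extension so that $M_{\bQ}$ has semistable reduction everywhere'' is unnecessary, since $h(M)$ is only defined under that hypothesis and the theorem is stated in that setting; performing the base change would also change $[F\colon\bQ]$.
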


\begin{proof}
The difference $h(M)-h(M')$ is preserved by Tate twists, so it suffices to consider the case $M$ has the nonpositive Hodge-Tate numbers. 

Then, this follows from Theorem ~\ref{p-isogeny}, Corollary ~\ref{big p} and Proposition ~\ref{tame}. 
Also, we refer to Remark ~\ref{ell'}. 
We emphasize that $\ell$ and $\ell'$ depend on $S$. 
\end{proof}

\begin{rem}
As we mentioned before, under certain conjectures, it is reasonable to allow any prime number to be $\ell$. 
Also, given any $\ell$, we can still control the bound, see Remark ~\ref{purity at bad primes}. 
In particular, the constant $C$ would be independent of $S$. 
For the first cohomology of abelian varieties, it is unconditional and this improves Raynaud's bound in ~\cite{Raynaud}*{4.4.9}. 
\end{rem}

\section{A Finiteness Conjecture}

\subsection{Statement of the conjecture}
Take nonnegative integers $h^{r,w-r}$ so that $h^{r,w-r}=0$ for $r<0$ and $r>w$. 

The following finiteness conjecture is due to Kato ~\cite{Kato1}*{4.2}. 

\begin{conj}[Finiteness]
Fix a real number $c$. 
There are finitely many isomorphism classes of pure $\bZ$-motives $M$ over $F$ of weight $w$ satisfying 
\begin{enumerate}
\item $M$ has semistable reduction at any finite places of $F$. 
\item $\dim_F \gr^r(M_{\dR})=h^{r,w-r}$. 
\item $h(M)< c$. 
\end{enumerate}
\end{conj}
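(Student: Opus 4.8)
The plan is to follow the template of Faltings' proof of the Shafarevich conjecture \cite{Faltings83}, with the boundedness theorem of Section 9 (Theorem~\ref{p-isogeny} and its global consequence) replacing Faltings' boundedness, and Proposition~\ref{positivity}, Proposition~\ref{logarithmic singularity} and Theorem~\ref{family} replacing the height machine on the Siegel modular variety. The argument breaks into three stages, and I expect the third to be the genuine obstacle — indeed the reason the statement is only a conjecture. Throughout one may twist by an Artin--Tate motive to assume the Hodge--Tate weights are nonpositive, which merely reindexes $w$ and the $h^{r,w-r}$ and leaves $h(M)$ unchanged, so that the results of Sections 8--9 apply cleanly.

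First stage: control of the bad reduction. Here one would show that for $M$ satisfying (1)--(3) there is a finite set $S$ of prime numbers, depending only on $c$, $w$, the $h^{r,w-r}$ and $F$, outside which $M$ has good reduction. This is where the present hypothesis is genuinely weaker than the classical Shafarevich setting (no bad set is fixed in advance). The mechanism is Kato's philosophy that degeneration contributes positively to the height: at a place $v\mid p$ of semistable bad reduction the monodromy operator $N$ on the associated $(\varphi,N)$-module is nonzero, and $N$ enters the Breuil/Kisin-module description of $M_{\dR}(T^{\vee})_v$ (cf. the Galois-action formula in the Remark following Theorem~\ref{comparison:Kisin}), forcing the local contribution to $h(M)$ to be bounded below by a positive multiple of $\log(\# k_v)$; summing over bad places and invoking the effective estimates of Section 9 (Remark~\ref{height difference}, Corollary~\ref{bound in the free case}) forces all but finitely many places to be of good reduction.

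Second stage: reduction from $\bZ$-motives to $\bQ$-motives. Using the boundedness theorem together with the factorization into good isogenies (Theorem~\ref{factorization}), one would reduce to showing that for each $\bQ$-motive $M_\bQ$ there are only finitely many isomorphism classes of $\bZ$-motives $(M_\bQ,T)$ with $h(M)<c$ --- equivalently, that the $G_F$-stable lattices in the adelic realization, up to $\aut(M_\bQ)$ and up to bounded isogeny, form a finite set. This is a Tate-conjecture-flavoured input: it would follow from semisimplicity of the motivic Galois representations (as Faltings used semisimplicity of Tate modules), which in general is not available.

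Third stage, the main obstacle: finiteness of the set of $\bQ$-motives of weight $w$ with the prescribed Hodge numbers, good reduction outside $S$, and bounded geometric height $\widehat{\deg}\,\cL$. For abelian varieties this is precisely Faltings' finiteness, proved on the Siegel modular variety; in general there is no moduli space of motives to play that role. The realistic substitute is conditional: \emph{if} all such $M_\bQ$ are realized inside finitely many geometric families $f\colon X\to C$ satisfying $(\ast)$ of Section~\ref{Heights in families}, then Theorem~\ref{family}, the ampleness of $\cL$ on the image of the period map (Proposition~\ref{positivity} and the remarks after it), and Northcott's theorem applied to that quasi-projective image would yield finiteness, provided the period maps are finite. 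But proving that motives with given numerical and ramification data arise from such a bounded family, and that the associated period maps are finite, is out of reach --- it is entangled with the Fontaine--Mazur conjecture (to recognise the Galois representations as geometric) and with a moduli theory of motives. So the honest outcome of this plan is a conditional statement: the Finiteness Conjecture follows from the results of this paper together with (ii) semisimplicity of motivic Galois representations and (iii) such a realisation-in-bounded-families input, which is exactly why it is recorded here as a conjecture rather than a theorem.
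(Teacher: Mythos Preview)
The statement is recorded in the paper as a \emph{conjecture}; there is no proof to compare against. The paper's Section~10 does not attempt to prove Finiteness: it states the conjecture, gives a few special cases (Artin motives, abelian varieties, CM newforms), and then in Subsection~10.2 shows the \emph{opposite} implication --- that Finiteness, once assumed, yields the Tate conjecture and (modulo Conjecture~$E$) semisimplicity and Shafarevich. So the logical direction in the paper is the reverse of your outline.

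You ultimately acknowledge that your plan is conditional, and your stages~2 and~3 correctly identify the missing inputs (semisimplicity/Tate-type control of lattices, and a moduli or realisation-in-families statement). But your stage~1 is presented as if it were within reach of the paper's methods, and it is not. Nothing in the paper shows that a place of semistable bad reduction contributes a positive amount $\asymp \log(\# k_v)$ to $h(M)$; the formula after Theorem~\ref{comparison:Kisin} describes the $G_K$-action, not a lower bound on the lattice discrepancy, and the estimates in Section~9 (Remark~\ref{height difference}, Corollary~\ref{bound in the free case}) bound \emph{differences} of heights under isogenies, not absolute local contributions from monodromy. A statement of the form ``nontrivial $N$ forces a quantified positive local height'' is itself an open problem of roughly the same depth as the conjecture, so stage~1 does not genuinely reduce anything. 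In short: your proposal is a reasonable heuristic roadmap, but it is not a proof, and the paper makes no claim to one.
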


\begin{rem}\label{strong finiteness}
For motives which may not have semistable reduction everywhere, the statement still makes sense if we use stable heights, but it is not expected to be true.  
If one uses Kato's height instead of our stable height, there is a chance for it to be true. 
\end{rem}

There is a variant allowing all $F$ with bounded degrees over $\bQ$. 

The conjecture particularly implies that heights of motives satisfying (1) and (2) should be bounded below. 

The conjecture seems extremely hard, but there are a few cases we can verify weaker statements as we discuss below. 
Also, recall that one of our theorem is a version of the conjecture inside certain families. 

\begin{exam}
Heights of Artin $\bZ$-motives with semistable reduction are all zero. 
They correspond to Galois stable lattices in unramified Artin $\bQ$-representations. 
It is well-known that there are finitely many isomorphism classes of unramified Artin $\bQ$-representations of dimension $n$. 
So, by Jordan-Zassenhaus theorem,  there are finitely many isomorphism classes of Galois stable lattices in unramified Artin $\bQ$-representations of dimension $n$. 

It is conjectured that a $\bQ$-motive $M$ with $\dim \gr^r(M_{\dR})=0$ for $r\neq 0$ is an Artin motive.  A variant is known at the level of suitable realizations, see ~\cite{KW} for details. 
\end{exam}

\begin{exam}
For the first cohomology of abelian varieties, the conjecture is true. The analogous statement for the Faltings height is true, and it is the most important observation in Faltings' paper ~\cite{Faltings83}. 
(Since we use the moduli space as we explained in Remark~\ref{Siegel}, non-existence of principal polarization causes a technical issue. But this can be settled by Zarhin's trick.)

We remark that, conjecturally, a $\bZ$-motive of weight $1$ with nonpositive Hodge-Tate weights would be attached to the first cohomology of an abelian variety. 
\end{exam}

\begin{exam}
In this example, we consider normalized newforms of weight $k>2$ with rational Fourier coefficients. So, $h^{k-1,0}=h^{0,k-1}=1$ and other Hodge numbers are all zero. 
One can attach pure $\bQ$-motives over $\bQ$ to such newforms, see ~\cite{Scholl}. 
Conjecturally, as a part of the Langlands program, any pure $\bQ$-motives over $\bQ$ with prescribed Hodge numbers should be attached to such newforms. 

Such an associated motive has semistable reduction everywhere if and only if the level of the corresponding newform is square-free and its nebentypus is trival because of the local-global compatibility ~\cite{Saito:compatibility}. 

If $k$ is odd, nebentypus must be nontrivial and such newfomrs do not exist. 
Therefore, the modularity would imply the finiteness conjecture. 

For even weights, the nebentypus is automatically trivial, but it seems difficult to show finiteness.  
If we only consider those who have complex multiplication,  we can prove a few things as in ~\cite{Schutt}. 
Assuming the extended Riemann hypothesis, it is proved that there are finitely many such newforms. (We can ignore twists because levels are square-free.) In fact, it is unconditional for $k=4$ or $6$. 
So, the finiteness conjecture for such associated motives up to isogeny is true. 
Combined with one of our theorem, there would be finite many such $\bZ$-motives by the finiteness conjecture.  
\end{exam}

\begin{rem}
These argument shows, for fixed $k$, heights of associated motives (with complex multiplication if $k$ is even) are bounded. 
This matches Kato's speculation that the above Hodge numbers would force heights to be bounded. 
According to him, this would reflect the fact that there are few families of such motives, which is a consequence of Griffiths transversality. 
(Roughly speaking, the moduli space of such motives would be discrete if it makes sense.) 
His observation is based on the analogy to the Vojta conjectures. 
\end{rem}

\begin{rem}
Suppose $k$ is odd and consider motives which may not have semistable reduction everywhere. 
In this case, newforms have complex multiplication, and the result of ~\cite{Schutt} apply to these case as well. 
This seems to be related to Remark ~\ref{strong finiteness}. 
\end{rem}

\subsection{Relationships with other conjectures}
Recall the following conjectures: 

\begin{conj}[Tate]
For any pure $\bQ$-motives $M_\bQ$ and $M'_\bQ$ over $F$, 
the $p$-adic \'etale realization induces an isomorphism
\[
\Hom(M_\bQ,M'_\bQ)\otimes_{\bQ}\bQ_p \overset{\cong}{\to} \Hom_{G_F}(M_p,M'_p)
\]
 for any prime number $p$. 
\end{conj}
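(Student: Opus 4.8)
The plan is to run Faltings' deduction of the Tate conjecture from finiteness, with the Boundedness theorem of Section $9$ playing the role of the boundedness of Faltings heights inside an isogeny class, and the Finiteness Conjecture of this section playing the role of Faltings' first finiteness statement. The argument is therefore conditional: it would establish the conjectured isomorphism, together with the semisimplicity of $M_p$, for pure $\bQ$-motives over $F$ \emph{granting} the Finiteness Conjecture; the new, unconditional ingredient contributed by this paper is precisely the boundedness input.

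First I would pass to a convenient integral setting. Choose $G_F$-stable lattices in $M_\bQ$ and $M'_\bQ$ and set $N=M\oplus M'$. After a finite Galois extension $F'/F$ the $\bZ$-motive $N$ is semistable everywhere, and both sides of the conjectured equality satisfy Galois descent along $F'/F$: the motivic $\Hom$ over $F$ is the $\Gal(F'/F)$-invariants of the motivic $\Hom$ over $F'$ (correspondences descend), and $\otimes_\bQ\bQ_p$ commutes with finite invariants, so it suffices to prove the statement over $F'$. There it is enough to show that for every $k$, every $G_{F'}$-stable $\bQ_p$-subspace $W$ of $N_p^{\oplus k}$ is cut out by an idempotent of $\Hom(N_\bQ^{\oplus k},N_\bQ^{\oplus k})\otimes_\bQ\bQ_p$. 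The case $k=1$ with arbitrary $W$ gives the semisimplicity of $N_p$ (take $(1-e)N_p$ as a stable complement), and the collection of these statements over all $k$ gives, via the reduction carried out in \cite{Faltings83}, the equality $\Hom_{G_{F'}}(N_p,N_p)=\Hom(N_\bQ,N_\bQ)\otimes\bQ_p$, whose off-diagonal block is the desired Tate isomorphism for $M$ and $M'$.

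The heart is producing these idempotents, and here I would follow \cite{Faltings83} essentially verbatim, using the present paper's inputs. Fix $p$, a $G_{F'}$-stable $\bZ$-lattice $T$ in the adelic realization of $N_\bQ$, and a $G_{F'}$-stable subspace $W\subseteq N_p$. For $n\geq 0$ let $T_n$ be the lattice agreeing with $T$ away from $p$ and equal to $(p^{-n}T_p\cap W)+T_p$ at $p$; the $T_n$ are $G_{F'}$-stable, form an increasing chain $T=T_0\subseteq T_1\subseteq\cdots$ with finite successive quotients, and each $(N_\bQ,T_n)$ is a pure $\bZ$-motive isogenous to $(N_\bQ,T)$, hence semistable everywhere and with the same weight and Hodge numbers. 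By the Boundedness theorem of Section $9$ the set $\{h(N_\bQ,T_n)\}$ is finite, so all $T_n$ have bounded height, and the Finiteness Conjecture then forces $(N_\bQ,T_n)\cong(N_\bQ,T_m)$ for some $n<m$. Such an isomorphism is a $u\in\aut(N_\bQ)$ with $u(T_m)=T_n$, so $u(T_n)\subseteq T_n$ and $u$ defines a $G_{F'}$-endomorphism of $T_n$; as in \cite{Faltings83}, as $n,m$ grow along a subsequence these endomorphisms approximate the projector onto $W$, and compactness of the Grassmannian of subspaces of $N_p$ of dimension $\dim W$ yields in the limit an idempotent $e\in\Hom(N_\bQ,N_\bQ)\otimes\bQ_p$ with $eN_p=W$. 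The same construction applied to $N^{\oplus k}$ handles all $k$.

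The main obstacle is that the Finiteness Conjecture is open, so the argument is only conditional; this is exactly where the difference with the abelian-variety case lies, since there the analogue of Finiteness is Faltings' theorem. Even granting the Finiteness Conjecture, the delicate technical point is the limit argument extracting $e$: one must control denominators so that $e$ lands in $\Hom(N_\bQ,N_\bQ)\otimes\bQ_p$ rather than merely in $\Hom_{G_{F'}}(N_p,N_p)$, and one uses that the category of Grothendieck motives is pseudo-abelian so that $e$ splits off an honest sub-motive. None of this should be new relative to \cite{Faltings83}, precisely because Section $9$ already supplies the unconditional boundedness, so I would not expect additional difficulties beyond transcribing Faltings' argument into the motivic setting.
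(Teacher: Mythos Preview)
The statement in question is a \emph{conjecture}, not a theorem: the paper does not prove it unconditionally, and neither do you. What the paper does contain is the final Proposition of Section~10, which asserts that the Finiteness Conjecture implies the Tate conjecture; its proof is a three-line sketch (``We essentially repeat Faltings' \ldots\ argument'') invoking exactly the mechanism you describe: from a $G_F$-stable subspace build a tower of $p$-isogenous $\bZ$-motives, use Boundedness (Section~9) plus Finiteness to force infinitely many of them to be isomorphic, and extract a motivic endomorphism with the prescribed image. Your proposal is a correct and considerably more detailed fleshing-out of that same conditional argument, so on the Tate statement itself you and the paper agree.

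One point where you overclaim relative to the paper: you assert that the construction also yields semisimplicity of $N_p$, because the limiting element is an \emph{idempotent} $e$ with $eN_p=W$. The paper is more careful here. Faltings' limit produces an element of $\Hom(N_\bQ,N_\bQ)\otimes\bQ_p$ whose image on $N_p$ is $W$; it is not automatically idempotent. To upgrade it to an idempotent one needs the motivic endomorphism algebra $\Hom(N_\bQ,N_\bQ)$ to be semisimple, and this is exactly why the paper invokes Conjecture~\emph{E} (numerical $=$ homological) and Jannsen's theorem for part~(2) of the Proposition, separating it from part~(1). Your appeal to pseudo-abelianness of the category of Grothendieck motives is beside the point: pseudo-abelianness lets you split off an idempotent once you have one, but it does not manufacture the idempotent for you. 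So your conditional deduction of Tate is fine and matches the paper; your simultaneous deduction of semisimplicity has a gap that the paper fills with an extra hypothesis.
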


\begin{rem}
In fact, this implies, for any pure $\bZ$-motives $M$ and $M'$, $\Hom(M,M')\otimes_{\bZ}\bZ_p \overset{\cong}{\to} \Hom_{G_F}(T_p,T'_p)$. 
\end{rem}

\begin{conj}[Semisimplicity]
For any $\bQ$-motive $M_\bQ$ over $F$ and a prime number $p$, $M_p$ is a semisimple representation of $G_F$. 
\end{conj}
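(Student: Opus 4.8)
The plan is to deduce the Semisimplicity Conjecture from the Finiteness Conjecture together with the boundedness theorem of Section~9, transporting Faltings' deduction of semisimplicity of Tate modules in \cite{Faltings83} to the present setting. Since semisimplicity of a $p$-adic representation of $G_F$ may be checked after replacing $F$ by a finite extension---given a $G_{F'}$-equivariant projection onto a subrepresentation, averaging over $\Gal(F'/F)$ produces a $G_F$-equivariant one, which is legitimate in characteristic zero---I would first replace $F$ by a finite extension over which $M_\bQ$ acquires semistable reduction at every finite place; stable heights are unchanged by Proposition~\ref{base change:height}. Assume, for contradiction, that $M_p$ is not semisimple, so that there is a non-split short exact sequence of $G_F$-representations $0\to W\to M_p\to U\to 0$ with $W\neq 0$; write $\pi\colon M_p\to U$ for the surjection.

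Fix a $G_F$-stable lattice $L$ in $M_p$, put $L_U=\pi(L)$, and for each integer $n\geq 0$ set $L_n=\pi^{-1}(p^nL_U)\cap L$. Then $L_n$ is a $G_F$-stable lattice in $M_p$, sitting in $0\to L\cap W\to L_n\to p^nL_U\to 0$, and for $n\leq m$ one has $L_m\subseteq L_n$ with $L_n/L_m\cong(\bZ/p^{m-n}\bZ)^{\dim U}$. Pairing $L_n$ at the place $p$ with a fixed lattice at all other primes yields a pure $\bZ$-motive $M_n$ with underlying $\bQ$-motive $M_\bQ$; all the $M_n$ have the same weight and Hodge numbers as $M$, have semistable reduction everywhere, and, via the inclusions $L_m\hookrightarrow L_n$, lie in a single isogeny class. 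By the boundedness theorem of Section~9 the set $\{h(M_n)\mid n\geq 0\}$ is finite, and then the Finiteness Conjecture forces only finitely many isomorphism classes among the $M_n$. Since an isomorphism of $\bZ$-motives induces, through functoriality of the $p$-adic realization, an isomorphism of the corresponding $G_F$-lattices, there must exist integers $n<m$ with $L_n\cong L_m$ as $G_F$-modules.

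The heart of the argument is to rule this out---that is, to show the $L_n$ are pairwise non-isomorphic precisely because the sequence does not split. An isomorphism $L_n\to L_m$ is, after inverting $p$, a $G_F$-equivariant automorphism $\phi$ of $M_p$ with $\phi(L_n)=L_m\subseteq L_n$. Now $\phi$ lies in the semisimple $\bQ_p$-algebra $E=\mathrm{End}_{G_F}(M_p)$, and the point is that non-splitness forces any such $\phi$ to act on $L_n$ with cokernel spread uniformly along a $G_F$-stable composition series of $M_p$, which is incompatible with $L_n/\phi(L_n)=L_n/L_m\cong(\bZ/p^{m-n}\bZ)^{\dim U}$ once $\dim U<\dim M_p$. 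This is transparent when $W$ and $U$ are irreducible and non-isomorphic: then $E$ consists of scalars, by Schur's lemma and non-splitness, so $\phi$ is multiplication by some $\lambda\in\bQ_p^\times$, and $L_n/\phi(L_n)$ is then $0$, or not contained in $L_n$, or isomorphic to $(\bZ/p^{v_p(\lambda)}\bZ)^{\dim M_p}$---never to $(\bZ/p^{m-n}\bZ)^{\dim U}$. The general case reduces to this by induction along a $G_F$-stable composition series, following Faltings. Hence $M_p$ is semisimple; the case of a $\bQ$-motive follows since semisimplicity of $M_p$ may be tested on any $G_F$-stable lattice.

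The hard part will be the last step: making precise the claim that non-splitness obstructs the ``unbalanced'' dilation $\phi(L_n)\subseteq L_n$ in general, by organizing the induction so that the obstruction on each graded piece of a composition series genuinely propagates to $M_p$---this is exactly where Faltings' treatment of Tate modules does its work, and I expect the adaptation to stay close to his argument but not to be purely formal. The remaining ingredients are routine: descending semisimplicity through the base extension, checking that the $M_n$ satisfy the hypotheses of the Finiteness Conjecture, and passing from the isogeny class to isomorphism classes. I would stress that the boundedness theorem of Section~9 does not by itself suffice; the genuinely new, and conjectural, input is the finiteness of $\bZ$-motives of bounded height with prescribed Hodge data. (The Tate Conjecture is expected to follow along the same lines, using the semisimplicity just obtained, as in the original argument of \cite{Faltings83}.)
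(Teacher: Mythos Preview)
There is a genuine gap, and it lies exactly where you flag the ``hard part.'' Your argument hinges on the claim that $E=\mathrm{End}_{G_F}(M_p)$ is a semisimple $\bQ_p$-algebra, which you then use to force any $\phi$ with $\phi(L_n)=L_m$ to act ``uniformly'' on a composition series. But this assertion is unjustified and in fact false in general: if $M_p$ contains a non-split self-extension $0\to V\to E\to V\to 0$ as a subquotient, then the composite $E\twoheadrightarrow V\hookrightarrow E$ is a nonzero nilpotent in $\mathrm{End}_{G_F}(E)$, and such nilpotents propagate to $\mathrm{End}_{G_F}(M_p)$. Your treatment of the case ``$W,U$ irreducible and non-isomorphic'' is correct precisely because in that case $E=\bQ_p$; the promised inductive reduction ``following Faltings'' does not go through, because Faltings' argument for abelian varieties uses as input the classical fact that $\mathrm{End}^0(A)$ is a semisimple $\bQ$-algebra---an ingredient you have no analogue of.

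This is exactly why the paper does \emph{not} claim that Finiteness alone yields Semisimplicity: it proves only that Finiteness together with Conjecture~\emph{E} does. The mechanism is different from yours in a crucial way. An isomorphism $M_n\cong M_m$ of $\bZ$-motives is not merely a $G_F$-isomorphism $L_n\cong L_m$; it comes from an automorphism of the $\bQ$-motive $M_\bQ$, and the construction yields an endomorphism of $M_\bQ$ whose $p$-adic realization has image $W$. One then needs to modify this endomorphism to an idempotent, and it is here---at the level of $\mathrm{End}(M_\bQ)$, not $\mathrm{End}_{G_F}(M_p)$---that semisimplicity is required. Conjecture~\emph{E} supplies it via Jannsen's theorem that numerical motives form a semisimple abelian category. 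So your proof-by-contradiction inversion of Faltings' scheme discards the motivic origin of the isomorphism and then tries to recover, on the Galois side, a structural fact that is only available (conjecturally) on the motivic side. Your parenthetical also reverses the logical order: in the paper, Tate is deduced from Finiteness first, and Semisimplicity only afterwards with the extra hypothesis~\emph{E}.
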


\begin{rem}
This conjecture is sometimes included in the Tate conjecture, or called the Grothendieck-Serre conjecture. 
\end{rem}

\begin{conj}[Shafarevich]
Fix a set of nonnegative integers ${h^{r,w-r}}$ as before and a set $S$ of prime numbers. Then, there are finite many isomorphism classes of pure $\bZ$-motives of weight $w$ with $\dim \gr^r(M_{\dR})=h^{r,w-r}$ which have pure good reduction outside $S$ and semistable reduction inside $S$.  
\end{conj}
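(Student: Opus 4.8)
The plan is to run Faltings' deduction of the Shafarevich conjecture from a finiteness statement at bounded height, with the boundedness theorem of this paper playing the role of Faltings' boundedness of heights in an isogeny class, and with the Finiteness conjecture above, the Tate conjecture, and the semisimplicity conjecture as the (conjectural) inputs.

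\emph{Step 1: there are only finitely many isogeny classes.} Fix $F$, $w$, $S$ and the $h^{r,w-r}$, set $n=\sum_r h^{r,w-r}$, and fix an auxiliary prime $\ell$. For a pure $\bZ$-motive $M$ as in the conjecture, the $\ell$-adic realization $M_\ell$ of $M_\bQ$ is an $n$-dimensional representation of $G_F$ unramified outside $S\cup\{\ell\}$, and by Lemma~\ref{pure} the characteristic polynomial of $\Frob_\lambda$ at each place $\lambda\notin S\cup\{\ell\}$ has bounded denominator and, by purity, bounded archimedean absolute values, hence lies in a fixed finite set. Since by Hermite--Minkowski only finitely many extensions of $F$ of bounded degree are unramified outside $S\cup\{\ell\}$, the reduction of $M_\ell$ modulo $\ell$ takes only finitely many values, and then Brauer--Nesbitt together with Chebotarev's density theorem leaves only finitely many possibilities for the semisimplification of $M_\ell$. \emph{Granting the semisimplicity conjecture}, $M_\ell$ itself takes only finitely many isomorphism classes; \emph{granting the Tate conjecture}, the $\bQ$-motive $M_\bQ$ is determined up to isomorphism by $M_\ell$. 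Hence there are only finitely many isogeny classes of pure $\bZ$-motives with the prescribed weight, Hodge numbers, and reduction type.

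\emph{Step 2: each isogeny class contributes finitely many isomorphism classes.} Fix one of the finitely many $\bQ$-motives $M_\bQ$ from Step 1; it has semistable reduction everywhere. Choose any pure $\bZ$-motive $M_0$ with underlying $\bQ$-motive $M_\bQ$. By the boundedness theorem of this paper, the heights $h(M')$ of pure $\bZ$-motives $M'$ isogenous to $M_0$ form a finite set, say all $<c_0$. Every pure $\bZ$-motive with underlying $\bQ$-motive $M_\bQ$ and the prescribed reduction behaviour is isogenous to $M_0$, hence has height $<c_0$, has $\dim_F\gr^r(M_{\dR})=h^{r,w-r}$, and has semistable reduction everywhere; \emph{granting the Finiteness conjecture} with the bound $c_0$ and the given Hodge numbers, only finitely many such $M'$ exist up to isomorphism. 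Summing over the finitely many $\bQ$-motives of Step 1 proves the conjecture.

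\emph{The main obstacle} is entirely in the conjectural inputs. The Tate and semisimplicity conjectures are what allow Step 1 to pass from $\ell$-adic realizations back to $\bQ$-motives, and --- far more seriously --- Kato's Finiteness conjecture used in Step 2 is the analog of Faltings' ``Finiteness~I'', whose only known proof (for abelian varieties) rests on the geometry and Arakelov theory of the moduli space and for which no technique is available for general motives. Everything else --- the Weil bounds encapsulated in Lemma~\ref{pure}, Hermite--Minkowski, and Chebotarev --- is standard; the genuinely new ingredient supplied by this paper is the boundedness theorem, which is precisely what trades the Shafarevich statement for a finiteness statement at bounded height.
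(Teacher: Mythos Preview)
Your deduction is correct and follows essentially the same two-step strategy as the paper's conditional argument in the final Proposition: first bound the number of isogeny classes via finiteness of semisimple $\ell$-adic representations with prescribed ramification and weight, then use the boundedness theorem together with the Finiteness conjecture to bound the number of $\bZ$-motives in each isogeny class.

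The only noteworthy difference is in how the conjectural inputs are packaged. You take the Tate conjecture and the semisimplicity conjecture as separate hypotheses alongside the Finiteness conjecture. The paper instead assumes the Finiteness conjecture together with Conjecture~\emph{E} (numerical equals homological equivalence): it first shows that the Finiteness conjecture alone already implies the Tate conjecture (so your Tate hypothesis is redundant), and then uses Conjecture~\emph{E} via Jannsen's theorem to obtain semisimplicity of the category of motives, hence of the Galois representations. Thus the paper's hypotheses are formally weaker (Finiteness $+$ \emph{E} rather than Finiteness $+$ Tate $+$ semisimplicity), though of course all of these are wide open. Apart from this repackaging, your Step~1 simply spells out the ``well-known'' finiteness of semisimple $\bQ_p$-representations that the paper invokes in one sentence, and your Step~2 is identical to the paper's.
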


\begin{rem}
This might not be explicitly written elsewhere. 
The statement without mentioning the purity should hold as well because the purity is expected for any prime numbers as we discussed before. 
\end{rem}

\begin{conj}[Conjecture \emph{E}]
Numerical equivalence is equal to ($p$-adic) homological equivalence. 
\end{conj}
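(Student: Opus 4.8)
The statement concerns algebraic cycles on a smooth projective variety $X$ over $F$, and one containment is formal: if a codimension-$r$ cycle $Z$ has $[Z]=0$ in $H^{2r}_{\et}(X_{\overline{F}},\bQ_p(r))$, then for every cycle $W$ of complementary dimension the intersection number $Z\cdot W$ is the image of $[Z]\cup[W]=0$ under the trace map, so it vanishes; hence $p$-adic homological equivalence implies numerical equivalence. The content is the converse, and the plan is to deduce it from the Lefschetz-type standard conjecture, following the classical theory of the standard conjectures.

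Concretely, I would proceed as follows. First, fix an ample class on $X$ and the resulting hard Lefschetz operator $L$ on $H^{*}_{\et}(X_{\overline{F}},\bQ_p)$; hard Lefschetz is a theorem \cite{Deligne}, so the primitive decomposition and the $\ast$-operator --- built from $L$, its inverse $\Lambda$ across middle degree, and the projectors onto primitive components --- are defined on cohomology. Second, invoke the Lefschetz standard conjecture $B(X)$, that $\Lambda$ (and hence the primitive projectors, and hence $\ast$) is induced by an algebraic correspondence. The key step is then that, granting $B(X)$, the bilinear form $(a,b)\mapsto\deg(a\cup\ast b)$ restricted to the $\bQ$-span $A^r(X)$ of algebraic classes in $H^{2r}$ is computed purely by intersection numbers of cycles, so its radical on $A^r(X)$ is precisely the subspace of numerically trivial cycle classes.

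Third, invoke the Hodge standard conjecture to conclude that this form is definite up to sign on the primitive algebraic classes, hence nondegenerate on $A^r(X)$; a numerically trivial cycle then lies in the radical of a definite form, so its class is zero, giving that numerical equivalence implies homological equivalence. Over the number field $F$ the Hodge standard conjecture is free: choosing $F\hookrightarrow\bC$ and using the Betti comparison reduces it to the classical Hodge--Riemann bilinear relations. Thus the whole statement is reduced to $B(X)$, which is known for abelian varieties (Lieberman), for curves and surfaces, and for flag varieties, so Conjecture \emph{E} holds unconditionally in those cases; in particular numerical, de Rham, and $p$-adic homological equivalence then coincide on the relevant motives, which would make the corresponding category of motives semisimple. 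Alternatively one could try the route through the Tate conjecture: over a finite field, the Tate conjecture together with semisimplicity of the Frobenius action yields $\mathrm{num}=\mathrm{hom}$, and a spreading-out and specialization argument, controlling the behaviour of both equivalence relations under reduction, would transport this back to $X$ over $F$.

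The hard part is, unavoidably, the Lefschetz standard conjecture itself: realizing $\Lambda$ as an algebraic cycle is open and of essentially the same difficulty as Conjecture \emph{E}, and no rearrangement of the standard conjectures sidesteps it --- so the proposal above is a reorganization rather than a resolution. Any genuinely new case would have to come from a new instance of $B(X)$, from a new case of the Tate conjecture with semisimple Frobenius together with the requisite specialization control, or from an argument exploiting special structure --- a large endomorphism algebra, a group action, or a fibration --- of the particular motives in question.
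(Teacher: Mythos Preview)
The statement you are attempting to prove is labelled \emph{Conjecture} in the paper, and the paper does not prove it; the accompanying remark explicitly identifies it with Grothendieck's standard conjecture \emph{D}, which is open. There is therefore no ``paper's own proof'' to compare against.

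Your proposal is an accurate account of the classical web of implications among the standard conjectures: one direction is immediate from compatibility of the cycle class map with intersection numbers, and for the converse you reduce to $B(X)$ (algebraicity of $\Lambda$) together with the Hodge standard conjecture, the latter being available over a number field via comparison with Betti cohomology and the Hodge--Riemann relations. This is all correct as far as it goes, and you yourself say as much in your final paragraph: the reduction to $B(X)$ is a reorganization, not a proof, since $B(X)$ is open in general and is of comparable depth to the statement in question. The alternative route you sketch through the Tate conjecture over finite fields plus specialization is likewise conditional on open conjectures. So what you have written is not a proof of Conjecture \emph{E}, and you should not present it as one; it is a summary of known conditional implications, and the honest conclusion is that the statement remains open outside the special cases (curves, surfaces, abelian varieties, etc.) you list.
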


\begin{rem}
This is asked by Tate in his Woods Hole talk. 
Now, this is widely known as Grothendieck's standard conjecture \emph{D}. 
\end{rem}

Finally, following Faltings, we relate the finiteness conjecture to these conjectures. 

\begin{prop}
Assume the finiteness conjecture is true. Then, the following holds:
\begin{enumerate}
\item The Tate conjecture is true. 
\item The conjecture E implies the semisimplicity conjecture. 
\item The conjecture E implies the Shafarevich conjecture. 
\end{enumerate}
\end{prop}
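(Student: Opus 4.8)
The plan is to follow Faltings \cite{Faltings83}, using the boundedness of heights in a fixed isogeny class established above in place of its abelian-variety analogue and the finiteness conjecture in place of Faltings' finiteness theorem for abelian varieties of bounded height. For (1), I would first dispose of injectivity of $\Hom(M_\bQ,M'_\bQ)\otimes_\bQ\bQ_p\to\Hom_{G_F}(M_p,M'_p)$, which is automatic: a rational combination of correspondences acting by zero on $p$-adic \'etale realizations acts by zero on de Rham realizations by the $p$-adic comparison isomorphism, hence is homologically trivial. For surjectivity, after replacing $(M_\bQ,M'_\bQ)$ by $M_\bQ\oplus M'_\bQ$ and passing to graphs it suffices to show that every $G_F$-stable $\bQ_p$-subspace $W\subseteq M_p$ lies in the image of $\operatorname{End}(M_\bQ)\otimes_\bQ\bQ_p$. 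Given such a $W$ and a $G_F$-stable lattice $T\subseteq M_{\bA^f_\bQ}$, I would form for each $n\geq1$ the $G_F$-stable lattice $T_n$ equal to $T$ away from $p$ and to $p^{-n}(T_p\cap W)+T_p$ at $p$, and the pure $\bZ$-motive $M_n=(M_\bQ,T_n)$. Each $M_n$ is isogenous to $(M_\bQ,T)$, is semistable everywhere, and has the Hodge numbers of $M$; so by boundedness $\{h(M_n)\}_n$ is finite, and by the finiteness conjecture only finitely many isomorphism classes occur among the $M_n$. Hence $M_n\cong M_{n'}$ for $n,n'$ ranging over an infinite set, and, rescaling the automorphisms of $M_\bQ$ that realize these isomorphisms and using that $\operatorname{End}(M_\bQ)\otimes_\bZ\bZ_p$ is a compact lattice in the finite-dimensional $\bQ_p$-algebra $\operatorname{End}(M_\bQ)\otimes_\bQ\bQ_p$ (the de Rham realization is faithful), I would extract in the limit $n\to\infty$ an idempotent $u_W\in\operatorname{End}(M_\bQ)\otimes_\bQ\bQ_p$ with image $W$. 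Feeding this back through the graph reduction gives surjectivity.

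For (2), assuming in addition conjecture $E$: by Jannsen's theorem the category of motives for numerical equivalence over $F$ is semisimple abelian, and $E$ identifies it with the category used here, so $M_\bQ$ decomposes into simple motives and $\operatorname{End}(M_\bQ)$ is a semisimple $\bQ$-algebra; by (1), $\operatorname{End}_{G_F}(M_p)=\operatorname{End}(M_\bQ)\otimes_\bQ\bQ_p$ is then semisimple over $\bQ_p$. Since the proof of (1) exhibits every $G_F$-stable $W\subseteq M_p$ as the image of an idempotent $u_W\in\operatorname{End}_{G_F}(M_p)$, one has $M_p=W\oplus\ker(u_W)$ as $G_F$-modules, so $M_p$ is semisimple; here $E$ plays the role that Poincar\'e reducibility plays in Faltings' argument. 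For (3), again assuming $E$: fix the Hodge numbers $h^{r,w-r}$ and the finite set $S$, and choose an auxiliary prime $\ell\notin S$. The $\ell$-adic realization of any candidate $M_\bQ$ is unramified outside $S\cup\{\ell\}$, has unipotent local monodromy inside $S$, and has bounded dimension; by a Hermite--Minkowski type finiteness for Galois representations with bounded ramification, together with purity of Frobenius eigenvalues (the Weil conjectures \cite{Deligne}), there are only finitely many possibilities for the semisimple $\bQ_\ell[G_F]$-module $M_\ell$. By Tate (part (1)) and semisimplicity (part (2)) a semisimple motive is determined up to isomorphism by such a realization, so finitely many $\bQ$-motives $M_\bQ$ occur; for each of these, all choices of $G_F$-stable lattice yield mutually isogenous $\bZ$-motives, hence by boundedness heights in a common finite set, and the finiteness conjecture then bounds the number of $\bZ$-motives $M$, giving the Shafarevich conjecture.

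The main obstacle is the limiting argument in (1): one must arrange the rescaling so that the sequence built from the isomorphisms $M_n\cong M_{n'}$ converges to a genuine idempotent with image exactly $W$ and lying in $\operatorname{End}(M_\bQ)\otimes_\bQ\bQ_p$. This is the technical heart, carried out for abelian varieties in \cite{Faltings83}; transplanting it requires only that $\operatorname{End}(M_\bQ)$ be a finite-dimensional $\bQ$-algebra (faithfulness of the de Rham realization) together with the boundedness and finiteness inputs. A secondary difficulty is the Galois-representation finiteness used in (3), where the behaviour at $p$ — semistability with prescribed Hodge--Tate weights — must also be controlled, and is ultimately handled through the finiteness conjecture rather than by a purely group-theoretic statement.
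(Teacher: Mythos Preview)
Your proposal is correct and follows essentially the same route as the paper: run Faltings' isogeny argument (form the lattices $T_n$, use boundedness plus the finiteness conjecture to find infinitely many isomorphic $M_n$, extract an endomorphism with image $W$) for (1); invoke Jannsen under conjecture~$E$ to get semisimplicity of $\operatorname{End}(M_\bQ)$ for (2); and use finiteness of semisimple $\bQ_p$-representations of $G_F$ of fixed dimension that are unramified and pure of weight $w$ outside $S$, together with boundedness in each isogeny class, for (3).

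One small imprecision worth flagging: the limiting element $u_W$ produced by the Faltings argument in (1) is in general only an endomorphism with image $W$, not an idempotent. The paper accordingly states only that one obtains an endomorphism of $M_\bQ$ whose image is the given subrepresentation, and then in (2) uses conjecture~$E$ (via Jannsen's semisimplicity of the motivic category) to replace this endomorphism by an idempotent in the now-semisimple algebra $\operatorname{End}(M_\bQ)\otimes_\bQ\bQ_p$. In your write-up, the role of $E$ in (2) is slightly obscured because you already claimed idempotency in (1); if that claim held unconditionally, $E$ would be unnecessary for semisimplicity. Relax the claim in (1) to ``endomorphism with image $W$'' and let $E$ do the upgrading in (2), exactly as the paper does.
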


\begin{proof}
We essentially repeat Faltings' (or more classical) argument. 
We give only a summary. Given a $G_F$-stable subrepresentation of $M_p$, one can produce a sequence of $p$-isogeneous $\bZ$-motives. The boundedness result and the finiteness conjecture imply that infinity many of them are isomorphic, and one obtains an endomorphism of $M_\bQ$ whose image is the given subrepresentation.  
Then, this implies the Tate conjecture. 

If numerical equivalence coincides with homological equivalence, the category of motives is semisimple ~\cite{Jan}.  
So, the endomorphism algebra of $M$ is semisimple, 
and the endomorphism above can be taken as an idempotent, hence the semisimplicity of $M_p$ follows.

To show the Shafarevich conjecture modulo isogenies, using the Tate conjecture, we may work with semisimple $p$-adic Galois representations.
It is well-known that there are finitely many isomorphism classes of $n$-dimensional semisimple $\bQ_p$-representations of $G_F$ which is unrafimfied and pure of weight $w$ outside $S$. 
So, there are only finite many such isogeny classes. 
Since heights are bounded in each isogeny class, the finiteness conjecture assures that there are finitely many isomorphism classes inside them. 
\end{proof}

\begin{bibdiv}
\begin{biblist}
\bib{Beilinson}{article}{
   author={Beilinson, A.},
   title={On the crystalline period map},
   journal={Camb. J. Math.},
   volume={1},
   date={2013},
   number={1},
   pages={1--51},
   review={\MR{3272051}},
}
\bib{Bhatt}{article}{
 author={Bhatt, Bhargav}, 
 title={p-adic derived de Rham cohomology}, 
 eprint={http://arxiv.org/abs/1204.6560}
}
\bib{Breuil97}{article}{
   author={Breuil, Christophe},
   title={Repr\'esentations $p$-adiques semi-stables et transversalit\'e de
   Griffiths},
   journal={Math. Ann.},
   volume={307},
   date={1997},
   number={2},
   pages={191--224},
   review={\MR{1428871 (98b:14016)}},
}
\bib{Breuil1999}{article}{
   author={Breuil, Christophe},
   title={Une remarque sur les repr\'esentations locales $p$-adiques et les
   congruences entre formes modulaires de Hilbert},
   journal={Bull. Soc. Math. France},
   volume={127},
   date={1999},
   number={3},
   pages={459--472},
   review={\MR{1724405 (2000h:11054)}},
}
\bib{Breuil}{article}{
   author={Breuil, Christophe},
   title={Integral $p$-adic Hodge theory},
   conference={
      title={Algebraic geometry 2000, Azumino (Hotaka)},
   },
   book={
      series={Adv. Stud. Pure Math.},
      volume={36},
      publisher={Math. Soc. Japan, Tokyo},
   },
   date={2002},
   pages={51--80},
   review={\MR{1971512 (2004e:11135)}},
}
\bib{Caruso-Liu}{article}{
   author={Caruso, Xavier},
   author={Liu, Tong},
   title={Quasi-semi-stable representations},
   journal={Bull. Soc. Math. France},
   volume={137},
   date={2009},
   number={2},
   pages={185--223},
   review={\MR{2543474 (2011c:11086)}},
}
\bib{Cattani-Kaplan-Schmid}{article}{
   author={Cattani, Eduardo},
   author={Kaplan, Aroldo},
   author={Schmid, Wilfried},
   title={Degeneration of Hodge structures},
   journal={Ann. of Math. (2)},
   volume={123},
   date={1986},
   number={3},
   pages={457--535},
   review={\MR{840721 (88a:32029)}},
}
\bib{Colmez-Fontaine}{article}{
   author={Colmez, Pierre},
   author={Fontaine, Jean-Marc},
   title={Construction des repr\'esentations $p$-adiques semi-stables},
   journal={Invent. Math.},
   volume={140},
   date={2000},
   number={1},
   pages={1--43},
   review={\MR{1779803 (2001g:11184)}},
}
\bib{Deligne}{article}{
   author={Deligne, Pierre},
   title={La conjecture de Weil. I},
   journal={Inst. Hautes \'Etudes Sci. Publ. Math.},
   number={43},
   date={1974},
   pages={273--307},
   review={\MR{0340258 (49 \#5013)}},
}
\bib{Faltings83}{article}{
   author={Faltings, Gerd.},
   title={Endlichkeitss\"atze f\"ur abelsche Variet\"aten \"uber
   Zahlk\"orpern},
   journal={Invent. Math.},
   volume={73},
   date={1983},
   number={3},
   pages={349--366},
   review={\MR{718935 (85g:11026a)}},
}
\bib{Faltings-Chai}{book}{
   author={Faltings, Gerd},
   author={Chai, Ching-Li},
   title={Degeneration of abelian varieties},
   series={Ergebnisse der Mathematik und ihrer Grenzgebiete (3) [Results in
   Mathematics and Related Areas (3)]},
   volume={22},
   note={With an appendix by David Mumford},
   publisher={Springer-Verlag, Berlin},
   date={1990},
   pages={xii+316},
   review={\MR{1083353 (92d:14036)}},
}
\bib{Faltings99}{article}{
   author={Faltings, Gerd},
   title={Integral crystalline cohomology over very ramified valuation
   rings},
   journal={J. Amer. Math. Soc.},
   volume={12},
   date={1999},
   number={1},
   pages={117--144},
   review={\MR{1618483 (99e:14022)}},
}
\bib{Faltings02}{article}{
   author={Faltings, Gerd},
   title={Almost \'etale extensions},
   note={Cohomologies $p$-adiques et applications arithm\'etiques, II},
   journal={Ast\'erisque},
   number={279},
   date={2002},
   pages={185--270},
   review={\MR{1922831 (2003m:14031)}},
}
\bib{Fontaine-Laffaille}{article}{
   author={Fontaine, Jean-Marc},
   author={Laffaille, Guy},
   title={Construction de repr\'esentations $p$-adiques},
   journal={Ann. Sci. \'Ecole Norm. Sup. (4)},
   volume={15},
   date={1982},
   number={4},
   pages={547--608 (1983)},
   review={\MR{707328 (85c:14028)}},
}
\bib{Griffiths}{article}{
   author={Griffiths, Phillip A.},
   title={Periods of integrals on algebraic manifolds. III. Some global
   differential-geometric properties of the period mapping},
   journal={Inst. Hautes \'Etudes Sci. Publ. Math.},
   number={38},
   date={1970},
   pages={125--180},
   review={\MR{0282990 (44 \#224)}},
}
\bib{Hyodo-Kato}{article}{
  author={Hyodo, Osamu}, 
  author={Kato, Kazuya}, 
  title={Semi-stable reduction and crystalline cohomology with logarithmic poles}, 
  journal={Ast\'erisque}, 
  number={223}, 
  date={1994}, 
  pages={221--268}, 
  review={\MR{1293974}}, 
}
\bib{Illusie}{article}{
   author={Illusie, Luc},
   title={D\'eformations de groupes de Barsotti-Tate (d'apr\`es A.
   Grothendieck)},
   note={Seminar on arithmetic bundles: the Mordell conjecture (Paris,
   1983/84)},
   journal={Ast\'erisque},
   number={127},
   date={1985},
   pages={151--198},
   review={\MR{801922}},
}
\bib{Jan}{article}{
   author={Jannsen, Uwe},
   title={Motives, numerical equivalence, and semi-simplicity},
   journal={Invent. Math.},
   volume={107},
   date={1992},
   number={3},
   pages={447--452},
   review={\MR{1150598 (93g:14009)}},
}
\bib{Kato2}{article}{
 author={Kato, Kazuya}, 
 title={Height of mixed motives}, 
 date={2013}, 
 note={\url{arxiv.org:1306.5693}}, 
}
\bib{Kato1}{article}{
   author={Kato, Kazuya},
   title={Heights of motives},
   journal={Proc. Japan Acad. Ser. A Math. Sci.},
   volume={90},
   date={2014},
   number={3},
   pages={49--53},
   review={\MR{3178484}},
}
\bib{KM}{article}{
   author={Katz, Nicholas M.},
   author={Messing, William},
   title={Some consequences of the Riemann hypothesis for varieties over
   finite fields},
   journal={Invent. Math.},
   volume={23},
   date={1974},
   pages={73--77},
   review={\MR{0332791 (48 \#11117)}},
}
\bib{Kisin}{article}{
   author={Kisin, Mark},
   title={Crystalline representations and F-crystals},
   conference={
      title={Algebraic geometry and number theory},
   },
   book={
      series={Progr. Math.},
      volume={253},
      publisher={Birkh"auser Boston, Boston, MA},
   },
   date={2006},
   pages={459--496},
   review={\MR{2263197 (2007j:11163)}},
}
\bib{KW}{article}{
   author={Kisin, Mark},
   author={Wortmann, Sigrid},
   title={A note on Artin motives},
   journal={Math. Res. Lett.},
   volume={10},
   date={2003},
   number={2-3},
   pages={375--389},
   review={\MR{1981910 (2004d:14018)}},
}
\bib{Kunnemann}{article}{
   author={K{\"u}nnemann, Klaus},
   title={Projective regular models for abelian varieties, semistable
   reduction, and the height pairing},
   journal={Duke Math. J.},
   volume={95},
   date={1998},
   number={1},
   pages={161--212},
   review={\MR{1646554 (99m:14043)}},
}	
\bib{Liu:BT}{article}{
   author={Liu, Tong},
   title={Potentially good reduction of Barsotti-Tate groups},
   journal={J. Number Theory},
   volume={126},
   date={2007},
   number={2},
   pages={155--184},
   review={\MR{2354925 (2008h:14044)}},
}
\bib{Liu:torsion}{article}{
   author={Liu, Tong},
   title={Torsion $p$-adic Galois representations and a conjecture of
   Fontaine},
   journal={Ann. Sci. \'Ecole Norm. Sup. (4)},
   volume={40},
   date={2007},
   number={4},
   pages={633--674},
   review={\MR{2191528 (2010h:11191)}},
}
\bib{Liu:Breuil}{article}{
   author={Liu, Tong},
   title={On lattices in semi-stable representations: a proof of a
   conjecture of Breuil},
   journal={Compos. Math.},
   volume={144},
   date={2008},
   number={1},
   pages={61--88},
   review={\MR{2388556 (2009c:14087)}},
}
\bib{Liu10}{article}{
   author={Liu, Tong},
   title={A note on lattices in semi-stable representations},
   journal={Math. Ann.},
   volume={346},
   date={2010},
   number={1},
   pages={117--138},
   review={\MR{2558890 (2011d:11272)}},
}
\bib{Liu:monodromy}{article}{
   author={Liu, Tong},
   title={Lattices in filtered $(\phi,N)$-modules},
   journal={J. Inst. Math. Jussieu},
   volume={11},
   date={2012},
   number={3},
   pages={659--693},
   review={\MR{2931320}},
}
\bib{Liu:filtration}{article}{
 author={Liu, Tong}, 
 title={Filtration asociated to torsion semi-stable representations}, 
 date={2015}, 
 note={\url{http://www.math.purdue.edu/~tongliu/pub/filtrationnew.pdf}}, 
}
\bib{Liu:compatibility}{article}{
 author={Liu, Tong}, 
 title={Compatibility of Kisin modules for different uniformizers}, 
 date={2015}, 
 note={\url{http://www.math.purdue.edu/~tongliu/pub/compatible.pdf}},
}
\bib{Nakayama}{article}{
   author={Nakayama, Chikara},
   title={Nearby cycles for log smooth families},
   journal={Compositio Math.},
   volume={112},
   date={1998},
   number={1},
   pages={45--75},
   issn={0010-437X},
   review={\MR{1622751 (99g:14044)}},
   doi={10.1023/A:1000327225021},
}
\bib{Niziol:comparison}{article}{
  author={Nizio{\l}, Wies{\l}awa},
   title={Semistable conjecture via K-theory},
   journal={Duke Math. J.},
   volume={141},
   date={2008},
   number={1},
   pages={151--178},
   review={\MR{2372150 (2008m:14035)}},
}
\bib{Niziol}{article}{
   author={Nizio{\l}, Wies{\l}awa},
   title={On uniqueness of $p$-adic period morphisms},
   journal={Pure Appl. Math. Q.},
   volume={5},
   date={2009},
   number={1},
   pages={163--212},
   review={\MR{2520458 (2011c:14058)}},
}
\bib{Peters}{article}{
   author={Peters, C. A. M.},
   title={A criterion for flatness of Hodge bundles over curves and
   geometric applications},
   journal={Math. Ann.},
   volume={268},
   date={1984},
   number={1},
   pages={1--19},
   review={\MR{744325 (85m:14014)}},
}
\bib{Raynaud}{article}{
   author={Raynaud, Michel},
   title={Hauteurs et isog\'enies},
   language={French},
   note={Seminar on arithmetic bundles: the Mordell conjecture (Paris,
   1983/84)},
   journal={Ast\'erisque},
   number={127},
   date={1985},
   pages={199--234},
   review={\MR{801923}},
}
\bib{Saito:compatibility}{article}{
   author={Saito, Takeshi},
   title={Modular forms and $p$-adic Hodge theory},
   journal={Invent. Math.},
   volume={129},
   date={1997},
   number={3},
   pages={607--620},
   review={\MR{1465337 (98g:11060)}},
}	
\bib{Saito}{article}{
   author={Saito, Takeshi},
   title={Weight spectral sequences and independence of $l$},
   journal={J. Inst. Math. Jussieu},
   volume={2},
   date={2003},
   number={4},
   pages={583--634},
   review={\MR{2006800 (2004i:14022)}},
}
\bib{Saito:log}{article}{
   author={Saito, Takeshi},
   title={Log smooth extension of a family of curves and semi-stable
   reduction},
   journal={J. Algebraic Geom.},
   volume={13},
   date={2004},
   number={2},
   pages={287--321},
   issn={1056-3911},
   review={\MR{2047700 (2005a:14034)}},
   doi={10.1090/S1056-3911-03-00338-2},
}		
\bib{Schmid}{article}{
   author={Schmid, Wilfried},
   title={Variation of Hodge structure: the singularities of the period
   mapping},
   journal={Invent. Math.},
   volume={22},
   date={1973},
   pages={211--319},
   review={\MR{0382272 (52 \#3157)}},
}
\bib{Scholl}{article}{
   author={Scholl, A. J.},
   title={Motives for modular forms},
   journal={Invent. Math.},
   volume={100},
   date={1990},
   number={2},
   pages={419--430},
   review={\MR{1047142 (91e:11054)}},
}
\bib{Schutt}{article}{
   author={Sch{\"u}tt, Matthias},
   title={CM newforms with rational coefficients},
   journal={Ramanujan J.},
   volume={19},
   date={2009},
   number={2},
   pages={187--205},
   review={\MR{2511671 (2010c:11052)}},
}
\bib{Taylor-Yoshida}{article}{
   author={Taylor, Richard},
   author={Yoshida, Teruyoshi},
   title={Compatibility of local and global Langlands correspondences},
   journal={J. Amer. Math. Soc.},
   volume={20},
   date={2007},
   number={2},
   pages={467--493},
   review={\MR{2276777 (2007k:11193)}},
}
\bib{Tsuji}{article}{
   author={Tsuji, Takeshi},
   title={$p$-adic \'etale cohomology and crystalline cohomology in the
   semi-stable reduction case},
   journal={Invent. Math.},
   volume={137},
   date={1999},
   number={2},
   pages={233--411},
   review={\MR{1705837 (2000m:14024)}},
}
\bib{Zucker}{article}{
   author={Zucker, Steven},
   title={Remarks on a theorem of Fujita},
   journal={J. Math. Soc. Japan},
   volume={34},
   date={1982},
   number={1},
   pages={47--54},
   review={\MR{639804 (83j:14009)}},
}
\end{biblist}
\end{bibdiv}

\end{document}